\begin{document}
	\input xy
	\xyoption{all}

	%Theorem for the introduciton
	\newtheorem{innercustomthm}{{\bf Theorem}}
	\newenvironment{customthm}[1]
	{\renewcommand\theinnercustomthm{#1}\innercustomthm}
	{\endinnercustomthm}
	
	\newtheorem{innercustomcor}{{\bf Corollary}}
	\newenvironment{customcor}[1]
	{\renewcommand\theinnercustomcor{#1}\innercustomcor}
	{\endinnercustomthm}
	
	\newtheorem{innercustomprop}{{\bf Proposition}}
	\newenvironment{customprop}[1]
	{\renewcommand\theinnercustomprop{#1}\innercustomprop}
	{\endinnercustomthm}

\newtheorem{alphatheorem}{Theorem}
\newtheorem{alphacorollary}[alphatheorem]{Corollary}
\newtheorem{alphaproposition}[alphatheorem]{Proposition}
\renewcommand*{\thealphatheorem}{\Alph{alphatheorem}}	
	
	\newtheorem{theorem}{Theorem}[section]
	\newtheorem{acknowledgement}[theorem]{Acknowledgement}
	\newtheorem{algorithm}[theorem]{Algorithm}
	\newtheorem{axiom}[theorem]{Axiom}
	\newtheorem{case}[theorem]{Case}
	\newtheorem{claim}[theorem]{Claim}
	\newtheorem{conclusion}[theorem]{Conclusion}
	\newtheorem{condition}[theorem]{Condition}
	\newtheorem{conjecture}[theorem]{Conjecture}
	\newtheorem{construction}[theorem]{Construction}
	\newtheorem{corollary}[theorem]{Corollary}
	\newtheorem{Claim}[theorem]{Claim}
	\newtheorem{definition}[theorem]{Definition}
	\newtheorem{example}[theorem]{Example}
	\newtheorem{exercise}[theorem]{Exercise}
	\newtheorem{lemma}[theorem]{Lemma}
	\newtheorem{notation}[theorem]{Notation}
	\newtheorem{problem}[theorem]{Problem}
	\newtheorem{proposition}[theorem]{Proposition}
	\newtheorem{solution}[theorem]{Solution}
	\newtheorem{summary}[theorem]{Summary}
	\numberwithin{equation}{section}
	
	\theoremstyle{remark}
	\newtheorem{remark}[theorem]{Remark}

	%%%%%%%%%%%%%%%%%%%%%%%%%%%%%%%%%%%%%%%%%%%%%%%%%%%%%%%%%%%%
	\makeatletter
	\newcommand{\rmnum}[1]{\romannumeral #1}
	\newcommand{\Rmnum}[1]{\expandafter\@slowromancap\romannumeral #1@}
	\def \g{\mathfrak{g}}
	\def \bH{{\mathbf H}}
	\def \bB{{\mathbf B }}
	\def \BKH{{\acute{H}}}
	\def \bBKH{{\acute{\mathbf H}}}
	\def \bK{\mathbb{K}}
	\def \bC{\mathbb{\bK_\de}}
	\def \bQ{\mathbb{Q}}
	\def \Z{\mathbb{Z}}
	\def \I{\mathbb{I}}
	\def \ch{{\mathcal H}}
	\def \cm{{\mathcal M}}
	\def \ct{{\mathcal T}}
	\def \hW{\widehat{W}}
	\def \bZ{\mathbb{Z}}
	\def \TH{\Theta}
	
	\def \tMH{{\cm\widetilde{\ch}(\Lambda^\imath)}}
	\def \tMHl{{\cm\widetilde{\ch}(\bs_\ell\Lambda^\imath)}}
	\def \bTH { \boldsymbol{\Theta}}
	\def \bDel{ \boldsymbol{\Delta}}
	\def \tY{\widetilde{Y}}
	\def \tH{{\widetilde{H}}}
	\def \tTH{\widetilde{\Theta}}
	\def \btau{\tau}
	\def \s{\varsigma}
	\def \bvs{\boldsymbol{\varsigma}}
	\def \bs{\mathbf r}
	\def \cR{\mathcal{R}}
	\def \fg{\mathfrak{g}}
	\def \Aut{\operatorname{Aut}\nolimits}
	
	%%WW
	\newcommand{\hgt}{\text{ht}}
	\def \ad{\text{ad}\,}
	\def \gr{\text{gr}\,}
	\def \hg{\widehat{\g}}
	\def \mf{\mathfrak}
	\def \N{\mathbb N}
	\def \tk{\widetilde{k}}
	\def \brW{{\rm Br}(W_{\tau})}
	\def \bome{{\varpi}}
	\def \bth{\bm{\theta}}
	\def\Br{{\rm Br}}
	\renewcommand{\t}{\boldsymbol{\omega}}
	
	%%WZ
	\def \tfX{\widetilde{ \Upsilon}}
	\def \tT{\widetilde{T}}

	\def \tcT{\widetilde{\mathscr T}}
	\def \E{\widetilde{E}}
	
	\def \SS{\mathbb{S}}
	\def \sll{\mathfrak{sl}}
	\def \tUD{{}^{Dr}\tU}

	\newcommand{\UU}{{\mathbf U}\otimes {\mathbf U}}
	\newcommand{\UUi}{(\UU)^\imath}
	\newcommand{\tUU}{{\tU}\otimes {\tU}}
	\newcommand{\tUUi}{(\tUU)^\imath}
	\newcommand{\tK}{\widetilde{K}}
	\newcommand{\tU}{\widetilde{{\mathbf U}} }
	\newcommand{\tUi}{\widetilde{{\mathbf U}}^\imath}
	\newcommand{\tUiii}{\tUi(\widehat\sll_3,\tau)}
	\newcommand{\tUiD}{{}^{\mathrm{Dr}}\widetilde{{\mathbf U}}^\imath}
	\newcommand{\tUigr}{{\text{gr}}\tUi}
	\newcommand{\tUigrp}{{\text{gr}}\widetilde{{\mathbf U}}^{\imath,+}}
	\newcommand{\tUiDgr}{{\text{gr}}\tUiD}

	\newcommand{\sqq}{{\bf v}}
	\newcommand{\sqvs}{\sqrt{\vs}}
	\newcommand{\dbl}{\operatorname{dbl}\nolimits}
	\newcommand{\swa}{\operatorname{swap}\nolimits}
	\renewcommand{\Im}{\operatorname{Im}\nolimits}
	\newcommand{\Sym}{\operatorname{Sym}\nolimits}
	\newcommand{\qbinom}[2]{\begin{bmatrix} #1\\#2 \end{bmatrix} }
	
	\def \ov{\overline}
	\def \balpha{{\boldsymbol \alpha}} %{\ov{\alpha}}
	\def \K{\mathbb K}
	
	\newcommand{\nc}{\newcommand}
	%\nc{\browntext}[1]{\textcolor{brown}{#1}}
	\nc{\browntext}[1]{\textcolor{brown}{#1}}
	\nc{\greentext}[1]{\textcolor{green}{#1}}
	\nc{\redtext}[1]{\textcolor{red}{#1}}
	\nc{\bluetext}[1]{\textcolor{blue}{#1}}
	\nc{\brown}[1]{\browntext{ #1}}
	\nc{\green}[1]{\greentext{ #1}}
	\nc{\red}[1]{\redtext{#1}}
	\nc{\blue}[1]{\bluetext{#1}}

	\def \Q {\mathbb Q}
	\def \C{\mathbb C}
	\def \TT{\mathbf T}
	\def \tt{\mathbf t}
	\newcommand{\wt}{\text{wt}}
	\def \de{\delta}
	\def \bvs{{\boldsymbol{\varsigma}}}
	\def \vs{\varsigma}
	\def \U{\mathbf U}
	\def \Ui{\mathbf{U}^\imath}
	\def \dm{\diamond}
	\def \bS{\mathbb S}
	
	\def \bR{\mathbb R}
	\def \bP{\mathbb P}
	\def \bF{\mathbb F}
	\def \II{\mathbb{I}_0}
	\def \t{\Omega}
	\allowdisplaybreaks
	
	\newcommand{\lutodo}{\todo[inline,color=violet!20, caption={}]}
	\newcommand{\wtodo}{\todo[inline,color=cyan!20, caption={}]}
	\newcommand{\ztodo}{\todo[inline,color=green!20, caption={}]}
	
	%%%%%
	\title[Quasi-split affine $\mathrm{i}$quantum groups III]{Braid group action and \\quasi-split affine $\mathrm{i}$quantum groups III}
	
	\author[Ming Lu]{Ming Lu}
	\address{Department of Mathematics, Sichuan University, Chengdu 610064, P.R.China}
	\email{luming@scu.edu.cn}

	\author[Xiaolong Pan]{Xiaolong Pan}
	\address{Department of Mathematics, Sichuan University, Chengdu 610064, P.R.China}
	\email{xiaolong\_pan@stu.scu.edu.cn}
	
	\author[Weiqiang Wang]{Weiqiang Wang}
	\address{Department of Mathematics, University of Virginia, Charlottesville, VA 22904, USA}
	\email{ww9c@virginia.edu}

	\author[Weinan Zhang]{Weinan Zhang}
	\address{Department of Mathematics and New Cornerstone Science Laboratory, The University of Hong Kong, Pokfulam, Hong Kong SAR, P.R.China}
	\email{mathzwn@hku.hk}

	\subjclass[2010]{Primary 17B37.}
	\keywords{Affine iquantum groups, Braid group action, Quantum symmetric pairs, Drinfeld presentation}

\begin{abstract}
	This is the last of three papers on Drinfeld presentations of quasi-split affine iquantum groups $\widetilde{{\mathbf U}}^\imath$, settling the remaining type ${\rm AIII}^{(\tau)}_{2r}$. This type distinguishes itself among all quasi-split affine types in having 3 relative root lengths. Various basic real and imaginary $v$-root vectors for $\widetilde{{\mathbf U}}^\imath$ are constructed, giving rise to affine rank one subalgebras of $\widetilde{{\mathbf U}}^\imath$ associated with simple roots in the finite relative root system. We establish the relations among these $v$-root vectors and show that they provide a Drinfeld presentation of $\widetilde{{\mathbf U}}^\imath$.
\end{abstract}
	
	\maketitle
    \setcounter{tocdepth}{1}
	\tableofcontents

%%%%%%%
%%%%%%%
\section{Introduction}

In this sequel to \cite{LWZ23, LWZ24}, we shall give the Drinfeld presentation for the affine iquantum group of type AIII$_{2r}^{(\tau)}$ with the following Satake diagram $(\I,\tau)$: 
\begin{center}\setlength{\unitlength}{0.7mm}
	\begin{equation}
		\label{pic:AIII2r}
	\begin{picture}(40,14)(0,20)
	\put(-70,20){${\rm AIII}_{2r}^{(\tau)}\, (r\ge 1)$}
				\put(0,10){$\circ$}
				\put(0,30){$\circ$}
				\put(50,10){$\circ$}
				\put(50,30){$\circ$}
				\put(72,10){$\circ$}
				\put(72,30){$\circ$}
				\put(73.5,13){\line(0,1){17}}
				\put(0,5.5){$2r$}
				\put(-2,34){${1}$}
				\put(47,6){\small $r+2$}
				\put(47,34){\small $r-1$}
				\put(69,6){\small $r+1$}
				\put(69,34){\small $r$}
				
				\put(3,11.5){\line(1,0){16}}
				\put(3,31.5){\line(1,0){16}}
				\put(23,10){$\cdots$}
				\put(23,30){$\cdots$}
				\put(33.5,11.5){\line(1,0){16}}
				\put(33.5,31.5){\line(1,0){16}}
				\put(53,11.5){\line(1,0){18.5}}
				\put(53,31.5){\line(1,0){18.5}}
				\put(-17.5,22.5){\line(2,1){17}}
				\put(-17.5,20.5){\line(2,-1){17}}
				
				\put(-21,20){$\circ$}
				\put(-21,15){\small $0$}
				
				\color{red}
				\put(0,20){\small $\tau$}
				\put(50,20){\small $\tau$}
				\put(78,20){\small $\tau$}

                \qbezier(0,13.5)(-4,21.5)(0,29.5)
				\put(-0.25,14){\vector(1,-2){0.5}}
				\put(-0.25,29){\vector(1,2){0.5}}
				
				\qbezier(50,13.5)(46,21.5)(50,29.5)
				\put(49.75,14){\vector(1,-2){0.5}}
				\put(49.75,29){\vector(1,2){0.5}}

				\qbezier(75,13.5)(79,21.5)(75,29.5)
				\put(75.25,14){\vector(-1,-2){0.5}}
				\put(75.25,29){\vector(-1,2){0.5}}
			\end{picture}
		\end{equation}     
		\vspace{0.5cm}
	\end{center}
This completes the constructions of Drinfeld presentations for all quasi-split affine iquantum groups. Drinfeld presentations of affine iquantum groups were obtained first in \cite{LW21b} for split ADE type and then in \cite{Z22} for split BCFG type. 
\vspace{2mm}

Drinfeld's new presentation \cite{Dr87} (established by Beck \cite{Be94} and Damiani \cite{Da15}) exhibits a quantized loop algebra structure for Drinfeld-Jimbo quantum groups of affine type. They have played an important role in representation theory of affine quantum groups, $q$-characters, quantum integrable models (such as XXZ-models and variants), and admit connections to cluster algebras and monoidal categorification. See the ICM lecture of Hernandez \cite{Her25} for a recent survey and references therein. 

We view quasi-split iquantum groups arising from quantum symmetric pairs (introduced by Letzter \cite{Let99} and generalized by Kolb \cite{Ko14}) as a generalization of Drinfeld-Jimbo quantum groups. In this paper we shall always work with universal iquantum groups following \cite{LW22}; this version of iquantum groups naturally affords  ibraid (= relative braid) group symmetries \cite{KP11, LW21a, LW22b, WZ23, Z23} which are essential for the constructions of Drinfeld presentations. 
Affine iquantum groups and their degenerate cousin known as twisted Yangians are closely related to quantum integrable models with boundary conditions (cf. Sklyanin \cite{Skl88}) and boundary affine Toda field theories (see Baseilhac-Belliard \cite{BB10}). 

Drinfeld type presentations of affine iquantum groups \cite{LW21b, Z22, LWZ23, LWZ24} (see also Baseilhac-Kolb \cite{BK20} for q-Onsager algebra) exhibit a quantized twisted loop algebra structure. The quasi-split affine Satake diagrams $(\I =\I_0\cup\{0\},\tau)$ relevant to Drinfeld presentations of affine iquantum groups are always assumed to satisfy that $\tau$ fixes the affine node $0$, and hence they are in bijection with finite  Satake diagrams $(\I_0, \tau)$. There are 3 rank one (quasi-split) Satake diagrams. Accordingly there are 3 distinct affine iquantum groups of rank one: besides the split rank one known as q-Onsager algebra, we have 
	\begin{enumerate}
		\item[(\texttt{A})] Drinfeld-Jimbo affine quantum $\sll_2$;
		\item[(\texttt{B})] affine iquantum group $\tUi(\widehat{\mathfrak{sl}}_3,\tau)$ of type AIII$_{2}^{(\tau)}$. 
        %, where $\tau$ is the involution on $\I =\{0,1,2\}$ of type $\widehat A_2$, such that	$\tau(0) =0$ and $1 \stackrel{\tau}{\leftrightarrow} 2.$
	\end{enumerate}
The Drinfeld presentations for (\texttt{A}) and (\texttt{B}) obtained by Damiani \cite{Da93} and the authors \cite{LWZ23} will play a fundamental role in the higher rank case studied in this paper. 

The affine iquantum groups of type AIII$_{2r}^{(\tau)}$, with Satake diagram \eqref{pic:AIII2r}  and denoted by $\tUi$, are the only family of affine iquantum groups whose Drinfeld presentation remains open (for $r\ge 2$) and requires the affine rank one in (\texttt{B}) as a new building block; the goal of this paper is to settle this last case. This $\tUi$ distinguishes itself among all affine types in having root vectors of 3 distinct root lengths. Moreover, $\tUi$ admits an action of the ibraid group $\text{Br}(W_\tau)$ of type $A_{2r}^{(2)}$ (cf. \cite{LWZ23, WZ23, Z23}) with its relative root system given below:
\begin{table}[h]  
	\begin{center}
		\centering
	\begin{tabular}{|m{4cm}<{\centering}%|m{4cm}<{\centering}
    |m{10cm}<{\centering}|}
	\hline
	Satake types & Relative affine root systems \\
	\hline
	\begin{tikzpicture}[baseline=0, scale=2.5]
    \node at (0, -0.02) {${\rm AIII}_{2}^{(\tau)}$ };
	\end{tikzpicture}  
    & \begin{tikzpicture}[baseline=0, scale=2.5]
    \node at (-1.0, -0.02) {${\rm A}_2^{(2)}$ };
	\node at (-0.5,-0.02) {\large$\circ$};
	\node at (0,-0.02) {\large$\circ$};
					
	\draw[-](-0.45, 0.025) to (-0.07, 0.025);
	\draw[-](-0.45, 0) to (-0.06, 0);
	\draw[-](-0.45, -0.025) to (-0.06, -0.025);
	\draw[-](-0.45, -0.05) to (-0.07, -0.05);
	\node at (0, -.12) {\small $1$};
	\draw[-](-0.08,-0.07) to (-0.05,-0.0125) to (-0.08,0.045);
	\node at (-0.5,-.12) {\small $0$};
	\end{tikzpicture} 
		\\
	\hline
	\begin{tikzpicture}[baseline=0, scale=2.5]
    \node at (0, 0.0) {${\rm AIII}_{2r}^{(\tau)}$ ($r\geq 2$) };
	\end{tikzpicture}  	 
    & 	\begin{tikzpicture}[baseline=0, scale=2.5]
    \node at (-1.0, 0.0) {${\rm A}_{2r}^{(2)}$};
	\node at (-0.5,0) {$\circ$};
	\node at (0,0) {$\circ$};
	\node at (0.5,0) {$\circ$};
	\node at (1.5,0) {$\circ$};
	\node at (2.0,0) {$\circ$};
	\node at (2.5,0) {$\circ$};
	\draw[-] (0.05,0) to (0.45,0);
	\draw[-] (1.55,0) to (1.95,0);
	\draw[dashed] (0.55,0) to (1.45,0);
	\draw[-implies, double equal sign distance]  (2.05, 0) to (2.45, 0);
	\draw[-implies, double equal sign distance]  (-0.45, 0) to (-0.05, 0);
	\node at (2, -.2) {\small $r-1$};
	\node at (2.5,-.2) {\small $r$};
	\node at (0, -.2) {\small $1$};
	\node at (0.5,-.2) {\small $2$};
	\node at (-0.5,-.2) {\small $0$};
\end{tikzpicture}
\\\hline
\end{tabular} 
\end{center}
	\vspace{0.5cm}
\caption{Relative affine root systems of Satake type ${\rm AIII}_{2r}^{(\tau)}$}
	\label{tab1}
\end{table}
%and thus $\tUi$ admits an action of the ibraid group $\text{Br}(W_\tau)$ of type $A_{2r}^{(2)}$; cf. \cite{LWZ23, WZ23, Z23}. 

There are two types of affine rank one subalgebras (i.e., type (\texttt{A}) and (\texttt{B})) appearing in the construction of Drinfeld presentation of $\tUi$ of type AIII$_{2r}^{(\tau)}$. The construction of affine rank one subalgebras of type (\texttt{A}) is standard  (cf. \cite{Be94}); see Proposition~ \ref{prop:rank1isoQG}. On the other hand, the new construction of affine rank one subalgebras of type (\texttt{B}) is different: as $B_i$ for $i\in \I$ have various root lengths, the construction requires a nonstandard generator $\TT_{\bth_r}^{-1} (B_0)$ given in \eqref{eq:Theta_iB0}; see Proposition~\ref{prop:rank1iso-sl3}. 

Under the identification of affine rank one subalgebras with algebras in (\texttt{A}) and (\texttt{B}), the ibraid group actions are shown to match with each other; see Proposition \ref{prop:T1Ti} (generalizing \cite{Be94}). 
With the construction of the affine rank one subalgebras in place, the constructions of $v$-root vectors of $\tUi$ are transported from those for $\tU(\widehat\sll_2)$ \cite{Da93, LWZ24} and those for $\tUiii$ \cite{LWZ23}. 

The ibraid group symmetries feature notably in Drinfeld presentation constructions, in different ways for each family of affine iquanum groups. We work out explicit reduced expressions for the fundamental iweights $\varpi_i$ viewed as elements in the ibraid group $\text{Br}(W_\tau)$; this is consistent with the recursive formulas for counterparts of $\varpi_i$, for $i \neq r$, (obtained by Lusztig \cite{Lus83}) in the setting of affine Hecke algebra of type $\widetilde{C}_r$. The new Drinfeld-type generators, which are distinguished $v$-root vectors of $\tUi$, are defined via ibraid group action. Due to 3 distinct finite rank one subalgebras in $\tUi$, there are many distinct rank two relations to verify in order to show that the algebra with new Drinfeld generators and relations is indeed isomorphic to $\tUi$; see Theorem \ref{thm:Dr}. All these require serious computations involving ibraid group symmetries $\TT_i$ and $\TT_{\varpi_i}$, which occupy Section \ref{sec:relation}. 
\vspace{2mm}

Drinfeld presentation for affine iquantum groups $\tUi$ of type AIII$_{2r+1}^{(\tau)}$ was essential in the geometric realization of $\tUi$ in terms of equivariant K-theory of Steinberg varieties; see Su-Wang \cite{SW24}. The Drinfeld presentation of affine iquantum groups of type AIII$_{2r}^{(\tau)}$ obtained in this paper is expected to have an analogous geometric realization; this will be the subject of a forthcoming paper by L.~Luo, C.~Su and Z.~Xu. Quasi-split affine iquantum groups in Drinfeld presentations naturally lead to the notion of shifted affine iquantum groups, which is expected to be related to geometry of affine Grassmannian islices and iCoulomb branches. In particular, the Drinfeld presentation of type AIII$_{2r}^{(\tau)}$ developed in this paper is expected to be directly applicable to the K-theoretic version of the Coulomb branches associated to Satake diagrams of finite type AIII$_{2r}$ in \cite{SSX25}, and it may have further application to 3D mirror symmetry. 

Drinfeld presentations of affine iquantum groups are expected to lead to $q$-characters just as for the usual affine quantum groups. Some progress has recently been made; cf. \cite{LP25}. 

Drinfeld presentations of twisted affine quantum groups were obtained by Damiani \cite{Da12, Da15}. It will be very interesting yet highly nontrivial to construct Drinfeld presentations for split twisted affine iquantum groups. 
\vspace{2mm}

The paper is organized as follows. 
In Section \ref{sec:iQG}, we review the basics of affine quantum groups and their Drinfeld presentations. We also review affine iquantum groups $\tUi$ of type AIII$_{2r}^{(\tau)}$ and study ibraid group symmetries for $\tUi$.

In Section~\ref{sec:subalgebras}, we construct the affine rank one subalgebras of $\tUi$ and show that they are isomorphic to quantum affine $\mathfrak{sl}_2$ or the affine iquantum group $\tUi(\widehat{\mathfrak{sl}}_3,\tau)$, with compatible braid and ibraid actions. 
	
In Section \ref{sec:Dpresentation}, we construct $v$-root vectors via ibraid group action, which serve as generators for the desired Drinfeld presentaton of $\tUi$. Then we formulate the Drinfeld presentation for $\tUi$, one in the usual commutator form (see Theorem \ref{thm:Dr}) and another in generating function form (see Theorem \ref{thm:DrGF}). The proof that this is indeed a presentation for $\tUi$ is partially given in Section~ \ref{sec:Dpresentation} and completed in Section \ref{sec:relation}, where all the relations for the Drinfeld generators in $\tUi$ are verified. 

	\vspace{2mm}
\noindent {\bf Competing Interests and Funding.}	

The authors have no competing interests to declare that are relevant to the content of this article.
The authors declare that the data supporting the findings of this study are available within the paper.

ML is partially supported by the National Natural Science Foundation of China (No. 12171333, 12261131498). WW is partially supported by the NSF grant DMS-2401351, and he thanks the National University of Singapore (Department of Mathematics and IMS) for providing an excellent research environment and support at the final stage of this project. WZ is partially supported by the New Cornerstone Science Foundation through the New Cornerstone Investigator Program awarded to Xuhua He.

	%%%%%%%%%%
	%%%%%%%%%%
\section{Affne iqauntum groups and ibraid group symmetries}
	\label{sec:iQG}
	
In this section, we review and set up notations for affine iquantum groups and ibraid group symmetries. We review the Drinfeld's presentation for affine quantum groups for type A and for affine rank one iquantum group of type AIII$_{2}^{(\tau)}$. We develop explicitly some properties of the ibraid group symmetries on $\tUi$ of type AIII$_{2r}^{(\tau)}$.

%%%%%%%%%%%%	
\subsection{Affine Weyl and braid groups}

Let $\II=\{1,2,\dots,2r\}$, and $\I=\{0,1,\dots,2r\}$, where $r\geq1$. Let $(c_{ij})_{i,j\in \II}$ be the Cartan matrix of the simple Lie algebra $\fg$ of type $A_{2r}$. Let $\cR_0$ be the set of roots for $\fg$, and fix a set $\cR^+_0$ of positive roots  with simple roots $\alpha_i$ $(i\in \II)$. Denote by $\theta=\sum_{i\in\II}\alpha_i$ the highest root of $\fg$.
	
Let $\widehat{\fg}$ be the (untwisted) affine Lie algebra with affine Cartan matrix denoted by $(c_{ij})_{i,j\in\I}$. Let $\alpha_i$ $(i\in \I)$ be the simple roots of $\widehat{\fg}$, and $\alpha_0=\de -\theta$, where $\de$ denotes the basic imaginary root. The root system $\cR$  for $\widehat{\fg}$ and its positive system $\cR^+$ are given by
\begin{align}
\begin{split}
	\cR &=\{\pm (\beta + k \delta) \mid \beta \in \cR_0^+, k  \in \Z\}  \cup \{m \delta \mid m \in \Z\backslash \{0\} \},
	\\
	\cR^+ &= \{k \delta +\beta \mid \beta \in \cR_0^+, k  \ge 0\}
	\cup  \{k \delta -\beta \mid \beta \in \cR_0^+, k > 0\}
	\cup \{m \delta \mid m \ge 1\}.
\end{split}
		\label{eq:roots}
\end{align}
For $\gamma =\sum_{i\in \I} n_i \alpha_i \in \N \I$, the height $\text{ht} (\gamma)$ is defined as $\text{ht} (\gamma) =\sum_{i\in \I} n_i$.
	
Let $P$ and $Q$ denote the weight and root lattices of the simple Lie algebra $\fg$, respectively. Let $\omega_i \in P$ $(i\in \II)$ be the fundamental weights of $\fg$. Note $\alpha_i =\sum_{j\in \II} c_{ij}\omega_j$. We define a bilinear pairing $\langle \cdot, \cdot \rangle : P\times Q \rightarrow \Z$ such that $\langle \omega_i, \alpha_j \rangle =\delta_{i,j}$, for $i,j \in \II$, and thus $\langle \alpha_i, \alpha_j \rangle = c_{ij}$.

The Weyl group $W_0$ of $\fg$ is generated by the simple reflections $s_i$, for $i \in \II$. They act on $P$ by $s_i(x)=x-\langle x, \alpha_i \rangle\alpha_i$ for $x\in P$. The extended affine Weyl group $\widetilde{W}:=W_0 \ltimes P$ contains the affine Weyl group $W:=W_0 \ltimes Q =\langle s_i \mid i \in \I \rangle$ as a subgroup; we denote
\[
t_\omega =(1, \omega) \in \widetilde W, \quad \text{ for } \omega \in P,
\]
so that $t_{\omega}t_{\omega'} =t_{\omega+\omega'}$. In particular, for $\omega\in P$, $j\in\II$, $t_\omega(\alpha_j)=\alpha_j-\langle \omega,\alpha_j\rangle \de$. We identify $P/Q$ with a finite group $\Omega$ of Dynkin diagram automorphisms, and so $\widetilde{W} =\Omega . W$. There is a length function $\ell(\cdot)$ on $\widetilde{W}$ such that $\ell(s_i)=1$, for $i\in \I$, and each element in $\Omega$ has length 0.
	
For $i\in \II$, we have
\begin{equation}  \label{eq:tomega}
	\ell(t_{\omega_i}) =\ell(\omega_i')+1, \qquad
	\text{ where } \quad \omega_i':= t_{\omega_i} s_i \in W.
\end{equation}
	
\subsection{Quantum groups and iquantum groups}

(This subsection is valid for quantum groups and iquantum groups of Kac-Moody type, though we do not need such a generality.)

Let $\tU =\tU(\widehat{\fg})$ denote the Drinfeld-Jimbo affine quantum group, a $\Q(v)$-algebra generated by $E_i, F_i, \tK_i,\tK_i'$, for $i\in \I$ subject to the following relations:
\begin{align}
	[E_i,F_j]= \delta_{ij} \frac{\tK_i-\tK_i'}{v-v^{-1}},  &\qquad [\tK_i,\tK_j]=[\tK_i,\tK_j']  =[\tK_i',\tK_j']=0,
		\label{eq:KK}
		\\
	\tK_i E_j=v^{c_{ij}} E_j \tK_i, & \qquad \tK_i F_j=v^{-c_{ij}} F_j \tK_i,
		\label{eq:EK}
		\\
	\tK_i' E_j=v^{-c_{ij}} E_j \tK_i', & \qquad \tK_i' F_j=v^{c_{ij}} F_j \tK_i',
		\label{eq:K2}
\end{align}
and the quantum Serre relations (which we skip). Here $\tK_i\tK_i'$ are central in $\tU$, for all $i\in\I$. A central reduction from $\tU$ leads to the Drinfeld-Jimbo quantum group for $\widehat{\fg}$.

For the affine Cartan matrix $C=(c_{ij})_{\I\times\I}$, let $\Aut(C)$ be the group of all permutations $\tau$ of the set $\I$ such that $c_{ij}=c_{\tau i,\tau j}$. 	Let $\tau$ be an involution in $\Aut(C)$, i.e., $\tau^2={\rm Id}$. Following  \cite{LW22}, we define the (universal) iquantum group $\widetilde{\U}^\imath$ to be the $\Q(v)$-subalgebra of $\tU$ generated by
\begin{equation}
		\label{eq:Bi}
B_i= F_i +  E_{\tau i} \tK_i',
\qquad \tk_i = \tK_i \tK_{\tau i}', \quad \forall i \in \I.
\end{equation}
Let $\tU^{\imath 0}$ be the $\Q(v)$-subalgebra of $\tUi$ generated by $\tk_i$, for $i\in \I$. The elements $\tk_i$ (for $i= \tau i$) and $\tk_i \tk_{\tau i}$  (for $i\neq \tau i$) are central in $\tUi$. Moreover, $\U^\imath$ is a right coideal subalgebra of $\U$ in the sense that $\Delta: \tUi \rightarrow \tUi\otimes \tU$; and $(\tU,\tUi)$ forms a quantum symmetric pair. The iquantum groups \`a la Letzter (cf. \cite{Let99, Ko14}) are obtained from $\tUi$ by central reductions, and will not be used in this paper. 

%Let $\bvs=(\vs_i)\in  (\Q(v)^\times)^{\I}$ be such that $\vs_i=\vs_{\tau i}$ for all $i$. Let $\Ui:=\Ui_{\bvs}$ be the $\Q(v)$-subalgebra of $\U$ generated by
%\[ B_i= F_i+\vs_i E_{\tau i}K_i^{-1},	\quad
%k_j= K_jK_{\tau j}^{-1},
%\qquad  \forall i \in \I, j \in \I\backslash\I_\tau. \]

%\begin{proposition} [{\cite[Proposition 6.2]{LW22}}]
%The algebra $\Ui$ is isomorphic to the quotient of $\tUi$ by the ideal generated by
%\begin{align*}
%	\tk_i - \vs_i \; (\text{for } i =\tau i),	\qquad  \tk_i \tk_{\tau i} - \vs_i \vs_{\tau i}  \;(\text{for } i \neq \tau i).
%\end{align*}
%The isomorphism is given by sending $B_i \mapsto B_i, k_j \mapsto \vs_{\tau j}^{-1} \tk_j, k_j^{-1} \mapsto \vs_{j}^{-1} \tk_{\tau j}, \forall i\in \I, j\in \I\backslash\I_\tau$. Moreover, the algebra $\widetilde{\U}^\imath$ is a right coideal subalgebra of $\widetilde{\U}$.
%\end{proposition}

We shall refer to $\tUi$ as {\em (quasi-split) $\mathrm{i}${}quantum groups}; they are called {\em split} if $\tau =\mathrm{Id}$.
For any $i\in\I$, we set
\begin{align*}
	\K_i:=-v^{2}\tk_i, \text{ if }\tau i=i;
		\qquad
	\K_j=\tk_j, \text{ otherwise.}
\end{align*}
For any $\alpha=\sum_{i\in\I} a_i\alpha_i\in\Z\I$, we set
\begin{align*}
       % \label{eq:bbKi}
		\K_\alpha:=\prod_{i\in\I}\K_i^{a_i}.
\end{align*}

\subsection{Relative affine Weyl/braid groups}
	\label{sub:Weyl}
	
The relative root systems and relative Weyl groups (of finite type) are well known; we refer to the exposition in \cite[\S2.3]{DK19} and the references therein. In this subsection we shall adapt this to set up an affine version of relative root systems and relative Weyl groups which are needed in this paper; see \cite{Lus03, LWZ24}.

Given a quasi-split Satake diagram $(\I,\tau)$ of Kac-Moody type, we fix
\begin{align}   \label{eq:ci}
	\I_\tau &= \{ \text{the chosen representatives of $\tau$-orbits in $\I$} \}.
\end{align} 
We denote by $\bs_{i}$ the following elements of order 2 in the Weyl group $W =\langle s_i\mid i\in\I \rangle$, i.e.,
	\begin{align}
		\label{def:simple reflection}
		\bs_i= \left\{
		\begin{array}{ll}
			s_{i}, & \text{ if } c_{i,\tau i}= 2;
			\\
			s_is_{\tau i}, & \text{ if }  c_{i,\tau i}=0;
			\\
			s_is_{\tau i}s_i, &\text{ if }  c_{i,\tau i}=-1.
		\end{array}
		\right.
	\end{align}
Note that $\bs_i=\bs_{\tau i}$ for any $i\in\I$ and hence we can parametrize the $\bs_i$ by $i\in\I_\tau$. Consider the following subgroup $W_\tau$ of the Weyl group $W$:
	\begin{align}
		\label{eq:Wtau}
		W_{\tau} =\{w\in W\mid \tau w =w \tau\}.
	\end{align}
The group $W_{\tau}$ is a Coxeter group with $\bs_i$ ($i\in \I_\tau$) as its generating set (cf. \cite{Lus03}; compare with \cite[\S2.3]{DK19}).

Now we specialize to the affine type with $\I = \II \cup \{0\}$. We shall always assume that the involution $\tau$ fixes the affine node $0$. Recall from \eqref{eq:roots} that $\cR$ is an affine root system. Define the element $\balpha\in \Q\cR$ by
	\begin{equation}
		\balpha:=\frac{\alpha+\tau \alpha}{2}, \quad(\alpha\in\cR).
	\end{equation}
Note that $\balpha_i=\balpha_{\tau i}$ for $i\in\I$ and $\balpha_0=\alpha_0$. 
	Let $\ov{\cR}:=\{\balpha\mid\alpha\in \cR\}$ be the relative affine root system associated to the quasi-split affine symmetric pair $(\widehat\fg, \widehat\fg^{\omega\tau})$. Then $\ov{\cR}$ admits a simple system $\{\balpha_i\mid i\in\I_\tau\}$ and the corresponding positive system $\ov{\cR}^+=\{\balpha\mid\alpha\in \cR^+\}$. Let $(\ov{c}_{ij})_{i,j\in \I_\tau}$ be the Cartan matrix of the relative root system, where $\ov{c}_{ij} =\frac{2(\balpha_i,\balpha_j)}{(\balpha_i,\balpha_i)}$. 

Then $W_\tau$ is the Weyl group associated to the root system $\ov{\cR}$. 
We shall refer to $W_{\tau}$ as the {\em relative affine Weyl group} associated with the affine symmetric pair $(\widehat\fg, \widehat\fg^{\omega\tau})$. Note that $W_{\tau}^0 =\{w\in W^0\mid \tau w =w \tau\}$ is the relative finite Weyl group associated with the symmetric pair $(\fg, \fg^{\omega\tau})$ with a generating set $\{\bs_i \mid i\in \I_{0,\tau}\}$. Let $\ell^\circ(\cdot)$ be the \emph{length function} of $W_\tau$. 

Recalling $\I_\tau$ from \eqref{eq:ci}, we set
	\begin{align*}
		\I_{0,\tau} = \I_{\tau} \cap \I_0.
	\end{align*}	
Define $\bome_i$, for $i\in \I_{0,\tau}$, to be the following elements in the extended affine Weyl group $\widetilde{W}$:
	\begin{align}\label{def:bome}
		\bome_{i}= 
			t_{\omega_i}t_{\omega_{\tau i}}.
	\end{align}
Then $\bome_i\in W_\tau$.
	\begin{lemma}
		For $i\in\II$, any reduced presentation of $\bome_{i}$ ends with $\bs_i$.
	\end{lemma}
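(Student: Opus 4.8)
The plan is to prove the stronger statement that the \emph{right descent set}
\[
D_R(\bome_i) := \{\bs_j \mid j\in\I_\tau,\ \ell^\circ(\bome_i \bs_j) < \ell^\circ(\bome_i)\}
\]
equals the singleton $\{\bs_i\}$. Since a reduced expression of an element $w\in W_\tau$ can terminate in a generator $\bs_j$ if and only if $\bs_j\in D_R(w)$, and since $\bome_i\neq 1$, the equality $D_R(\bome_i)=\{\bs_i\}$ is exactly the assertion that every reduced presentation of $\bome_i$ ends with $\bs_i$. Because $W_\tau$ is the Weyl group of the relative affine root system $\ov{\cR}$ with simple roots $\{\balpha_j\mid j\in\I_\tau\}$, I will invoke the standard root-sign criterion for descents: $\ell^\circ(w\bs_j)<\ell^\circ(w)$ if and only if $w(\balpha_j)\in\ov{\cR}^-$. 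Thus the lemma reduces to determining the signs of the roots $\bome_i(\balpha_j)$ for all $j\in\I_\tau$.

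The central computation is straightforward. Writing $\mu=\omega_i+\omega_{\tau i}$ so that $\bome_i=t_\mu$, and using that $\bome_i$ commutes with $\tau$, I have $\bome_i(\balpha_j)=\tfrac12\big(\bome_i(\alpha_j)+\bome_i(\alpha_{\tau j})\big)$ for $j\in\II$. Applying $t_\mu(\alpha_k)=\alpha_k-\langle\mu,\alpha_k\rangle\de$ together with $\langle\omega_i,\alpha_k\rangle=\delta_{ik}$, collecting the Kronecker deltas, and using the $\tau$-invariance of the pairing, I obtain
\[
\bome_i(\balpha_j)=\balpha_j-(\delta_{ij}+\delta_{\tau i,j})\,\de, \qquad j\in\I_{0,\tau}.
\]
For the affine node I treat $\balpha_0=\alpha_0=\de-\theta$ separately: from $t_\mu(\de)=\de$ and $\langle\mu,\theta\rangle=2$ (using $\theta=\sum_{k}\alpha_k$ in type $A_{2r}$), I get $\bome_i(\balpha_0)=\alpha_0+2\de=3\de-\theta$.

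It remains to read off the signs. If the $\tau$-orbit of $j$ is disjoint from $\{i,\tau i\}$, then $\bome_i(\balpha_j)=\balpha_j\in\ov{\cR}^+$, and for the affine node $\bome_i(\balpha_0)=3\de-\theta\in\ov{\cR}^+$; hence none of these $\bs_j$ lies in $D_R(\bome_i)$. For the orbit of $i$ (which contributes the single generator $\bs_i=\bs_{\tau i}$) there are two cases: if $\tau i=i$ then $\bome_i(\balpha_i)=\alpha_i-2\de$, while if $\tau i\neq i$ then $\bome_i(\balpha_i)=\balpha_i-\de$. In either case this equals $\overline{\alpha_i-m\de}$ with $\alpha_i-m\de\in\cR^-$ for $m\in\{1,2\}$, so it lies in $\ov{\cR}^-$ and $\bs_i$ is a right descent. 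Therefore $D_R(\bome_i)=\{\bs_i\}$, which is the claim.

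The computation itself is routine; the points demanding care are the legitimacy of the root-sign descent criterion in this relative affine setting—namely that $(W_\tau,\{\bs_j\})$ is a genuine Coxeter system acting on $\ov{\cR}$ through the simple roots $\balpha_j$ in the usual way—and the verification that the perturbed weights $\balpha_i-\de$, $\alpha_i-2\de$, and $3\de-\theta$ are honest elements of $\ov{\cR}$ of the asserted sign, which forces the separate treatment of the affine node $0$ and of the two cases $\tau i=i$ versus $\tau i\neq i$.
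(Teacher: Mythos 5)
Your proof is correct and follows essentially the same route as the paper's one-line argument, namely the descent criterion $\ell^\circ(\bome_i\bs_j)<\ell^\circ(\bome_i)\iff \bome_i(\balpha_j)\in\ov{\cR}^-$ combined with the observation that $\bome_i(\balpha_j)<0$ exactly when $\bs_j=\bs_i$. You merely spell out the computation of $\bome_i(\balpha_j)=\balpha_j-(\delta_{ij}+\delta_{\tau i,j})\de$ (and the affine-node case) that the paper leaves implicit.
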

	
	\begin{proof}
		If $\ell(\bome_i\bs_j)<\ell(\bome_i)$ then $\bome_i(\balpha_j)<0$, which happens exactly when $i=j$.
	\end{proof}

\subsection{iBraid group symmetries}

The braid group associated to the relative affine Weyl group for $(\widehat\fg, \widehat\fg^{\omega\tau})$, where $\omega$ denotes the Chevalley involution, is of the form
	\begin{equation}
		\label{eq:braidCox}
		\brW =\langle t_i \mid i\in \I_\tau \rangle, 
	\end{equation}
where $t_i$ satisfy the same braid relations as for $\bs_i$ in $W_{\tau}$. (The following ibraid group symmetries are actually valid for quasi-split iquantum groups of Kac-Moody type.) 
	
\begin{theorem} [\cite{Z23}; \text{also cf. \cite{KP11, LW21a, LW22b, WZ23, LWZ23}}]
		\label{thm:Ti}
	(1) For $i\in \I$ such that $\tau i=i$, there exists an automorphism $\TT_i$ of the $\Q(v)$-algebra $\tUi$ such that
		$\TT_i(\K_\mu) =\K_{\bs_i\mu}$, and
		\[
		\TT_i(B_j)= \begin{cases}
			\K_i^{-1} B_i,  &\text{ if }j=i,\\
			B_j,  &\text{ if } c_{ij}=0, \\
			B_jB_i-vB_iB_j,  & \text{ if }c_{ij}=-1, \\
			{[}2]^{-1} \big(B_jB_i^{2} -v[2] B_i B_jB_i +v^2 B_i^{2} B_j \big) + B_j\K_i,  & \text{ if }c_{ij}=-2,
		\end{cases}
		\]
		for $\mu\in \Z\I$ and $j\in \I$.
		
	(2) For $i\in \I$ such that $c_{i,\tau i}=0$, there exists an automorphism $\TT_i$ of the $\Q(v)$-algebra $\tUi$ such that, for any $j\in\I$,
		$\TT_i(\K_j) =(-v)^{-c_{ij}-c_{\tau i,j}}\K_{j}\K_i^{-c_{ij}} \K_{\tau i}^{-c_{\tau i,j}}$, and
		\[
		\TT_i(B_j)= \begin{cases}
			B_jB_i -vB_iB_j,  & \text{ if }c_{ij}=-1 \text{ and } c_{\tau i,j}=0,
			\\
			B_jB_{\tau i}-v B_{\tau i}B_j,  & \text{ if } c_{ij}=0 \text{ and }c_{\tau i,j}=-1 ,
			\\
			[[B_j,B_i]_v,B_{\tau i}]_v - v B_j\bK_{i},  & \text{ if } c_{ij}=-1 \text{ and }c_{\tau i,j}=-1 ,
			\\
			-\bK_{i}^{-1} B_{\tau i},  & \text{ if }j=i,
			\\
			-\bK_{\tau i}^{-1} B_i,  &\text{ if }j=\tau i,
			\\
			B_j, & \text{ otherwise.}
		\end{cases}
		\]
%		for $j\in \I$.
		
	(3) For $i\in \I$ such that $c_{i,\tau i}=-1$, there exists an automorphism $\TT_i$ of the $\Q(v)$-algebra $\tUi$ such that, for any $j\in\I$,
		\[
		\TT_i(\K_j)=v^{-(c_{ij}+c_{\tau i,j})}\K_j(\K_i\K_{\tau i})^{-c_{ij}-c_{\tau i,j}},
		\]
		and
		\[
		\TT_i(B_j)= \begin{cases}
			\big[ [B_j,B_i]_v,B_{\tau i} \big]_v-\bK_i B_{j},  & \text{ if }c_{ij}=-1 \text{ and } c_{\tau i,j}=0,\\
			\big[ [B_j,B_{\tau i}]_v,B_{i} \big]_v- \bK_{\tau i} B_{j},  & \text{ if } c_{ij}=0 \text{ and }c_{\tau i,j}=-1 ,\\
			v\Big[\big[ [B_j,B_i]_v,B_{\tau i}\big] ,[B_{\tau i},B_i]_v\Big]\\
			\qquad-\big[B_j, [B_{\tau i},B_i]_{v^3} \big]\bK_{i}+ vB_j\bK_i\bK_{\tau i},  & \text{ if } c_{ij}=-1 \text{ and }c_{\tau i,j}=-1 ,\\
			-v^{-2}B_{i}\bK_{\tau i}^{-1},  & \text{ if }j=i,\\
			-v^{-2} B_{\tau i}\bK_{i}^{-1},  &\text{ if }j=\tau i,\\
			B_j, & \text{ otherwise.}
		\end{cases}
		\]
		
Moreover, there exists a homomorphism $\brW \rightarrow \Aut( \tUi)$, $t_i\mapsto \TT_i$, for all $i\in \I_\tau$.
	\end{theorem}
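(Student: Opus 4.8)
The plan is to treat the two assertions separately: first, the existence of each individual automorphism $\TT_i$ (cases (1)--(3), distinguished by whether $c_{i,\tau i}$ equals $2$, $0$, or $-1$), and second, the braid relations among the $\TT_i$, which is exactly the content of the closing homomorphism statement $\brW \to \Aut(\tUi)$. The organizing principle throughout is \emph{locality}: inspecting the formulas, one sees $\TT_i(B_j)=B_j$ whenever $j\notin\{i,\tau i\}$ and $c_{ij}=c_{\tau i,j}=0$, so every computation only ever involves the $B_k$ with $k$ in the $\tau$-orbit of $i$ together with its neighbors.

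To build a single $\TT_i$, I would first check that the prescribed action on the commutative part $\tU^{\imath0}=\langle \tk_k \mid k\in\I\rangle$ is a well-defined algebra automorphism: since $\bs_i\in W$ acts on $\Z\I$ by a lattice automorphism, the assignment $\K_\mu\mapsto\K_{\bs_i\mu}$ (decorated by the scalar prefactors recorded in the formulas) is consistent and invertible. It then remains to verify that the assignment respects the defining $\imath$Serre-type relations of $\tUi$ together with the $\tk_k$--$B_j$ commutation relations. By locality each such relation lives in a sub-iquantum algebra of relative rank $\le 2$ attached to a pair $(\balpha_i,\balpha_j)$, so this is a finite (if tedious) check in each of the three cases. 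Invertibility I would secure by writing down an explicit candidate inverse $\TT_i^{-1}$ of the same shape (obtained by the substitution $v\mapsto v^{-1}$ and interchanging the two summands of $B_j=F_j+E_{\tau j}\tK_j'$) and confirming $\TT_i\TT_i^{-1}=\mathrm{id}$ on the generators $B_j,\tk_k$; equivalently, one realizes $\TT_i$ through Lusztig's symmetries $T_{\bs_i}$ on the ambient $\tU$ up to a diagonal rescaling, so that bijectivity is inherited from that of $T_{\bs_i}$.

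For the braid relations, fix $i\neq j$ in $\I_\tau$ and let $m=m_{ij}$ be the order of $\bs_i\bs_j$ in $W_\tau$; when $m=\infty$ there is nothing to prove, so assume $m<\infty$. I must show $\underbrace{\TT_i\TT_j\TT_i\cdots}_{m}=\underbrace{\TT_j\TT_i\TT_j\cdots}_{m}$. Both sides are automorphisms of $\tUi$, so it suffices to compare them on the generators. On $\tU^{\imath0}$ the two sides induce the lattice automorphisms $\underbrace{\bs_i\bs_j\cdots}_{m}=\underbrace{\bs_j\bs_i\cdots}_{m}$ of $\Z\I$ (with matching scalar twists), which is the braid relation holding in $W_\tau$ by definition of $\bs_i$. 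On the $B_k$, locality confines the comparison to the finite-type iquantum group of relative rank $2$ generated by the $B_k$ with $k$ in the $\tau$-closure of $\{i,j\}$ and its neighbors; organizing by the relative Cartan datum $\ov{c}_{ij}\,\ov{c}_{ji}$ (giving $m\in\{2,3,4,6\}$ and relative types $A_1\times A_1$, $A_2$, $B_2$, $G_2$) reduces everything to a finite list of explicit rank-$2$ identities.

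The main obstacle is precisely this last rank-$2$ verification in the cases where $\bs_i$ and $\bs_j$ carry different relative root lengths (the $B_2$-type and, in Kac-Moody generality, $G_2$-type identities), since there the $\imath$Serre manipulations are long and lack the symmetry that shortens the equal-length cases; moreover one must simultaneously track the $\bK_i$-factors produced by the twisted Cartan action in cases (2) and (3). I would tame this by transporting as much as possible to the ambient $\tU$: establishing that each $\TT_i$ intertwines the Lusztig operator $T_{\bs_i}$ on $\tU$ reduces the braid identities among the $\TT_i$ to the already-known braid relations among the $T_k$ in $W$, leaving the genuinely new work concentrated in the well-definedness of each individual $\TT_i$ on the coideal subalgebra $\tUi$.
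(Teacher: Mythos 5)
First, a point of comparison: the paper offers no proof of this theorem --- it is imported from \cite{Z23} (with antecedents in \cite{KP11, LW21a, LW22b, WZ23}) --- so your proposal can only be measured against the construction in that cited literature, not against an argument in the present text.

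The genuine gap sits exactly at the step you flag as the main obstacle. You propose to secure invertibility and the braid relations by ``realizing $\TT_i$ through Lusztig's symmetries $T_{\bs_i}$ on the ambient $\tU$ up to a diagonal rescaling'' and then inheriting the braid identities from those of the $T_k$. This does not work: Lusztig's $T_{\bs_i}$ does not preserve the coideal subalgebra $\tUi$, and $\TT_i$ is not its restriction up to rescaling. Already in case (1) one has $\TT_i(B_i)=\K_i^{-1}B_i$, whereas $T_i(F_i+E_i\tK_i')$ is not proportional to $B_i$; in cases (2)--(3) the correction terms such as $-vB_j\K_i$ and $+vB_j\K_i\K_{\tau i}$ have no counterpart in $T_{\bs_i}(F_j)$. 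The actual relationship --- and this is the central point of \cite{WZ23, Z23} --- is that $\TT_i$ agrees with a (rescaled) Lusztig symmetry only after conjugation by a rank-one component of the quasi-$K$-matrix, and the braid relations are deduced not from Lusztig's braid relations directly but from an intrinsic uniqueness characterization of the $\TT_i$ acting on integrable modules, combined with the factorization of quasi-$K$-matrices in the sense of \cite{DK19}. Without that ingredient, your argument for the homomorphism $\brW\rightarrow\Aut(\tUi)$ collapses to the fallback of verifying the relative rank-two braid identities (including the length-$4$ relation when $\ov{c}_{ij}\,\ov{c}_{ji}=2$, which is the relevant case for the diagram \eqref{pic:AIII2r}) by brute force from the displayed formulas; this is not carried out anywhere in the literature in this generality and is precisely what the intrinsic approach was designed to avoid. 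The first half of your plan --- checking relation by relation that each assignment respects the Serre presentation of \cite{CLW21}, localized to relative rank $\le 2$ --- is sound and is essentially how the individual symmetries were first produced in \cite{KP11, LW21a}. But note also that the inverse is not obtained by ``$v\mapsto v^{-1}$ and swapping summands'': the paper records $\TT_i^{-1}=\sigma_\imath\TT_i\sigma_\imath$ for the anti-involution $\sigma_\imath$ of \eqref{eq:sigma}, which is a genuinely different (and not self-evident) statement.
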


\begin{remark}
Let $\Phi$ be the rescaling automorphism on $\tUi$ such that
	\begin{align*}
		B_i\mapsto B_i,\quad B_{\tau i}\mapsto -v^{-1} B_{\tau i}, \quad\K_i\mapsto -v^{-1}\K_i,\quad\K_{\tau i} \mapsto -v^{-1}\K_{\tau i}
	\end{align*}
for $i\neq \tau i, i\in \I_\tau$ and $\Phi$ fixes $B_j,\K_j$ for $j=\tau j$. Then the symmetries $\mathrm{T}_i$ in \cite[\S 5]{LW21a}, for $c_{i,\tau i}=0,2$, is given by $\mathrm{T}_i=\Phi \TT_i \Phi^{-1}$. 
\end{remark}

The quantum group version of the following statement is well known. 	
\begin{proposition}[\text{cf. \cite[Theorem 7.13]{WZ23}; also see \cite[Lemma 2.9]{LWZ24}}]
		\label{prop:fixB}
	We have $\TT_w (B_i) = B_{w i}$, for $i\in \I$ and $w \in W_\tau$ such that $wi \in \I$.
\end{proposition}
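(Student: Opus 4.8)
The plan is to mirror the classical proof of the corresponding fact for Lusztig's braid symmetries on quantum groups. Since $w\mapsto \TT_w$ factors through the homomorphism $\brW \to \Aut(\tUi)$ of Theorem \ref{thm:Ti}, a reduced expression $w=\bs_{j_1}\cdots\bs_{j_n}$ (with $j_t\in\I_\tau$) gives $\TT_w=\TT_{j_1}\cdots\TT_{j_n}$, which reduces the statement to an induction on $\ell^\circ(w)$. The hypothesis $wi\in\I$ means $w\alpha_i=\alpha_{wi}$ is a simple root; since $w\in W_\tau$ commutes with $\tau$ we also get $w\alpha_{\tau i}=\alpha_{\tau(wi)}$, so $w$ permutes $\tau$-orbits in $\I$, and the $E$- and $F$-parts of $B_i$ can be tracked compatibly.

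First I would settle the one-generator case directly from the formulas in Theorem \ref{thm:Ti}. In all three branches ($c_{j,\tau j}=2,0,-1$) one has $\bs_j\alpha_i=\alpha_i-(\text{a nonnegative combination of }\alpha_j,\alpha_{\tau j})$, so by linear independence of the simple roots $\bs_j\alpha_i$ is again simple precisely when the combination vanishes, i.e. when $\bs_j$ fixes $\alpha_i$. In each branch of Theorem \ref{thm:Ti} where this happens (the ``$c_{ij}=0$'', resp. the ``otherwise'', cases, using $c_{\tau i,j}=c_{i,\tau j}$) the formula reads $\TT_j(B_i)=B_i=B_{\bs_j i}$. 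This disposes of the base step and shows that no single generator ever moves $\alpha_i$ to a \emph{different} simple root.

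The inductive step is where the real difficulty lies. Peeling off a generator on either side, say $w=\bs_j w'$ with $\ell^\circ(w')<\ell^\circ(w)$, forces us to understand $\TT_{w'}(B_i)$; but $w'\alpha_i=\bs_j\alpha_{wi}$ is generically not simple, so $w'i\notin\I$ and the inductive hypothesis does not apply directly. (Peeling on the right fails for the same reason, since a right descent of $w$ rarely fixes $\alpha_i$.) Consequently the intermediate images $\TT_{w'}(B_i)$ are not single root vectors but genuine iterated $v$-commutators of the $B$'s, and tracking them term by term is hopeless. The honest resolution---exactly as in Lusztig's proof that $T_w(E_i)=E_{wi}$---is to reduce to a relative rank-two parabolic of $W_\tau$ containing the relevant reflections and there verify the identity using the higher ibraid formulas of Theorem \ref{thm:Ti} together with the rank-two ($v$-Serre type) relations among the $B_i$. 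This is the main obstacle, and it is aggravated by the three relative root lengths: the rank-two subconfigurations occur in the distinct type-(\texttt{A}) and type-(\texttt{B}) flavors of the introduction, so several inequivalent computations are needed, one for each pattern of $(c_{i,\tau i},c_{ij},c_{\tau i,j})$.

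A cleaner alternative I would try, in order to shorten the rank-two bookkeeping, is to compare $\TT_w$ with Lusztig's braid symmetries on $\tU$ through the embedding $B_i=F_i+E_{\tau i}\tK_i'$. If $\TT_w$ agrees on $\tUi$ with Lusztig's $T_w$ on $\tU$ (for the same $w\in W$), up to the rescaling $\Phi$ of the Remark following Theorem \ref{thm:Ti}, then the well-known identities $T_w(F_i)=F_{wi}$, $T_w(E_{\tau i})=E_{\tau(wi)}$, and $T_w(\tK_i')=\tK_{wi}'$ would assemble directly into $\TT_w(B_i)=F_{wi}+E_{\tau(wi)}\tK_{wi}'=B_{wi}$. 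The cost is establishing this compatibility---the individual $T_{s_p}$ for $\tau p\neq p$ do not preserve the coideal $\tUi$, only the symmetric combinations $\TT_{\bs_i}$ do---which is itself a rank-two verification; so either route ultimately rests on the same rank-two core.
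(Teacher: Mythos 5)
The paper does not actually prove this proposition: it is imported wholesale from \cite[Theorem 7.13]{WZ23} (see also \cite[Lemma 2.9]{LWZ24}), where the statement is established for quasi-split $\mathrm{i}$quantum groups of Kac--Moody type. So there is no in-paper argument to measure you against; the relevant question is whether your submission constitutes a proof on its own, and it does not. Your base case is fine (modulo a sign slip: $\bs_j\alpha_i=\alpha_i+(\text{a nonnegative combination of }\alpha_j,\alpha_{\tau j})$ for $i\notin\{j,\tau j\}$, not minus, though the conclusion that $\bs_j\alpha_i$ is simple iff it equals $\alpha_i$ is unaffected), and your diagnosis of the obstruction is exactly right: when $wi\neq i$ the intermediate roots $w'\alpha_i$ are not simple, so the naive induction on $\ell^\circ(w)$ collapses and one must descend to relative rank-two parabolic subgroups. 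But that is precisely where you stop. The several inequivalent rank-two verifications --- one for each pattern of $(c_{i,\tau i},c_{ij},c_{\tau i,j})$, using the explicit formulas of Theorem \ref{thm:Ti} together with the $v$-Serre relations --- are named and classified but never carried out, and your alternative route via Lusztig's $T_w$ on $\tU$ is likewise flagged as resting on "the same rank-two core" without that core being supplied. A concrete instance of what is missing: the paper freely uses identities such as $\TT_j\TT_{j+1}(B_j)=B_{j+1}$ (e.g.\ in the proofs of Lemma \ref{lem:qsiDR100} and Proposition \ref{prop:iDR2}), where the intermediate element $\TT_{j+1}(B_j)=[B_j,B_{j+1}]_v$-type commutators must be shown to collapse back to a single generator after applying $\TT_j$; none of these computations appear in your write-up.

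In short: what you have is an accurate proof \emph{plan} that correctly locates the difficulty, not a proof. To complete it along the lines of the cited sources you would either (i) carry out the finite list of rank-two computations explicitly, or (ii) invoke the intertwining of $\TT_w$ with braid operators on integrable $\tU$-modules as in \cite{WZ23}, which is how the general statement is actually established there. Either way, the decisive step is absent from the submission.
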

    
There exists a $\Q(v)$-algebra anti-involution $\sigma_\imath: \tUi\rightarrow \tUi$ such that
	\begin{align}
		\label{eq:sigma}
		\sigma_\imath(B_i)=B_{i}, \quad \sigma_\imath(\K_i)= \K_{\tau i},
		\quad \forall i\in \I.
	\end{align}
		
By \cite[Theorem 6.7]{WZ23}, we have
	\begin{align}
		\TT_i^{-1}=\sigma_\imath \TT_i\sigma_{\imath}, \quad\forall i\in\I.
	\end{align}

\subsection{Affine iquantum groups of quasi-split type AIII$_{2r}^{(\tau)}$}
    
In the remainder of this paper, we only consider the affine iquantum group with Satake diagram \eqref{pic:AIII2r}.	
The affine iquantum group of type ${\rm AIII}_{2r}^{(\tau)}$ admits the following presentation \cite{CLW21}: it is isomorphic to the $\Q(v)$-algebra $\tUi =\tUi(\widehat{\fg})$ with generators $B_i$, $\K_i^{\pm 1}$ $(i\in \I)$, subject to the following relations, for $i, j\in \I$:
	\begin{align}
		\K_i\K_i^{-1} =\K_i^{-1}\K_i=1,  \quad \K_i\K_\ell &=\K_\ell \K_i, \quad
		\K_\ell B_i=v^{c_{\tau \ell,i} -c_{\ell i}} B_i \K_\ell,
		\\
		B_iB_j -B_j B_i&=0, \qquad\qquad\qquad \text{ if } c_{ij}=0 \text{ and }\tau i\neq j,
		\label{eq:S1} \\
		\sum_{s=0}^{1-c_{ij}} (-1)^s \qbinom{1-c_{ij}}{s} & B_i^{s}B_jB_i^{1-c_{ij}-s} =0, \quad \text{ if } j \neq \tau i\neq i,
		\label{eq:S6}
		\\
		B_{\tau i}B_i -B_i B_{\tau i}& =   \frac{\K_i -\K_{\tau i}}{v-v^{-1}},
		\quad \text{ if }  c_{i,\tau i}=0,
		\label{relation5}	\\
		B_i^2 B_j -[2] B_i B_j B_i +B_j B_i^2 &= - v^{-1}  B_j \K_i,  \qquad \text{ if }c_{ij}=-1 \text{ and }c_{i,\tau i}=2,
		\label{eq:S2} \\
		B_{i}^2 B_{\tau i} -[2]B_i B_{\tau i}B_i +B_{\tau i}B_i^2&= -[2](v \bK_iB_i+v B_i\bK_{\tau i}),
		\qquad  \text{ if } c_{i,\tau i}=-1.
		\label{eq:S4} 
	\end{align}
	
The involution $\tau$ induces an involution of $\tUi$, which is denoted by $\widehat{\tau}$: $B_i\leftrightarrow B_{\tau i}$, $\K_i\leftrightarrow \K_{\tau i}$.
	
%For $\mu = \mu' +\mu''  \in \Z \I := \oplus_{i\in \I} \Z \alpha_i$,  define $\K_\mu$ such that
%	\begin{align}
%		\K_{\alpha_i} =\K_i, \quad
%		\K_{-\alpha_i} =\K_i^{-1}, \quad
%		\K_{\mu} =\K_{\mu'} \K_{\mu''},
%		\quad  \K_\delta =\K_0 \K_\theta.
%	\end{align}
The algebra $\tUi$ is endowed with a filtered algebra structure
	\begin{align}  \label{eq:filt1}
		\widetilde{\U}^{\imath,0} \subset \widetilde{\U}^{\imath,1} \subset \cdots \subset \widetilde{\U}^{\imath,m} \subset \cdots
	\end{align}
by setting 
	\begin{align}  \label{eq:filt}
		\widetilde{\U}^{\imath,m} =\Q(v)\text{-span} \{ B_{i_1} B_{i_2} \ldots B_{i_s} \K_\mu \mid \mu \in \Z\I, i_1, \ldots, i_s \in \I, s \le m \}.
	\end{align}
Note that
\begin{align}  \label{eq:UiCartan}
	\widetilde{\U}^{\imath,0} =\bigoplus_{\mu \in \Z\I} \Q(v) \K_\mu
\end{align}
is the $\Q(v)$-subalgebra generated by $\K_i$ for $i\in \I$. Then, according to a basic result of Letzter and Kolb on quantum symmetric pairs adapted to our setting of $\tUi$ (cf. \cite{Let02, Ko14}), the associated graded $\gr \tUi$ with respect to \eqref{eq:filt1}--\eqref{eq:filt} can be identified with
\begin{align}   \label{eq:filter}
	\gr \tUi \cong \U^- \otimes \Q(v)[\K_i^\pm | i\in \I],
	\qquad \overline{B_i}\mapsto F_i,  \quad
	\overline{\K}_i \mapsto \K_i \; (i\in \I).
\end{align}

\subsection{Properties of ibraid group operators}

The relative affine root system and relative affine Weyl group for $\tUi$ of type ${\rm AIII}_{2r}^{(\tau)}$ can be read off from Table \ref{tab1}; they are of type $A_{2r}^{(2)}$. 

\begin{lemma}[{\cite[Lemma 4.4]{Lus83}}] 
		\label{lem:Tomeij}
	We have
	\begin{itemize}
		\item[(1)] $\TT_{\bome_i}\TT_{\bome_j}=\TT_{\bome_j}\TT_{\bome_i}$, for any $i,j\in\I_{0}$;
		\item[(2)] $\TT_i^{-1} \TT_{\bome_i} \TT_i^{-1}=\TT_{\bome_i}^{-1} \prod_{k\neq i,k\in \I_{0,\tau}} \TT_{\bome_k}^{-\ov{c}_{ik}}$, for $i\neq r,r+1$ in $\I_0$. 
	\end{itemize}
\end{lemma}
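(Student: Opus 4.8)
The plan is to prove both identities inside the braid group $\brW$ and then push them to $\Aut(\tUi)$ through the homomorphism $\brW\to\Aut(\tUi)$, $t_i\mapsto\TT_i$, of Theorem~\ref{thm:Ti}; recall that for $w\in W_\tau$ the operator $\TT_w$ is the image of the canonical positive lift of $w$, which is multiplicative on length-additive products. I would first record that $\bome_i=t_{\lambda_i}$ with $\lambda_i:=\omega_i+\omega_{\tau i}$, and that $\langle\lambda_i,\balpha_j\rangle=\delta_{ij}$ for $i,j\in\I_{0,\tau}$, so each $\lambda_i$ is a dominant relative fundamental coweight and each $\bome_i$ is a dominant translation in $W_\tau$.

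For part (1): as translations $\bome_i\bome_j=t_{\lambda_i+\lambda_j}=\bome_j\bome_i$ commute in $W_\tau$, and since $\lambda_i,\lambda_j$ are dominant the length $\ell^\circ$ is additive, giving $\ell^\circ(\bome_i\bome_j)=\ell^\circ(\bome_i)+\ell^\circ(\bome_j)=\ell^\circ(\bome_j\bome_i)$. Hence the canonical lifts multiply and $\TT_{\bome_i}\TT_{\bome_j}=\TT_{\bome_i\bome_j}=\TT_{\bome_j\bome_i}=\TT_{\bome_j}\TT_{\bome_i}$.

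For part (2) I would first isolate the Weyl-group shadow. For $i\neq r$ one has $\bs_i=s_is_{\tau i}$ with $\langle\lambda_i,\alpha_i\rangle=\langle\lambda_i,\alpha_{\tau i}\rangle=1$, so $\bs_i\lambda_i=\lambda_i-\alpha_i-\alpha_{\tau i}=-\lambda_i-\sum_{k\neq i}\ov c_{ik}\lambda_k$ (using $\alpha_i+\alpha_{\tau i}=\sum_k\ov c_{ik}\lambda_k$ and $\ov c_{ii}=2$), whence in $W_\tau$
\[
\bs_i\bome_i\bs_i=t_{\bs_i\lambda_i}=\bome_i^{-1}\prod_{k\neq i,\,k\in\I_{0,\tau}}\bome_k^{-\ov c_{ik}}.
\]
Writing $Y^{\lambda_k}:=\TT_{\bome_k}$ for the lattice generators (these equal the Bernstein elements since the $\lambda_k$ are dominant), which commute by part (1) and satisfy $Y^\lambda Y^\mu=Y^{\lambda+\mu}$, the right-hand side of (2) collapses to a single Bernstein element, $\TT_{\bome_i}^{-1}\prod_{k\neq i}\TT_{\bome_k}^{-\ov c_{ik}}=Y^{\bs_i\lambda_i}$. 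On the left, the preceding lemma gives the reduced factorization $\bome_i=(\bome_i\bs_i)\bs_i$, hence $\TT_{\bome_i}\TT_i^{-1}=\TT_{\bome_i\bs_i}$ and $\TT_i^{-1}\TT_{\bome_i}\TT_i^{-1}=\TT_i^{-1}\TT_{\bome_i\bs_i}$. Thus (2) is reduced to the single braid identity $\TT_i^{-1}Y^{\lambda_i}\TT_i^{-1}=Y^{\bs_i\lambda_i}$.

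I expect this last identity to be the main obstacle, since it is a genuine braid relation and not a reduced-word manipulation: because $\bs_i\lambda_i$ lies in the $W_\tau^0$-orbit of the dominant $\lambda_i$, one has $\ell^\circ(\bs_i\bome_i\bs_i)=\ell^\circ(\bome_i)$, so the two factors $\TT_i^{-1}$ cannot be absorbed by additivity of length; they instead account for the discrepancy between the canonical lift of $t_{\bs_i\lambda_i}$ and the Bernstein element $Y^{\bs_i\lambda_i}$. Establishing it amounts to the Bernstein cross-relation $\TT_i^{-1}Y^{\lambda}\TT_i^{-1}=Y^{\bs_i\lambda}$ in $\brW$, valid whenever $\langle\balpha_i,\lambda\rangle=1$, which is the content of the Bernstein presentation of the affine braid group and is exactly the computation of Lusztig \cite[Lemma~4.4]{Lus83} specialized to type $A_{2r}^{(2)}$. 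This is precisely where the hypothesis $i\neq r,r+1$ enters: at the non-special nodes the local rank-one structure is of untwisted type $A$, where this clean relation holds, whereas at the special node $r$ (with $c_{r,\tau r}=-1$ and $\bs_r=s_rs_{\tau r}s_r$) the relative root system acquires the doubled short root of $A_2^{(2)}$, the source of the third root length, and the cross-relation there takes a modified form.
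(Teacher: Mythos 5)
Your proposal is correct and in substance follows the same route as the paper, which gives no argument of its own for this lemma beyond the attribution to \cite[Lemma 4.4]{Lus83}: reducing part (1) to commutativity and length-additivity of dominant translations, and part (2) to the Bernstein cross-relation $\TT_i^{-1}Y^{\lambda_i}\TT_i^{-1}=Y^{\bs_i\lambda_i}$, is exactly the content of that cited result, and your supporting computations ($\langle\lambda_i,\balpha_j\rangle=\delta_{ij}$, and $\alpha_i+\alpha_{\tau i}=\sum_k\ov{c}_{ik}\lambda_k$ hence $\bs_i\lambda_i=-\lambda_i-\sum_{k\neq i}\ov{c}_{ik}\lambda_k$) check out precisely for $i\neq r,r+1$, where $\bs_i=s_is_{\tau i}$ and $\ov{c}_{ik}=c_{ik}+c_{\tau i,k}$. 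Your diagnosis of why the special node $r$ must be excluded (there $\ov{c}_{rr}=2$ no longer matches the coefficient $c_{rr}+c_{\tau r,r}=1$, reflecting the doubled root of the non-reduced relative system) is also the right one.
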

	
\begin{lemma}[{cf. \cite[Lemma 2.11]{LWZ24}}]
		\label{lem:T-T}
	We have
		$\TT_{\bome_i'}(B_i) =\TT_{\bome_i'}^{-1}(B_i),$ for $i\in \II$ and $i\neq r,r+1$.
\end{lemma}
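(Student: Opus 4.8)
The plan is to convert the asserted equality into a single operator identity controlled by Lemma~\ref{lem:Tomeij}(2). Since $i \neq r, r+1$, the lemma preceding Lemma~\ref{lem:Tomeij} (every reduced word for $\bome_i$ terminates in $\bs_i$) provides a reduced factorization $\bome_i = \bome_i'\bs_i$ with $\ell^\circ(\bome_i) = \ell^\circ(\bome_i')+1$. As $\TT\colon \brW \to \Aut(\tUi)$ is a group homomorphism and the factorization is reduced, this yields $\TT_{\bome_i} = \TT_{\bome_i'}\TT_i$, whence $\TT_{\bome_i'} = \TT_{\bome_i}\TT_i^{-1}$ and $\TT_{\bome_i'}^{-1} = \TT_i\TT_{\bome_i}^{-1}$. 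Inserting these into the target identity $\TT_{\bome_i'}(B_i) = \TT_{\bome_i'}^{-1}(B_i)$ and acting by $\TT_i^{-1}$ on both sides, I reduce the claim to
\[
\TT_i^{-1}\TT_{\bome_i}\TT_i^{-1}(B_i) = \TT_{\bome_i}^{-1}(B_i).
\]

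Next I would feed in Lemma~\ref{lem:Tomeij}(2), which expresses the left-hand operator as $\TT_i^{-1}\TT_{\bome_i}\TT_i^{-1} = \TT_{\bome_i}^{-1}\prod_{k\neq i,\,k\in\I_{0,\tau}}\TT_{\bome_k}^{-\ov{c}_{ik}}$. Thus the displayed identity---and hence the lemma---follows once I check the auxiliary claim that the product $\prod_{k\neq i}\TT_{\bome_k}^{-\ov{c}_{ik}}$ fixes $B_i$. For this I use $\bome_k = t_{\omega_k}t_{\omega_{\tau k}}$ together with $t_\omega(\alpha_i) = \alpha_i - \langle\omega,\alpha_i\rangle\de$ to compute
\[
\bome_k(\alpha_i) = \alpha_i - \big(\langle\omega_k,\alpha_i\rangle + \langle\omega_{\tau k},\alpha_i\rangle\big)\de = \alpha_i
\]
whenever $k\neq i$ and $\tau k\neq i$, since then $\langle\omega_k,\alpha_i\rangle = \langle\omega_{\tau k},\alpha_i\rangle = 0$. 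Hence $\bome_k$ fixes the node $i$, and Proposition~\ref{prop:fixB} gives $\TT_{\bome_k}(B_i) = B_i$; every factor of the product then fixes $B_i$, so the product does too, and the left-hand side collapses to $\TT_{\bome_i}^{-1}(B_i)$ as required.

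The step I expect to need the most care is the $\tau$-orbit bookkeeping in this last computation, because $\bome_k(\alpha_i) = \alpha_i$ requires both $k\neq i$ and $\tau k\neq i$. This is automatic exactly when $i$ is the chosen orbit representative, i.e.\ $i\in\I_{0,\tau}$: then any $k\in\I_{0,\tau}$ with $k\neq i$ lies in a distinct $\tau$-orbit, forcing $\tau k\neq i$ as well. I would therefore first establish the identity for $i\in\I_{0,\tau}$, where the argument above runs cleanly. For the remaining $i\in\II\setminus(\I_{0,\tau}\cup\{r,r+1\})$ I would transport the result using the involution $\widehat\tau$: since $\bome_i = \bome_{\tau i}$ and $\bs_i = \bs_{\tau i}$ we have $\bome_i' = \bome_{\tau i}'$ with $\tau i\in\I_{0,\tau}$, so the established case gives $\TT_{\bome_i'}(B_{\tau i}) = \TT_{\bome_i'}^{-1}(B_{\tau i})$; applying $\widehat\tau$, which commutes with every $\TT_w$ for $w\in W_\tau$ (as $\widehat\tau\TT_k\widehat\tau = \TT_k$ on generators, by the $\tau$-symmetry of the formulas in Theorem~\ref{thm:Ti}) and satisfies $\widehat\tau(B_{\tau i}) = B_i$, yields the desired equality. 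The symmetry $\TT_w^{-1} = \sigma_\imath\TT_{w^{-1}}\sigma_\imath$ offers an alternative route to the same reduction.
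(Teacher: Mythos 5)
Your proof is correct and is essentially the paper's own argument: the paper's one-line proof defers to the proof of \cite[Lemma 2.11]{LWZ24}, which is exactly your reduction of the claim to $\TT_i^{-1}\TT_{\bome_i}\TT_i^{-1}(B_i)=\TT_{\bome_i}^{-1}(B_i)$ via $\TT_{\bome_i'}=\TT_{\bome_i}\TT_i^{-1}$, followed by Lemma~\ref{lem:Tomeij}(2) and the observation that each $\TT_{\bome_k}$ with $k$ outside the $\tau$-orbit of $i$ fixes $B_i$ (Proposition~\ref{prop:fixB}). Your extra $\tau$-orbit bookkeeping and the $\widehat\tau$-transport for non-representative indices are harmless refinements that only make explicit what the index set in Lemma~\ref{lem:Tomeij}(2) already intends.
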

	
\begin{proof}
It is the same as the proof for \cite[Lemma 2.11]{LWZ24} using Lemma \ref{lem:Tomeij}(2) now. 
\end{proof}

For type ${\rm A}_{2r}^{(2)}$, the extended relative affine Weyl group coincides with relative affine Weyl group $W_\tau$.	
The following recursive formula of type ${\rm A}_{2r}^{(2)}$ (for $1\le k<r$) coincides with its counterpart in type $\widetilde{C}_r$ in \cite[\S4.5]{Lus83}, but they differ for $k=r$. By convention, we set $\TT_{\bome_0}=1$.

\begin{lemma}
For $1\le k \le r$, we have
    \begin{align}\label{TT:induction}
        \TT_{\bome_k} =\TT_{\bome_{k-1}} \TT_{k-1}^{-1}\cdots\TT_2^{-1} \TT_1^{-1} \TT_{\bome_1} \TT_1^{-1}\TT_2^{-1}\cdots \TT_{k-1}^{-1}.
    \end{align}
\end{lemma}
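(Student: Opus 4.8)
The plan is to prove the recursive formula \eqref{TT:induction} by induction on $k$, treating the base case $k=1$ as trivial (it reads $\TT_{\bome_1}=\TT_{\bome_0}\TT_{\bome_1}=\TT_{\bome_1}$ since $\TT_{\bome_0}=1$ by convention). The heart of the argument is an identity expressing $\TT_{\bome_k}$ in terms of $\TT_{\bome_{k-1}}$ and the finite braid operators $\TT_1,\dots,\TT_{k-1}$, which I would extract from the commutation and conjugation relations in Lemma~\ref{lem:Tomeij}. The key observation is that for $A_{2r}^{(2)}$ the relative Cartan matrix has $\ov{c}_{ik}=-1$ precisely when $|i-k|=1$ in the linear part of the diagram (away from the short-root end at $k=r$), so the product $\prod_{k'\neq i}\TT_{\bome_{k'}}^{-\ov{c}_{ik'}}$ appearing in Lemma~\ref{lem:Tomeij}(2) collapses to a product over just the two neighbors of $i$.

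Concretely, for $1\le k<r$ I would use Lemma~\ref{lem:Tomeij}(2) with $i=k-1$ (valid since $k-1\neq r,r+1$) to rewrite $\TT_{k-1}^{-1}\TT_{\bome_{k-1}}\TT_{k-1}^{-1}$ as $\TT_{\bome_{k-1}}^{-1}\TT_{\bome_{k-2}}\TT_{\bome_{k}}$ (using that the only nonzero off-diagonal $\ov{c}_{k-1,k'}$ are for $k'=k-2$ and $k'=k$, each equal to $-1$). Solving this for $\TT_{\bome_k}$ gives
\begin{align*}
\TT_{\bome_k}=\TT_{\bome_{k-2}}^{-1}\TT_{\bome_{k-1}}\TT_{k-1}^{-1}\TT_{\bome_{k-1}}\TT_{k-1}^{-1}.
\end{align*}
Then I would substitute the inductive hypothesis for $\TT_{\bome_{k-1}}$, namely its expression as $\TT_{\bome_{k-2}}\TT_{k-2}^{-1}\cdots\TT_1^{-1}\TT_{\bome_1}\TT_1^{-1}\cdots\TT_{k-2}^{-1}$, into the middle factor. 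The commutativity $\TT_{\bome_{k-2}}\TT_{\bome_{k-1}}=\TT_{\bome_{k-1}}\TT_{\bome_{k-2}}$ from Lemma~\ref{lem:Tomeij}(1), together with the fact that $\TT_{k-1}$ commutes with $\TT_{\bome_j}$ for $j\le k-2$ (again read off from the vanishing of the relevant $\ov{c}$ entries, so that $\TT_{k-1}$ acts trivially on the corresponding $\K$-weights and root vectors), should telescope the expression: the leading $\TT_{\bome_{k-2}}^{-1}$ cancels against the $\TT_{\bome_{k-2}}$ coming from the hypothesis, and the two flanking copies of $\TT_{k-1}^{-1}$ extend the palindromic string to the claimed form.

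The main obstacle I anticipate is the treatment of the boundary case $k=r$, where the remark preceding the lemma warns that the recursion differs from the $\widetilde{C}_r$ counterpart of Lusztig and where $i=r-1$ still satisfies $i\neq r,r+1$ but the neighbor structure and the short-root normalization at node $r$ must be handled with care; here I would need to verify that Lemma~\ref{lem:Tomeij}(2) applied at $i=r-1$ still produces exactly the two neighbors $r-2$ and $r$ with coefficient $-1$, so that the same telescoping goes through without an extra factor. A secondary technical point is to justify rigorously each commutation $[\TT_{k-1},\TT_{\bome_j}]=0$ for $j\le k-2$: rather than invoking Lemma~\ref{lem:Tomeij}(1) directly (which concerns only $\TT_{\bome}$'s), I would derive it from the explicit action of $\TT_{k-1}$ on the generators, noting that $\bome_j$ for $j\le k-2$ involves nodes disjoint from the support of $s_{k-1}$. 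Provided these commutations hold, the induction closes uniformly for all $1\le k\le r$, giving the palindromic conjugation formula \eqref{TT:induction}.
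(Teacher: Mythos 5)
Your proposal is correct and follows essentially the same route as the paper: both rest on Lemma~\ref{lem:Tomeij}(2) applied at $i=k-1$ (which in type $A_{2r}^{(2)}$ indeed yields only the two neighbors $k-2$ and $k$ with coefficient $-1$, including at the boundary step $k=r$ since $\ov{c}_{r-1,r}=-1$ even though $\ov{c}_{r,r-1}=-2$), together with the commutativity of the $\TT_{\bome_j}$ and the commutation of $\TT_{k-1}$ with $\TT_{\bome_{k-2}}$, followed by induction. The paper merely packages the same computation as the telescoping identity $\TT_{\bome_{k-1}}^{-1}\TT_{\bome_k}=\TT_{k-1}^{-1}\TT_{\bome_{k-2}}^{-1}\TT_{\bome_{k-1}}\TT_{k-1}^{-1}$, so the two arguments are the same in substance.
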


\begin{proof}
The formula is trivial for $k=1$. Assume that $2\le k\le r.$ By Lemma \ref{lem:Tomeij}, we have
\[
\TT_{k-1}^{-1}\TT_{\bome_{k-1}}\TT_{k-1}^{-1}=\TT_{\bome_{k-2}} \TT_{\bome_{k-1}}^{-1} \TT_{\bome_k}.
\]
Hence we have
\begin{align}
\label{eq:induct-weyl}
\TT_{\bome_{k-1}}^{-1} \TT_{\bome_k} = \TT_{\bome_{k-2}}^{-1} \TT_{k-1}^{-1}\TT_{\bome_{k-1}}\TT_{k-1}^{-1}
= \TT_{k-1}^{-1} \TT_{\bome_{k-2}}^{-1} \TT_{\bome_{k-1}}\TT_{k-1}^{-1}.
\end{align}
Now the lemma follows by a simple induction. 
\end{proof}

%The following formula is specific for quasi-split affine type AIII$_{2r}^{(\tau)}$.
	\begin{proposition}\label{prop:TT_bome}
		For $1\le k \le r$, we have
		\begin{align}
			\label{eq:Tome}
			\TT_{\bome_k}&=(\TT_0\TT_1\cdots \TT_{r-1}\TT_{r}\TT_{r-1}\cdots \TT_k)^k.
		\end{align}
	\end{proposition}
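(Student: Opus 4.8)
The plan is to prove \eqref{eq:Tome} by induction on $k$, taking the recursion \eqref{TT:induction} as the only structural input and then carrying out every simplification inside the group $\langle \TT_i \mid i\in\I_\tau\rangle$ of automorphisms, so that all manipulations are legitimate braid-group identities. Throughout I write $P_k := \TT_0\TT_1\cdots\TT_{r-1}\TT_r\TT_{r-1}\cdots\TT_k$, so that the assertion reads $\TT_{\bome_k}=P_k^k$; directly from the definition one reads off $P_{k-1}=P_k\TT_{k-1}$, equivalently $P_k=P_{k-1}\TT_{k-1}^{-1}$. The base case $k=1$ is the statement $\TT_{\bome_1}=P_1$. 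Since the $\TT_i$ satisfy the braid relations of $W_\tau$ (Theorem \ref{thm:Ti}), $\TT_{\bome_1}$ may be computed from any reduced expression of $\bome_1$; hence the base case amounts to checking that $\bs_0\bs_1\cdots\bs_{r-1}\bs_r\bs_{r-1}\cdots\bs_1$ is a reduced word for $\bome_1$ in $W_\tau$, which I would verify by a length count $\ell^\circ(\bome_1)=2r$ against the translation-length formula applied to $\bome_1=t_{\omega_1}t_{\omega_{\tau 1}}$.

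For the inductive step assume $\TT_{\bome_{k-1}}=P_{k-1}^{k-1}$ and substitute into \eqref{TT:induction}. First I simplify the right-hand tail: using the base-case form of $\TT_{\bome_1}$, the product $\TT_{\bome_1}\,\TT_1^{-1}\TT_2^{-1}\cdots\TT_{k-1}^{-1}$ telescopes, each $\TT_j$ at the end of the descending tail of $\TT_{\bome_1}$ cancelling the matching $\TT_j^{-1}$, and leaves exactly $P_k$. This yields
\[
\TT_{\bome_k}=P_{k-1}^{k-1}\,\big(\TT_{k-1}^{-1}\TT_{k-2}^{-1}\cdots\TT_1^{-1}\big)\,P_k .
\]
Thus the whole proposition reduces to the single identity, which I label (B),
\[
P_{k-1}^{k-1}\,\TT_{k-1}^{-1}\TT_{k-2}^{-1}\cdots\TT_1^{-1}=P_k^{k-1},
\]
since (B) then gives $\TT_{\bome_k}=P_k^{k-1}P_k=P_k^k$.

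To prove (B) I would use $P_k=P_{k-1}\TT_{k-1}^{-1}$ to expand $P_k^{k-1}=(P_{k-1}\TT_{k-1}^{-1})^{k-1}$ and move all factors $P_{k-1}$ to the left, rewriting $P_k^{k-1}$ as $P_{k-1}^{k-1}$ times an ordered product of the conjugates $P_{k-1}^{-j}\TT_{k-1}^{-1}P_{k-1}^{\,j}$. Identity (B) then becomes a Coxeter ``rotation'' statement, namely that this product of conjugates equals $\TT_{k-1}^{-1}\cdots\TT_1^{-1}$. I would establish this by an auxiliary induction resting on the segment-conjugation identity
\[
P_{k-1}^{-1}\big(\TT_{k-1}^{-1}\TT_{k-2}^{-1}\cdots\TT_{k-j}^{-1}\big)P_{k-1}=\TT_{k-1}^{-1}\TT_{k-2}^{-1}\cdots\TT_{k-j-1}^{-1}\,\TT_{k-1},\qquad 1\le j\le k-2,
\]
which in turn follows from two elementary braid computations inside $P_{k-1}$: the interior rotation $\TT_i P_{k-1}=P_{k-1}\TT_{i-1}$ for $2\le i\le k-2$, and the boundary rotation $\TT_{k-1}P_{k-1}=P_{k-1}\TT_{k-1}^{-1}\TT_{k-2}\TT_{k-1}$. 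Both are proved by commuting the leading generator rightward through $P_{k-1}$ and applying the order-$3$ braid relation at each adjacent single-bond pair.

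The hard part is precisely this braid bookkeeping: one must push a simple generator past the long word $P_{k-1}$ while controlling the two double bonds of the relative diagram of type $A_{2r}^{(2)}$, namely $\{0,1\}$ and $\{r-1,r\}$. What makes the computation close up uniformly is that, because $k\le r$, every generator that is moved or conjugated stays in the interior index range $\{1,\dots,r-1\}$, where all consecutive bonds are simple; $\TT_0$ is never braided nontrivially, and the double bond at $\{r-1,r\}$ is always crossed by a commuting generator (the index differences being at least $2$). This is also the structural reason the closed formula \eqref{eq:Tome} holds without modification for all $1\le k\le r$, in contrast to the recursion \eqref{TT:induction}, which must be handled separately at $k=r$. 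Once the two rotation identities and the segment-conjugation identity are established, (B) follows formally and the induction closes.
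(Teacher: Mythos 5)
Your argument is correct and follows essentially the same route as the paper: the identical base case via the reduced expression $\bome_1=\bs_0\bs_1\cdots\bs_r\cdots\bs_1$ and a length count, the same telescoping of \eqref{TT:induction} into the paper's \eqref{TT:ind}, and the same key commutation $\TT_{j+1}P_k=P_k\TT_j$ (the paper's \eqref{TT:ind2}), of which your ``interior/boundary rotation'' and segment-conjugation identities are just a more elaborate repackaging used to close the induction. The only quibble is peripheral: the recursion \eqref{TT:induction} is proved uniformly for $1\le k\le r$ (it is the comparison with type $\widetilde{C}_r$ that breaks at $k=r$, not the recursion itself), so no separate treatment of $k=r$ is needed there.
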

	
	\begin{proof}
		Let us show \eqref{eq:Tome} for $k=1$. Recall from \eqref{def:bome} that that $\bome_{1}=t_{\omega_1}t_{\omega_{2r}}$. In the Weyl group of affine type $A_{2r}$, it is known that $t_{\omega_1}=\rho s_{2r}\cdots s_{2}s_1$, $t_{\omega_{2r}}=\rho^{2r}s_1s_2\cdots s_{2r}$ are reduced expressions; cf. \cite[\S 4.5]{Lus83}.
		Here $\rho:\I\rightarrow \I$ is the diagram automorphism which sends $i$ to $i+1$ modulo $2r+1$.  Then 
		\begin{align*}
			\bome_{1}=t_{\omega_{2r}}t_{\omega_1}
			&=\rho^{2r} s_1s_2\cdot s_{2r}\rho s_{2r}\cdots s_{2}s_1
			\\
			&=\bs_0\bs_1\cdots \bs_{r-1}\bs_r\bs_{r-1}\cdots\bs_1.
		\end{align*}
		Recall that $\ell,\ell^\circ$ denote length functions in $W,W_\tau$ respectively. It is clear that 
		\begin{align*}
			\ell(\bome_1)=\ell(t_{\omega_1})+\ell(t_{\omega_{2r}})=4r=\sum\ell(\bs_i),
		\end{align*}
		where the summation on RHS runs over the expression $\bs_0\bs_1\cdots \bs_{r-1}\bs_r\bs_{r-1}\cdots\bs_1$. It follows that  $\ell^\circ(\bome_1)=\sum\ell^\circ(\bs_i)$, which implies that $\bome_1=\bs_0\bs_1\cdots \bs_{r-1}\bs_r\bs_{r-1}\cdots\bs_1$ is reduced.
		Thus, \eqref{eq:Tome} for $k=1$ follows.  

Using \eqref{eq:Tome} for $k=1$, we rewrite \eqref{TT:induction} as
        \begin{align}\label{TT:ind}
        \TT_{\bome_k}=\TT_{\bome_{k-1}}\TT_{k-1}^{-1}\cdots\TT_2^{-1} \TT_1^{-1} (\TT_0\TT_1\cdots \TT_{r-1}\TT_{r}\TT_{r-1}\cdots \TT_k).
    \end{align}

		Now we prove \eqref{eq:Tome} for $k>1$.         
        By a direct computation, we have for $1\leq j<k-1$
        \begin{align}\label{TT:ind2}
        (\TT_0\TT_1\cdots \TT_{r-1}\TT_{r}\TT_{r-1}\cdots \TT_{k})\TT_j^{-1}
        =
        \TT_{j+1}^{-1}(\TT_0\TT_1\cdots \TT_{r-1}\TT_{r}\TT_{r-1}\cdots \TT_{k}).
        \end{align}
        The desired formula \eqref{eq:Tome} follows by \eqref{TT:ind}--\eqref{TT:ind2} by induction on $k$. 
	\end{proof}

  We record the following special cases which will be used later; the first two formulas in the context of affine Hecke algebra of type $A_{2r}^{(2)}$ can also be found in \cite[Corollary 4.2.4]{Da00}.
    \begin{corollary} \label{cor:Da}
    We have
    \begin{align}
    \label{eq:Tomega1}
	\TT_{\bome_1}&=\TT_0\TT_1\cdots \TT_{r-1}\TT_{r}\TT_{r-1}\cdots \TT_1,\\
    \label{eq:Tomegar-1}
	\TT_{\bome_{r-1}}&=(\TT_0\TT_1\cdots \TT_r)^{r-1}\TT_1\TT_2\cdots \TT_{r-1},
    \\
    \TT_{\bome_{r}} &=(\TT_0\TT_1\TT_2\cdots\TT_r)^r.
    \label{eq:Tomega3}
    \end{align}
    \end{corollary}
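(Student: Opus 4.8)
The plan is to deduce all three identities from Proposition~\ref{prop:TT_bome}: the two extreme cases $k=1$ and $k=r$ fall out by direct specialization, while the intermediate case $k=r-1$ requires a short braid-group computation in $\brW$.

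First I would dispose of the immediate cases. Setting $k=1$ in \eqref{eq:Tome} reproduces $\TT_{\bome_1}=\TT_0\TT_1\cdots\TT_{r-1}\TT_r\TT_{r-1}\cdots\TT_1$ verbatim, which is \eqref{eq:Tomega1}. Setting $k=r$ in \eqref{eq:Tome}, the descending tail $\TT_{r-1}\cdots\TT_k$ is empty, so the base of the power collapses to $\TT_0\TT_1\cdots\TT_r$ and we obtain $\TT_{\bome_r}=(\TT_0\TT_1\cdots\TT_r)^r$, which is \eqref{eq:Tomega3}.

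For \eqref{eq:Tomegar-1}, write $X:=\TT_0\TT_1\cdots\TT_r$. Specializing \eqref{eq:Tome} at $k=r-1$ gives $\TT_{\bome_{r-1}}=(X\TT_{r-1})^{r-1}$, so it remains to prove $(X\TT_{r-1})^{r-1}=X^{r-1}\TT_1\TT_2\cdots\TT_{r-1}$. The engine is the conjugation identity $X^{-1}\TT_j X=\TT_{j-1}$, valid for $2\le j\le r-1$. I would establish this purely from the braid relations of $\brW$ (of type $A_{2r}^{(2)}$): commute $\TT_j$ rightward past $\TT_0,\dots,\TT_{j-2}$, apply the length-three braid relation $\TT_j\TT_{j-1}\TT_j=\TT_{j-1}\TT_j\TT_{j-1}$ (legitimate precisely because the bond $(j-1,j)$ is simple for $2\le j\le r-1$), and then commute the freed $\TT_{j-1}$ rightward past $\TT_{j+1},\dots,\TT_r$. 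Since conjugation is multiplicative, this yields $X^{-1}(\TT_{r-m}\cdots\TT_{r-1})X=\TT_{r-m-1}\cdots\TT_{r-2}$ whenever $r-m\ge 2$.

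With this in hand I would prove, by induction on $m$, that $(X\TT_{r-1})^m=X^m\,\TT_{r-m}\TT_{r-m+1}\cdots\TT_{r-1}$ for $1\le m\le r-1$; the base case $m=1$ is trivial, and the inductive step moves the block $\TT_{r-m}\cdots\TT_{r-1}$ across one copy of $X$ via the conjugation identity (whose index condition $r-m\ge 2$ is exactly what holds while $m\le r-2$) and then reattaches the trailing $\TT_{r-1}$. Taking $m=r-1$ gives $(X\TT_{r-1})^{r-1}=X^{r-1}\TT_1\TT_2\cdots\TT_{r-1}$, which is \eqref{eq:Tomegar-1}. The only delicate point is the braid-relation bookkeeping in the conjugation identity---in particular checking that the indices occurring always avoid the two double bonds at nodes $0$ and $r$, so that only simple-bond braid moves and commutations intervene; once that is confirmed the induction is routine.
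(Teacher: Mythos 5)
Your proposal is correct and follows essentially the same route as the paper: the cases $k=1$ and $k=r$ are read off directly from Proposition~\ref{prop:TT_bome}, and the case $k=r-1$ is handled via the conjugation identity $\TT_{j}X=X\TT_{j-1}$ (which is exactly the paper's relation \eqref{TT:ind2} specialized to $k=r$, there also obtained from the braid relations) followed by the same induction moving the block $\TT_{r-m}\cdots\TT_{r-1}$ across copies of $X$. The only cosmetic difference is that you build up $(X\TT_{r-1})^m=X^m\TT_{r-m}\cdots\TT_{r-1}$ from the left while the paper peels factors off from the right.
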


    \begin{proof}
     The formula \eqref{eq:Tomega1} and \eqref{eq:Tomega3} directly follow from Proposition~\ref{prop:TT_bome}. 
     
     We derive \eqref{eq:Tomegar-1} from Proposition~\ref{prop:TT_bome} and its proof. The case $r=2$ is trivial and hence we assume $r>2$. Write $\TT_{\bs_0\bs_1\cdots\bs_r}:=\TT_0\TT_1\cdots \TT_{r-1}\TT_{r}$. Setting $k=r$ in \eqref{TT:ind2}, we have $\TT_{j+1}\TT_{\bs_0\bs_1\cdots\bs_r}=\TT_{\bs_0\bs_1\cdots\bs_r}  \TT_j $ for $1\leq j<r-1$ and then
    \begin{align}\label{TT:ind3}
        \TT_{r-1}(\TT_{\bs_0\bs_1\cdots\bs_r})^l
        =
         (\TT_{\bs_0\bs_1\cdots\bs_r})^l \TT_{r-l-1}.
    \end{align} 
    Hence, by Proposition~\ref{prop:TT_bome} and \eqref{TT:ind3}, we have
    \begin{align*}
    \TT_{\bome_{r-1}}&=(\TT_{\bs_0\bs_1\cdots\bs_r}\TT_{r-1})^{r-1}
    \\
    %&=( \TT_{\bs_0\bs_1\cdots\bs_r}\TT_{r-1})^{r-2}\TT_{\bs_0\bs_1\cdots\bs_r}\TT_{r-1}
    %\\
    &=( \TT_{\bs_0\bs_1\cdots\bs_r}\TT_{r-1})^{r-3}\TT_{\bs_0\bs_1\cdots\bs_r}\TT_{r-1}\TT_{\bs_0\bs_1\cdots\bs_r}\TT_{r-1}
    \\
    &=( \TT_{\bs_0\bs_1\cdots\bs_r}\TT_{r-1})^{r-3}(\TT_{\bs_0\bs_1\cdots\bs_r})^2\TT_{r-2}\TT_{r-1}
    \\
    &=( \TT_{\bs_0\bs_1\cdots\bs_r}\TT_{r-1})^{r-4}\TT_{\bs_0\bs_1\cdots\bs_r}\TT_{r-1} (\TT_{\bs_0\bs_1\cdots\bs_r})^2\TT_{r-2}\TT_{r-1}
    \\
    &=( \TT_{\bs_0\bs_1\cdots\bs_r}\TT_{r-1})^{r-4}(\TT_{\bs_0\bs_1\cdots\bs_r})^3\TT_{r-3}\TT_{r-2}\TT_{r-1}
    \\
    &=\cdots
    =(\TT_{\bs_0\bs_1\cdots\bs_r})^{r-1}\TT_1\TT_2\cdots \TT_{r-1},
    \end{align*}
    as desired.
    \end{proof}

    \begin{corollary}
    \label{cor:length}
    Let $1\le k \le r$. We have the following reduced expressions of $\bome_k$ in $W_\tau$: 
    \[
    \bome_k = (\bs_0\bs_1\cdots \bs_{r-1}\bs_{r}\bs_{r-1}\cdots \bs_k)^k;
    \] 
    and in addition, ${\bome_{r-1}}=(\bs_0\bs_1\cdots \bs_r)^{r-1}\bs_1\bs_2\cdots \bs_{r-1}$. In particular, the length of $\bome_k$ in $W_\tau$ is  $k(2r+1-k)$. 
    \end{corollary}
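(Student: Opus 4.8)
The plan is to deduce the group-theoretic statements of the corollary from the operator identities already established in Proposition~\ref{prop:TT_bome} and Corollary~\ref{cor:Da}, and then to certify reducedness by a length comparison with the ambient affine Weyl group $W$. Concretely, I would prove two things: first, that the identities
\[
\bome_k = (\bs_0\bs_1\cdots \bs_{r-1}\bs_{r}\bs_{r-1}\cdots \bs_k)^k, \qquad \bome_{r-1}=(\bs_0\bs_1\cdots \bs_r)^{r-1}\bs_1\bs_2\cdots \bs_{r-1}
\]
hold in $W_\tau$; and second, that the right-hand words are reduced, which simultaneously yields $\ell^\circ(\bome_k)=k(2r+1-k)$.

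For the first point, I would observe that the proofs of Proposition~\ref{prop:TT_bome} and Corollary~\ref{cor:Da} are formal manipulations using only (a) the braid relations among the $\TT_i$, which are by definition the braid relations among the $\bs_i$ in $W_\tau$, (b) the recursion \eqref{TT:induction}, and (c) the commutation \eqref{TT:ind2}. Each of these is the operator image of a genuine identity in $W_\tau$: relation \eqref{TT:induction} comes from the Weyl-group recursion furnished by Lemma~\ref{lem:Tomeij} (Lusztig), a statement about $W_\tau$, and \eqref{TT:ind2} is proved by a direct braid computation valid verbatim in $W_\tau$ (with $\TT_j^{-1}$ replaced by $\bs_j=\bs_j^{-1}$). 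Running the same induction with $\TT_i$ replaced by $\bs_i$ therefore proves the displayed identities directly in $W_\tau$, the base case being $\bome_1=\bs_0\bs_1\cdots\bs_{r-1}\bs_r\bs_{r-1}\cdots\bs_1$ already verified inside the proof of Proposition~\ref{prop:TT_bome}.

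For reducedness I would compare lengths in $W$. Since translations commute and multiply, $\bome_k=t_{\omega_k}t_{\omega_{\tau k}}=t_{\omega_k+\omega_{\tau k}}$ with $\omega_k+\omega_{\tau k}$ dominant; hence $\ell(\bome_k)=\ell(t_{\omega_k})+\ell(t_{\omega_{\tau k}})$, and using the standard value $\ell(t_{\omega_i})=i(2r+1-i)$ in type $A_{2r}$ together with $\tau k=2r+1-k$ one gets $\ell(\bome_k)=2k(2r+1-k)$. On the other hand, from \eqref{def:simple reflection} and the Satake diagram \eqref{pic:AIII2r} the $W$-lengths of the generators are $\ell(\bs_0)=1$, $\ell(\bs_i)=2$ for $1\le i\le r-1$, and $\ell(\bs_r)=3$; summing these over the letters of $(\bs_0\cdots\bs_k)^k$, a word with $k(2r+1-k)$ letters, again gives $2k(2r+1-k)$. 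Thus the $W$-length of $\bome_k$ equals the sum of the $W$-lengths of the letters. If the $W_\tau$-word were not reduced, the deletion condition in the Coxeter group $W_\tau$ would allow us to delete two letters and obtain a shorter $W_\tau$-expression of the same element $\bome_k$; by subadditivity of $\ell$ this would force $\ell(\bome_k)\le 2k(2r+1-k)$ minus the (positive) $W$-lengths of the two deleted generators, strictly less than $2k(2r+1-k)$, a contradiction. Hence the word is reduced and $\ell^\circ(\bome_k)=k(2r+1-k)$. The same bookkeeping handles the alternative expression for $\bome_{r-1}$, whose letters also have $W$-length-sum $2(r-1)(r+2)=\ell(\bome_{r-1})$.

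The main obstacle, and the part requiring genuine care, is the interface between the two length functions $\ell$ and $\ell^\circ$: one must know both that $\ell$ is subadditive with respect to the $\bs_i$-factorization and that equality with the letterwise $\ell$-sum detects reducedness in $W_\tau$. The deletion-condition argument above is what makes this rigorous, and it is precisely here that the three distinct generator lengths $1,2,3$ (reflecting the three relative root lengths of type $A_{2r}^{(2)}$) must be tracked correctly, since a deleted letter may contribute $1$, $2$, or $3$ and one only needs that this contribution is positive. The remaining steps — the recursion bookkeeping for the group identity and the elementary translation-length computation — are routine.
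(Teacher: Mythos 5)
Your proposal is correct and takes essentially the same route as the paper: both deduce $\ell(\bome_k)=2k(2r+1-k)$ in $W$ from $\ell(t_{\omega_i})=i(2r+1-i)$, observe that the letterwise sum of the $W$-lengths $\ell(\bs_0)=1$, $\ell(\bs_i)=2$, $\ell(\bs_r)=3$ over the given word equals the same number, and conclude reducedness. You merely make explicit two steps the paper leaves implicit — descending the identity from the braid-group level of Proposition~\ref{prop:TT_bome} to $W_\tau$, and the deletion-condition argument showing that equality of the two $W$-length counts forces the $W_\tau$-word to be reduced.
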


    \begin{proof}
    By \cite[\S 4.5]{Lus83}, $\ell(t_{\omega_i})=i(2r+1-i)$ and then $\ell(\bome_k)=2k(2r+1-k)$. On the other hand, since $\ell(\bs_0)=1,\ell(\bs_r)=3,\ell(\bs_i)=2$ for $1\le i\le r-1$, the length of $(\bs_0\bs_1\cdots \bs_{r-1}\bs_{r}\bs_{r-1}\cdots \bs_k)^k$ in $W$ is also $2k(2r+1-k)$. Hence, this is a reduced expression of $\bome_k$.
    \end{proof}

\subsection{Drinfeld presentation of affine quantum groups}
	\label{subsec:Drsl2}
	
The affine quantum group $\U$ admits a second presentation known as the Drinfeld presentation. Recall $C=(c_{ij})_{i,j\in\I_0}$ is the Cartan matrix of the simple Lie algebra $\g$. Let $^{\text{Dr}}\U$ be the $\Q(v)$-algebra generated by $x_{i k}^{\pm}$, $h_{i l}$, $K_i^{\pm 1}$, $\texttt{C}^{\pm1}$, for $i\in\I_0$, $k\in\Z$, and $l\in\Z\backslash\{0\}$, subject to the following relations: $\texttt{C}^{\pm1}$ are central and
\begin{align*}
	[K_i,K_j] & =  [K_i,h_{j l}] =0, \quad K_i K_i^{-1} =\texttt{C} \texttt{C}^{- 1} =1,
		\\
	[h_{ik},h_{jl}] &= \delta_{k, -l} \frac{[k c_{ij}]}{k} \frac{\texttt{C}^k -\texttt{C}^{-k}}{v -v^{-1}},
		\\
	K_ix_{jk}^{\pm} K_i^{-1} &=v^{\pm c_{ij}} x_{jk}^{\pm},
		\\
	[h_{i k},x_{j l}^{\pm}] &=\pm\frac{[kc_{ij}]}{k} \texttt{C}^{\frac{k\mp |k|}{2}}x_{j,k+l}^{\pm},
		\\
	[x_{i k}^+,x_{j l}^-] &=\delta_{ij} {(\texttt{C}^{-l} K_i\psi_{i,k+l} - \texttt{C}^{-k} K_i^{-1} \varphi_{i,k+l})}, %{(v_i-v_i^{-1})},
		\\
	x_{i,k+1}^{\pm} x_{j,l}^{\pm}-v^{\pm c_{ij}} x_{j,l}^{\pm} x_{i,k+1}^{\pm} &=v^{\pm c_{ij}} x_{i,k}^{\pm} x_{j,l+1}^{\pm}- x_{j,l+1}^{\pm} x_{i,k}^{\pm},
		\\
	\Sym_{k_1,\dots,k_r}\sum_{t=0}^{r} (-1)^t \qbinom{r}{t} & x_{i,k_1}^{\pm}\cdots
	x_{i,k_t}^{\pm} x_{j,l}^{\pm}  x_{i,k_t+1}^{\pm} \cdots x_{i,k_n}^{\pm} =0, \text{ for } r= 1-c_{ij}\; (i\neq j),
\end{align*}
where $\Sym_{k_1,\dots,k_r}$ denotes the symmetrization with respect to the indices $k_1,\dots,k_r$, $\psi_{i,k}$ and $\varphi_{i,k}$ are defined by the following equations:
\begin{align*}
	1+ \sum_{m\geq 1} (v-v^{-1})\psi_{i,m}u^m &=  \exp\Big((v -v^{-1}) \sum_{m\ge 1}  h_{i,m}u^m\Big),
		\\
	1+ \sum_{m\geq1 } (v-v^{-1}) \varphi_{i, -m}u^{-m} &= \exp \Big((v^{-1} -v) \sum_{m\ge 1} h_{i,-m}u^{-m}\Big).
	\end{align*}
(We omit a degree operator $D$ in the version of ${}^{\text{Dr}}\U$ above.) There exists an isomorphism of $\Q(v)$-algebras ${}^{\text{Dr}}\U \cong \U$; cf. \cite{Dr87, Be94, Da15}.

\subsection{Drinfeld presentation of iquantum group of type AIII$_{2}^{(\tau)}$}
	\label{sec:rank1}
		 
Let $\tau$ be the following diagram automorphism given by swapping vertices $1$ and $2$ while fixing $0$: 
	\begin{center}\setlength{\unitlength}{0.9mm}
		\begin{equation}
			\label{eq:satakerank1}
			\begin{picture}(40,20)(-20,15)
				\put(0,10){$\circ$}
				\put(0,30){$\circ$}
				\put(0,5.5){$2$}
				\put(-2,34){${1}$}
				%	\put(92,16){\small $r$}
				\put(-17.5,22.5){\line(2,1){17}}
				\put(-17.5,20.5){\line(2,-1){17}}
				\put(1,12.7){\line(0,1){17}}
				
				\put(-21,20){$\circ$}
				\put(-21,15){\small $0$}
				
				\color{red}
                \put(8.5,20){$\tau$}
				\qbezier(3,12.5)(8,21.5)(3,30.5)
				\put(3.25,13){\vector(-1,-2){0.5}}
				\put(3.25,30){\vector(-1,2){0.5}}
			\end{picture}
		\end{equation}     
		\vspace{0.5cm}
	\end{center}	

We recall from \cite{LWZ23} a Drinfeld type presentation for $\tUi(\widehat{\mathfrak{sl}}_3,\tau)$ of quasi-split affine rank one type $A_2^{(\tau)}$ associated with the Satake diagram \eqref{eq:satakerank1}. Let $\Sym_{k_1,k_2}$ denote the symmetrization with respect to indices $k_1, k_2$ in the sense $\Sym_{k_1,k_2} f(k_1, k_2) =f(k_1, k_2) +f(k_2, k_1)$.
	
\begin{definition}
		\label{def:iDR}
Let $\tUiD(\widehat{\mathfrak{sl}}_3,\tau)$ be the $\Q(v)$-algebra generated by the elements $B_{i,l}$, $H_{i,m}$, $\bK_i^{\pm1}$, $C^{\pm1}$, where $i=1,2$, $l\in\Z$ and $m \in \Z_{\ge 1}$, subject to the following relations: for $m, n \ge 1, k, l \in \Z$, and $i, j \in \{1,2\}$,
\begin{align}
	C \text{ is central,} \quad &
	\K_i\K_j=\K_j\K_i, \quad
	\K_i H_{j,m}=H_{j,m}\K_i,\quad
	\bK_iB_{j,l}=v^{c_{\tau i,j}-c_{ij}} B_{j,l} \bK_i,
		\label{qsiA1DR1} \\
	[H_{i,m},H_{j,n}] &=0,\label{qsiA1DR2}
			\\
	[H_{i,m},B_{j,l}] &=\frac{[mc_{ij}]}{m} B_{j,l+m}-\frac{[mc_{\tau i,j}]}{m} B_{j,l-m}C^m,\label{qsiA1DR3}
			\\
		\label{qsiA1DR4}
	[B_{i,k},B_{i,l+1}]_{v^{-2}} & -v^{-2}[B_{i,k+1},B_{i,l}]_{v^{2} }=0,
			\\
		\label{qsiA1DR5}
	[B_{i,k},B_{\tau i,l+1}]_v & -v[B_{i,k+1},B_{\tau i,l}]_{v^{-1}} = -\Theta_{{\tau i},l-k+1}C^k \bK_{i} +v \Theta_{ {\tau i},l-k-1}C^{k+1}\bK_{i}
			\\
	& \qquad\qquad\qquad\qquad  -\Theta_{i,k-l+1}C^l\bK_{{\tau i}} +v \Theta_{i,k-l-1}C^{l+1}\bK_{\tau i},
			\notag  \\
	\bS_{i,\tau i}(k_1,k_2|l)
	=[2]&\Sym_{k_1,k_2}\sum_{p\geq 0}v^{2p}
	\big[\TH_{\tau i,l-k_2-p}\K_i-v\TH_{\tau i,l-k_2-p-2}C\K_i, B_{i,k_1-p} \big]_{v^{-4p-1}}C^{k_2+p}
			\notag
			\\
	+v[2]&\Sym_{k_1,k_2}\sum_{p\geq 0}v^{2p} \big[ B_{i,k_1+p+1},\TH_{i,k_2-l-p+1}\K_{\tau i}-v\TH_{i,k_2-l-p-1}C \K_{\tau i}\big]_{v^{-4p-3}} C^{l-1}.
			\label{qsiA1DR6}
\end{align}
Here $H_{i,m}$ are related to $\Theta_{i,m}$ by the following equation:
\begin{align}
	\label{exp h}
	1+ \sum_{m\geq 1} (v-v^{-1})\Theta_{i,m} u^m  = \exp\Big( (v-v^{-1}) \sum_{m\geq 1} H_{i,m} u^m \Big).
\end{align}
We have also denoted
\begin{align}
	\label{eq:SS}
\bS_{i,\tau i}(k_1,k_2|l) : = \Sym_{k_1,k_2}\Big(B_{i,k_1}B_{i,k_2}B_{\tau i,l}-[2]B_{i,k_1} B_{\tau i,l} B_{i,k_2} + B_{\tau i,l} B_{i,k_1}B_{i,k_2}\Big).
\end{align}
\end{definition}
	
Fix  signs $o(1)$ and $o(2)$ associated to the nodes $\I_0=\{1,2\}$ such that $o(1)o(2)=-1$. Following \cite{LWZ23}, we define in $\tUi(\widehat{\mathfrak{sl}}_3,\tau)$ the {\em real $v$-root vectors}
	\begin{align}
		\label{eq:Bik}
		B_{i,k} =B_{k\delta+\alpha_i} :=\big(o(i)\TT_{\bome}\big)^{-k}(B_i), \quad
		\text{ for } k \in \Z, i \in \{1,2\}.
	\end{align}
	Denote, for $i=1,2$ and $k\in \Z$,
	\begin{equation}\label{Dn}
		D_{i,k} :=-[B_{\tau i},B_{i,k }]_{v^{-1}}-[B_{i,k+1},B_{\tau i,-1}]_{v^{-1}}.
	\end{equation}
	Set $\TH_{i,0} =\frac{1}{v-v^{-1}}$.
	Define the {\em imaginary $v$-root vectors} $\TH_{i,m}$, for $m\ge 1$, inductively:
	\begin{align}
		\label{TH}
		\TH_{i,1} & = -o(i)\big(\big[ B_i,[ B_{\tau i},B_0 ]_v\big]_{v^{2}}- v B_0 \bK_i\big),
		\\
		\label{eq:TH12}
		\TH_{i,2} &= -vD_{i,0} C \bK_{\tau i}^{-1}+v\TH_{i,0}C -\TH_{\tau i,0}C\bK_{\tau i}^{-1}\bK_i,
		\\
		\label{THn}
		\TH_{i,m} &=v\TH_{i,m-2} C -vD_{i,m-2}C \bK_{\tau i}^{-1},
		\quad \text{ for } m\ge 3.
	\end{align}
	For convenience, we set  $\TH_{i,m}=0$ for $m<0$.

\begin{proposition}[{\cite[Theorem 5.5]{LWZ23}}]
		\label{prop:Dr1}
There is an algebra isomorphism ${\Phi}: \tUiD (\widehat{\mathfrak{sl}}_3,\tau) \rightarrow\tUi(\widehat{\mathfrak{sl}}_3,\tau)$, which sends
\begin{align}
			\label{eq:isom}
	B_{i,l}\mapsto B_{i,l}, \quad \Theta_{i,m} \mapsto \Theta_{i,m},
			\quad
	\K_i\mapsto \K_i, \quad C\mapsto C,
	\quad \text{ for } m\ge 1,\; l\in \Z,\; i \in \{1,2\}.
\end{align}
The inverse ${\Phi}^{-1}$ sends
\begin{align*}
	\K_0\mapsto & -v^{-1} C \K_1^{-1}\K_2^{-1},
			\quad
	\K_i\mapsto  \K_i,
			\quad
	B_i\mapsto   B_{i,0},
	\quad \text{ for }i \in\{1,2\},
			\\
	B_0\mapsto
	& o(1) v^{-1} \big(\TH_{1,1}-v [B_1,B_{2,-1}]_{v^{-1}}  C\K_2^{-1}\big)\K_1^{-1}.
	\end{align*}
\end{proposition}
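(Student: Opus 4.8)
The plan is to realize $\Phi$ and its inverse as explicit maps on generators and to check that each respects the defining relations, so that they are mutually inverse by a computation on a finite generating set. First I would show that the assignment \eqref{eq:isom}, sending the abstract Drinfeld generators of $\tUiD(\widehat{\mathfrak{sl}}_3,\tau)$ to the concrete real root vectors $B_{i,l}$ of \eqref{eq:Bik}, the imaginary root vectors $\Theta_{i,m}$ of \eqref{TH}--\eqref{THn}, together with $\K_i$ and $C$, defines an algebra homomorphism $\Phi\colon\tUiD(\widehat{\mathfrak{sl}}_3,\tau)\to\tUi(\widehat{\mathfrak{sl}}_3,\tau)$. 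Concretely this means verifying that these particular elements of $\tUi$ satisfy \eqref{qsiA1DR1}--\eqref{qsiA1DR6}. The central leverage is that $B_{i,k}=(o(i)\TT_{\bome})^{-k}(B_i)$ is the image of $B_i$ under an algebra automorphism (Theorem~\ref{thm:Ti}, Proposition~\ref{prop:fixB}): any identity established for one ``low'' pair of indices propagates along a whole diagonal family of indices by applying powers of $\TT_{\bome}$. I would thus reduce every Drinfeld relation to finitely many base cases involving only $B_0,B_1,B_2$ and the first few $\Theta_{i,m}$, which can be attacked inside the original presentation \eqref{eq:S1}--\eqref{eq:S4}. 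Surjectivity of $\Phi$ is then immediate: $B_i=B_{i,0}$ and $\K_i$ lie in the image, $\K_0$ is a product of $C$ and the $\K_i$, and $B_0$ is recovered from $\Theta_{1,1}$ via \eqref{TH}.

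I would verify the relations in increasing order of difficulty. The Cartan-type relations \eqref{qsiA1DR1} follow at once from the $\K_i$-weight of $B_{i,k}$, namely $k\delta+\alpha_i$. The bracket relations \eqref{qsiA1DR3} reduce, through the recursions \eqref{TH}--\eqref{THn} and conjugation by $\TT_{\bome}$, to the single base computation of $[\Theta_{i,1},B_{j,l}]$, which I would extract from the definition \eqref{TH} of $\Theta_{i,1}$ together with the rank-one relations \eqref{relation5} and \eqref{eq:S4}. The Drinfeld commutator relations \eqref{qsiA1DR4}--\eqref{qsiA1DR5} among real root vectors would be handled by establishing the cases $[B_{i,0},B_{j,1}]$ and $[B_{i,0},B_{j,2}]$ directly and then shifting by $\TT_{\bome}$; here one must also confirm that the $\Theta$-built right-hand side of \eqref{qsiA1DR5} is exactly the correct correction term, which is precisely how the $\Theta_{i,m}$ were engineered in \eqref{TH}--\eqref{THn} and \eqref{Dn}.

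The two genuinely hard steps will be the commutativity \eqref{qsiA1DR2} of the imaginary root vectors and the deformed iSerre relation \eqref{qsiA1DR6}. For \eqref{qsiA1DR2} I would argue by induction on $m+n$, using \eqref{THn} to rewrite $[\Theta_{i,m},\Theta_{j,n}]$ in terms of brackets already controlled by \eqref{qsiA1DR3} and \eqref{qsiA1DR5}, and then closing the induction. For \eqref{qsiA1DR6} the cleanest route is to first prove the instance $\bS_{i,\tau i}(k_1,k_2\mid l)$ for $l=0$ and the smallest $k_1,k_2$ by a direct, if lengthy, manipulation of the iSerre relation \eqref{eq:S4} and the operator $\TT_i$, and then generate all remaining instances by applying $\TT_{\bome}$ and exploiting the symmetrization $\Sym_{k_1,k_2}$. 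I expect \eqref{qsiA1DR6} to be the principal obstacle: it couples three index families with the $\Theta$-corrections and does not linearize, so the base-case bookkeeping is substantial.

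Finally I would check that the formulas for $\Phi^{-1}$ define a homomorphism $\tUi(\widehat{\mathfrak{sl}}_3,\tau)\to\tUiD(\widehat{\mathfrak{sl}}_3,\tau)$, by verifying the original relations \eqref{eq:S1}, \eqref{eq:S6}, \eqref{relation5}, \eqref{eq:S2}, \eqref{eq:S4} on the proposed images of $B_0,B_1,B_2,\K_0,\K_1,\K_2$; the only delicate input is the image of $B_0$, read off from the definition \eqref{TH} of $\Theta_{1,1}$. The composites are then the identity: it suffices to test them on a finite generating subset of Drinfeld generators (all $B_{i,l}$ and $\Theta_{i,m}$ being produced from $B_{1,0},B_{2,0},B_{1,1},B_{2,-1}$ and the toral/central elements via \eqref{qsiA1DR3} and \eqref{qsiA1DR5}), where the round trip is transparent since $\Phi$ sends $B_{i,0}\mapsto B_i$ and recovers $B_0$ through $\Theta_{1,1}$. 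As an alternative to building $\Phi^{-1}$ by hand, injectivity of $\Phi$ could be deduced from a PBW/triangular-decomposition argument, comparing a spanning set of $\tUiD$ with the associated graded $\gr\tUi\cong\U^-\otimes\Q(v)[\K_i^\pm]$ of \eqref{eq:filter} and the known Drinfeld basis of $\U^-$; but the explicit inverse is the more economical route here.
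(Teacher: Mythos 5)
The paper itself offers no proof of Proposition \ref{prop:Dr1}: it is imported verbatim from \cite[Theorem 5.5]{LWZ23}, so the only comparison available is with the proof given there (and with the parallel argument this paper supplies for its higher-rank analogue, Theorem \ref{thm:Dr}). Your outline matches that proof in its main thrust: one checks that the concrete root vectors of \eqref{eq:Bik}--\eqref{THn} satisfy the Drinfeld relations \eqref{qsiA1DR1}--\eqref{qsiA1DR6}, using the fact that $\TT_{\bome}$ is an algebra automorphism fixing the $\TH_{i,m}$ to propagate each relation from finitely many base cases, and surjectivity is exactly your observation that $B_0$ is recovered from $\TH_{1,1}$ via \eqref{TH}. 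Where you genuinely diverge is the injectivity step. The source, like the proof of Theorem \ref{thm:Dr} here, passes to the filtration \eqref{eq:filt1}--\eqref{eq:filter} and identifies $\Phi^{\mathrm{gr}}$ with the known Beck--Damiani isomorphism for the Drinfeld presentation of $\U^-$; the explicit formula for $\Phi^{-1}$ is then read off a posteriori from \eqref{TH}, not independently verified to be a homomorphism. Your preferred route --- building $\Phi^{-1}$ by hand and checking the composites on generators --- is legitimate but not cheaper: to see that the proposed image of $B_0$ respects \eqref{eq:S2} and \eqref{eq:S4} you would have to carry out, inside $\tUiD(\widehat{\mathfrak{sl}}_3,\tau)$, computations of essentially the same size as the forward verification of \eqref{qsiA1DR6}, and your claim that every $B_{i,l}$ and $\TH_{i,m}$ is generated from $B_{1,0},B_{2,0},B_{1,1},B_{2,-1}$ and the toral/central elements is itself a small lemma requiring proof. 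The filtration argument you mention in your closing sentence is the one actually used, and I would promote it to the primary route.
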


	%
	%%%%%%%%%%%%%%%%%%
	\section{Affine rank one subalgebras of affine iquantum groups}
	\label{sec:subalgebras}
	
	In this section, we construct new affine rank one subalgebras of affine iquantum groups $\tUi$ for each $i\in \I_0$ by establishing embeddings from the 2 different types of iquantum groups of affine rank one to $\tUi$.

\subsection{Definitions of affine rank one subalgebras}
	Define a sign function
	\[
	o(\cdot): \II \longrightarrow \{\pm 1\}
	\]
	such that $o(i) o(j)=-1$ whenever $c_{ij} <0$ (there are exactly 2 such functions).
	Inspecting the Satake diagram \eqref{pic:AIII2r}, we see that the values of $o(i)o(\tau i)=-1$ are independent of $i\in \I_0$. 
    
    \begin{lemma} \label{lem:TKK}
    We have $ \K_i^{-1} \TT_{\bome_i}^{-1}(\K_i)=v^{2r-1}\K_\delta$, for all $i\in \I_0$. 
    \end{lemma}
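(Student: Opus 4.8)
The plan is to compute the scalar $\K_i^{-1}\TT_{\bome_i}^{-1}(\K_i)$ by evaluating the action of $\TT_{\bome_i}$ on the Cartan element $\K_i$ directly, using that $\bome_i\in W_\tau$ acts on the weight lattice by $t_{\omega_i}t_{\omega_{\tau i}}$. Since each $\TT_w$ for $w\in W_\tau$ sends $\K_\mu$ to $\K_{w\mu}$ up to an explicit sign/power of $v$ dictated by the formulas in Theorem~\ref{thm:Ti}, the cleanest route is to track the $\K$-part of the braid action. First I would express $\K_i$ (for $i\in\I_0$, which is a real node with $\tau i\neq i$) in terms of the $\tk$'s, and then compute $\TT_{\bome_i}^{-1}(\K_i)$ by applying $\bome_i^{-1}=t_{-\omega_i}t_{-\omega_{\tau i}}$ to the relevant weight, keeping careful track of the accumulated powers of $v$ (and signs, which should cancel since we are acting by an element of even length in the reflections $\bs_j$).

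The key computation reduces to understanding how $t_{\omega_i}t_{\omega_{\tau i}}$ shifts roots by $\delta$. Recall $t_\omega(\alpha_j)=\alpha_j-\langle\omega,\alpha_j\rangle\delta$, so the translation part of $\bome_i$ records exactly the $\delta$-shift that produces the $\K_\delta$ factor. Concretely I would use the relation $\K_\ell B_i=v^{c_{\tau\ell,i}-c_{\ell i}}B_i\K_\ell$ together with the fact that $\TT_{\bome_i}$ intertwines the grading, so that $\TT_{\bome_i}^{-1}(\K_i)$ differs from $\K_i$ by $\K_{\text{(translation part)}}$, i.e.\ by a power of $\K_\delta$, times a power of $v$. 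Computing $\langle\omega_i+\omega_{\tau i},\cdot\rangle$ against the appropriate root, and using that the highest root $\theta=\sum_{j\in\II}\alpha_j$ has height $2r$ in type $A_{2r}$, should produce the exponent $2r-1$ on $v$ and the single factor of $\K_\delta$. The factor $v^{2r-1}$ and the appearance of $\K_\delta$ are both governed by Corollary~\ref{cor:length}, which gives $\ell^\circ(\bome_i)$ and hence controls the total power of $v$.

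The main obstacle I anticipate is bookkeeping the precise power of $v$ and verifying that all signs $(-v)$ arising from the $c_{i,\tau i}=0$ case of Theorem~\ref{thm:Ti}(2) combine correctly to leave a clean $v^{2r-1}$ with no leftover sign. Since $i\in\I_0$ has $c_{i,\tau i}=0$ in type ${\rm AIII}_{2r}^{(\tau)}$ (the two nodes of each $\tau$-orbit are not adjacent for the relevant interior nodes), the formula $\TT_i(\K_j)=(-v)^{-c_{ij}-c_{\tau i,j}}\K_j\K_i^{-c_{ij}}\K_{\tau i}^{-c_{\tau i,j}}$ must be applied iteratively along a reduced word for $\bome_i$, and the signs need to cancel in pairs. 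I would handle this by reducing to the rank-one shift: it suffices to verify the claim is $\tau$-equivariant and independent of $i\in\I_0$ (as already noted for $o(i)o(\tau i)$), then compute it for a single convenient node — say $i=1$ using the reduced expression $\bome_1=\bs_0\bs_1\cdots\bs_{r-1}\bs_r\bs_{r-1}\cdots\bs_1$ from Corollary~\ref{cor:length} — and read off the cumulative scalar. An alternative, possibly cleaner, verification is to pair both sides against $B_i$: applying $\TT_{\bome_i}$ and using Proposition~\ref{prop:fixB} together with the commutation relation between $\K$'s and $B$'s pins down the scalar uniquely, bypassing the sign bookkeeping in the iterated Cartan formula.
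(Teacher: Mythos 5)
Your base-case computation is essentially the paper's: for $i=1$ one writes $\TT_{\bome_1}=\TT_0\TT_1\cdots\TT_{r-1}\TT_r\TT_{r-1}\cdots\TT_1$ from Corollary~\ref{cor:Da} and pushes $\K_1$ through the Cartan formulas of Theorem~\ref{thm:Ti}, obtaining $v^{2r-1}\K_\delta$. The problem is how you get from $i=1$ to general $i\in\I_0$, and here the proposal has a genuine gap. You propose to "verify the claim is $\tau$-equivariant and independent of $i\in\I_0$ (as already noted for $o(i)o(\tau i)$)" and then compute only for $i=1$. But the independence in $i$ of $\K_i^{-1}\TT_{\bome_i}^{-1}(\K_i)$ is precisely the nontrivial content of the lemma --- the paper only records it as a consequence, in \eqref{C}, \emph{after} the lemma is proved. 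It is not analogous to the triviality $o(i)o(\tau i)=-1$. A priori the scalar could depend on $i$: the reduced expression of $\bome_i$ is $(\bs_0\bs_1\cdots\bs_r\bs_{r-1}\cdots\bs_i)^i$ with length $i(2r+1-i)$ varying in $i$, so the accumulated powers of $v$ and signs differ word by word. The paper closes this by an induction on $i$ (for $1\le i\le r$) using Lemma~\ref{lem:Tomeij}, which converts $\TT_{\bome_{i+1}}^{-1}(\K_{i+1})$ into $\TT_i\TT_{\bome_i}^{-1}\TT_i(\K_{i+1})$ and feeds in the inductive hypothesis; the range $r+1\le i\le 2r$ then follows by $\widehat\tau$. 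Without this (or an equally explicit computation for each $\bome_i$), your argument only establishes the case $i=1,2r$.

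Your fallback --- "pair both sides against $B_i$ \ldots the commutation relation between $\K$'s and $B$'s pins down the scalar uniquely" --- does not work. Since $\sum_{\ell\in\I}c_{\tau\ell,j}=\sum_{\ell\in\I}c_{\ell j}$, the element $\K_\delta$ commutes with every $B_j$, exactly as a scalar does; so commutation against the $B$'s determines $\TT_{\bome_i}^{-1}(\K_i)$ only up to an arbitrary element of $\Q(v)[\K_\delta^{\pm1}]$ and cannot detect the factor $v^{2r-1}$ (nor a sign). Two smaller inaccuracies: $c_{i,\tau i}=0$ fails for $i=r,r+1$ (those nodes are adjacent, $c_{r,\tau r}=-1$), and in any case every reduced word for $\bome_i$ contains $\bs_r$, whose Cartan formula is the case-(3) one of Theorem~\ref{thm:Ti} (no sign), so the sign bookkeeping is not governed solely by the case-(2) formula; "the signs cancel because the length is even" is a heuristic, not an argument, since the number of $(-1)$'s picked up at each step is $-c_{ij}-c_{\tau i,j}$ rather than $1$.
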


    \begin{proof}
    We prove by induction on $i$. 
        By using \eqref{eq:Tomega1}, 	\begin{align*}
			-\K_1^{-1}\TT_{\bome_1}^{-1}(\K_1)
			&=-\K^{-1}\TT_1^{-1}\cdots \TT_{r-1}^{-1} \TT_r^{-1}\TT_{r-1}^{-1}\cdots \TT_0^{-1}(\K_1)
			\\
			&=-v^{2r-1}\K_0\K_1\cdots\K_{2r}=-v^{2r-1}\K_\delta.
		\end{align*}
Assume $\K_i^{-1} \TT_{\bome_i}^{-1}(\K_i)=v^{2r-1}\K_\delta$, for $1\leq i\leq r-1$. By using 
Lemma \ref{lem:Tomeij}, we have 
\begin{align*}
    \TT_{\bome_{i+1}}^{-1}(\K_{i+1})&=\TT_i\TT_{\bome_i}^{-1}\TT_i\TT_{\bome_i}^{-1}\TT_{\bome_{i-1}}^{-1}(\K_{i+1})
    =\TT_i\TT_{\bome_i}^{-1}\TT_i(\K_{i+1})
    \\
    &=-v\TT_i\TT_{\bome_i}^{-1}(\K_i\K_{i+1})
   = -v\TT_i\TT_{\bome_i}^{-1}(\K_i)\TT_i\TT_{\bome_i}^{-1}(\K_{i+1})
    \\
    &=-v\TT_i(v^{2r-1}\K_\delta \K_i)\TT_i(\K_{i+1})
   -v^{2r-2}\K_i^{-1}\K_\delta(-v)(\K_i\K_{i+1})
    \\
    &=v^{2r-1}\K_\delta\K_{i+1}.
\end{align*}
This proves the desired formula for $1\leq i\leq r$. 

For $r+1\leq i\leq 2r$, it holds by applying the involution $\widehat{\tau}$.        
\end{proof}

We set 
	\begin{align}
		\label{C}
		C:=o(i)o(\tau i)\,\K_i^{-1} \TT_{\bome_i}^{-1}(\K_i)=-v^{2r-1}\K_\delta,
	\end{align}
	which is independent of $i\in \I_0$ by Lemma \ref{lem:TKK}.
	For $i\in\II$, we denote
	\[
	\bome_{i}':=\bome_i\bs_i, 
	\]
	and we have 
	\begin{align*}
		\ell(\bome_i') =\ell(\bome_i)-\ell(\bs_i), \qquad \text{and} \qquad \TT_{\bome_{i}'}=\TT_{\bome_i} \TT_i^{-1}. 
	\end{align*}
	
	\begin{definition} \label{def:Uii}
		For any $i\in\I_0$ such that $i\notin \{r,r+1\}$, we define $\tUi_{[i]}$ to be the $\Q(v)$-subalgebra of $\tUi$ generated by $B_j$, $\K_j^{\pm1}$, $\TT_{\bome_j'}(B_j)$,  $C^{\pm1}$ for $j\in\{i,\tau i\}$.
	\end{definition}

We shall also define the subalgebra $\tUi_{[i]}$ of $\tUi$, for $i\in\{r,r+1\}$. 
To that end, we first describe some properties of $W_\tau$ and $\Br(W_\tau)$. Note that $\I_0 =\I^c \cup \{r,r+1\}$, where 
    \[
    \I^c=\{1,2,\ldots,r-1,r+2,\ldots, 2r\}.
    \]
	
	Define $\bth_r$ to be the longest element in $W_{\I^c_\tau}:=\langle \bs_i\mid 1\le i <r \rangle$ (viewed as a subgroup of $W_\tau$), with a reduced expression given by 
    \[
    \bth_r=\bs_{r-1}(\bs_{r-2}\bs_{r-1})\cdots(\bs_{1}\cdots \bs_{r-1}).
    \]     
	Viewing $W_\tau$ (and $W_{\I^c_\tau}$) as a subgroup of $W$ (and $W_{\I^c}$), we can identify $\bth_r$ with the longest element in the parabolic subgroup $W_{\I^c}$ of $W$. We shall need the root vector associated to $\de-\alpha_r-\alpha_{\tau r}$:
	\begin{align}
		\label{def:Btheta}
		\TT_{\bth_r}^{-1} (B_0)
		=(\TT_{r-1}^{-1}\cdots\TT_{2}^{-1}\TT_{1}^{-1})\cdots(\TT_{r-1}^{r-1}\TT_{r-2}^{-1})\TT_{r-1}^{-1}(B_0)
		=\TT_{r-1}^{-1}\cdots\TT_{2}^{-1}\TT_1^{-1}(B_0).
	\end{align}
    We can now give a crucial definition of a new affine rank one subalgebra.
	\begin{definition} \label{def:Uir}
		We define $\tUi_{[r]} = \tUi_{[r+1]}$ to be the $\Q(v)$-subalgebra of $\tUi$ generated by $B_r, B_{r+1}$, $\K_r^{\pm 1}$, $\K_{r+1}^{\pm 1}$, $\TT_{\bth_r}(\K_0)^{\pm 1}$ and $\TT_{\bth_r}^{-1} (B_0)$. 
		%(Compare Definition~\ref{def:Uii}.)
	\end{definition}

	\subsection{Affine rank one subalgebra for $c_{r,\tau r}=-1$}
	
	To distinguish notations for $\tUi$, we shall adopt the dotted notation for the generators for the quasi-split affine iquantum group $\tUi(\widehat{\mathfrak{sl}}_3,\tau)$ of type AIII$_2^{(\tau)}$: $\dot B_i, \dot \K_i$, for $i=\{0,1,2\}$, whose Drinfeld prepsentation is given in Proposition \ref{prop:Dr1}. Accordingly we denote the relative braid group symmetries on $\tUi(\widehat{\mathfrak{sl}}_3,\tau)$ by $\dot \TT_w$. Recall the subalgebra $\tUi_{[r]}$ of $\tUi$ from Definition~\ref{def:Uir}. 
	
\begin{proposition}
		\label{prop:rank1iso-sl3}
	There exists a $\Q(v)$-algebra isomorphism  $\aleph_r: \tUi(\widehat{\mathfrak{sl}}_3,\tau) \longrightarrow \tUi_{[r]}$, which sends $\dot B_1 \mapsto B_r, \dot B_2 \mapsto B_{\tau r}, \dot B_0\mapsto  \TT_{\bth_r}^{-1} (B_0),\dot\K_1 \mapsto \K_r, \dot\K_2\mapsto \K_{\tau r}, \dot\K_0 \mapsto \TT_{\bth_r}(\K_0)$. In particular, for $j\in\{r, \tau r\}$, we have
	\begin{align}\label{TB00i}
		&\TT_{\bth_r}^{-1} (B_0)^2 B_j -[2] \TT_{\bth_r}^{-1} (B_0) B_j \TT_{\bth_r}^{-1} (B_0)+B_j \TT_{\bth_r}^{-1} (B_0)^2=-v^{-1}\TT_{\bth_r}(\K_0) B_j,
			\\
			&\TT_{\bth_r}^{-1} (B_0) B_j^2 -[2]  \TT_{\bth_r}^{-1} (B_0) B_j \TT_{\bth_r}^{-1} (B_0)+B_j^2 \TT_{\bth_r}^{-1} (B_0)=0. \label{TB0ii}
	\end{align}
\end{proposition}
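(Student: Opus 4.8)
The plan is to construct the isomorphism $\aleph_r$ by exploiting the fact that $\tUi_{[r]}$ is, by Definition~\ref{def:Uir}, generated by precisely the images we wish to assign, so the real content is to verify that these images satisfy the defining relations of $\tUi(\widehat{\mathfrak{sl}}_3,\tau)$ listed in \eqref{eq:S1}--\eqref{eq:S4} for the rank-one Satake diagram \eqref{eq:satakerank1}, and then that the resulting surjection is injective. The key structural input is that $\TT_{\bth_r}$ is a product of ibraid operators $\TT_j$ with $j\in\I^c_\tau$, i.e.\ indexed by nodes \emph{disjoint} from $\{r,r+1\}$ and their neighbors except through the single edge $c_{r-1,r}=-1$. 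Thus I would first record how $\TT_{\bth_r}$ acts on the generators $B_r,B_{\tau r},\K_r,\K_{\tau r}$: since $\bth_r$ lies in $W_{\I^c_\tau}$ and fixes the simple reflections $\bs_r,\bs_{r+1}$, Proposition~\ref{prop:fixB} and the $\K$-action formulas in Theorem~\ref{thm:Ti} should give that $\TT_{\bth_r}$ fixes $B_r,B_{\tau r}$ (or moves them in a controlled way) and sends $\K_0\mapsto\TT_{\bth_r}(\K_0)$, a central-up-to-computation element.

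Concretely, I would verify the three types of relations in turn. First, the Serre-type and commutation relations internal to the pair $\{r,\tau r\}$ (with $c_{r,\tau r}=-1$): these are inherited directly from \eqref{eq:S4} applied to $i=r$ in $\tUi$, matching the corresponding relation in $\tUi(\widehat{\mathfrak{sl}}_3,\tau)$ for the $\tau$-swapped pair $\{1,2\}$. Second, and this is the crux, the relations \eqref{TB00i} and \eqref{TB0ii} between $\TT_{\bth_r}^{-1}(B_0)$ and $B_j$ for $j\in\{r,\tau r\}$. The cleanest route is to apply the algebra automorphism $\TT_{\bth_r}^{-1}$ to the known relations \eqref{eq:S2} and \eqref{eq:S1}/\eqref{eq:S6} that hold among $B_0$, $B_{r-1}$, $\K_0$ in $\tUi$ near the affine node. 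Since $\TT_{\bth_r}^{-1}(B_j)=B_j$ for $j\in\{r,\tau r\}$ (as $\bth_r$ commutes past these nodes appropriately), applying $\TT_{\bth_r}^{-1}$ to a relation of the form \eqref{eq:S2} with $i=r$ or to the relevant rank-two relation transports it to the asserted identities \eqref{TB00i}--\eqref{TB0ii}, with the twist $\K_i\mapsto\TT_{\bth_r}(\K_0)$ accounting for the right-hand sides.

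The verification that $\TT_{\bth_r}^{-1}(B_0)$ interacts with $B_r,B_{\tau r}$ exactly as $\dot B_0$ interacts with $\dot B_1,\dot B_2$ in type AIII$_2^{(\tau)}$ is what pins down the homomorphism; I would check the $\K$-commutation relations \eqref{qsiA1DR1}-style weight conditions using $\TT_{\bth_r}(\K_0)$ and the fact that $\bth_r^{-1}(\balpha_0)=\delta-\balpha_r-\balpha_{\tau r}$, so the root datum of the subalgebra matches the affine rank one datum of $A_2^{(\tau)}$. Once all defining relations are confirmed, the assignment extends to a surjective algebra homomorphism $\aleph_r$ by the presentation of $\tUi(\widehat{\mathfrak{sl}}_3,\tau)$; surjectivity is immediate since the images generate $\tUi_{[r]}$. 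Injectivity I would obtain by a filtration/associated-graded argument: under \eqref{eq:filter}, the generators $B_r,B_{\tau r},\TT_{\bth_r}^{-1}(B_0)$ map to linearly independent root vectors $F_r,F_{\tau r},F_{\delta-\alpha_r-\alpha_{\tau r}}$ in $\U^-$, so $\gr\aleph_r$ is an isomorphism onto its image and hence $\aleph_r$ is injective.

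The main obstacle I expect is the bookkeeping in showing $\TT_{\bth_r}^{-1}(B_0)$ really plays the role of $\dot B_0$: one must confirm both that $\TT_{\bth_r}^{-1}(B_0)$ is a genuine root vector for $\delta-\alpha_r-\alpha_{\tau r}$ (using the reduced expression in \eqref{def:Btheta} and Proposition~\ref{prop:fixB}) and that the nonstandard generator choice produces exactly the right-hand sides $-v^{-1}\TT_{\bth_r}(\K_0)B_j$ and $0$ rather than extra lower-order correction terms. This requires carefully tracking how $\TT_{\bth_r}^{-1}$ conjugates the $\K$-factors appearing in \eqref{eq:S2}, which is the delicate computational heart of the argument and the reason the type-(\texttt{B}) construction is flagged as new and nonstandard in the introduction.
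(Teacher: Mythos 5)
Your overall skeleton (check the defining relations of $\tUi(\widehat{\mathfrak{sl}}_3,\tau)$ on the proposed images, get surjectivity for free from Definition~\ref{def:Uir}, and prove injectivity by passing to the associated graded as in Beck) coincides with the paper's, and the injectivity argument in particular is exactly what the paper does. However, there is a genuine gap at the computational heart, namely your verification of \eqref{TB00i}--\eqref{TB0ii}. You propose to obtain these by applying the automorphism $\TT_{\bth_r}^{-1}$ to known Serre relations near the affine node, justified by the claim that $\TT_{\bth_r}^{-1}(B_j)=B_j$ for $j\in\{r,\tau r\}$. That claim is false: $\bth_r$ is the longest element of the parabolic subgroup $W_{\I^c}$, and $\bth_r(\alpha_r)=\alpha_1+\cdots+\alpha_r\neq\alpha_r$ (already for $r=2$ one has $\bs_1(\alpha_2)=s_1s_4(\alpha_2)=\alpha_1+\alpha_2$), so Proposition~\ref{prop:fixB} does not apply and $\TT_{\bth_r}^{\pm1}(B_r)$ is a genuine higher root vector, not $B_r$. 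Moreover the relation \eqref{eq:S2} ``with $i=r$'' is not available, since \eqref{eq:S2} requires $c_{i,\tau i}=2$ while $c_{r,\tau r}=-1$; the only instances of \eqref{eq:S2} involving $B_0$ pair it with $B_1$ or $B_{2r}$, and applying $\TT_{\bth_r}^{-1}$ to those produces relations involving $\TT_{\bth_r}^{-1}(B_1)$, which is again not one of your generators. So the transport argument, as stated, does not produce \eqref{TB00i}--\eqref{TB0ii}.

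What the paper actually does for \eqref{TB0ii} is an explicit computation: it writes $\TT_{\bth_r}^{-1}(B_0)=\TT_{r-1}^{-1}\cdots\TT_2^{-1}\big(\big[B_{2r},[B_1,B_0]_v\big]_v-v\K_{2r}B_0\big)$, expands $\TT_{r-1}^{-1}\cdots\TT_2^{-1}(B_1)$ and $\TT_{r-1}^{-1}\cdots\TT_2^{-1}(B_{2r})$ as iterated $v$-commutators of the $B_i$'s, observes that $B_r$ commutes with every constituent except $B_{r-1}$, and reduces the whole identity to the single finite-type Serre relation $\big[B_r,[B_r,B_{r-1}]_v\big]_{v^{-1}}=0$ from \eqref{eq:S6}; the relation \eqref{TB00i} is then proved by an entirely similar expansion of the equivalent identity $B_0^2\,\TT_{\bth_r}(B_r)-[2]B_0\,\TT_{\bth_r}(B_r)B_0+\TT_{\bth_r}(B_r)B_0^2=-v^{-1}\K_0\,\TT_{\bth_r}(B_r)$. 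Some such hands-on computation is unavoidable precisely because the generator $\TT_{\bth_r}^{-1}(B_0)$ is nonstandard; your closing paragraph correctly identifies this as the delicate point, but the mechanism you offer for resolving it does not work.
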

	  
\begin{proof} 
Assume that $r\ge 2$ as the case $r=1$ is trivial. We only consider the case for $j=r$ as the other case for $j=r+1$ follows by symmetry. 
        
Let us prove \eqref{TB0ii}, which can be reformulated as
\begin{align*}
	S_r(\TT_{\bth_r}^{-1} (B_0))=0,
	\qquad \text{ where } S_r(x) :=\big[B_r,[B_r,x]_v\big]_{v^{-1}}.
\end{align*}
Since $\TT_k(B_0)=B_0$ for any $k\neq 1,2r$, we have
\begin{align*}
\TT_{\bth_r}^{-1} (B_0)
=\TT_{r-1}^{-1}\cdots \TT_2^{-1}\Big(\big[B_{2r},[B_1,B_0]_v\big]_v-v \K_{2r} B_0 \Big),
\end{align*}
and in addition, 
\begin{align*}
&\TT_{r-1}^{-1}\cdots \TT_2^{-1}(B_0)=B_0,\\
&\TT_{r-1}^{-1}\cdots \TT_2^{-1}(B_1)=\bigg[\Big[\cdots\big[ [B_{r-1},B_{r-2} ]_v,B_{r-3}\big]_v\cdots,B_2\Big]_v ,B_1\bigg]_v,\\
&\TT_{r-1}^{-1}\cdots \TT_2^{-1}(B_{2r})= \bigg[\Big[\cdots\big[ [B_{r+2}, B_{r+3}]_v,B_{r+4}\big]_v\cdots,B_{2r-1}\Big]_v ,B_{2r}\bigg]_v.
\end{align*}
Note that $B_r$ commutes with $\TT_{r-1}^{-1}\cdots \TT_2^{-1}(B_{2r})$ and $B_0$, as $B_r$ commutes with all the relevant $B_i$ in the above expression for $\TT_{r-1}^{-1}\cdots \TT_2^{-1}(B_{2r})$. For the same reason, we also have
\[
S_r\big(\TT_{r-1}^{-1}\cdots \TT_2^{-1}(B_1)\big)=\bigg[\Big[\cdots\big[ [S_r(B_{r-1}), 	B_{r-2}]_v,B_{r-3}\big]_v\cdots,B_2\Big]_v ,B_1\bigg]_v=0.
\]
where we used $S_r(B_{r-1})=0$; see the defining relation \eqref{eq:S6}. By the above computations, we have
\begin{align*}
	&S_r\Big( \TT_{r-1}^{-1}\cdots \TT_2^{-1}\big[B_{2r},[B_1,B_0]_v\big]_v\Big)
			\\
	&= \big[\TT_{r-1}^{-1}\cdots \TT_2^{-1}(B_{2r}),[S_r\big(\TT_{r-1}^{-1}\cdots \TT_2^{-1}(B_1)\big),B_0]_v\big]_v
			=0.
\end{align*}
It remains to show that $\TT_{r-1}^{-1}\cdots \TT_2^{-1}\big( \K_{2r} B_0 \big)$ is annihilated by $S_r(\cdot)$. By definition, we have $\TT_{r-1}\cdots \TT_2(\K_{2r})=(-v)^{r-2}\K_{2r}\K_{2r-1}\cdots \K_{r+2}$. Hence, we have
\begin{align*}
	S_r\Big(\TT_{r-1}^{-1}\cdots \TT_2^{-1}\big( \K_{2r} B_0 \big)\Big) &=(-v)^{r} 	\K_{2r}\K_{2r-1}\cdots \K_{r+2}\big[ B_r, [B_r,B_0]\big]_ {v^{-2}}=0.
\end{align*} 
Therefore, we have proved \eqref{TB0ii}.
		
By entirely similar arguments we prove the following equivalent version of \eqref{TB00i} with $j=r$:
\[
B_0^2 \TT_{\bth_r}(B_r )-[2]B_0 \TT_{\bth_r}(B_r) B_0 +\TT_{\bth_r}(B_r) B_0^2=-v^{-1}\K_0 \TT_{\bth_r}(B_r).
\]
The detail is skipped. 

The surjectivity of $\aleph_r$ is clear since all generators of $\tUi_{[i]}$ are in the image. It remains to prove the injectivity of $\aleph_r$ using a filtration argument. Recall from \eqref{eq:filt1} and \eqref{eq:filter} the natural filtrations on $\tUi$ (and $\tUi(\widehat{\mathfrak{sl}}_3)$, respectively) such that the associated graded are given by
\begin{align*} 
\mathrm{gr}\tUi \cong \U^-\otimes \bQ(v)[\K_i^{\pm1} | i\in \I],
    \qquad
\mathrm{gr}\tUi(\widehat{\mathfrak{sl}}_3)\cong \U(\widehat{\mathfrak{sl}}_3)^-\otimes \bQ(v)[\dot{\K}_i^{\pm1}\mid i=0,1,2].
\end{align*}
The map $\aleph_r$ is compatible with these two filtrations and then $\aleph_r$ induces a homomorphism $\aleph_r^{\text{gr}}: \U(\widehat{\mathfrak{sl}}_3)^-\rightarrow \U ^-$ on the associated graded algebras, which sends $F_1 \mapsto F_r, F_2\mapsto F_{r+1}, F_0 \mapsto T_{\bth_r}^{-1}(F_0)$. Here $T_w$ ($w\in W$) denote Lusztig's braid group symmetries; see \cite{Lus93}. We can use similar arguments as in \cite[Proposition 3.8]{Be94} to prove that this induced homomorphism $\aleph_r^{\text{gr}}$ is injective, and hence $\aleph_r$ is also injective.
\end{proof}
    
	\begin{remark}
	   The algebra isomorphism $\aleph_r: \tUi(\widehat{\mathfrak{sl}}_3,\tau) \longrightarrow \tUi_{[r]}$ has a variant that sends $\dot B_1 \mapsto B_r, \dot B_2 \mapsto B_{\tau r}, \dot B_0\mapsto  \TT_{\bth_r} (B_0),\dot\K_1 \mapsto \K_r, \dot\K_2\mapsto \K_{\tau r}, \dot\K_0 \mapsto \TT_{\bth_r}^{-1}(\K_0)$. However, the homomorphism $\aleph_r$ given in Proposition \ref{prop:rank1iso-sl3} is the only one compatible with the construction of root vectors below.
	\end{remark}

	\subsection{Affine rank one subalgebras for $c_{i,\tau i}=0$}

    Denote by $\ov{\U}(\widehat{\mathfrak{sl}}_2)$ the quotient algebra of $\tU(\widehat{\mathfrak{sl}}_2)$ modulo the ideal generated by the central element $\tK_\de-\tK'_\de$.
	To distinguish notations from those for $\tUi$, we shall adopt the dotted notation for the generators for $\ov{\U}(\widehat{\mathfrak{sl}}_2)$: $\dot F_a, \dot E_a, \dot K_a, \dot K_a'$, for $a\in \{0,1\}.$ Accordingly we denote by $\dot\TT_a$ Lusztig's braid group symmetry of $\ov{\U}(\widehat{\mathfrak{sl}}_2)$. Recall the subalgebra $\tUi_{[i]}$ of $\tUi$ from Definition \ref{def:Uii}. 
	
    \begin{proposition}[\text{\cite[Proposition 3.7]{LWZ24}}]
		\label{prop:rank1isoQG}
	For $i\in\II$ with $c_{i,\tau i}=0$,  there is a $\Q(v)$-algebra isomorphism  $\aleph_i: \ov{\U}(\widehat{\mathfrak{sl}}_2) \longrightarrow \tUi_{[i]}$, which sends 
    $\dot F_1 \mapsto B_i, \dot F_0 \mapsto \TT_{\bome_i'} (B_i), \dot E_1\mapsto B_{\tau i}, \dot E_0\mapsto \TT_{\bome_i'}(B_{\tau i}), \dot K_1 \mapsto K_i, \dot K'_1\mapsto \K_{\tau i}, \dot K_0 \mapsto \TT_{\bome_i'}(\K_i), \dot K'_0\mapsto \TT_{\bome_i'}( \K_{\tau i})$. 
    \end{proposition}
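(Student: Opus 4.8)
The plan is to check that $\aleph_i$ respects the defining relations of $\ov{\U}(\widehat{\sll}_2)$, and then to establish surjectivity (which is immediate) and injectivity (by a filtration argument). The starting observation is that, since $c_{i,\tau i}=0$ forces $i\neq r,r+1$, the defining relation \eqref{relation5} reads $B_{\tau i}B_i-B_iB_{\tau i}=\frac{\K_i-\K_{\tau i}}{v-v^{-1}}$; together with the $\K$--$B$ commutation relations this is exactly the relation defining a (universal) Drinfeld--Jimbo quantum $\sll_2$ under $\dot E_1\mapsto B_{\tau i}$, $\dot F_1\mapsto B_i$, $\dot K_1\mapsto\K_i$, $\dot K_1'\mapsto\K_{\tau i}$. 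Because $\TT_{\bome_i'}$ is a $\Q(v)$-algebra automorphism of $\tUi$ (Theorem~\ref{thm:Ti}), applying it to these node-$1$ relations produces all the relations among the node-$0$ images $\dot E_0,\dot F_0,\dot K_0,\dot K_0'$. Hence the only relations that require genuine work are the cross relations between the two nodes and the central identity $\tK_\delta=\tK_\delta'$.

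For the cross relations I would compute $\TT_{\bome_i'}(B_i)$ and $\TT_{\bome_i'}(B_{\tau i})$ from the reduced expression of $\bome_i'=\bome_i\bs_i$ (Corollary~\ref{cor:length}), repeatedly applying Proposition~\ref{prop:fixB} to peel off the reflections that fix $B_i$ or $B_{\tau i}$ and leave only a short word to be treated by hand. The self-duality $\TT_{\bome_i'}(B_i)=\TT_{\bome_i'}^{-1}(B_i)$ of Lemma~\ref{lem:T-T} is then used to bring the node-$0$ generators into a symmetric form in which the mixed commutators $[\TT_{\bome_i'}(B_{\tau i}),B_i]$ and $[B_{\tau i},\TT_{\bome_i'}(B_i)]$ collapse to $0$ and the degree-three quantum Serre relations of $\widehat{\sll}_2$ can be checked. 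For the central identity one computes $\tK_\delta=\dot K_0\dot K_1\mapsto\TT_{\bome_i'}(\K_i)\K_i$ and $\tK_\delta'=\dot K_0'\dot K_1'\mapsto\TT_{\bome_i'}(\K_{\tau i})\K_{\tau i}$ directly from the $\K$-action formulas of Theorem~\ref{thm:Ti}, and Lemma~\ref{lem:TKK} together with \eqref{C} identifies both images with the central element $C$, so they agree.

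Surjectivity is clear, as every generator of $\tUi_{[i]}$ appears in the image. For injectivity I would argue as in the proof of Proposition~\ref{prop:rank1iso-sl3}: the map is compatible with the natural filtrations, whose associated graded algebras are $\U(\widehat{\sll}_2)^-\otimes\Q(v)[\dot K^{\pm 1}]$ and $\U^-\otimes\Q(v)[\K^{\pm 1}]$, and so $\aleph_i$ induces a graded homomorphism $\aleph_i^{\text{gr}}\colon\U(\widehat{\sll}_2)^-\to\U^-$ sending $F_1\mapsto F_i$ and $F_0\mapsto T_{\bome_i'}(F_i)$ in terms of Lusztig's symmetries. Following Beck \cite[Proposition 3.8]{Be94}, $\aleph_i^{\text{gr}}$ is injective, and therefore so is $\aleph_i$. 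The main obstacle lies in the second paragraph: the explicit braid-group computation of the node-$0$/node-$1$ Serre relations, which is the one place where the identities developed in Section~\ref{sec:iQG} must be pushed through directly.
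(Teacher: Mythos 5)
Note that the paper does not prove this statement itself: it is quoted verbatim from \cite[Proposition 3.7]{LWZ24}, so there is no in-paper argument to compare against beyond the parallel Proposition~\ref{prop:rank1iso-sl3}. Your outline — node-$1$ relations from \eqref{relation5}, node-$0$ relations by transporting them through the automorphism $\TT_{\bome_i'}$, the remaining cross/Serre relations by explicit braid-group computation using Proposition~\ref{prop:fixB} and Lemma~\ref{lem:T-T}, identification of $\tK_\delta$ and $\tK_\delta'$ with $C$ via Lemma~\ref{lem:TKK}, and injectivity by passing to the associated graded and invoking Beck's argument \cite[Proposition 3.8]{Be94} — is exactly the strategy of the cited proof and of the paper's own proof of Proposition~\ref{prop:rank1iso-sl3}, so the approach is correct and essentially the same, with only the deferred Serre-relation computation left to carry out.
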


\subsection{Translation invariance of affine rank one subalgebras}
	
\begin{proposition} \label{prop:TiFix}
	Let $i, j\in \II$ be such that $ j\neq i,\tau i$. Then  $\TT_{\bome_i}(x)=x$, for all $x\in\tUi_{[j]}$.
\end{proposition}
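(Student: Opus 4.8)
The plan is to use that $\TT_{\bome_i}$ is an algebra automorphism of $\tUi$, so that it suffices to check it fixes each generator of $\tUi_{[j]}$ listed in Definitions \ref{def:Uii} and \ref{def:Uir}. The organizing principle is the action of $\bome_i=t_{\omega_i}t_{\omega_{\tau i}}$ on roots: for a real affine root $\beta$ with finite part $\ov\beta$ one has $\bome_i(\beta)=\beta-\langle\omega_i+\omega_{\tau i},\ov\beta\rangle\de$ and $\bome_i(\de)=\de$. Since $\langle\omega_i+\omega_{\tau i},\alpha_k\rangle=\delta_{ik}+\delta_{\tau i,k}$, the element $\bome_i$ fixes $\alpha_k$ for every $k\in\II$ with $k\neq i,\tau i$; and when $i\in\I^c$ it also fixes $\de-\alpha_r-\alpha_{r+1}$, as $\langle\omega_i+\omega_{\tau i},\alpha_r+\alpha_{r+1}\rangle=0$. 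Under the hypothesis $j\neq i,\tau i$, these are precisely the roots underlying the generators of $\tUi_{[j]}$.

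First I would handle the generators attached to simple roots. For $k\in\{j,\tau j\}$---and also $k\in\{r,r+1\}$ when $j\in\{r,r+1\}$, noting that $i\in\I^c$ forces $r,r+1\neq i,\tau i$---Proposition \ref{prop:fixB} gives $\TT_{\bome_i}(B_k)=B_{\bome_i k}=B_k$, while $\TT_{\bome_i}(\K_k)=\K_k$ follows from the Cartan-part formulas of Theorem \ref{thm:Ti} together with $\bome_i(\alpha_k)=\alpha_k$. As $C=-v^{2r-1}\K_\de$ by \eqref{C} and $\bome_i(\de)=\de$, also $\TT_{\bome_i}(C)=C$. Finally $\TT_{\bth_r}(\K_0)$ is a $\K$-monomial of weight $\bth_r(\alpha_0)=\de-\bth_r(\theta)$; using the routine identity $\bth_r(\theta)=\theta-\sum_{k\in\I^c}\alpha_k$ one finds $\langle\omega_i+\omega_{\tau i},\bth_r(\theta)\rangle=0$, so $\bome_i$ fixes this weight and hence $\TT_{\bome_i}$ fixes $\TT_{\bth_r}(\K_0)$ by linearity of the Cartan action.

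Next, for $j\neq r,r+1$ the remaining generators are $\TT_{\bome_j'}(B_j)=\TT_{\bome_j}\TT_j^{-1}(B_j)$ and its $\tau j$-analogue. Since $\bs_j$ fixes $\omega_i+\omega_{\tau i}$ for $j\neq i,\tau i$, the elements $\bome_i$ and $\bs_j$ commute in $W_\tau$, and because right multiplication by $\bs_j$ shortens $\bome_i$ only when $j=i$ the product is length-additive, so this commutation lifts to $\TT_{\bome_i}\TT_j=\TT_j\TT_{\bome_i}$. Combining with $\TT_{\bome_i}\TT_{\bome_j}=\TT_{\bome_j}\TT_{\bome_i}$ from Lemma \ref{lem:Tomeij}(1) and with $\TT_{\bome_i}(B_j)=B_j$ yields
\[
\TT_{\bome_i}\TT_{\bome_j}\TT_j^{-1}(B_j)=\TT_{\bome_j}\TT_j^{-1}\TT_{\bome_i}(B_j)=\TT_{\bome_j}\TT_j^{-1}(B_j)=\TT_{\bome_j'}(B_j),
\]
and symmetrically for $B_{\tau j}$.

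The crux is the last generator, $\TT_{\bth_r}^{-1}(B_0)$ in the case $j\in\{r,r+1\}$; equivalently I must show $(\TT_{\bth_r}\TT_{\bome_i}\TT_{\bth_r}^{-1})(B_0)=B_0$ for $i\in\I^c$. Here the earlier strategy fails: $B_0$ is \emph{not} fixed by $\TT_{\bome_i}$ (indeed $\langle\omega_i+\omega_{\tau i},\theta\rangle=2$), and $\TT_{\bome_i}$ does not commute with $\TT_{\bth_r}$, since every reduced word of $\bome_i$ ends in $\bs_i$ while $\bth_r$ contains $\bs_i$. My approach is to compute the conjugate at the level of $W_\tau$: $\bth_r\bome_i\bth_r^{-1}=t_{\bth_r(\omega_i+\omega_{\tau i})}$ is again a translation, and $\bth_r(\theta)=\theta-\sum_{k\in\I^c}\alpha_k$ forces $\langle\bth_r(\omega_i+\omega_{\tau i}),\theta\rangle=\langle\omega_i+\omega_{\tau i},\bth_r(\theta)\rangle=0$, so this translation fixes the affine root $\alpha_0$ and the associated product of the commuting operators $\TT_{\bome_k}$ fixes $B_0$ by Proposition \ref{prop:fixB}. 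The main obstacle is to promote this Weyl-group identity to the ibraid group: because the conjugation is not length-additive, $\TT_{\bth_r}\TT_{\bome_i}\TT_{\bth_r}^{-1}$ is not literally a single $\TT_{\bome_{i'}}$ but carries correction factors governed by Lemma \ref{lem:Tomeij}(2). The real work is to show, by inducting along the reduced expression $\bth_r=\bs_{r-1}(\bs_{r-2}\bs_{r-1})\cdots(\bs_1\cdots\bs_{r-1})$, that all such corrections either lie in the abelian subgroup generated by the $\TT_{\bome_k}$ or act trivially on $B_0$, leaving $(\TT_{\bth_r}\TT_{\bome_i}\TT_{\bth_r}^{-1})(B_0)=B_0$, as desired.
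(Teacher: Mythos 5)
Your treatment of the routine generators is correct and essentially matches the paper: $\K_j,\K_{\tau j},C$ and the Cartan monomial $\TT_{\bth_r}(\K_0)$ are fixed for weight reasons, $B_j,B_{\tau j}$ (and $B_r,B_{r+1}$) are fixed by Proposition \ref{prop:fixB}, and for $j\neq r,r+1$ the generator $\TT_{\bome_j'}(B_j)$ is handled by commuting $\TT_{\bome_i}$ past $\TT_{\bome_j}\TT_j^{-1}$ (the paper does this slightly differently, observing that $\TT_j^{-1}(B_j)$ is a $\K$-multiple of $B_j$ or $B_{\tau j}$ and invoking only Lemma \ref{lem:Tomeij}(1), but your length-additivity argument for $\TT_{\bome_i}\TT_j=\TT_j\TT_{\bome_i}$ is also valid).

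However, for the generator $\TT_{\bth_r}^{-1}(B_0)$ your proposal contains a genuine gap rather than a proof. You correctly diagnose the two obstructions — $B_0$ is not fixed by $\TT_{\bome_i}$, and $\TT_{\bome_i}$ does not commute with $\TT_{\bth_r}$ — and you correctly compute at the level of $W_\tau$ that $\bth_r\bome_i\bth_r^{-1}=t_{\bth_r(\omega_i+\omega_{\tau i})}$ fixes $\alpha_0$. But the passage from this Weyl-group identity to the ibraid group is exactly where all the difficulty lies, and you explicitly defer it ("the real work is to show\dots"). Note in particular that $\bth_r(\omega_i+\omega_{\tau i})$ is not dominant, so the corresponding translation element of $\Br(W_\tau)$ is not the braid lift of a reduced word, and Proposition \ref{prop:fixB} does not apply to it directly; one must control the correction terms coming from Lemma \ref{lem:Tomeij}(2), and it is not a priori clear they "act trivially on $B_0$". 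The paper's proof of precisely this point occupies most of the argument: a downward induction on $i$ starting from $i=r-1$ (where Corollaries \ref{cor:Da} and \ref{cor:length} give a reduced expression for $\bome_{r-1}\bth_r^{-1}$ fixing $\alpha_0$, so Proposition \ref{prop:fixB} applies), followed by a reduction via \eqref{TT:induction} to the identity \eqref{eqn:reduced-induction}, which is itself proved by a second induction using braid relations. Your proposal would need to supply an argument of comparable substance before it could be considered complete.
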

	
	\begin{proof}
Clearly, all of $\K_j, \K_{\tau j},C$ are fixed by $\TT_{\bome_i}$. By Proposition~\ref{prop:fixB}, $\TT_{\bome_i}(B_j)=B_j$ and  $\TT_{\bome_i}(B_{\tau j})=B_{\tau j}$.
		
		For other generators, the proof is divided into the following two cases. 
		
		\underline{Case (1): $j\neq r,\tau r$}. Note that $\TT_j^{-1}(B_j)$ equals either $ -B_{\tau j}\K_{\tau j}^{-1} $ or $-vB_j\K_j^{-1}$, which implies that $\TT_j^{-1}(B_j)$ is fixed by $\TT_{\bome_i}$. By Lemma~\ref{lem:Tomeij}, we have
		\begin{align*}
			\TT_{\bome_i} \TT_{\bome_j'}(B_j)
			=\TT_{\bome_i} \TT_{\bome_j} \TT_j^{-1}(B_j)
			= \TT_{\bome_j} \TT_{\bome_i} \TT_j^{-1}(B_j)
			= \TT_{\bome_j} \TT_j^{-1}(B_j)
			= \TT_{\bome_j'}(B_j).
		\end{align*}
		Applying $\widehat{\tau}$ to the above formula, we obtain $\TT_{\bome_i} \TT_{\bome_j'}(B_{\tau j})= \TT_{\bome_j'}(B_{\tau j})$ as well.
				
\underline{Case (2): $j\in\{r,\tau r\}$}.  
It remains to prove by downward induction on $i$ that
\begin{align}
    \label{eq:omegaithetab0}
    \TT_{\bome_i}(\TT_{\bth_r}^{-1} (B_0))=\TT_{\bth_r}^{-1} (B_0), \quad \forall 1\leq i\leq r-1.
\end{align}
For $i=r-1$, by Corollary \ref{cor:Da}, we have 
\begin{align*}
    \TT_{\bome_{r-1}}(\TT_{\bth_r}^{-1} (B_0))&=(\TT_0\TT_1\cdots \TT_r)^{r-1}\TT_1\TT_2\cdots \TT_{r-1}(\TT_{r-1}^{-1}\cdots\TT_{2}^{-1}\TT_1^{-1}(B_0))
            \\
    &=(\TT_0\TT_1\cdots \TT_r)^{r-1}(B_0).
\end{align*}
Moreover, from Corollary \ref{cor:length}, we know $\bs_1\bs_2\cdots \bs_{r-1}(\bs_0\bs_1\cdots \bs_r)^{r-1}$ is a reduced expression by considering the reduced expression of $\bome_{r-1}^2$. A simple calculation shows 
$$
\bs_1\bs_2\cdots \bs_{r-1}(\bs_0\bs_1\cdots \bs_r)^{r-1}(\alpha_0)=\alpha_0.
$$ 
Hence, by Proposition~\ref{prop:fixB}, we have
\begin{align*}
    \TT_1\TT_2\cdots \TT_{r-1} (\TT_0\TT_1\cdots \TT_r)^{r-1}(B_0)=B_0,
\end{align*}
and then
\begin{align*}
    \TT_{\bome_{r-1}}(\TT_{\bth_r}^{-1} (B_0))=\TT_{\bth_r}^{-1} (B_0).
\end{align*}
            
Assume that \eqref{eq:omegaithetab0} holds for $i=k \ge 2$. By \eqref{TT:induction}, we have
\begin{align*}
    \TT_{\bome_{k-1}}(\TT_{\bth_r}^{-1} (B_0))=\TT_{\bome_{k}} \TT_{k-1}\cdots\TT_2 \TT_1 \TT_{\bome_1}^{-1} \TT_1\TT_2\cdots \TT_{k-1}(\TT_{\bth_r}^{-1} (B_0)).
\end{align*}
Using the inductive assumption, it is enough to prove
\begin{align*}
    \TT_{k-1}\cdots\TT_2 \TT_1 \TT_{\bome_1}^{-1} \TT_1\TT_2\cdots \TT_{k-1}(\TT_{\bth_r}^{-1} (B_0))=\TT_{\bth_r}^{-1} (B_0),
\end{align*}
which is equivalent to
\begin{align*}
    \TT_{k-1}\cdots \TT_2\TT_1\TT_{\bome_1}^{-1}\TT_1\TT_2\cdots \TT_{k-1}\TT_k^{-1}\cdots \TT_1^{-1}(B_0)=\TT_k^{-1}\cdots \TT_1^{-1}(B_0),
\end{align*}
which is in turn equivalent to (by using \eqref{eq:Tomega1})
\begin{align*}
 \TT_1\TT_2\cdots \TT_{k-1} \cdot \TT_{k+1}^{-1}\cdots \TT_{r}^{-1}   \TT_{r-1}^{-1}\cdots \TT_1^{-1}\TT_0^{-1} \TT_1\TT_2\cdots \TT_{k-1}\TT_k^{-1}\cdots \TT_2^{-1}\TT_1^{-1}(B_0)=B_0.
\end{align*} 
We can freely move $\TT_{k+1}^{-1}\cdots\TT_r^{-1}\TT_{r-1}^{-1}\cdots \TT_{k+1}^{-1}$ to the left of $\TT_1\TT_2\cdots\TT_{k-1}$ since they commute and note that  $\TT_{k+1}^{-1}\cdots\TT_r^{-1}\TT_{r-1}^{-1}\cdots \TT_{k+1}^{-1}$ fixes $B_0$. Hence the previous desired identity is equivalent to 
 \begin{align}
 \label{eqn:reduced-induction}
    \TT_1\TT_2\cdots \TT_{k-1}\TT_{k}^{-1}\cdots \TT_1^{-1}\TT_0^{-1} \TT_1\TT_2\cdots \TT_{k-1}\TT_k^{-1}\cdots \TT_2^{-1}\TT_1^{-1}(B_0)=B_0.
 \end{align}
Let us prove \eqref{eqn:reduced-induction} by induction on $k$ for $1\leq k\leq r-1$. 
If $k=1$, we have  
\begin{align*}
    \text{LHS}\eqref{eqn:reduced-induction}=\TT_1^{-1}\TT_0^{-1}\TT_1^{-1}(B_0)=B_0.
\end{align*}

Let $k\geq2$. Applying braid relations in the first and third equations below and moving $\TT_k^{-1}$ to the left as possible in the second equation, we have
\begin{align*}
   &\TT_1\TT_2\cdots \TT_{k-1}\TT_{k}^{-1}\cdots \TT_1^{-1}\TT_0^{-1} \TT_1\TT_2\cdots (\TT_{k-1}\TT_k^{-1}\TT_{k-1}^{-1})\cdots \TT_2^{-1}\TT_1^{-1}(B_0)
    \\
    &=\TT_1\TT_2\cdots \TT_{k-1}\TT_{k}^{-1}\cdots \TT_1^{-1}\TT_0^{-1} \TT_1\TT_2\cdots \TT_{k-2}(\TT_k^{-1}\TT_{k-1}^{-1}\TT_k)\TT_{k-2}^{-1}\cdots \TT_2^{-1}\TT_1^{-1}(B_0)
    \\
    &=\TT_1\TT_2\cdots \TT_{k-2}(\TT_{k-1}\TT_{k}^{-1}\TT_{k-1}^{-1}\TT_k^{-1})\TT_{k-2}^{-1}\cdots \TT_1^{-1}\TT_0^{-1} \TT_1\TT_2\cdots \TT_{k-2}
    \\
    &\quad\cdot\TT_{k-1}^{-1}\TT_{k-2}^{-1}\cdots \TT_2^{-1}\TT_1^{-1}(B_0)
    \\
    &=\TT_1\TT_2\cdots \TT_{k-2}(\TT_{k}^{-1}\TT_{k-1}^{-1})\TT_{k-2}^{-1}\cdots \TT_1^{-1}\TT_0^{-1} \TT_1\TT_2\cdots \TT_{k-2}\TT_{k-1}^{-1}\TT_{k-2}^{-1}\cdots \TT_2^{-1}\TT_1^{-1}(B_0)
    \\
    &=\TT_k^{-1}\big( \TT_1\TT_2\cdots \TT_{k-2}\TT_{k-1}^{-1}\TT_{k-2}^{-1}\cdots \TT_1^{-1}\TT_0^{-1} \TT_1\TT_2\cdots \TT_{k-2}\TT_{k-1}^{-1}\TT_{k-2}^{-1}\cdots \TT_2^{-1}\TT_1^{-1}(B_0)\big)
    \\
    &=\TT_k^{-1}(B_0)=B_0,
\end{align*}
  where the second last equation follows by the inductive assumption. 
So \eqref{eqn:reduced-induction} holds for $1\leq k\leq r-1$, and then  \eqref{eq:omegaithetab0} holds. 

  As $\TT_{\bome_i}$ fixes all the generators of $\tUi_{[j]}$, the proposition follows.
		\end{proof}

\subsection{Compatibility of ibraid group actions}

 Recall the isomorphisms $\aleph_i$ from Propositions \ref{prop:rank1iso-sl3} and \ref{prop:rank1isoQG}.
 
\begin{proposition}
		\label{prop:T1Ti}
	For $i\in\I_{0,\tau}$, we have
	\begin{align}
	\TT_i|_{\tUi_{[i]}} &= \aleph_i \circ {\dot \TT}_1 \circ \aleph_i^{-1},
			\label{eq:T1Ti}			\\
	\TT_{\bome_i}|_{\tUi_{[i]}} &= \aleph_i \circ {\dot \TT}_{\bome_1} \circ \aleph_i^{-1}.
			\label{eq:T1Ti2}
	\end{align}
\end{proposition}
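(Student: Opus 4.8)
The plan is to check both identities on a generating set of $\tUi_{[i]}$. As $\TT_i$ and $\TT_{\bome_i}$ are automorphisms of $\tUi$ and the right-hand sides are, by construction, automorphisms of $\tUi_{[i]}$, it suffices to verify, for each algebra generator $g$ of $\tUi_{[i]}$ listed in Definition~\ref{def:Uii} (when $c_{i,\tau i}=0$) or Definition~\ref{def:Uir} (when $i\in\{r,r+1\}$), the two equalities $\TT_i(g)=\aleph_i\circ\dot\TT_1\circ\aleph_i^{-1}(g)$ and $\TT_{\bome_i}(g)=\aleph_i\circ\dot\TT_{\bome_1}\circ\aleph_i^{-1}(g)$; the right-hand members visibly lie in $\tUi_{[i]}$. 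Under $\aleph_i$ these generators are precisely the affine Chevalley generators of the rank one algebra, namely $\dot F_1,\dot E_1,\dot K_1^{\pm1},(\dot K_1')^{\pm1},\dot F_0,\dot E_0$ of $\ov{\U}(\widehat\sll_2)$ in the first case and $\dot B_1,\dot B_2,\dot\K_1^{\pm1},\dot\K_2^{\pm1},\dot B_0,\dot\K_0^{\pm1}$ of $\tUi(\widehat\sll_3,\tau)$ in the second, so the argument splits into these two cases.

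The finite (non-affine) generators are handled by a direct comparison of braid formulas. For $c_{i,\tau i}=0$, Theorem~\ref{thm:Ti}(2) expresses $\TT_i(B_i),\TT_i(B_{\tau i})$ and $\TT_i(\K_i^{\pm1}),\TT_i(\K_{\tau i}^{\pm1})$ in terms of $B_i,B_{\tau i}$ and Cartan elements, and matching these under $\aleph_i$ with Lusztig's symmetry $\dot\TT_1$ on $\dot F_1,\dot E_1,\dot K_1^{\pm1}$ — using the explicit form of $\aleph_i$ from Proposition~\ref{prop:rank1isoQG} — gives \eqref{eq:T1Ti} on these generators; for $i\in\{r,r+1\}$ one compares Theorem~\ref{thm:Ti}(3) with the rank one instance of the same theorem via Proposition~\ref{prop:rank1iso-sl3}. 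The central $C$ is fixed on both sides. For \eqref{eq:T1Ti2}, the action of $\TT_{\bome_i}$ on the Cartan part is computed from Lemma~\ref{lem:TKK} and \eqref{C}, while its action on the finite generator $B_i$ is obtained by applying the explicit reduced expression of $\bome_i$ from Corollary~\ref{cor:length} (together with Proposition~\ref{prop:TT_bome} and Proposition~\ref{prop:fixB}) and comparing the resulting real root vector with $\aleph_i(\dot\TT_{\bome_1}(\dot F_1))$.

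The crux is the affine Chevalley generator $\TT_{\bome_i'}(B_i)=\aleph_i(\dot F_0)$ (and $\TT_{\bth_r}^{-1}(B_0)=\aleph_r(\dot B_0)$ in the type~B case). For \eqref{eq:T1Ti} the starting point is $\TT_i\TT_{\bome_i'}(B_i)=\TT_i\TT_{\bome_i}\TT_i^{-1}(B_i)$. When $i\neq r,r+1$ a decisive simplification is the relation
\[
\TT_i^{-1}\TT_{\bome_i}\TT_i^{-1}\big|_{\tUi_{[i]}}=\TT_{\bome_i}^{-1}\big|_{\tUi_{[i]}},
\]
obtained from Lemma~\ref{lem:Tomeij}(2) by observing, through Proposition~\ref{prop:TiFix}, that each factor $\TT_{\bome_k}^{-\ov{c}_{ik}}$ with $k\neq i$ restricts to the identity on $\tUi_{[i]}$ (the relative node $k$ being distinct from both $i$ and $\tau i$); together with Lemma~\ref{lem:T-T} this reduces the $\TT_i$- and $\TT_{\bome_i}$-images of the affine generators to the finite data already matched in the previous paragraph and to the root vectors $\TT_{\bome_i}^{\pm1}(B_i)$. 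The two identities are thus verified in tandem, the finite-generator computation for \eqref{eq:T1Ti2} feeding the affine-generator computation for \eqref{eq:T1Ti} and conversely; on the rank one side the targets are $\dot\TT_1(\dot F_0)$ and $\dot\TT_{\bome_1}(\dot F_0)$, standard in $\ov{\U}(\widehat\sll_2)$ and read off from the Drinfeld presentation of Proposition~\ref{prop:Dr1} in the $\sll_3$-iquantum case.

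The main obstacle is the case $i\in\{r,r+1\}$. There $\tUi_{[r]}\cong\tUi(\widehat\sll_3,\tau)$ carries three distinct relative root lengths, Lemma~\ref{lem:Tomeij}(2) is unavailable, and the affine generator is the nonstandard vector $\TT_{\bth_r}^{-1}(B_0)$ of \eqref{def:Btheta}; matching $\TT_r\TT_{\bth_r}^{-1}(B_0)$ and $\TT_{\bome_r}\TT_{\bth_r}^{-1}(B_0)$ with $\aleph_r(\dot\TT_1(\dot B_0))$ and $\aleph_r(\dot\TT_{\bome_1}(\dot B_0))$ requires the full $\sll_3$-iquantum braid formulas of Theorem~\ref{thm:Ti}(3) together with the imaginary root vector data $\dot\Theta_{i,m},\dot D_{i,k}$ of Proposition~\ref{prop:Dr1}, and constitutes the heaviest braid-group bookkeeping of the argument.
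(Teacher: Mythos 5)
Your overall strategy (check both sides on the generators of $\tUi_{[i]}$) is legitimate, and the reductions you give for the finite generators and for the affine generator in the case $c_{i,\tau i}=0$ (via Lemma~\ref{lem:Tomeij}(2), Proposition~\ref{prop:TiFix} and Lemma~\ref{lem:T-T}, essentially Beck's argument) are sound. But the proposal has a genuine gap exactly where this proposition is new: for $i\in\{r,r+1\}$ you never compute $\TT_r\big(\TT_{\bth_r}^{-1}(B_0)\big)$ or $\TT_{\bome_r}\big(\TT_{\bth_r}^{-1}(B_0)\big)$, nor do you compute $\aleph_r\big(\dot\TT_1(\dot B_0)\big)$; you only name the tools (``the full $\sll_3$-iquantum braid formulas of Theorem~\ref{thm:Ti}(3) together with the imaginary root vector data'') and call it ``the heaviest bookkeeping.'' Since $\TT_{\bth_r}^{-1}(B_0)=\TT_{r-1}^{-1}\cdots\TT_1^{-1}(B_0)$ is a deeply nested $v$-bracket in $B_0,B_1,\dots,B_{r-1},B_{r+2},\dots,B_{2r}$, and $\dot\TT_1(\dot B_0)$ in $\tUi(\widehat{\mathfrak{sl}}_3,\tau)$ is itself the five-term expression of Theorem~\ref{thm:Ti}(3) (the case $c_{1,0}=c_{2,0}=-1$), matching the two sides is not a routine verification that can be waved at --- it is the entire difficulty, and a proof that defers it is not a proof.

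It is worth noting that the paper avoids this computation altogether, by a different route. For \eqref{eq:T1Ti} it invokes the intrinsic rank-one formulas of \cite{WZ25} for $\TT_i$ acting on integrable modules, which depend only on $B_i,B_{\tau i},\K_i,\K_{\tau i}$: since $\aleph_i$ matches these four elements with $\dot B_1,\dot B_2,\dot\K_1,\dot\K_2$, the two operators in \eqref{eq:T1Ti} are given by the \emph{same} formula and hence agree on all of $\tUi_{[i]}$, with no need to evaluate $\TT_r$ on the affine generator. Then \eqref{eq:T1Ti2} for $i=r$ is reduced, using $\dot\TT_{\bome_1}=\dot\TT_0\dot\TT_1$ and the already-proved \eqref{eq:T1Ti}, to the two identities \eqref{eq:T1Ti4}--\eqref{eq:T1Ti3}, which are settled by reduced-expression manipulations with $\bome_r=(\bs_0\cdots\bs_r)^r$ and Proposition~\ref{prop:fixB} --- again never applying the raw formulas of Theorem~\ref{thm:Ti}(3) to $\TT_{\bth_r}^{-1}(B_0)$. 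If you want to keep your generator-by-generator approach, you must either carry out the $\sll_3$-type computation you have postponed, or import an intrinsic characterization of $\TT_r$ of the kind the paper uses; as written, the argument is incomplete precisely at the case that distinguishes type ${\rm AIII}_{2r}^{(\tau)}$.
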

	
\begin{proof}
The proof of the identity \eqref{eq:T1Ti} relies on the existence of the rank one formulas in \cite{WZ25} for $\TT_i$ acting on integrable modules which depends only on $B_i, B_{\tau i}, \K_i, \K_{\tau i}$, where $c_{i,\tau i}=0, -1$. The argument can be found in the proof of \cite[Proposition 3.10]{LWZ24}. 
    
The identity \eqref{eq:T1Ti2} is established just as in \cite{Be94} in case $c_{i,\tau i}=0$. It remains to establish the identity \eqref{eq:T1Ti2} when  $c_{i,\tau i}=-1$, i.e., $i\in\{r,r+1\}$. Without loss of generality, we assume $i=r$. 
		
We prove the identity \eqref{eq:T1Ti2} for $i=r$. Recall from Proposition~\ref{prop:rank1iso-sl3} that $\aleph_r: \tUi(\widehat{\mathfrak{sl}}_3,\tau) \rightarrow \tUi $  sends $\dot B_1 \mapsto B_r, \dot B_2 \mapsto B_{\tau r}, \dot B_0\mapsto \TT_{\bth_r}^{-1}(B_0), \dot\K_1 \mapsto \K_r, \dot\K_2\mapsto \K_{\tau r}, \dot\K_0 \mapsto \TT_{\bth_r}(\K_0)$.
		
Recall that $\dot \TT_{\bome_1}=\dot\TT_0 \dot \TT_1$, and hence we have
\begin{align*}
\dot \TT_{\bome_1}^{-1}(\dot B_0)=\dot \TT_1^{-1}(\dot B_0 \dot \K_0^{-1}),
    \qquad	\dot \TT_{\bome_1}^{-1}(\dot B_1)=\dot \TT_1^{-1}\big([\dot B_0,\dot B_1]_v\big).
\end{align*}
Hence, in order to prove the identity \eqref{eq:T1Ti2}, it suffices to verify the following two identities
\begin{align}
    \label{eq:T1Ti4}
\TT_{\bome_r}^{-1}(\TT_{\bth_r}^{-1}(B_0) )&= \TT_r^{-1}(\TT_{\bth_r}^{-1}(B_0\K_0^{-1})),
            \\
        \label{eq:T1Ti3}
\TT_{\bome_r}^{-1}(B_r)&=\TT_r^{-1}\big([ \TT_{\bth_r}^{-1}(B_0), B_r]_v\big).
\end{align}
Recall that $\bome_r=(\bs_0\cdots \bs_r)^r$ is a reduced expression and $\TT_{\bth_r}^{-1}(B_0)=\TT_{\bs_1\bs_2\cdots\bs_{r-1}}^{-1}(B_0)$. 
		
We prove \eqref{eq:T1Ti4}. Thanks to $\TT_0^{-1}(B_0)=B_0\K_0^{-1}$ and Corollary \ref{cor:Da}, the identity \eqref{eq:T1Ti4} is equivalent to
\begin{align*} 
	\TT_{\bs_0\bs_1\cdots\bs_{r}}^{-(r-1)} \TT_{r-1}^{-1}\cdots \TT_1^{-1}(B_0)=B_0.
\end{align*}
Using the braid relations, we have $\TT_{\bs_0\bs_1\cdots\bs_{r}}^{-1}\TT_a^{-1}=\TT_{a-1}^{-1}\TT_{\bs_0\bs_1\cdots\bs_{r}}^{-1}$ for $a\geq 2$, and then we have
\begin{align*}
	\TT_{\bs_0\bs_1\cdots\bs_{r}}^{-1} \TT_{a}^{-1}\cdots \TT_1^{-1}(B_0)
	&=\TT_{a-1}^{-1}\cdots \TT_1^{-1}\TT_{\bs_0\bs_1\cdots\bs_{r}}^{-1} \TT_1^{-1}(B_0)
			\\
	&=\TT_{a-1}^{-1}\cdots \TT_1^{-1}(B_0),
\end{align*}
where we used $\TT_1\TT_0\TT_1(B_0)=B_0$ and $\TT_i(B_0)=B_0,i>1$ in the second equality. Applying the above formula repeatedly for $r-1$ times gives us 
\begin{align*}
	\TT_{\bs_0\bs_1\cdots\bs_{r}}^{-(r-1)} \TT_{r-1}^{-1}\cdots \TT_1^{-1}(B_0)
	%&=\TT_{\bs_0\bs_1\cdots\bs_{r}}^{-(r-2)} \TT_{r-2}^{-1}\cdots \TT_1^{-1}(B_0)\\
	%&=\TT_{\bs_0\bs_1\cdots\bs_{r}}^{-(r-3)} \TT_{r-3}^{-1}\cdots \TT_1^{-1}(B_0)
	%=\cdots
	=B_0,
\end{align*}
whence \eqref{eq:T1Ti4}.
		
We prove \eqref{eq:T1Ti3}. Since $ \TT_{r-1}\TT_r \TT_{r-1} (B_r)=B_r$, we have
\begin{align*}
	\TT_{\bs_0\bs_1\cdots\bs_{r}}^{-1}(B_r)=\TT_{r-1}(B_r).
\end{align*}
By braid relations, we have $\TT_{a-1}\TT_{\bs_0\bs_1\cdots\bs_{r}}^{-1}=\TT_{\bs_0\bs_1\cdots\bs_{r}}^{-1}\TT_a$ for $a\geq 2$. Applying these two relations repeatedly, we obtain
\begin{align*}
	\TT_{\bome_r}^{-1}(B_r)&=  \TT_{\bs_0\bs_1\cdots\bs_{r}}^{-r}(B_r)= \TT_{\bs_0\bs_1\cdots\bs_{r}}^{-(r-1)}\TT_{r-1}(B_r)
=\TT_{\bs_0\bs_1\cdots\bs_{r}}^{-1} \TT_{1}\TT_{\bs_0\bs_1\cdots\bs_{r}}^{-(r-2)}(B_r)
			\\
	&=\TT_{\bs_0\bs_1\cdots\bs_{r}}^{-1} \TT_{1}\TT_2\TT_{\bs_0\bs_1\cdots\bs_{r}}^{-(r-3)}(B_r)
			=\cdots
	=\TT_{\bs_0\bs_1\cdots\bs_{r}}^{-1} \TT_{\bs_1\bs_2\cdots\bs_{r-1}}(B_r).
\end{align*}
Hence, the identity \eqref{eq:T1Ti3} is equivalent to the identity
\begin{align}\label{eq:T1Ti5}
	\TT_0^{-1}  \TT_{\bs_1\bs_2\cdots\bs_{r-1}}(B_r)=[B_0,\TT_{\bs_1\bs_2\cdots\bs_{r-1}}(B_r)]_v. 
\end{align}
Note that $\TT_a\TT_{a-1}(B_a)=B_{a-1}$  $(2\le a \le r-1)$ by Proposition~\ref{prop:fixB}. Using this identity repeatedly, the identity \eqref{eq:T1Ti5} can be proved as follows:
\begin{align*}
	\TT_0^{-1}  \TT_{\bs_1\bs_2\cdots\bs_{r-1}}(B_r)&= \big[B_r,\TT_0^{-1}  \TT_{\bs_1\bs_2\cdots\bs_{r-2}}(B_{r-1})\big]_v
	=\big[B_r,\TT_0^{-1}  \TT_{\bs_2\cdots\bs_{r-1}}^{-1}(B_{1})\big]_v
			\\
	&=\big[B_r,  [B_0, \TT_{\bs_2\cdots\bs_{r-1}}^{-1}(B_{1})]_v\big]_v
	=\big[B_r,  [B_0, \TT_{\bs_1\bs_2\cdots\bs_{r-2}}(B_{r-1})]_v\big]_v
			\\
	&=\big[B_0,  [B_r, \TT_{\bs_1\bs_2\cdots\bs_{r-2}}(B_{r-1})]_v\big]_v
	=\big[B_0,  \TT_{\bs_1\bs_2\cdots\bs_{r-1}}(B_{r})\big]_v.
\end{align*}
Therefore, \eqref{eq:T1Ti2} is proved.
\end{proof}

	%%%%%%%%
	%%%%%%%%
\section{A Drinfeld presentation of affine iquantum groups}
	\label{sec:Dpresentation}
	
In this section, we construct new $v$-root vectors in the quasi-split iquantum group $\tUi$ of type ${\rm AIII}_{2r}^{(\tau)}$, and then formulate a Drinfeld type presentation for $\tUi$. Proof for the presentation is partially given and will be completed in Section \ref{sec:relation}.

\subsection{$v$-Root vectors in higher ranks}
	\label{subsec:root vectors}
	Define a sign function
	\[
	o(\cdot): \II \longrightarrow \{\pm 1\}
	\]
	such that $o(j)=-o(i)$ whenever $c_{ij} <0$. (There are clearly exactly 2 such functions.) We define uiformly the following elements in $\tUi$ (called {\em real $v$-root vectors}), for $i\in \II$, $k\in \Z$: 
	\begin{align}
		\label{re root}
		B_{i,k} &= o(i)^{-k} \TT_{\bome_i}^{-k} (B_i).
	\end{align}
	In particular, we have $B_{i,0}=B_i$.
	
	Next we define case-by-case the imaginary $v$-root vectors $\TH_{i,n}$, for $i\in \II$, $n\ge 1$,  depending on whether $c_{i,\tau i}= 0$, or $-1$.

	\subsubsection{The case when $c_{i,\tau i}=0$}
	For $i\in\I_0$ with $c_{i,\tau i}=0$, we define
	\begin{align}
		\label{def:thetac=0}
		\Theta_{i,0}:=\frac{1}{v-v^{-1}},\qquad
		\Theta_{i,n}:=[B_{i,n},B_{\tau i}]\K_{ \tau i}^{-1}, \text{ for }n>0.
	\end{align}	
	
	By Proposition~\ref{prop:T1Ti}, $\Theta_{i,n}=o(i)^{n}[\TT_{\bome_i}^{-n}(B_i),B_{\tau i}]\K_{\tau i}^{-1}$ is identified with the $\aleph_i$-image of $[x_{1,-n}^-,x_{1,0}^+]K_1'^{-1} =\frac{\varphi_{1,-n}}{v-v^{-1}}$; see Proposition~\ref{prop:rank1isoQG} for $\aleph_i$.

	\subsubsection{The case when $c_{i,\tau i}=-1$}

    The following definitions were inspired by the constructions of $v$-root vectors in type AIII$_2^{(\tau)}$ \cite{LWZ23}.
	For $i\in\I_0$ and $n\in\Z$, we define \begin{equation}\label{GDn}
		D_{i,n }=-[B_{\tau i},B_{i,n }]_{v^{-1}}-[B_{i,n+1},B_{\tau i,-1}]_{v^{-1}}.
	\end{equation}
	Let $\TH_{i,0}=\frac{1}{v-v^{-1}}$. 
	Define $\TH_{i,n}$, for $n\ge 1$, recursively as follows:
	\begin{align}
		\label{GTH1}
		\TH_{i,1}&= -o(i)v\big(\big[ B_i,\TT_{\bome_i}(B_{\tau i})\big]_{v^{-1}} C\bK_{\tau i}^{-1}
		-\TT_{\bth_r}^{-1} (B_0) \bK_i\big),
		\\
		\label{eq:GTH13}
		\TH_{i,2}&=-vD_{i,0} C \bK_{\tau i}^{-1}+v^{-c_{i,\tau i}}\TH_{i,0}C
		-\TH_{\tau i,0}C\bK_{\tau i}^{-1}\bK_i,
		\\
		\label{GTHn1}
		\TH_{i,n}\bK_{\tau i}&= v^{-c_{i,\tau i}}\TH_{i,n-2}\bK_{\tau i}C -vD_{i,n-2}C.
	\end{align}
	We further set  $\TH_{i,n}=0$ for $n<0$. By definition, we have
    \begin{align}\label{GTH1'}
        \Theta_{i,1}=v[B_i,B_{\tau i,-1}]_{v^{-1}}C\K_{\tau i}^{-1}+o(i)v\TT_{\bth_r}^{-1} (B_0) \bK_i.
    \end{align}
	
\subsection{Translation invariance of imaginary root vectors}	

We can now formulate a key property shared by imaginary $v$-root vectors in all types.
	
	\begin{proposition}
		\label{prop:Theta-invariant Tomega}
		The following identity holds:
		\[
		\TT_{\bome_j}(\Theta_{i,n})=\Theta_{i,n},
		\]
		for all $i, j\in\II$ and $n \ge 1$.
	\end{proposition}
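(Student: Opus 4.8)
The plan is to split the argument according to whether the index $j$ lies in the $\tau$-orbit $\{i,\tau i\}$ of $i$ or not. The first observation is a reduction: since $\bome_i=t_{\omega_i}t_{\omega_{\tau i}}=t_{\omega_{\tau i}}t_{\omega_i}=\bome_{\tau i}$ by \eqref{def:bome} and the commutativity of translations, we have $\TT_{\bome_i}=\TT_{\bome_{\tau i}}$. Hence it suffices to treat the two cases $j\in\{i,\tau i\}$ (where $\TT_{\bome_j}=\TT_{\bome_i}$) and $j\notin\{i,\tau i\}$ separately.

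For the case $j\notin\{i,\tau i\}$, I would first check that every $\Theta_{i,n}$ lies in the affine rank one subalgebra $\tUi_{[i]}$. The real $v$-root vectors $B_{i,k}=o(i)^{-k}\TT_{\bome_i}^{-k}(B_i)$ are the $\aleph_i$-images of the loop generators of the rank one algebra, hence belong to $\tUi_{[i]}$; and by the defining formulas \eqref{def:thetac=0} (when $c_{i,\tau i}=0$) and \eqref{GDn}--\eqref{GTHn1} (when $c_{i,\tau i}=-1$), each $\Theta_{i,n}$ is a noncommutative polynomial in the $B_{i,k}$, $B_{\tau i,k}$, $\TT_{\bth_r}^{-1}(B_0)$, the $\K_i^{\pm1},\K_{\tau i}^{\pm1}$ and $C^{\pm1}$, all of which lie in $\tUi_{[i]}$. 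Since $j\notin\{i,\tau i\}$ is equivalent to $i\notin\{j,\tau j\}$, Proposition~\ref{prop:TiFix} (with the roles of $i$ and $j$ interchanged) shows that $\TT_{\bome_j}$ fixes $\tUi_{[i]}$ pointwise; in particular $\TT_{\bome_j}(\Theta_{i,n})=\Theta_{i,n}$.

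For the case $j\in\{i,\tau i\}$, I would reduce to the corresponding rank one statement via the compatibility of ibraid actions. Under the isomorphism $\aleph_i$ of Proposition~\ref{prop:rank1isoQG} (for $c_{i,\tau i}=0$) or Proposition~\ref{prop:rank1iso-sl3} (for $c_{i,\tau i}=-1$), the intertwining relation \eqref{eq:T1Ti2} together with $B_i\mapsto\dot B_1$, $C\mapsto C$ gives $B_{i,k}=\aleph_i(\dot B_{1,k})$, and then the matched recursions show $\Theta_{i,n}=\aleph_i(\dot\Theta_{1,n})$, where $\dot\Theta_{1,n}$ is the imaginary $v$-root vector of the rank one algebra. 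Applying \eqref{eq:T1Ti2} once more yields $\TT_{\bome_i}(\Theta_{i,n})=\aleph_i\big(\dot\TT_{\bome_1}(\dot\Theta_{1,n})\big)$, so the claim follows from the rank one invariance $\dot\TT_{\bome_1}(\dot\Theta_{1,n})=\dot\Theta_{1,n}$. For $c_{i,\tau i}=0$ this is the translation stability of the imaginary loop generators in $\ov{\U}(\widehat{\mathfrak{sl}}_2)$, known from Damiani and Beck \cite{Da93,Be94}; for $c_{i,\tau i}=-1$ it is the analogous statement in $\tUiii$, which I would extract from the rank one Drinfeld presentation of \cite{LWZ23} (Proposition~\ref{prop:Dr1}).

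The hard part will be the rank one invariance in the $c_{i,\tau i}=-1$ case, that is, $\dot\TT_{\bome_1}(\dot\Theta_{1,n})=\dot\Theta_{1,n}$ in $\tUiii$. In contrast to the $\mathfrak{sl}_2$ case, the $\dot\Theta_{1,n}$ are defined by the intricate recursion \eqref{TH}--\eqref{THn}, so the invariance is not visible from the definitions; I expect to prove it by combining the translation action of $\dot\TT_{\bome_1}$ on $\dot B_{1,k},\dot B_{2,k}$ (a shift of the loop index up to sign and a central twist) with the rank one Drinfeld relations \eqref{qsiA1DR5}--\eqref{qsiA1DR6}, or else by passing to the generating-function description of these imaginary root vectors in \cite{LWZ23}. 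A secondary point that must be verified carefully is the identification $\Theta_{i,n}=\aleph_i(\dot\Theta_{1,n})$, which requires matching the higher rank recursion \eqref{GTH1}--\eqref{GTHn1} with its rank one counterpart under $\aleph_i$ before the reduction in the previous paragraph can be invoked.
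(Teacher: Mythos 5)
Your proposal is correct and follows essentially the same route as the paper: the case $j\notin\{i,\tau i\}$ is handled by observing that $\Theta_{i,n}$ lies in the affine rank one subalgebra $\tUi_{[i]}$ and invoking Proposition~\ref{prop:TiFix}, while the case $j\in\{i,\tau i\}$ is transported via Proposition~\ref{prop:T1Ti} to the known rank one invariance statements in \cite{Da93} (for $c_{i,\tau i}=0$) and \cite{LWZ23} (for $c_{i,\tau i}=-1$). The only difference is that you flag the $c_{i,\tau i}=-1$ rank one invariance as something still to be proved, whereas the paper cites it directly as an established result of \cite{LWZ23}.
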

	
	\begin{proof}
		The statement for $j\neq i$ follows by Proposition~\ref{prop:TiFix}.
		
		The statement for $j=i$ is reduced to the affine rank one case by applying Proposition~\ref{prop:T1Ti}. The statement in all 2 types of affine rank one is known to hold: the case for $c_{i,\tau i}=0$ was proved in \cite{Da93}, while the case for $c_{i,\tau i}=-1$ was proved in \cite{LWZ23}.
	\end{proof}

	\subsection{A Drinfeld type presentation of quasi-split affine iquantum groups}

	\begin{definition} 
		\label{def:iDRA1}
		Let $\tUiD$ be the $\mathbb{C}(v)$-algebra generated by the elements $B_{i,l}$, $H_{i,m}$, $\bK_i^{\pm1}$, $C^{\pm1}$, where $i\in\II$, $l\in\Z$ and $m>0$, subject to the following relations \eqref{qsiDR0}--\eqref{qsiDR10}, for $i,j\in \II$, $m,n>0,$ and $ l,k,k_1,k_2\in \Z$:
		\begin{align}
			C^{\pm1} \text{ is central,} &\quad [\bK_i,\bK_j] =  [\bK_i,H_{j,n}] =0,
            \quad \bK_i B_{j,l}=v^{c_{\tau i,j}-c_{ij}} B_{j,l} \bK_i,
			\label{qsiDR0}
			\\
			\label{qsiDR1}
			&[H_{i,m},H_{j,n}] =0,
			\\
			\label{qsiDR2}
			&[H_{i,m},B_{j,l}] =\frac{[mc_{ij}]}{m} B_{j,l+m}-\frac{[mc_{\tau i,j}]}{m} B_{j,l-m}C^m,
			\\
			\label{qsiDR4}
			&[B_{i,k},B_{\tau i,l}]
			= \bK_{\tau i} C^l \TH_{i,k-l}- \bK_{i} C^k \Theta_{\tau i,l-k}
			\text{ if }c_{i,\tau i}=0,
			\\
			\notag 
			[B_{i,k},B_{\tau i,l+1}]_{v^{-c_{i,\tau i}}}&-v^{-c_{i,\tau i}}[B_{i,k+1},B_{\tau i,l}]_{v^{c_{i,\tau i}}}
			= -\Theta_{{\tau i},l-k+1}\bK_{i} C^k
			+v \Theta_{ {\tau i},l-k-1}\bK_{i} C^{k+1}
			\\
			&\qquad\qquad- \Theta_{i,k-l+1} \bK_{ \tau i} C^l
			+v \Theta_{i,k-l-1}\bK_{ \tau i} C^{l+1}, \; \text{ if }   { c_{i,\tau i}=-1},
			\label{qsiDR6}  \\
			&[B_{i,k}, B_{j,l}] =0,
			\qquad \text{ if }c_{ij}=0 \text{ and }\tau i\neq j, 
			\label{qsiDR7}
			\\
			\label{qsiDR3}
			&[B_{i,k},B_{j,l+1}]_{v^{-c_{ij}}} -v^{-c_{ij}}[B_{i,k+1},B_{j,l}]_{v^{c_{ij}}} =0, \qquad \text{ if } j\neq \tau i,
		\end{align}
		and the Serre relations
		\begin{align}
			&\bS_{i,j}(k_1,k_2|l)
			= 0, \text{ if }c_{i,j}=-1,  j \neq \tau i\neq i, \label{qsiDR9}
			\\
			\label{qsiDR10}
			&\bS_{i,\tau i}(k_1,k_2|l)
			=[2]\Sym_{k_1,k_2}\sum_{p\geq 0}v^{2p}
			\big[\TH_{\tau i,l-k_2-p}\K_i-v\TH_{\tau i,l-k_2-p-2}C\K_i, B_{i,k_1-p} \big]_{v^{-4p-1}}C^{k_2+p}
			\notag
			\\
			& \quad +v[2]\Sym_{k_1,k_2}\sum_{p\geq 0}v^{2p} \big[ B_{i,k_1+p+1},\TH_{i,k_2-l-p+1}\K_{\tau i}-v\TH_{i,k_2-l-p-1}C \K_{\tau i}\big]_{v^{-4p-3}} C^{l-1},\\
			&\hspace{3in} \text{ if }c_{i,\tau i}=-1.
			\notag
		\end{align}
		Here $H_{i,m}$ are related to $\Theta_{i,m}$ by the following equation for generating functions in $u$: 
		\begin{align}
			\label{exp}
			1+ \sum_{m\geq 1} (v-v^{-1})\Theta_{i,m} u^m  = \exp\Big( (v-v^{-1}) \sum_{m\geq 1} H_{i,m} u^m \Big),
		\end{align}
		and we have used the shorthand notation
		\begin{align}
			\label{eq:SS}
			\bS_{i,j}(k_1,k_2|l) : = \Sym_{k_1,k_2}\Big(B_{i,k_1}B_{i,k_2}B_{j,l}-[2]B_{i,k_1} B_{j,l} B_{i,k_2} + B_{j,l} B_{i,k_1}B_{i,k_2}\Big).
		\end{align}
	\end{definition}

    We first prepare a few lemmas for the proof of Theorem \ref{thm:Dr} on Drinfeld presentation. 
	\begin{lemma}\label{lem:thrloop}
		The element $\TT_{\theta_r}^{-1}(B_0)$ in \eqref{def:Btheta} can be written in terms of loop generators as follows
		\begin{align*}
			\TT_{\theta_r}^{-1}(B_0) =-o(i)v^{-1}\TH_{r,1}\K_r^{-1}
			+ \big[ B_r,\TT_{\bome_r}(B_{r+1})\big]_{v^{-1}} C\K_{r+1}^{-1}\K_r^{-1}.
		\end{align*}
	\end{lemma}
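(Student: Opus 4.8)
The plan is to read the identity directly off the recursive definition \eqref{GTH1} of the imaginary root vector $\TH_{r,1}$: that definition already expresses $\TH_{r,1}$ as a combination of $\TT_{\bth_r}^{-1}(B_0)$ together with loop generators, so inverting it isolates $\TT_{\bth_r}^{-1}(B_0)$.

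First I would specialize \eqref{GTH1} to $i=r$, recalling that $\tau r=r+1$ for the Satake diagram \eqref{pic:AIII2r}, so that $\bK_i=\K_r$ and $\bK_{\tau i}=\K_{r+1}$:
\[
\TH_{r,1}=-o(r)v\Big(\big[B_r,\TT_{\bome_r}(B_{r+1})\big]_{v^{-1}}C\K_{r+1}^{-1}-\TT_{\bth_r}^{-1}(B_0)\K_r\Big).
\]
Here $\TT_{\bth_r}^{-1}(B_0)$ occurs linearly, through the single summand $\TT_{\bth_r}^{-1}(B_0)\K_r$. I would collect all remaining terms on the other side, divide by the scalar $o(r)v$ (using $o(r)^{-1}=o(r)$), and right-multiply by $\K_r^{-1}$; solving for $\TT_{\bth_r}^{-1}(B_0)$ then reproduces the two summands of the statement.

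It remains to verify that the right-hand side is genuinely built from Drinfeld loop generators. The factor $\TH_{r,1}$ is an imaginary root vector, hence lies in the subalgebra generated by the $H_{r,m}$ through \eqref{exp}, while $\K_r,\K_{r+1},C$ are Cartan-type loop generators. For the commutator factor I would invoke $\bome_r=t_{\omega_r}t_{\omega_{r+1}}=\bome_{r+1}$, immediate from \eqref{def:bome} and $t_\omega t_{\omega'}=t_{\omega+\omega'}$; together with the definition \eqref{re root} of the real root vectors this gives $\TT_{\bome_r}(B_{r+1})=o(r+1)\,B_{r+1,-1}$, so $\big[B_r,\TT_{\bome_r}(B_{r+1})\big]_{v^{-1}}$ is a commutator of the real loop generators $B_{r,0}=B_r$ and $B_{r+1,-1}$.

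There is no substantive obstacle: the argument is a one-line algebraic rearrangement of the defining relation \eqref{GTH1}. The only points demanding care are the sign bookkeeping for $o(r)$, the placement of the noncommuting Cartan factor $\K_r^{-1}$ on the right, and the identification $\TT_{\bome_r}(B_{r+1})=o(r+1)B_{r+1,-1}$ — all routine.
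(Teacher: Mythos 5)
Your proposal is correct and is exactly the paper's argument: the paper's entire proof of this lemma is the single sentence that it follows from the definition \eqref{GTH1} of $\TH_{r,1}$, and your rearrangement (specializing \eqref{GTH1} to $i=r$, isolating $\TT_{\bth_r}^{-1}(B_0)\K_r$, and right-multiplying by $\K_r^{-1}$) is the intended computation, with the identification $\TT_{\bome_r}(B_{r+1})=o(r+1)B_{r+1,-1}$ confirming that every ingredient is a loop generator. The one point to watch is the unbound index $i$ in the statement: solving \eqref{GTH1} gives the coefficient $+o(r)v^{-1}=-o(\tau r)v^{-1}$ on $\TH_{r,1}\K_r^{-1}$ (consistent with \eqref{GTH1'}), so the lemma's $-o(i)$ must be read with $i=\tau r$, not $i=r$.
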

	
	\begin{proof}
		It follows from the definition \eqref{GTH1} of $\TH_{r,1}$.
	\end{proof}
	
	\begin{lemma}   
		For each $i\in \II$, there exists an algebra automorphism $\t_i$ on $\tUiD$ such that
		\begin{align*}
			\t_i (B_{i,r}) &=o(i) B_{i,r -1}, \quad \t_i (B_{\tau i,r}) =o(\tau i)B_{\tau i,r -1},\quad \t_i (H_{j,m}) =H_{j,m}, \quad \t_i(C) =C,
			\\
			\t_i (\K_i) &=o(i)o(\tau i) \K_i C^{-1}, \quad \t_i (\K_{\tau i}) =o(i)o(\tau i) \K_{\tau i} C^{-1}, \quad \t_i(B_{j,r})=B_{j,r},
		\end{align*}
		for all $r\in \Z, m\ge 1$, and $j\not \in \{i,\tau i\}$. Moreover, $\t_i=\t_{\tau i}$ and $\t_i \t_k =\t_k\t_i$ for all $i,k \in \II$.
	\end{lemma}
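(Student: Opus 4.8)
The plan is to define $\t_i$ on the generators of $\tUiD$ by the stated formulas, to verify that it respects all the defining relations \eqref{qsiDR0}--\eqref{qsiDR10} so that it extends to an algebra endomorphism, and then to exhibit an explicit inverse. The guiding principle I would state first is that $\t_i$ acts as a \emph{loop shift} supported on the block of generators attached to the pair $\{i,\tau i\}$: it sends $B_{i,k}\mapsto o(i)B_{i,k-1}$ and $B_{\tau i,k}\mapsto o(\tau i)B_{\tau i,k-1}$, rescales $\K_i,\K_{\tau i}$ by $o(i)o(\tau i)C^{-1}$, and fixes $C$, every $H_{j,m}$ (hence, via \eqref{exp}, every $\Theta_{j,m}$), and every generator attached to a node outside $\{i,\tau i\}$. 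This is modelled on the operator $\TT_{\bome_i}$ of $\tUi$, together with the invariance $\TT_{\bome_i}(\Theta_{j,n})=\Theta_{j,n}$ from Proposition~\ref{prop:Theta-invariant Tomega} and the identity $\TT_{\bome_i}(\K_i)=o(i)o(\tau i)\K_i C^{-1}$ coming from \eqref{C}; but since the isomorphism $\tUiD\cong\tUi$ is only established later, the verification must be carried out directly on the presentation.

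The core of the argument is the observation that each defining relation is \emph{translation covariant}: applying $\t_i$ carries the relation indexed by $(k,l,\dots)$ to a scalar, a product of signs $o(\cdot)$, times the same relation with all loop indices attached to the shifted node(s) lowered by one, the stray powers of the central $C$ matching on both sides. Concretely I would run through the list. The loop-homogeneous relations \eqref{qsiDR1}, \eqref{qsiDR7}, \eqref{qsiDR3} and the Serre relation \eqref{qsiDR9} are immediate, since $\t_i$ multiplies each monomial by a sign and uniformly shifts indices, sending a defining relation to a sign multiple of another instance of the same family. For \eqref{qsiDR0} and \eqref{qsiDR2} one checks that the extra factor $C^{-1}$ picked up by $\K_i,\K_{\tau i}$ is central and cancels against the $C$-powers already present, so one again lands on the relation with $l$ replaced by $l-1$. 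The delicate cases are the commutator relations \eqref{qsiDR4}, \eqref{qsiDR6} and the inhomogeneous Serre relation \eqref{qsiDR10}: here, on the left each $B$ contributes a sign and a shift, while on the right each $\K_i$ or $\K_{\tau i}$ contributes $o(i)o(\tau i)C^{-1}$, and since the index combinations entering the $\Theta$'s (such as $l-k$, $l-k_2$, $k_2-l$) are invariant under the simultaneous shift $k\mapsto k-1$, $l\mapsto l-1$, both sides transform by the same overall sign and the same $C$-power. Thus the image is that same relation with the indices uniformly lowered.

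With well-definedness in hand, invertibility follows by exhibiting $\t_i^{-1}$ as the opposite loop shift, namely $B_{i,k}\mapsto o(i)B_{i,k+1}$, $B_{\tau i,k}\mapsto o(\tau i)B_{\tau i,k+1}$, $\K_i\mapsto o(i)o(\tau i)\K_i C$, $\K_{\tau i}\mapsto o(i)o(\tau i)\K_{\tau i}C$, fixing the remaining generators; the same covariance check shows this is an endomorphism, and $o(i)^2=1$ together with centrality of $C$ gives $\t_i\t_i^{-1}=\t_i^{-1}\t_i=\mathrm{id}$ on generators. The identity $\t_i=\t_{\tau i}$ is immediate because the defining formulas are symmetric under $i\leftrightarrow\tau i$. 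Finally, for $k\notin\{i,\tau i\}$ the maps $\t_i$ and $\t_k$ are supported on disjoint blocks of generators---$\t_i$ alters only $B_{i,\cdot},B_{\tau i,\cdot},\K_i,\K_{\tau i}$ and $\t_k$ only $B_{k,\cdot},B_{\tau k,\cdot},\K_k,\K_{\tau k}$, while each fixes the other's data and both fix $C$ and all $H_{j,m}$---so $\t_i\t_k=\t_k\t_i$ on generators, and for $k\in\{i,\tau i\}$ this reduces to $\t_i=\t_{\tau i}$.

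I expect the main obstacle to be the sign-and-central-charge bookkeeping in the inhomogeneous Serre relation \eqref{qsiDR10}: there the left-hand side is cubic in the $B$'s, with two factors at node $i$ and one at $\tau i$, while the right-hand side mixes $\Theta$'s, $\K$'s, $B$'s and several powers of $C$, so one must verify summand-by-summand that the sign $o(i)^2o(\tau i)=o(\tau i)$ and the shifted $C$-exponents line up on both sides. This is the step most prone to error and warrants the closest attention.
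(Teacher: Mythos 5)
Your proposal is correct and is exactly the argument the paper intends: the paper's own proof is the single line ``Follows by inspection of the defining relations for $\tUiD$,'' and your write-up is simply that inspection carried out explicitly (translation covariance of each relation, the explicit inverse shift, and the disjoint-support argument for commutativity). The sign and $C$-power bookkeeping you flag for \eqref{qsiDR10} does work out as you describe, since both sides acquire the overall factor $o(\tau i)$ and a uniform shift of all loop indices by one.
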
 
	
	\begin{proof}
		Follows by inspection of the defining relations for $\tUiD$.
	\end{proof}

\begin{lemma}[{see \cite[Remark 4.17]{Da12}}]	
Let $a,b,c\in\tUi$. For any $u,w,t$, we have
\begin{align}
\label{eq:Jacobi}
			\big[a,[b,c]_{t/w}\big]_{uw}=\big[[a,b]_u,c\big]_t+u\big[ b,[a,c]_w\big]_{t/uw}.
		\end{align}
	\end{lemma}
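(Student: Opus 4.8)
The plan is to prove \eqref{eq:Jacobi} by brute-force expansion, since it is a purely formal identity in an associative algebra: it holds for any elements $a,b,c$ of $\tUi$ and any invertible scalars $u,w,t$, with no input from the specific structure of the iquantum group. Recall the convention $[x,y]_s=xy-s\,yx$ used throughout. The only monomials in $a,b,c$ that can occur are the six orderings $abc,acb,bac,bca,cab,cba$, so the entire proof amounts to comparing the coefficient of each such monomial on the two sides.

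First I would expand the left-hand side:
\[
\big[a,[b,c]_{t/w}\big]_{uw}
= abc-\tfrac{t}{w}\,acb-uw\,bca+ut\,cba,
\]
which involves only the four monomials $abc,acb,bca,cba$. Next I would expand the two terms on the right-hand side separately:
\begin{align*}
\big[[a,b]_u,c\big]_t &= abc-u\,bac-t\,cab+ut\,cba,
\\
u\big[b,[a,c]_w\big]_{t/uw} &= u\,bac-uw\,bca-\tfrac{t}{w}\,acb+t\,cab.
\end{align*}
The crucial observation is that the two mixed monomials $bac$ and $cab$, which do not appear on the left, cancel when the two right-hand terms are added: the coefficients $-u$ and $+u$ of $bac$ sum to zero, and likewise $-t$ and $+t$ for $cab$.

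Adding the two displayed expressions then leaves exactly $abc-\tfrac{t}{w}\,acb-uw\,bca+ut\,cba$, which matches the left-hand side term by term, completing the proof. There is no real obstacle here; the only point requiring care is the bookkeeping of the scalar factors — in particular verifying that the products $u\cdot(t/uw)=t/w$ and $u\cdot(t/uw)\cdot w=t$ come out correctly in the second right-hand term — together with the standing assumption that $w$ and $uw$ are invertible so that the subscripts $t/w$ and $t/uw$ are defined.
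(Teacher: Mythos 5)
Your expansion is correct: both sides reduce to $abc-\tfrac{t}{w}\,acb-uw\,bca+ut\,cba$ after the $bac$ and $cab$ terms cancel, which is exactly the verification this purely formal identity requires. The paper itself offers no proof, simply citing \cite[Remark 4.17]{Da12}, so your direct coefficient check is the intended (and essentially the only) argument.
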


Denote by $\bth_i$ the longest element in $W_{\{1,2,\dots,i-1\}}:=\langle \bs_j\mid 1\leq j<i-1\rangle$ for $1\leq i\leq r$. Then we have $\TT_{\bth_i}^{-1}(B_0)=\TT_{i-1}^{-1}\cdots \TT_{2}^{-1}\TT_1^{-1}(B_0)$. 
	\begin{lemma}
		\label{lem:induction-TB}
		For $1< i\leq r$, we have 
		\begin{align}
			\label{eq:Theta_iB0}
			\TT_{\bth_i}^{-1}(B_0)&=\big[B_{\tau (i-1)},[B_{i-1}, \TT_{\bth_{i-1}}^{-1}(B_0)]_v\big]_v-v\TT_{\bth_{i-1}}^{-1}(B_0)\K_{\tau(i-1)}
			\\
			\label{eq:Theta_iB0-1}
			&=\big[B_{i-1},[B_{\tau(i-1)}, \TT_{\bth_{i-1}}^{-1}(B_0)]_v\big]_v-v\TT_{\bth_{i-1}}^{-1}(B_0)\K_{i-1},
			\\
			\label{eq:Serre-Thetai-1}
			\big[B_{i-1},& [B_{i-1},\TT^{-1}_{\bth_{i-1}}(B_0)]_v\big]_{v^{-1}}=0. %			\qquad
			%\big[B_{\tau(i-1)},[B_{\tau(i-1)},\TT^{-1}_{\bth_{i-1}}(B_0)]_v\big]_{v^{-1}}=0,
		\end{align}
	\end{lemma}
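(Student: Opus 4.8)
The plan is to prove the three identities \eqref{eq:Theta_iB0}, \eqref{eq:Theta_iB0-1} and \eqref{eq:Serre-Thetai-1} simultaneously by induction on $i$. Write $\beta_i:=\TT_{\bth_i}^{-1}(B_0)$, so that $\beta_1=B_0$ and, since $\TT_{\bth_i}^{-1}(B_0)=\TT_{i-1}^{-1}\cdots\TT_1^{-1}(B_0)$, one has the recursion $\beta_i=\TT_{i-1}^{-1}(\beta_{i-1})$. For the base case $i=2$ we have $\beta_1=B_0$, and the three identities become statements about $\TT_1^{-1}(B_0)$: identity \eqref{eq:Serre-Thetai-1} is exactly the Serre relation \eqref{eq:S6} for the pair $(1,0)$ (one checks $[B_1,[B_1,B_0]_v]_{v^{-1}}=B_1^2B_0-[2]B_1B_0B_1+B_0B_1^2$); identity \eqref{eq:Theta_iB0} follows by applying $\TT_1^{-1}=\sigma_\imath\TT_1\sigma_\imath$ to $B_0$, using the explicit formula for $\TT_1(B_0)$ from Theorem \ref{thm:Ti}(2) in the subcase $c_{1,0}=c_{\tau1,0}=-1$ together with the anti-homomorphism property of $\sigma_\imath$; and \eqref{eq:Theta_iB0-1} is obtained from \eqref{eq:Theta_iB0} by applying the involution $\widehat{\tau}$, which fixes $\beta_i$ (as $\bth_i$ involves only the $\tau$-stable generators $\bs_1,\dots,\bs_{i-1}$) and swaps $B_{i-1}\leftrightarrow B_{\tau(i-1)}$, $\K_{i-1}\leftrightarrow\K_{\tau(i-1)}$.

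Before the inductive step (so $3\le i\le r$, hence $2\le i-1\le r-1$ and $c_{i-1,\tau(i-1)}=0$ with $i-2$ and $\tau(i-1)$ non-adjacent), I would record three facts about $\TT_{i-1}^{-1}$. First, $\TT_{i-1}^{-1}(\beta_{i-2})=\beta_{i-2}$: indeed $\beta_{i-2}$ involves only $\TT_1,\dots,\TT_{i-3}$, each of which commutes with $\TT_{i-1}$, and $\TT_{i-1}^{-1}(B_0)=B_0$ since $c_{0,i-1}=0$. Second, using $\TT_{i-1}^{-1}=\sigma_\imath\TT_{i-1}\sigma_\imath$ and Theorem \ref{thm:Ti}(2), a direct computation gives $\TT_{i-1}^{-1}(B_{i-2})=[B_{i-1},B_{i-2}]_v$, $\TT_{i-1}^{-1}(B_{\tau(i-2)})=[B_{\tau(i-1)},B_{\tau(i-2)}]_v$ and $\TT_{i-1}^{-1}(\K_{\tau(i-2)})=-v\K_{\tau(i-2)}\K_{\tau(i-1)}$.

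For the inductive step of \eqref{eq:Serre-Thetai-1}, I would apply the automorphism $\TT_{i-2}$, under which $\beta_{i-1}=\TT_{i-2}^{-1}(\beta_{i-2})\mapsto\beta_{i-2}$ and $B_{i-1}\mapsto[B_{i-1},B_{i-2}]_v$; this converts the claim into the rank-two identity $\big[[B_{i-1},B_{i-2}]_v,[[B_{i-1},B_{i-2}]_v,\beta_{i-2}]_v\big]_{v^{-1}}=0$. The latter follows from the Serre relation $[B_{i-1},[B_{i-1},B_{i-2}]_v]_{v^{-1}}=0$, the inductive hypothesis $[B_{i-2},[B_{i-2},\beta_{i-2}]_v]_{v^{-1}}=0$, and the commutation $[B_{i-1},\beta_{i-2}]=0$ (as $B_{i-1}$ commutes with every generator appearing in $\beta_{i-2}$), all assembled through repeated use of the Jacobi identity \eqref{eq:Jacobi}. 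For the inductive step of \eqref{eq:Theta_iB0}, I would substitute the inductive formula $\beta_{i-1}=[B_{\tau(i-2)},[B_{i-2},\beta_{i-2}]_v]_v-v\beta_{i-2}\K_{\tau(i-2)}$ into $\beta_i=\TT_{i-1}^{-1}(\beta_{i-1})$, distribute $\TT_{i-1}^{-1}$ over the brackets using the three facts above, and then reorganize the resulting nested brackets into the target form $[B_{\tau(i-1)},[B_{i-1},\beta_{i-1}]_v]_v-v\beta_{i-1}\K_{\tau(i-1)}$ by means of \eqref{eq:Jacobi}, the $\K$-commutation relations of $\tUi$, and the Serre relation \eqref{eq:Serre-Thetai-1} just established. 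Identity \eqref{eq:Theta_iB0-1} then follows from \eqref{eq:Theta_iB0} by applying $\widehat{\tau}$.

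I expect the main obstacle to be this last reorganization in \eqref{eq:Theta_iB0}: one must carry out the bracket gymnastics while correctly tracking the scalar $v$-powers and the $\K$-factors produced when commuting the various $B$'s past the $\K$'s, and it is precisely here that \eqref{eq:Serre-Thetai-1} is indispensable for cancelling the terms that are not of the desired shape. The reduction of \eqref{eq:Serre-Thetai-1} to a self-contained rank-two identity is what keeps that auxiliary computation manageable and isolates the genuinely new work into a single bracket manipulation.
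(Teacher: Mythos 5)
Your overall architecture (induction on $i$, base case $i=2$ via the explicit formula for $\TT_1^{-1}(B_0)$ from Theorem \ref{thm:Ti}(2) and $\sigma_\imath$, deduction of \eqref{eq:Theta_iB0-1} from \eqref{eq:Theta_iB0} via $\widehat{\tau}$, and the inductive step for \eqref{eq:Theta_iB0} by applying $\TT_{i-1}^{-1}$ to the level-$(i-1)$ formula using $\TT_{i-1}^{-1}(B_{i-2})=[B_{i-1},B_{i-2}]_v$, $\TT_{i-1}^{-1}(B_{\tau(i-2)})=[B_{\tau(i-1)},B_{\tau(i-2)}]_v$, $\TT_{i-1}^{-1}(\K_{\tau(i-2)})=-v\K_{\tau(i-2)}\K_{\tau(i-1)}$ and $\TT_{i-1}^{-1}(\beta_{i-2})=\beta_{i-2}$) coincides with the paper's, and those preparatory facts are all correct. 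However, there are two genuine problems in the execution. First, for \eqref{eq:Serre-Thetai-1} your reduction via $\TT_{i-2}$ converts the claim into $\big[X,[X,\beta_{i-2}]_v\big]_{v^{-1}}=0$ with $X=[B_{i-1},B_{i-2}]_v$, which is an identity of exactly the same nature as the one you started from (a higher Serre relation between a composite root vector and $\beta_{i-2}$), and the three ingredients you list do not suffice to close it: if one sets $Y=[B_{i-2},\beta_{i-2}]_v$ and systematically applies \eqref{eq:Jacobi} together with $[B_{i-1},X]_{v^{-1}}=0$, $[B_{i-2},Y]_{v^{-1}}=0$ and $[B_{i-1},\beta_{i-2}]=0$, one finds $\big[X,[X,\beta_{i-2}]_v\big]_{v^{-1}}=-v^{2}\big[X',[X',\beta_{i-2}]_v\big]_{v^{-1}}$ with $X'=[B_{i-1},B_{i-2}]_{v^{-1}}$, and the symmetric manipulation returns $\big[X',[X',\beta_{i-2}]_v\big]_{v^{-1}}=-v^{-2}\big[X,[X,\beta_{i-2}]_v\big]_{v^{-1}}$, i.e.\ a tautology. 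The paper avoids this trap by \emph{not} applying $\TT_{i-2}$ and instead substituting the already-proved formula \eqref{eq:Theta_iB0} at level $i-1$: since $B_{i-1}$ commutes with both $B_{\tau(i-2)}$ and $\beta_{i-2}$, the factor $B_{\tau(i-2)}$ can be pulled outside the double bracket and the whole expression collapses onto $\big[B_{i-1},[B_{i-1},B_{i-2}]_v\big]_{v^{-1}}=0$, which is a \emph{defining} Serre relation \eqref{eq:S6}. You should reorder your induction so that \eqref{eq:Theta_iB0} at level $i-1$ is available for this purpose.

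Second, in the final reorganization for \eqref{eq:Theta_iB0} you list your tools as \eqref{eq:Jacobi}, the $\K$-commutation relations, and ``the Serre relation \eqref{eq:Serre-Thetai-1} just established,'' and you assert that \eqref{eq:Serre-Thetai-1} is indispensable there. This is a misdiagnosis: \eqref{eq:Serre-Thetai-1} plays no role in that computation. The term $-v\,\TT_{\bth_{i-1}}^{-1}(B_0)\K_{\tau(i-1)}$ on the right-hand side is produced by the cross term $[B_{\tau(i-1)},B_{i-1}]=\frac{\K_{i-1}-\K_{\tau(i-1)}}{v-v^{-1}}$, i.e.\ by relation \eqref{relation5}, which you do not list and which is not a $\K$-commutation relation; without it the target form cannot be reached. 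The other inputs are the vanishing commutators $[B_{i-1},B_{\tau(i-2)}]=0$, $[B_{i-1},\beta_{i-2}]=[B_{\tau(i-1)},\beta_{i-2}]=0$, and the $\K$-conjugation rules. So the plan, executed literally as written, would stall at both the Serre step and the final cancellation; both repairs are available but require replacing the stated mechanisms, not merely elaborating them.
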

	
	\begin{proof}
		We prove \eqref{eq:Theta_iB0} by induction. 
		For $i=2$, it follows from Theorem \ref{thm:Ti} (or rather its $\TT_i^{-1}$-counterpart).

        For $i>2$, by inductive hypothesis, we get 
        \begin{align*}
            \TT_{\bth_i}^{-1}(B_0)&=\TT_{i-1}^{-1}\big(\TT_{\bth_{i-1}}^{-1}(B_0)\big)
            \\
            &=\TT_{i-1}^{-1}\Big(\big[B_{\tau (i-2)},[B_{i-2}, \TT_{\bth_{i-2}}^{-1}(B_0)]_v\big]_v\Big)-v\TT_{i-1}^{-1}( \TT_{\bth_{i-2}}^{-1}(B_0)\K_{\tau(i-2)})
            \\
            &=\Big[[B_{\tau (i-1)},B_{\tau (i-2)}]_v,\big[[B_{i-1},B_{i-2}]_v, \TT_{\bth_{i-2}}^{-1}(B_0)\big]_v\Big]_v+v^2\TT_{\bth_{i-2}}^{-1}(B_0) \K_{\tau(i-2)}\K_{\tau(i-1)}
            \\
            &=\Big[[B_{\tau (i-1)},B_{\tau (i-2)}]_v,\big[B_{i-1},[B_{i-2}, \TT_{\bth_{i-2}}^{-1}(B_0)]_v\big]_v\Big]_v+v^2\TT_{\bth_{i-2}}^{-1}(B_0) \K_{\tau(i-2)}\K_{\tau(i-1)}
            \end{align*}
            where the last equality follows by \eqref{eq:Jacobi} and $[B_{i-1},\TT_{\theta_{i-2}}^{-1}(B_0)]=0$.
            Then 
            \begin{align*}
\TT_{\bth_i}^{-1}(B_0)            &=\bigg[B_{\tau (i-1)},\Big[B_{\tau (i-2)},\big[B_{i-1},[B_{i-2}, \TT_{\bth_{i-2}}^{-1}(B_0)]_v\big]_v\Big]_v\bigg]_v
            \\
            &\quad -v\bigg[B_{\tau (i-2)},\Big[B_{\tau (i-1)},\big[B_{i-1},[B_{i-2}, \TT_{\bth_{i-2}}^{-1}(B_0)]_v\big]_v\Big]\bigg] +v^2\TT_{\bth_{i-2}}^{-1}(B_0) \K_{\tau(i-2)}\K_{\tau(i-1)}
            \\
            &=\bigg[B_{\tau (i-1)},\Big[B_{i-1},\big[B_{\tau(i-2)},[B_{i-2}, \TT_{\bth_{i-2}}^{-1}(B_0)]_v\big]_v\Big]_v\bigg]_v
            \\
            &\quad-v\Big[B_{\tau(i-2)},\big[ [B_{\tau(i-1)},B_{i-1}], [B_{i-2}, \TT_{\bth_{i-2}}^{-1}(B_0)]_v\big]_v \Big]+v^2\TT_{\bth_{i-2}}^{-1}(B_0) \K_{\tau(i-2)}\K_{\tau(i-1)},
            \end{align*}
where the lat equality follows from \eqref{eq:Jacobi} and  
$$[B_{i-1},B_{\tau(i-2)}]=0, \qquad\big[B_{\tau(i-1)},[B_{i-2},\TT^{-1}_{\bth_{i-2}}(B_0)]_v\big]=0.$$ 
Note that 
\begin{align*}
    \bigg[B_{\tau (i-1)},\Big[B_{i-1},\big[B_{\tau(i-2)},[B_{i-2}, \TT_{\bth_{i-2}}^{-1}(B_0)]_v\big]_v\Big]_v\bigg]_v=\bigg[B_{\tau (i-1)},\Big[B_{i-1},\TT^{-1}_{\bth_{i-1}}(B_0)\Big]_v\bigg]_v,
\end{align*}
by using the inductive hypothesis, and 
\begin{align*}
    \bigg[B_{\tau (i-1)},\Big[B_{i-1},\TT_{\bth_{i-2}}^{-1}(B_0)\K_{\tau(i-2)}\Big]_v\bigg]_v= \bigg[B_{\tau (i-1)},\Big[B_{i-1},\TT_{\bth_{i-2}}^{-1}(B_0)\Big]\K_{\tau(i-2)}\bigg]_v=0.
\end{align*}
Therefore, 
            \begin{align*}
            \TT_{\bth_i}^{-1}(B_0)&=\bigg[B_{\tau (i-1)},\Big[B_{i-1},\TT^{-1}_{\bth_{i-1}}(B_0)\Big]_v\bigg]_v
            -v\Big[B_{\tau(i-2)},\big[ B_{i-2}, \TT_{\bth_{i-2}}^{-1}(B_0)\big]_v\Big]_v\K_{\tau(i-1)} 
            \\
            &\quad+v^2\TT_{\bth_{i-2}}^{-1}(B_0) \K_{\tau(i-2)}\K_{\tau(i-1)}
            \\
            &=\big[B_{\tau (i-1)},[B_{i-1}, \TT_{\bth_{i-1}}^{-1}(B_0)]_v\big]_v-v\TT_{\bth_{i-1}}^{-1}(B_0)\K_{\tau(i-1)},
        \end{align*}
        by using the inductive hypothesis again. The proof of \eqref{eq:Theta_iB0} is completed.

		The formula \eqref{eq:Theta_iB0-1} follows from \eqref{eq:Theta_iB0} since the braid group actions $\TT_j$ commutates with the involution $\widehat{\tau}$.  
		
		For \eqref{eq:Serre-Thetai-1}, we also prove by induction. If $i=2$, it is obvious by using \eqref{eq:S6}. 
        For $i\geq 2$, by using the inductive hypothesis, $[B_{i-1},B_{\tau(i-2)}]=0$, and $[B_{i-1},\TT_{\theta_{i-2}}^{-1}(B_0)]=0$, we have
		\begin{align*}
		    \big[B_{i-1}, [B_{i-1},\TT^{-1}_{\bth_{i-1}}(B_0)]_v\big]_{v^{-1}}&=\bigg[B_{i-1},\Big[B_{i-1},\big[B_{\tau(i-2)},[B_{i-2}, \TT_{\bth_{i-2}}^{-1}(B_0)]_v\big]_v\Big]_v\bigg]_v
            \\
            &=\bigg[B_{\tau(i-2)},\Big[B_{i-1},\big[B_{i-1},[B_{i-2}, \TT_{\bth_{i-2}}^{-1}(B_0)]_v\big]_v\Big]_v\bigg]_v
            \\
            &=\bigg[B_{\tau(i-2)},\Big[B_{i-1},\big[[B_{i-1},B_{i-2}]_v, \TT_{\bth_{i-2}}^{-1}(B_0)\big]_v\Big]_v\bigg]_v
            \\
            &=\bigg[B_{\tau(i-2)},\Big[\big[B_{i-1},[B_{i-1},B_{i-2}]_v\big]_v, \TT_{\bth_{i-2}}^{-1}(B_0)\Big]_v\bigg]_v=0
		\end{align*}
        since $\big[B_{i-1},[B_{i-1},B_{i-2}]_v\big]_v=0$; see \eqref{eq:S6}. The proof is completed.
	\end{proof}

	We can now formulate the main result of this paper. Recall the root vectors $B_{i,k}, \TH_{i,m}$ in $\tUi$ from \S\ref{subsec:root vectors}.
	
	\begin{theorem}
		\label{thm:Dr}
		There is a $\Q(v)$-algebra isomorphism ${\Phi}: {}\tUiD \longrightarrow\tUi$, which sends
		\begin{align}
			\label{def:Phi}
			B_{i,k}\mapsto B_{i,k}, \quad \Theta_{i,m} \mapsto \Theta_{i,m},
			\quad
			\K_i\mapsto \K_i, \quad C\mapsto o(i)o(\tau i)\K_i^{-1}\TT_{\bome_i}^{-1}(\K_i),
		\end{align}
		for $i\in \II, m\ge 1,$ and $k\in \Z$.
	\end{theorem}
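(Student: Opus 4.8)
The plan is to show that the assignment \eqref{def:Phi} extends to a well-defined algebra homomorphism $\Phi\colon \tUiD\to\tUi$, and then that $\Phi$ is both surjective and injective. Since $\tUiD$ is presented by generators and relations, the only thing to check for well-definedness is that the images in $\tUi$ of the Drinfeld generators --- namely the real $v$-root vectors $B_{i,k}$, the imaginary $v$-root vectors $\Theta_{i,m}$, the Cartan elements $\K_i$, and the central element $C=-v^{2r-1}\K_\delta$ --- satisfy every defining relation \eqref{qsiDR0}--\eqref{qsiDR10}. The commutation relations \eqref{qsiDR0} are immediate from the action of $\K_i$ on the $B_{j,l}$ and from the definition \eqref{C} of $C$, so the substance lies in the remaining relations, which I would organize according to how many distinct $\tau$-orbits of indices they involve.

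The first family consists of the relations confined to a single pair $\{i,\tau i\}$: the diagonal cases of \eqref{qsiDR1} and \eqref{qsiDR2}, together with the bracket relations \eqref{qsiDR4}, \eqref{qsiDR6}, and the Serre-type relation \eqref{qsiDR10}. For each $i\in\I_{0,\tau}$ these relations live entirely inside the affine rank one subalgebra $\tUi_{[i]}$, and the point is that under the isomorphism $\aleph_i$ of Proposition~\ref{prop:rank1isoQG} (if $c_{i,\tau i}=0$) or Proposition~\ref{prop:rank1iso-sl3} (if $c_{i,\tau i}=-1$) the vectors $B_{i,k},\Theta_{i,m}$ correspond precisely to the rank one root vectors. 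This is exactly where the compatibility of ibraid actions enters: since $\TT_{\bome_i}|_{\tUi_{[i]}}=\aleph_i\circ\dot\TT_{\bome_1}\circ\aleph_i^{-1}$ by Proposition~\ref{prop:T1Ti}, and the imaginary vectors are built by the same recursions \eqref{GTH1}--\eqref{GTHn1} as in rank one, $\aleph_i$ intertwines the two sets of generators. Hence each relation in this family is the $\aleph_i$-image of a relation already established in the rank one Drinfeld presentations of Damiani~\cite{Da93} (for $c_{i,\tau i}=0$) and Proposition~\ref{prop:Dr1} (for $c_{i,\tau i}=-1$), and therefore holds in $\tUi$.

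The second family consists of the genuinely rank two relations across distinct orbits: the cross-orbit cases of \eqref{qsiDR1} and \eqref{qsiDR2}, the commutativity \eqref{qsiDR7}, the loop relation \eqref{qsiDR3}, and the Serre relations \eqref{qsiDR9}. For these I would exploit the translation invariance results --- $\TT_{\bome_j}$ fixes $\tUi_{[i]}$ pointwise for $j\neq i,\tau i$ (Proposition~\ref{prop:TiFix}) and fixes every $\Theta_{i,n}$ (Proposition~\ref{prop:Theta-invariant Tomega}) --- together with $B_{i,k}=o(i)^{-k}\TT_{\bome_i}^{-k}(B_i)$ to shift the integer parameters and reduce each relation to finitely many base cases with small $|k|,|l|$. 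These base cases are then checked by direct computation using the explicit formulas for $\TT_i$ and $\TT_{\bome_i}$, the Jacobi-type identity \eqref{eq:Jacobi}, and the higher-rank expansion of $\TT_{\bth_i}^{-1}(B_0)$ from Lemma~\ref{lem:induction-TB}. This is the main obstacle: because $\tUi$ carries root vectors of three different root lengths, there are many inequivalent rank two configurations to treat and the computations are lengthy; they are precisely what occupies Section~\ref{sec:relation}.

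Granting well-definedness, surjectivity is straightforward: $B_{i,0}=B_i$ for all $i\in\II$, the element $B_0$ is recovered from the loop generators via Lemma~\ref{lem:thrloop} (which expresses $\TT_{\bth_r}^{-1}(B_0)$ through $\Theta_{r,1}$ and a bracket of real root vectors) after applying $\TT_{\bth_r}$, and $\K_0$ is recovered from $C=-v^{2r-1}\K_\delta$ and the $\K_i$ with $i\in\II$. For injectivity I would run a filtration argument in the spirit of \cite{Be94} and the prequels \cite{LWZ23,LWZ24}: equip $\tUiD$ with a filtration compatible with \eqref{eq:filt1}--\eqref{eq:filter} so that the induced map lands in $\gr\tUi\cong\U^-\otimes\Q(v)[\K_i^{\pm}]$, and identify the associated graded map with the analogous Drinfeld-type map onto $\U^-$, whose injectivity follows from the PBW basis of the quantum loop algebra. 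Since $\Phi$ is surjective, the graded map is surjective, and once it is shown to be injective it is an isomorphism; hence $\Phi$ itself is injective, completing the proof.
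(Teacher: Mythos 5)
Your overall architecture --- verify the relations case by case (single-orbit relations transported through $\aleph_i$ and Proposition~\ref{prop:T1Ti} from the rank one Drinfeld presentations, cross-orbit relations reduced by translation invariance to base cases checked by direct computation), then surjectivity, then injectivity via the associated graded --- is the same as the paper's, and the well-definedness and injectivity portions are sound as sketched.

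There is, however, a genuine gap in your surjectivity argument. You claim that $B_0$ is recovered from $\TT_{\bth_r}^{-1}(B_0)\in\Im(\Phi)$ ``after applying $\TT_{\bth_r}$.'' This step fails: $\TT_{\bth_r}$ is an automorphism of the ambient algebra $\tUi$, but at this stage of the proof you do not know that it preserves the subalgebra $\Im(\Phi)$ --- that would essentially require already knowing $\Im(\Phi)=\tUi$, which is what you are trying to prove. The paper instead runs a downward induction on $i$, for $r\ge i\ge 1$, showing $\TT_{\bth_i}^{-1}(B_0)\in\Im(\Phi)$ entirely by manipulations \emph{inside} $\Im(\Phi)$: using the recursion of Lemma~\ref{lem:induction-TB} one computes
\begin{align*}
\big[B_{i-1},\TT_{\bth_i}^{-1}(B_0)\big]_{v^{-1}}
= v^{-1}\big[B_{i-1},\TT_{\bth_{i-1}}^{-1}(B_0)\big]_{v}\K_{i-1},
\end{align*}
which puts $\big[B_{i-1},\TT_{\bth_{i-1}}^{-1}(B_0)\big]_{v}$, hence $\big[B_{\tau(i-1)},[B_{i-1},\TT_{\bth_{i-1}}^{-1}(B_0)]_v\big]_v = v\K_{\tau(i-1)}\TT_{\bth_{i-1}}^{-1}(B_0)+\TT_{\bth_i}^{-1}(B_0)$, into $\Im(\Phi)$, and therefore $\TT_{\bth_{i-1}}^{-1}(B_0)$ as well; the case $i=1$ gives $B_0$. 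You need this (or an equivalent device that never leaves $\Im(\Phi)$) to close the surjectivity argument; as written, ``straightforward'' substantially understates this step.
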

	
	\begin{proof}
		We assume that $\Phi\colon \tUiD \rightarrow\tUi$ is an algebra homomorphism for now, postponing its proof to Section \ref{sec:relation}. 
		
		We first show that $\Phi: \tUiD \rightarrow\tUi$ is surjective. All generators for $\tUi$ except $B_0$ are clearly in the image of $\Phi$, and so it remains to show that $B_0\in \Im(\Phi)$.  
		
		We shall prove by downward induction on $i$, for $1\le i\le r$, that $\TT_{\bth_{i}}^{-1}(B_0)\in\Im(\Phi)$. The base case $\TT_{\bth_r}^{-1}(B_0)\in \text{Im} (\Phi)$ holds by Lemma~\ref{lem:thrloop}. Assume that $\TT_{\bth_{i}}^{-1}(B_0)\in\Im(\Phi)$, for $2\le i\le r$. By Lemma \ref{lem:induction-TB}, we have
		\begin{align*}
			&\big[B_{i-1},	\TT_{\bth_i}^{-1}(B_0)\big]_{v^{-1}}
			\\
			&=\Big[ B_{i-1}, \big[B_{\tau (i-1)},[B_{i-1}, \TT_{\bth_{i-1}}^{-1}(B_0)]_v\big]_v\Big]_{v^{-1}}-v\big[B_{i-1},\TT_{\bth_{i-1}}^{-1}(B_0)\K_{\tau(i-1)}\big]_{v^{-1}}
			\\
			&=\big[[B_{i-1},B_{\tau(i-1)}],[B_{i-1}, \TT_{\bth_{i-1}}^{-1}(B_0)]_v
		\big]+\Big[ B_{\tau(i-1)}, \big[B_{i-1}, [B_{i-1},\TT_{\bth_{i-1}}^{-1}(B_0)]_v\big]_{v^{-1}}\Big]_v
		\\
		&\quad -v\big[B_{i-1},\TT_{\bth_{i-1}}^{-1}(B_0)\big]_{v}\K_{\tau(i-1)}	
		\\
		&=\big[\frac{\K_{\tau(i-1)}-\K_{i-1}}{v-v^{-1}},[B_{i-1}, \TT_{\bth_{i-1}}^{-1}(B_0)]_v
		\big]-v\big[B_{i-1},\TT_{\bth_{i-1}}^{-1}(B_0)\big]_{v}\K_{\tau(i-1)}	
		\\
		&=v^{-1} \big[B_{i-1},\TT_{\bth_{i-1}}^{-1}(B_0)\big]_{v}\K_{i-1}.
		\end{align*}
	Hence, $\big[B_{i-1},\TT_{\bth_{i-1}}^{-1}(B_0)\big]_{v}$ and then $\big[B_{\tau(i-1)},[B_{i-1},\TT_{\bth_{i-1}}^{-1}(B_0)]_v\big]_v$ lie in $\Im(\Phi)$. 
   We can rewrite 
	\begin{align*}
		&\big[B_{\tau(i-1)},[B_{i-1},\TT_{\bth_{i-1}}^{-1}(B_0)]_v\big]_v
		\\
		&=\big[[B_{\tau(i-1)},B_{i-1}],\TT_{\bth_{i-1}}^{-1}(B_0)\big]_{v^2}+\big[B_{i-1},[B_{\tau(i-1)},\TT_{\bth_{i-1}}^{-1}(B_0)]_v\big]_v
		\\
		&=-v(\K_{i-1}-\K_{\tau(i-1)})\TT_{\bth_{i-1}}^{-1}(B_0)+\TT_{\bth_i}^{-1}(B_0)+v\TT_{\bth_{i-1}}^{-1}(B_0)\K_{i-1}
		\\
		&=v\K_{\tau(i-1)} \TT_{\bth_{i-1}}^{-1}(B_0)+\TT_{\bth_i}^{-1}(B_0).
	\end{align*}
Thus $\TT_{\bth_{i-1}}^{-1}(B_0)\in\Im(\Phi)$ since $\big[B_{\tau(i-1)},[B_{i-1},\TT_{\bth_{i-1}}^{-1}(B_0)]_v\big]_v$ and $\TT_{\bth_i}^{-1}(B_0)$ lie in $\Im(\Phi)$.
	
Therefore, $B_0=\TT_{\bth_{1}}^{-1}(B_0)\in\Im(\Phi)$, and $\Phi$ is surjective.		
	
		The injectivity of $\Phi$ is proved by the same argument as in \cite[Proof of Theorem~ 3.13]{LW21b} by passing to the associated graded, where $\Phi^{\text{gr}}$ becomes a well-known isomorphism \cite{Be94, Da15} for the Drinfeld presentation of half the affine quantum group. We skip the details.
	\end{proof}

	\begin{lemma}
		\label{lem:relationsreform}
		The following equivalences hold:
		
		(1) The identity \eqref{qsiDR1} is equivalent to the identity
		\begin{align}\label{qsiDR1reform}
			[\TH_{i,m},\TH_{j,n}]=0,\quad \forall i,j \in \II, \text{ and }m,n\geq1.
		\end{align}
		(2) The identity \eqref{qsiDR2} is equivalent to the identity	\begin{align}\label{qsiDR2reform}
			[\TH_{i,m},& B_{j,k}]+v^{c_{i,j}-c_{\tau i,j}}[\TH_{i,m-2},B_{j,k}]_{v^{2(c_{\tau i,j}-c_{i,j})}}C
			\\\notag
			&-v^{c_{i,j}}[\TH_{i,m-1},B_{j,k+1}]_{v^{-2c_{ i,j}}}- v^{-c_{\tau i, j}}[\TH_{i,m-1},B_{j,k-1}]_{v^{2c_{\tau i,j}}}C
			=0,
		\end{align}
		for any $i,j \in \II$, $m\ge 1$ and $k\in\Z$.
	\end{lemma}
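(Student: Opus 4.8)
The plan is to derive both equivalences from the exponential relation \eqref{exp}, read as a \emph{triangular change of generators}. Expanding the exponential, the coefficient of $z^m$ in $\exp\big((v-v^{-1})\sum_{a\ge1}H_{i,a}z^a\big)$ is a (noncommutative) polynomial in $H_{i,1},\dots,H_{i,m}$, so each $\Theta_{i,m}$ lies in the subalgebra generated by $H_{i,1},\dots,H_{i,m}$; expanding the logarithm of $\Theta_i(z):=1+\sum_{m\ge1}(v-v^{-1})\Theta_{i,m}z^m$ (which has constant term $1$) shows conversely that each $H_{i,m}$ lies in the subalgebra generated by $\Theta_{i,1},\dots,\Theta_{i,m}$. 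In both directions only same-index generators of index $\le m$ occur, and these membership statements require no commutativity hypothesis.

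For part (1) I would argue purely from these membership statements. If \eqref{qsiDR1} holds, the generators $H_{i,m}$ pairwise commute, so the subalgebra they generate is commutative; since every $\Theta_{i,m}$ lies in this subalgebra, \eqref{qsiDR1reform} follows. Conversely, if \eqref{qsiDR1reform} holds, the subalgebra generated by the $\Theta$'s is commutative and contains every $H_{i,m}$, giving \eqref{qsiDR1}. Thus the two commuting subalgebras coincide and part (1) is immediate.

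For part (2) I would pass to generating functions. Set $H_i(z)=\sum_{m\ge1}H_{i,m}z^m$, $\Theta_i(z)=\exp\big((v-v^{-1})H_i(z)\big)$ and $B_j(w)=\sum_{l\in\Z}B_{j,l}w^l$. Summing \eqref{qsiDR2} against $z^mw^l$ shows it is equivalent to the single identity $[H_i(z),B_j(w)]=\phi_{ij}(z,w)B_j(w)$, where $\phi_{ij}(z,w)$ is the central scalar series built from the right-hand side of \eqref{qsiDR2}. Since $\phi_{ij}$ commutes with everything, $\ad H_i(z)$ acts on $B_j(w)$ as multiplication by $\phi_{ij}$, and applying $\operatorname{Ad}\big(\Theta_i(z)\big)=\exp\big((v-v^{-1})\ad H_i(z)\big)$ together with $\sum_{m\ge1}x^m/m=-\log(1-x)$ yields
\[
\Theta_i(z)\,B_j(w)\,\Theta_i(z)^{-1}=R_{ij}(z,w)\,B_j(w),\qquad
R_{ij}(z,w)=\frac{(1-v^{-c_{ij}}zw^{-1})(1-v^{c_{\tau i,j}}Czw)}{(1-v^{c_{ij}}zw^{-1})(1-v^{-c_{\tau i,j}}Czw)}.
\]
Clearing denominators and extracting the coefficient of $z^mw^k$ then reproduces \eqref{qsiDR2reform} exactly, after recognizing the resulting pairs of monomials as the three $v$-brackets appearing there; the conventions $\Theta_{i,0}=\tfrac1{v-v^{-1}}$ and $\Theta_{i,<0}=0$ fall out automatically because $\Theta_i(z)$ has constant term $1$ and no negative powers of $z$.

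The main obstacle is the converse in part (2). The coefficient extraction is reversible, so \eqref{qsiDR2reform} is equivalent to the functional equation $\operatorname{Ad}(\Theta_i(z))B_j(w)=R_{ij}B_j(w)$; but this says only that $\operatorname{Ad}(\Theta_i(z))$, not $\ad H_i(z)$ itself, acts as a scalar on $B_j(w)$, and one cannot simply take logarithms of such an operator identity. I would resolve this by a triangular induction on $m$. Writing $\mathcal D_a=(v-v^{-1})\ad H_{i,a}$ and $\operatorname{Ad}(\Theta_i(z))=\sum_{m\ge0}\mathcal E_mz^m$, the exponential expansion gives $\mathcal E_m=\mathcal D_m+Q_m(\mathcal D_1,\dots,\mathcal D_{m-1})$ with $Q_m$ a sum of products of length $\ge2$. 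The base case $\mathcal E_1=\mathcal D_1$ is \eqref{qsiDR2} at $m=1$. Assuming $\mathcal D_a(B_j(w))$ is a central scalar multiple of $B_j(w)$ for every $a<m$, each product in $Q_m$ also sends $B_j(w)$ to a scalar multiple of itself, so $\mathcal E_m(B_j(w))=\mathcal D_m(B_j(w))+(\text{scalar})\,B_j(w)$; comparing with the functional equation forces $\mathcal D_m(B_j(w))$ to be precisely the required scalar multiple, which is \eqref{qsiDR2} at index $m$. This closes the induction and completes the equivalence.
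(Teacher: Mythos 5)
Your proposal is correct. The paper itself gives no argument here --- it declares part (1) ``obvious'' and outsources part (2) to \cite[Lemma 4.8]{LWZ24} --- so your write-up is necessarily more detailed than the paper's, but the underlying mechanism (the triangular $\exp$/$\log$ change of generators in \eqref{exp}, and the generating-function reformulation of \eqref{qsiDR2} as $\mathrm{Ad}(\bTH_i(z))\bB_j(w)=R_{ij}(z,w)\bB_j(w)$) is exactly what the cited lemma rests on. The one genuinely delicate point is the converse in part (2): knowing that $\mathrm{Ad}(\bTH_i(z))$ acts on $\bB_j(w)$ by a central scalar series does not formally imply the same for $\ad H_i(z)$, and you correctly identify and close this gap with the triangular induction $\mathcal E_m=\mathcal D_m+Q_m(\mathcal D_1,\dots,\mathcal D_{m-1})$, using that the scalars $\phi_a(w)$ (built from $C^{\pm1}$ and powers of $w$) are central so that products of lower-order $\mathcal D_a$'s still act by scalars; comparing with the coefficient of $z^m$ in $R_{ij}=\exp\big(\sum_a\phi_az^a\big)$ then pins down $\mathcal D_m(\bB_j(w))$ exactly. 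I verified that clearing denominators in $\bTH_i(z)\bB_j(w)\bTH_i(z)^{-1}=R_{ij}(z,w)\bB_j(w)$ and extracting the coefficient of $z^mw^k$ reproduces \eqref{qsiDR2reform} with the stated exponents, and that the conventions $\Theta_{i,0}=\tfrac1{v-v^{-1}}$, $\Theta_{i,<0}=0$ make the $m=1$ case collapse to \eqref{qsiDR2} with $\Theta_{i,1}=H_{i,1}$. Your argument for part (1) (both families generate the same subalgebra, with only same-index, degree-$\le m$ generators appearing in either direction) is also sound and needs no commutativity input to set up.
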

	
	\begin{proof}
		The first statement is obvious, and the second one follows from \cite[Lemma~ 4.8]{LWZ24}.
	\end{proof}

	\subsection{Drinfeld presentation via generating functions}
	
	Introduce the generating functions
	\begin{align}\label{eq:Genfun}
		\begin{cases}
			\bB_{i}(z) =\sum_{k\in\Z} B_{i,k}z^{k},
			\\
			\bTH_{i}(z)  =1+ \sum_{m > 0}(v-v^{-1})\Theta_{i,m}z^{m},
			\\
			\bH_i(u)=\sum_{m\geq 1} H_{i,m} u^m,
			\\
			\bDel(z)=\sum_{k\in\Z}  C^k z^k.
		\end{cases}
	\end{align}
	The equation \eqref{exp} can be reformulated in terms of generating functions as 
	\begin{align}
		\label{expz}
		\bTH_{i}(z)  = \exp \big((v-v^{-1}) \bH_i(z)\big).
	\end{align}
	Introduce the following notation
	\begin{align*}
		& \bS_{i,j}(w_1,w_2|z)
		\\
		&=\Sym_{w_1,w_2} \big( \bB_{j}(z)\bB_{i}(w_{1})\bB_{i}(w_2) -[2] \bB_{i}(w_{1})\bB_{j}(z)\bB_{i}(w_{2}) +\bB_{i}(w_1)\bB_{i}(w_{2})\bB_{j}(z) \big).
	\end{align*}
	We can rewrite the defining relations for $\tUiD$ via generating functions in \eqref{eq:Genfun}, and hence reformulate Theorem~\ref{thm:Dr} as follows.
	
\begin{theorem} \label{thm:DrGF}
	$\tUi$ is generated by the elements $B_{i,l}$, $\Theta_{i,m}$, $\bK_i^{\pm1}$, $C^{\pm1}$, where $i\in\II$, $l\in\Z$ and $m>0$, subject to the following relations, for $i,j\in \II$:
	\begin{align}
		C \text{ is central, } & \quad [\bK_i,\bK_j] =  [\bK_i,\bTH_{j}(w)] =0,\quad \bK_i\bB_j(w)=v^{c_{\tau i,j}-c_{ij}} \bB_j(w) \bK_i,
			\label{Rel1GF} \\
		\bTH_i(z) \bTH_j(w) &=\bTH_j(w) \bTH_i(z),
			\\
		\bB_j(w)  \bTH_i(z)
		&= \frac{1 -v^{c_{ij}}zw^{-1}}{1 -v^{-c_{ij}}zw^{-1}} \cdot \frac{1 -v^{-c_{\tau i,j}}zw C}{1 -v^{c_{\tau i,j}} zw C}
		\bTH_i(z) \bB_j(w), 
			\\
		[\bB_i(z),\bB_{\tau i}(w)] &=\frac{\bDel(zw)}{v-v^{-1}} \big(\K_{\tau i}\bTH_i(z)-\K_i\bTH_{\tau i}(w)\big),
		\qquad \text{ if } c_{i,\tau i}=0,
			\\
			\label{qsiA1DRG5}
		(v^{-1}z-w) \bB_i(z)& \bB_{\tau i} (w)  +(v^{-1}w-z) \bB_{\tau i}(w) \bB_i(z)
			\\
			%%%3
		= \frac{\bDel(zw) }{1-v^2}&
		\big((z -vw) \K_{{i}} \bTH_{\tau i} (w) + (w -vz) \K_{\tau i} \bTH_i(z) \big), \qquad \text{ if } c_{i,\tau i}=-1,
			\notag
			\\
		[\bB_i(w), \bB_j(z)] &=0, \qquad\text{ if } c_{ij}=0, j\neq \tau i, 
			\label{BB0} \\
		(v^{c_{ij}}z -w) & \bB_i(z) \bB_j(w)  +(v^{c_{ij}}w-z) \bB_j(w) \bB_i(z)=0, \qquad \text{ if }j\neq \tau i,
			\label{BB1}
		\end{align}
and the Serre relations
	\begin{align}
	&\bS_{i,j}(w_1,w_2|z) =0,\qquad \text{ if } c_{ij}=-1,j\neq \tau i\neq i,
			\\ \notag
	& \mathbb{S}_{i,\tau i}(w_1,w_2|z)= \frac{1}{v-v^{-1}}\Big(-v^{-1}[2]\Sym_{w_1,w_2}\bDel(w_2 z) \frac{1-v w_2^{-1} z}{1-v^{-2} w_1 w_2^{-1}}\bB_i(w_1)\bTH_{\tau i}(z)\bK_i
			\\\notag
	&\qquad\qquad\qquad \quad  +[2]\Sym_{w_1,w_2}\bDel(w_2 z) \frac{1-vw_2^{-1}z}{1-v^2 w_1 w_2^{-1}}\bTH_{\tau i}(z)\bK_i\bB_i(w_1)
			\\\notag
	&\qquad\qquad\qquad\quad +v[2]\Sym_{w_1,w_2}\bDel(w_2 z)\frac{w_1^{-1}z-v w_1^{-1} w_2}{1-v^2 w_1^{-1} w_2}\bB_i(w_1)\bTH_{i}(w_2)\bK_{\tau i}
			\\
	&\qquad\qquad\qquad\quad+v^{-2}[2]\Sym_{w_1,w_2}\bDel(w_2 z) \frac{v w_1^{-1}w_2 - w_1^{-1}z}{1-v^{-2}w_1^{-1}w_2} \bTH_{i}(w_2)\bK_{\tau i}\bB_i(w_1)\Big), \label{qsiA1DRG6}
			\\\notag
	&\hspace{8cm} \text{ if } c_{i,\tau i}=-1.
	\end{align}
\end{theorem}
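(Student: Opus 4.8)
The plan is to deduce Theorem~\ref{thm:DrGF} from Theorem~\ref{thm:Dr} by repackaging the commutator-form defining relations of $\tUiD$ into their generating-function avatars. Theorem~\ref{thm:Dr} already supplies an algebra isomorphism $\Phi\colon \tUiD\to\tUi$ identifying the loop generators $B_{i,l},\Theta_{i,m},\K_i,C$ with the $v$-root vectors, so surjectivity onto a generating set and the completeness of the relation set are both inherited from it. Hence no new injectivity/surjectivity argument is needed, and the entire content is to verify that the relations \eqref{Rel1GF}--\eqref{qsiA1DRG6}, read as identities in the ring of formal series with coefficients in $\tUiD$, are \emph{equivalent} to the relations \eqref{qsiDR0}--\eqref{qsiDR10}.

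First I would dispatch the routine relations by direct coefficient extraction. Substituting the generating functions \eqref{eq:Genfun} into \eqref{Rel1GF} and comparing coefficients of monomials recovers \eqref{qsiDR0} termwise, while $\bTH_i(z)\bTH_j(w)=\bTH_j(w)\bTH_i(z)$ is, via the exponential formula \eqref{expz}, equivalent to $[\Theta_{i,m},\Theta_{j,n}]=0$, which is Lemma~\ref{lem:relationsreform}(1). The relations \eqref{BB0}--\eqref{BB1} result from \eqref{qsiDR7}--\eqref{qsiDR3} by clearing denominators and matching coefficients of $z^kw^l$; likewise \eqref{qsiA1DRG5} is the generating-function form of \eqref{qsiDR6}, obtained by expanding $\bDel(zw)=\sum_k C^k(zw)^k$ and reading off the coefficient of each $z^kw^{l+1}$.

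The $\bB$--$\bTH$ exchange relation is the first genuinely delicate step. I would expand the rational prefactor $\frac{1-v^{c_{ij}}zw^{-1}}{1-v^{-c_{ij}}zw^{-1}}\cdot\frac{1-v^{-c_{\tau i,j}}zwC}{1-v^{c_{\tau i,j}}zwC}$ as a formal power series, cross-multiply to clear the two denominators $(1-v^{-c_{ij}}zw^{-1})(1-v^{c_{\tau i,j}}zwC)$, and compare coefficients; the resulting four-term recursion in the $z$-degree $m$ is precisely \eqref{qsiDR2reform}, which by Lemma~\ref{lem:relationsreform}(2) is equivalent to \eqref{qsiDR2}. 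The only subtlety is that the two geometric expansions run in opposite directions, so the identity must be interpreted after the stated clearing of denominators.

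The hardest part will be the imaginary Serre relation \eqref{qsiA1DRG6}, matched against \eqref{qsiDR10}. Here each of the four $\Sym_{w_1,w_2}$ terms carries a factor of the shape $\bDel(w_2z)\,\frac{1-vw_2^{-1}z}{1-v^{\pm2}w_1w_2^{-1}}$, which I would expand as a double series; extracting the coefficient of $w_1^{k_1}w_2^{k_2}z^l$ should then reproduce the summed expression $[2]\Sym_{k_1,k_2}\sum_{p\ge0}v^{2p}(\cdots)$ on the right of \eqref{qsiDR10}. Concretely, the geometric series $\frac{1}{1-v^{\pm2}w_1w_2^{-1}}=\sum_{p\ge0}v^{\pm2p}(w_1w_2^{-1})^p$ produces the summation index $p$, while the numerator $1-vw_2^{-1}z$ together with the shift by $\bDel$ accounts for the $v$-brackets $[\,\cdot\,]_{v^{-4p-1}}$, $[\,\cdot\,]_{v^{-4p-3}}$ and the central powers $C^{k_2+p}$, $C^{l-1}$. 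Keeping the four sign-and-shift conventions straight simultaneously, and confirming that the two $\Sym$ symmetrizations match after the substitution, is the main obstacle; once the expansions are aligned the verification is mechanical.
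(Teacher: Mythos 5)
Your proposal is correct and follows essentially the same route as the paper: Theorem \ref{thm:DrGF} is deduced from Theorem \ref{thm:Dr} by showing that the generating-function relations are equivalent to the commutator-form relations of $\tUiD$, with no new injectivity or surjectivity argument required. The only difference is that the paper outsources these equivalences to prior work (citing [LWZ23, Theorem 5.7] for \eqref{qsiA1DRG5} and \eqref{qsiA1DRG6}, and [LWZ24, Theorem 4.6] for the rest), whereas you propose to carry out the coefficient extractions directly; your sketches of those verifications, including the use of Lemma \ref{lem:relationsreform}, match what those references do.
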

	
\begin{proof}
	The equivalences between \eqref{qsiDR6} and \eqref{qsiA1DRG5} as well as between \eqref{qsiDR10} and \eqref{qsiA1DRG6} are established in \cite[Theorem 5.7]{LWZ23}. The equivalences of other relations are established in \cite[Theorem~ 4.6]{LWZ24}.
\end{proof}

	%%%%%%%%
	%%%%%%%%
\section{Verification of Drinfeld type new relations}
	\label{sec:relation}
	
In this section, we prove that ${\Phi}: \tUiD \rightarrow\tUi$ defined by \eqref{def:Phi} is a homomorphism, completing the proof of Theorem \ref{thm:Dr}. 

\subsection{Relation \eqref{qsiDR7}--\eqref{qsiDR3}}
	
\begin{proposition}
		\label{prop:qsiDR7}
	Assume $c_{ij}=0$, for $i,j \in \II$ such that $\tau i\neq j$. Then
	$[B_{i,k},B_{j,l}]=0,$ for all $k,l\in\Z$.
\end{proposition}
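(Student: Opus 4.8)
The plan is to reduce the loop identity $[B_{i,k},B_{j,l}]=0$ to the rank-zero defining relation \eqref{eq:S1}, namely $[B_i,B_j]=0$, by peeling off the two loop indices $k$ and $l$ one at a time, using the translation operators $\TT_{\bome_i},\TT_{\bome_j}$ together with their fixed-point behavior on the affine rank one subalgebras established in Proposition~\ref{prop:TiFix}.

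First I would record two preliminary facts. Since $c_{ij}=0$ we have $i\neq j$, and together with the hypothesis $\tau i\neq j$ this gives $j\notin\{i,\tau i\}$; applying $\tau$ then yields $i\notin\{j,\tau j\}$ as well. Next, I claim the real root vector $B_{j,l}=o(j)^{-l}\TT_{\bome_j}^{-l}(B_j)$ from \eqref{re root} lies in the rank one subalgebra $\tUi_{[j]}$. Indeed, Proposition~\ref{prop:T1Ti} shows that $\TT_{\bome_j}$ preserves $\tUi_{[j]}$ (there it restricts to the conjugate $\aleph_j\circ\dot\TT_{\bome_1}\circ\aleph_j^{-1}$ of an automorphism of the rank one algebra, which is an automorphism of $\tUi_{[j]}=\operatorname{Im}\aleph_j$), and $B_j\in\tUi_{[j]}$ is a generator; hence $\TT_{\bome_j}^{-l}(B_j)\in\tUi_{[j]}$. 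Symmetrically $B_i\in\tUi_{[i]}$.

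Now I would peel off $k$. By \eqref{re root}, $B_{i,k}=o(i)^{-k}\TT_{\bome_i}^{-k}(B_i)$. Because $j\notin\{i,\tau i\}$ and $B_{j,l}\in\tUi_{[j]}$, Proposition~\ref{prop:TiFix} gives $\TT_{\bome_i}^{-k}(B_{j,l})=B_{j,l}$. As $\TT_{\bome_i}$ is an algebra automorphism,
\[
[B_{i,k},B_{j,l}]=o(i)^{-k}\big[\TT_{\bome_i}^{-k}(B_i),\TT_{\bome_i}^{-k}(B_{j,l})\big]=o(i)^{-k}\TT_{\bome_i}^{-k}\big([B_i,B_{j,l}]\big),
\]
so it suffices to prove $[B_i,B_{j,l}]=0$. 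Peeling off $l$ in the same manner, I use that $i\notin\{j,\tau j\}$ and $B_i\in\tUi_{[i]}$, so Proposition~\ref{prop:TiFix} gives $\TT_{\bome_j}^{-l}(B_i)=B_i$, whence
\[
[B_i,B_{j,l}]=o(j)^{-l}\big[\TT_{\bome_j}^{-l}(B_i),\TT_{\bome_j}^{-l}(B_j)\big]=o(j)^{-l}\TT_{\bome_j}^{-l}\big([B_i,B_j]\big)=0,
\]
the last equality being precisely the defining relation \eqref{eq:S1}. Combining the two displays gives $[B_{i,k},B_{j,l}]=0$.

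The computation is essentially formal once the setup is fixed, so there is no genuinely hard step; the only point requiring care is the second preliminary fact, i.e.\ verifying that $B_{j,l}$ actually lies in $\tUi_{[j]}$ so that Proposition~\ref{prop:TiFix} applies. This must be checked uniformly across the two types of rank one subalgebra---the case $c_{j,\tau j}=0$ (via $\aleph_j$ and $\ov{\U}(\widehat{\sll}_2)$) and the case $j\in\{r,r+1\}$ (via $\aleph_r$ and $\tUi(\widehat{\sll}_3,\tau)$)---both of which are covered by Proposition~\ref{prop:T1Ti}; for $j=r+1$ one additionally uses $\bome_{\tau j}=\bome_j$ to route the argument through $\tUi_{[r]}=\tUi_{[r+1]}$.
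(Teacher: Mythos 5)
Your proposal is correct and takes essentially the same approach as the paper: both reduce the general identity to the defining relation $[B_i,B_j]=0$ of \eqref{eq:S1} by applying the translation operators $\TT_{\bome_i}^{-k}\TT_{\bome_j}^{-l}$ and invoking Proposition~\ref{prop:TiFix}. The only cosmetic difference is that you peel off the two loop indices sequentially, justifying $B_{j,l}\in\tUi_{[j]}$ via Proposition~\ref{prop:T1Ti}, whereas the paper applies both operators at once and cites Lemma~\ref{lem:Tomeij} for their commutativity.
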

	
\begin{proof}
	The identity for $k=l=0$, i.e., $[B_{i},B_{j}]=0,$ is the defining relation \eqref{eq:S1} for $\tUi$. The identity for general $k,l$ follows by applying $\TT_{\bome_i}^{-k} \TT_{\bome_j}^{-l}$ to the above identity and using Lemma~\ref{lem:Tomeij} and Proposition~\ref{prop:TiFix}.
\end{proof}

\begin{lemma} [\text{cf. \cite[Lemma 3.3]{Be94}}]
		\label{lem:TXij}
For $j\neq \tau i\in\II$ such that $c_{ij}=-1$, denote
\begin{align*}
		X_{ij}:= B_jB_i-vB_iB_j.
\end{align*}
Then we have
$\TT_{\bome_i}(X_{ji})=\TT_{\bome_j}(X_{ij}).$
\end{lemma}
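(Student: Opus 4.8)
\emph{Proof plan.}
The plan is to convert the claimed identity into a purely braid-theoretic statement and then to collapse it using the commuting translations $\TT_{\bome_k}$ together with translation invariance. First I note that the asserted equation is symmetric under $i\leftrightarrow j$, and that among the two adjacent nodes at least one is \emph{generic}, i.e. has $c_{\bullet,\tau\bullet}=0$: the only adjacent $\tau$-orbit pair is $\{r,r+1\}$, and this is excluded by the hypothesis $j\neq\tau i$. After possibly swapping $i\leftrightarrow j$ and applying the involution $\widehat{\tau}$ (which commutes with every $\TT_k$, hence with every $\TT_{\bome_k}$), I may assume that $i$ is generic and that $j=i+1$ with $i,j\in\{1,\dots,r\}$. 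For such $i$ one has $c_{\tau i,j}=0$, because $\tau$ is the long-range flip $k\mapsto 2r+1-k$; thus Theorem~\ref{thm:Ti}(2) gives $X_{ij}=\TT_i(B_j)$. Combining this with $\sigma_\imath(X_{ij})=X_{ji}$, $\sigma_\imath(B_j)=B_j$ and $\TT_i^{-1}=\sigma_\imath\TT_i\sigma_\imath$ (see \eqref{eq:sigma}) yields $X_{ji}=\TT_i^{-1}(B_j)$. Hence $\TT_{\bome_i}(X_{ji})=\TT_{\bome_i}\TT_i^{-1}(B_j)=\TT_{\bome_i'}(B_j)$ and $\TT_{\bome_j}(X_{ij})=\TT_{\bome_{i+1}}\TT_i(B_j)$, so the lemma is reduced to the braid identity $\TT_{\bome_i'}(B_j)=\TT_{\bome_{i+1}}\TT_i(B_j)$.

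Next I would prove this braid identity from the recursions among the $\TT_{\bome_k}$. Applying Lemma~\ref{lem:Tomeij}(2) at the generic node $i$, and using that both relative neighbours satisfy $\ov{c}_{i,i-1}=\ov{c}_{i,i+1}=-1$, I obtain $\TT_{\bome_i'}^2=\TT_{\bome_{i-1}}\TT_{\bome_{i+1}}$, that is $\TT_{\bome_{i+1}}=\TT_{\bome_{i-1}}^{-1}\TT_{\bome_i'}^2$ (the convention $\TT_{\bome_0}=1$ covers the case $i=1$). Then, using $\TT_{\bome_i'}\TT_i=\TT_{\bome_i}$, the commutativity $\TT_{\bome_{i-1}}\TT_{\bome_i}=\TT_{\bome_i}\TT_{\bome_{i-1}}$ from Lemma~\ref{lem:Tomeij}(1), and the elementary commutation $\TT_{\bome_{i-1}}\TT_i=\TT_i\TT_{\bome_{i-1}}$ (valid since $\bs_i$ fixes the translation weight of $\bome_{i-1}$), I rewrite $\TT_{\bome_{i+1}}\TT_i=\TT_{\bome_{i-1}}^{-1}\TT_{\bome_i'}^2\TT_i=\TT_{\bome_i'}\TT_{\bome_i}\TT_{\bome_{i-1}}^{-1}$.

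Finally I would evaluate at $B_j=B_{i+1}$: since $i+1\notin\{i-1,\tau(i-1),i,\tau i\}$, Proposition~\ref{prop:TiFix} gives $\TT_{\bome_{i-1}}^{-1}(B_{i+1})=B_{i+1}$ and $\TT_{\bome_i}(B_{i+1})=B_{i+1}$, so the right-hand side collapses to $\TT_{\bome_i'}(B_{i+1})$, which is exactly the left-hand side. This completes the proof.

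The step I expect to be the main obstacle is the recursion in the second paragraph at the short/long-root junction $i=r-1,\ j=r$, where the double bond of the relative system $A_{2r}^{(2)}$ enters. The whole uniform argument hinges on $\ov{c}_{r-1,r}=-1$ (rather than $-2$): this is precisely what keeps the exponent in $\TT_{\bome_r}=\TT_{\bome_{r-2}}^{-1}\TT_{\bome_{r-1}'}^2$ equal to $2$ and makes the collapse go through. It is exactly at this junction that the three distinct root lengths of type AIII$_{2r}^{(\tau)}$ threaten to derail the uniform computation, so this Cartan-integer check — together with the direct verification of the base cases $r=1$ (vacuous) and $r=2$ (where $\bome_0=1$) — is the crux. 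I would also flag that one should \emph{not} attempt the lemma through its equivalent loop-generator form $[B_{i,-1},B_j]_v+[B_{j,-1},B_i]_v=0$, since that is merely the $k=l=-1$ instance of relation \eqref{qsiDR3} that this lemma is meant to supply; the braid-theoretic route above is essential to avoid circularity.
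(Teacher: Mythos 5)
Your proof is correct and uses essentially the same two ingredients as the paper's: Theorem \ref{thm:Ti} to realize $X_{ij}$ and $X_{ji}$ as $\TT^{\pm 1}$ of a single generator applied at the generic node, followed by Lemma \ref{lem:Tomeij}(2) together with the commutativity of the $\TT_{\bome_k}$ and translation invariance of the $B_k$. The paper shortcuts your solve-and-commute step by choosing labels so that $\ov{c}_{ji}=-1$ and applying $\TT_{\bome_j}\TT_j^{-1}=\TT_j\TT_{\bome_j}^{-1}\prod_{k\neq j}\TT_{\bome_k}^{-\ov{c}_{jk}}$ directly to $B_i$ (only the factor $\TT_{\bome_i}$ acts nontrivially), but this is the same argument, and your explicit check that $\ov{c}_{r-1,r}=-1$ at the double bond is precisely what the paper's ``without loss of generality'' also relies on.
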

	
\begin{proof}
Observe that $c_{ij}=-1$ implies that either $\ov{c}_{ij}=-1$ or $\ov{c}_{ji}=-1$.  Without loss of generality, assume that $\ov{c}_{ji}=-1$. In particular, $j\neq r,r+1$.
According to Theorem~\ref{thm:Ti}, if $\ov{c}_{ji}=-1$, we have
\begin{align*}
\TT_j^{-1}(B_i)=B_jB_i-vB_iB_j=X_{ij},
			\qquad
\TT_j(B_i)=B_iB_j-vB_jB_i=X_{ji}.
\end{align*} 
Thus by Lemma \ref{lem:Tomeij}(2), we have
\begin{align*}
\TT_{\bome_j}(X_{ij})&=\TT_{\bome_j} \TT_j^{-1}(B_i)= \TT_j\TT_{\bome_j}^{-1}\TT_{\bome_i}(B_i)
=\TT_{\bome_i}\TT_j(B_i)=\TT_{\bome_i}(X_{ji}).
\end{align*}
\end{proof}
	
Now we are ready to establish the relation \eqref{qsiDR3}.
\begin{proposition}
		\label{prop:iDR3a}
We have $[B_{i,k}, B_{j,l+1}]_{v^{-c_{ij}}}  -v^{-c_{ij}} [B_{i,k+1}, B_{j,l}]_{v^{c_{ij}}}=0$, for $j\neq \tau i \in \II$ and $k, l \in \Z.$
\end{proposition}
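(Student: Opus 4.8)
The plan is to split according to the value of $c_{ij}$, which in type $A$ (with $j\neq\tau i\neq i$) is either $0$ or $-1$. When $c_{ij}=0$ the exponents $v^{\pm c_{ij}}$ are trivial and the asserted identity reduces to $[B_{i,k},B_{j,l+1}]-[B_{i,k+1},B_{j,l}]=0$, which is immediate from Proposition~\ref{prop:qsiDR7} because each commutator already vanishes. The substance therefore lies in the case $c_{ij}=-1$. Writing $R(k,l):=[B_{i,k},B_{j,l+1}]_{v}-v[B_{i,k+1},B_{j,l}]_{v^{-1}}$, the goal becomes $R(k,l)=0$ for all $k,l\in\Z$.

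First I would reduce to a single base case by translation invariance. Since $c_{ij}=-1$ forces $i\neq j$, and $j\neq\tau i$ is assumed, the node $j$ satisfies $j\notin\{i,\tau i\}$; hence $\TT_{\bome_i}(B_j)=B_j$ by Proposition~\ref{prop:fixB} (as $\bome_i(\alpha_j)=\alpha_j$), and since $\TT_{\bome_i}$ and $\TT_{\bome_j}$ commute (Lemma~\ref{lem:Tomeij}(1)) one gets $\TT_{\bome_i}(B_{j,l})=B_{j,l}$ for every $l$; symmetrically $\TT_{\bome_j}(B_{i,k})=B_{i,k}$. Combined with $\TT_{\bome_i}^{-1}(B_{i,k})=o(i)B_{i,k+1}$ this yields $\TT_{\bome_i}^{-1}(R(k,l))=o(i)\,R(k+1,l)$, and likewise $\TT_{\bome_j}^{-1}(R(k,l))=o(j)\,R(k,l+1)$. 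As the $\TT_{\bome}$ are automorphisms and $o(i),o(j)=\pm1$, all the $R(k,l)$ are nonzero scalar multiples of one another, so it suffices to prove $R(-1,-1)=0$.

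The base case is where Lemma~\ref{lem:TXij} enters, and this is the heart of the argument. Expanding the braid identity $\TT_{\bome_i}(X_{ji})=\TT_{\bome_j}(X_{ij})$ in loop generators, using $\TT_{\bome_i}(B_i)=o(i)B_{i,-1}$, $\TT_{\bome_i}(B_j)=B_j$ and the symmetric statements, gives $o(i)[B_{i,-1},B_j]_{v}=o(j)[B_{j,-1},B_i]_{v}$. Rewriting $[B_{j,-1},B_i]_{v}=-v[B_i,B_{j,-1}]_{v^{-1}}$ and invoking the sign normalization $o(i)o(j)=-1$ (valid since $c_{ij}<0$) turns this into $[B_{i,-1},B_j]_{v}-v[B_i,B_{j,-1}]_{v^{-1}}=0$, which is exactly $R(-1,-1)=0$; together with the reduction step this proves the proposition. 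I expect the main obstacle to be not any long computation but the careful bookkeeping of the sign function $o(\cdot)$ against the $v$-power conventions in $[\,\cdot,\cdot\,]_{v^{\pm1}}$: the conceptual content is carried entirely by Lemma~\ref{lem:TXij}, so the decisive point is simply to recognize that its loop-generator expansion is precisely the base instance of the relation one wants.
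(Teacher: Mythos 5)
Your treatment of the cases $c_{ij}=0$ and $c_{ij}=-1$ (with $i\neq j$) is correct and follows essentially the same route as the paper: the $c_{ij}=0$ case is immediate from Proposition~\ref{prop:qsiDR7}, and the $c_{ij}=-1$ case rests on Lemma~\ref{lem:TXij} together with translation by $\TT_{\bome_i}$, $\TT_{\bome_j}$ (the paper establishes the identity at $l=0$ and then applies $\TT_{\bome_j}^{-l}$, whereas you reduce everything to the single value $(k,l)=(-1,-1)$; this is an immaterial difference, and your sign bookkeeping with $o(i)o(j)=-1$ checks out).

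However, there is a genuine gap: you have silently excluded the case $j=i$. The hypothesis of the proposition is only $j\neq\tau i$, and since $\tau i\neq i$ for every $i\in\II$ in type ${\rm AIII}_{2r}^{(\tau)}$, the choice $j=i$ is allowed; then $c_{ij}=2$, not $0$ or $-1$, and the assertion becomes
\begin{align*}
[B_{i,k},B_{i,l+1}]_{v^{-2}}-v^{-2}[B_{i,k+1},B_{i,l}]_{v^{2}}=0,
\end{align*}
which is a nontrivial relation inside the affine rank one subalgebra $\tUi_{[i]}$ (it is relation \eqref{qsiA1DR4} of the rank one Drinfeld presentation). None of your tools apply here: Lemma~\ref{lem:TXij} requires $c_{ij}=-1$, the $c_{ij}=0$ commutativity is irrelevant, and the translation trick no longer isolates a single base case because $\TT_{\bome_i}$ shifts both loop indices simultaneously. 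The paper disposes of this case by transporting the corresponding known relation from $\tU(\widehat{\mathfrak{sl}}_2)$ (when $c_{i,\tau i}=0$) or from $\tUi(\widehat{\mathfrak{sl}}_3,\tau)$ (when $i\in\{r,r+1\}$) through the isomorphisms $\aleph_i$ of Propositions~\ref{prop:rank1isoQG} and \ref{prop:rank1iso-sl3}; your argument needs this additional step.
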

	
\begin{proof}
For $j=i$, it follows by transporting the corresponding relations in $\tU(\widehat{\mathfrak{sl}}_2)$ and $\tUi(\widehat{\mathfrak{sl}}_3)$ by using Proposition \ref{prop:rank1isoQG} and Proposition \ref{prop:rank1iso-sl3}.
		
It remains to consider the case $i\neq j\neq \tau i$. If $c_{ij}=0$, then the identity in the proposition follows directly by \eqref{qsiDR7}, which has been proved in Proposition \ref{prop:qsiDR7}.
		
Assume $c_{ij}=-1$. Note that
$v[B_{i,k+1},B_{j}]_{v^{-1}}= - o(i)^{k+1} \TT_{\bome_i}^{-(k+1)}(X_{ij}).$
By using Lemma \ref{lem:TXij}, we have
\begin{align*}
	[B_{i,k},B_{j,1}]_{v}&= B_{i,k}B_{j,1}-vB_{j,1}B_{i,k}
			\\
	& = o(i)^k o(j) \TT_{\bome_i}^{-k} \TT_{\bome_j}^{-1}(X_{ji})
	= - o(i)^{k+1} \TT_{\bome_i}^{-k} \TT_{\bome_i}^{-1} (X_{ij})
			\\
	&= - o(i)^{k+1} \TT_{\bome_i}^{-(k+1)}(X_{ij})=v[B_{i,k+1},B_{j}]_{v^{-1}}.
\end{align*}
Hence we have obtained an identity
$[B_{i,k},B_{j,1}]_{v}-v[B_{i,k+1},B_{j}]_{v^{-1}}=0.$
The identity in the proposition follows by applying $\TT_{\bome_j}^{-l}$ to this identity.
\end{proof}

\subsection{Relations \eqref{qsiDR1}--\eqref{qsiDR2} for $j\in\{i,\tau i\}$, and \eqref{qsiDR4}--\eqref{qsiDR6}}
		
	\begin{proposition}
		\label{prop:iDR31}
		\eqref{qsiDR1}--\eqref{qsiDR2} for $j\in\{i,\tau i\}$, and \eqref{qsiDR4}--\eqref{qsiDR6} hold in $\tUi$.
	\end{proposition}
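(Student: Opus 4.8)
The plan is to reduce every one of these relations to its counterpart inside an affine rank one subalgebra. First I would observe that each of \eqref{qsiDR1} and \eqref{qsiDR2} with $j\in\{i,\tau i\}$, together with \eqref{qsiDR4} and \eqref{qsiDR6}, involves only the root vectors $B_{i,k}, B_{\tau i,l}, \TH_{i,m}, \TH_{\tau i,m}$ and the elements $\K_i^{\pm1},\K_{\tau i}^{\pm1},C^{\pm1}$; by the defining formulas \eqref{re root}, \eqref{def:thetac=0} and \eqref{GTH1}--\eqref{GTHn1}, all of these lie in the affine rank one subalgebra $\tUi_{[i]}$ of Definitions~\ref{def:Uii} and \ref{def:Uir}. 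Hence it suffices to check the relations inside $\tUi_{[i]}$, and for that I would transport them through the isomorphism $\aleph_i$: namely $\aleph_i\colon \ov{\U}(\widehat{\sll}_2)\to \tUi_{[i]}$ from Proposition~\ref{prop:rank1isoQG} when $c_{i,\tau i}=0$, and $\aleph_r\colon \tUi(\widehat{\sll}_3,\tau)\to \tUi_{[r]}$ from Proposition~\ref{prop:rank1iso-sl3} when $c_{i,\tau i}=-1$ (so that $i\in\{r,r+1\}$).

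The crux is to match the Drinfeld generators of $\tUi$ with the $\aleph_i$-images of the corresponding rank one Drinfeld generators. For the real root vectors this is immediate from the compatibility of ibraid actions \eqref{eq:T1Ti2} in Proposition~\ref{prop:T1Ti}: since $\TT_{\bome_i}|_{\tUi_{[i]}}=\aleph_i\circ \dot\TT_{\bome_1}\circ \aleph_i^{-1}$ and $\bome_{\tau i}=\bome_i$, applying $\aleph_i$ to $\dot B_{1,k}=(o(1)\dot\TT_{\bome_1})^{-k}(\dot B_1)$ and to $\dot B_{2,l}$ recovers precisely $B_{i,k}$ and $B_{\tau i,l}$ from \eqref{re root}. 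For the imaginary root vectors, when $c_{i,\tau i}=0$ the identification of $\Theta_{i,n}=[B_{i,n},B_{\tau i}]\K_{\tau i}^{-1}$ with the $\aleph_i$-image of $\varphi_{1,-n}/(v-v^{-1})$ is the one already recorded after \eqref{def:thetac=0}. When $c_{i,\tau i}=-1$ I would prove $\aleph_r(\dot\TH_{1,m})=\TH_{r,m}$ for all $m\ge 1$ by induction on $m$: the recursions \eqref{eq:GTH13} and \eqref{GTHn1} (with $v^{-c_{i,\tau i}}=v$) coincide term by term with \eqref{eq:TH12} and \eqref{THn} under $\aleph_r$, once one checks $\aleph_r(\dot D_{1,n})=D_{r,n}$, which follows from \eqref{Dn}, \eqref{GDn} and the already established matching of the real root vectors.

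The base case $m=1$ is the main obstacle, since the two expressions \eqref{GTH1} and \eqref{TH} for $\TH_{\cdot,1}$ look quite different and involve the nonstandard generator $\TT_{\bth_r}^{-1}(B_0)=\aleph_r(\dot B_0)$. To bridge them I would use the explicit inverse $\Phi^{-1}$ in Proposition~\ref{prop:Dr1}, which expresses $\dot B_0$, and hence $\dot\TH_{1,1}$, in terms of the loop generators $\dot B_1,\dot B_{2,-1},\dot\K_1,\dot\K_2,C$; applying $\aleph_r$, substituting $B_{\tau r,-1}=o(\tau r)\TT_{\bome_r}(B_{\tau r})$ and using the sign relation $o(\tau r)=-o(r)$ turns the transported $\dot\TH_{1,1}$ exactly into the right hand side of \eqref{GTH1}. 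This is precisely the content of Lemma~\ref{lem:thrloop}, so the base case is in place. With the generator matching complete in both cases, the relations \eqref{qsiDR1} and \eqref{qsiDR2} for $j\in\{i,\tau i\}$, together with \eqref{qsiDR4} and \eqref{qsiDR6}, are nothing but the $\aleph_i$-images of defining relations of the rank one Drinfeld presentation; these hold by Damiani's presentation of quantum affine $\sll_2$ \cite{Da93} when $c_{i,\tau i}=0$, and by Proposition~\ref{prop:Dr1} (that is, \cite[Theorem 5.5]{LWZ23}) when $c_{i,\tau i}=-1$. What remains is purely careful bookkeeping of the signs $o(\cdot)$ and of the central element $C=o(i)o(\tau i)\K_i^{-1}\TT_{\bome_i}^{-1}(\K_i)$ across the isomorphism.
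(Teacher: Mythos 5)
Your proposal is correct and follows essentially the same route as the paper, whose proof of this proposition is a two-line appeal to Propositions~\ref{prop:rank1isoQG} and \ref{prop:rank1iso-sl3} to transport the rank one relations of $\ov{\U}(\widehat{\mathfrak{sl}}_2)$ and $\tUi(\widehat{\mathfrak{sl}}_3,\tau)$ into $\tUi$. The only difference is that you spell out the generator-matching under $\aleph_i$ (real root vectors via Proposition~\ref{prop:T1Ti}, imaginary ones via the recursions and Lemma~\ref{lem:thrloop}), which the paper leaves implicit; this bookkeeping is accurate and consistent with the identifications the paper records after \eqref{def:thetac=0} and in \eqref{GTH1'}.
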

	
	\begin{proof}
		The current relations in $\tUi(\widehat{\mathfrak{sl}}_3)$ are given in Definition~\ref{def:iDR} and Proposition~\ref{prop:Dr1}. Using Propositions \ref{prop:rank1isoQG} and \ref{prop:rank1iso-sl3}, one can transport these (rank one) relations in $\tU(\widehat{\mathfrak{sl}}_2)$ and $\tUi(\widehat{\mathfrak{sl}}_3)$ to the higher rank case, and then the desired relations follow.
	\end{proof}

	\subsection{Relation \eqref{qsiDR2} for $c_{ij}=0=c_{\tau i,j}$}
	\label{subsec:verifydeg}
	By definition, we have 
	\begin{align}
    \Theta_{r,1}= v[B_r,B_{r+1,-1}]_{v^{-1}}C\K_{r+1}^{-1}-v\TT_{\theta_r}^{-1}(B_0)\K_r.
    \end{align}
	
	\begin{lemma}
		\label{lem:comb}
		For $j\in\II$, we have
		\begin{align*}
			\Theta_{j,n}=
			\begin{cases}
				v C \Theta_{j,n-2}-\K_j^{-1} \big( [B_{\tau j},B_{j,n}]_{v}+[B_{j,n-1},B_{\tau j,1}]_{v} \big), & \text{ if } n\ge 3,
				\\
				\frac{\K_j\K_{\tau j}^{-1}C-vC}{v-v^{-1}}  -\K_j^{-1} \big( [B_{\tau j},B_{j,2}]_{v}+[B_{j,1},B_{\tau j,1}]_{v} \big), & \text{ if } n=2,
				\\
				-\K_{\tau j}^{-1}[B_{\tau j}, B_{j,1}]_{v}+o(j)v\TT_{\bth_j}^{-1}(B_0)\K_j, & \text{ if } n= 1,
			\end{cases}
		\end{align*} if $c_{j,\tau j}=-1$;
		and
		\begin{align*}
			\Theta_{j,n}=[B_{j,n},B_{\tau j}]\K_{\tau j}^{-1},
		\end{align*}
		if $c_{j,\tau j}=0$.
		In particular, for any $n\ge 1$, the element $\Theta_{j,n}$  is a $\Q(v)[C^{\pm 1},\K_j^{\pm 1},\K_{\tau j}^{\pm1}]$-linear combination of $1$,  and $[B_{j,k}, B_{\tau j,l+1}]_{v^{-c_{j,\tau j}}} +[B_{\tau j,l}, B_{j,k+1}]_{v^{-c_{j,\tau j}}}$, (together with $\TT_{\bth_j}^{-1}(B_0)$ if $c_{j,\tau j}=-1$) for $l, k \in \Z$.
	\end{lemma}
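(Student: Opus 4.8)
The plan is to split the argument according to the value of $c_{j,\tau j}$, since only the case $c_{j,\tau j}=-1$ requires genuine work. When $c_{j,\tau j}=0$ the asserted formula $\Theta_{j,n}=[B_{j,n},B_{\tau j}]\K_{\tau j}^{-1}$ is exactly the definition \eqref{def:thetac=0}; since $[B_{j,n},B_{\tau j}]=[B_{j,n},B_{\tau j,0}]$, this already exhibits $\Theta_{j,n}$ as a $\K_{\tau j}^{-1}$-multiple of a commutator of the required shape, so the concluding clause holds at once.

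When $c_{j,\tau j}=-1$ we have $j\in\{r,r+1\}$, so every element involved lies in the affine rank one subalgebra $\tUi_{[r]}$ of Definition~\ref{def:Uir}, and I would transport the computation to $\tUi(\widehat{\mathfrak{sl}}_3,\tau)$ through the isomorphism $\aleph_r$ of Proposition~\ref{prop:rank1iso-sl3}. By Proposition~\ref{prop:T1Ti} the operator $\TT_{\bome_r}$ corresponds to $\dot\TT_{\bome_1}$ under $\aleph_r$, so the real root vectors match, $\aleph_r(\dot B_{i,k})=B_{j,k}$ (with $i\in\{1,2\}$ matching $j\in\{r,r+1\}$), while $\aleph_r(\dot B_0)=\TT_{\bth_r}^{-1}(B_0)$ and $\bth_j=\bth_r$. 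A term-by-term comparison of \eqref{GDn} and \eqref{GTH1}--\eqref{GTHn1} (with $v^{-c_{j,\tau j}}=v$) against the dotted recursions \eqref{Dn} and \eqref{TH}--\eqref{THn} then shows that $\aleph_r$ also sends $\dot D_{i,k}\mapsto D_{j,k}$ and $\dot\Theta_{i,n}\mapsto\Theta_{j,n}$; hence it suffices to prove the three identities inside $\tUi(\widehat{\mathfrak{sl}}_3,\tau)$.

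There the identities come from unwinding the recursion. The case $n=1$ is \eqref{GTH1'} rewritten: using $\TT_{\bome_r}(B_{\tau r})=o(\tau r)B_{\tau r,-1}$ and $o(j)o(\tau j)=-1$ it reduces to the degree-$\delta$ loop identity $v[B_{j,0},B_{\tau j,-1}]_{v^{-1}}C=-[B_{\tau j},B_{j,1}]_v$, a rank one loop relation of \cite{LWZ23} (the intervening $\K_{\tau j}^{-1}$-conjugation being trivial on a single $B_jB_{\tau j}$-type monomial), while the $\TT_{\bth_j}^{-1}(B_0)$ term is carried along unchanged. For $n\ge2$ I would substitute \eqref{GDn} for $D_{j,n-2}$ into \eqref{eq:GTH13}--\eqref{GTHn1} and apply the degree-two shift carrying the index triple $(n-2,n-1,-1)$ of $D_{j,n-2}C$ to $(n,n-1,1)$, which rewrites $-vD_{j,n-2}C$ as $-\K_j^{-1}\bigl([B_{\tau j},B_{j,n}]_v+[B_{j,n-1},B_{\tau j,1}]_v\bigr)\K_{\tau j}$ and yields the formula for $n\ge3$. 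For $n=2$ the same shift reaches the boundary value $\Theta_{j,0}=\frac{1}{v-v^{-1}}$, and the constant it produces combines with the explicit constants of \eqref{eq:GTH13} to give $\frac{\K_j\K_{\tau j}^{-1}C-vC}{v-v^{-1}}$.

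For the final clause I would argue by downward induction through \eqref{GTHn1}: for $n\ge2$ the commutator part $-\K_j^{-1}\bigl([B_{\tau j},B_{j,n}]_v+[B_{j,n-1},B_{\tau j,1}]_v\bigr)$ is precisely the symmetric expression $[B_{j,k},B_{\tau j,l+1}]_v+[B_{\tau j,l},B_{j,k+1}]_v$ at $(k,l)=(n-1,0)$, while $vC\Theta_{j,n-2}$ lowers the index by two; thus every $\Theta_{j,n}$ collapses to $\Theta_{j,0}$ (a scalar, the ``$1$'' term) or to $\Theta_{j,1}$, modulo $\Q(v)[C^{\pm1},\K_j^{\pm1},\K_{\tau j}^{\pm1}]$-multiples of such symmetric commutators, with $\Theta_{j,1}$ supplying the extra generator $\TT_{\bth_j}^{-1}(B_0)$. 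I expect the main difficulty to be precisely this degree-two index shift: one must keep exact account of the constant boundary corrections it generates, since these are what feed the $n=2$ constant and, one rung lower, the $\TT_{\bth_j}^{-1}(B_0)$ contribution at $n=1$.
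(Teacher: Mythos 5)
Your proposal is correct in substance and follows essentially the same route as the paper, whose proof is a single sentence: the displayed formulas are the already-established relations \eqref{qsiDR4} (at $k=n$, $l=0$) and \eqref{qsiDR6} (at $k=n-1$, $l=0$) solved for $\Theta_{j,n}$, and the final clause is an induction on $n$ through the resulting recursion. Your ``degree-two shift with boundary $\Theta$-corrections'' is exactly relation \eqref{qsiDR6}, so the detour through $\aleph_r$ and the raw definitions \eqref{GDn}--\eqref{GTHn1} re-derives inside $\tUi(\widehat{\mathfrak{sl}}_3,\tau)$ what Proposition \ref{prop:iDR31} already supplies in $\tUi$ at this point of the paper; this is harmless but redundant. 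The one step that deserves more care is $n=1$: relation \eqref{qsiDR6} only compares commutators of equal total loop degree, so at $(k,l)=(0,0)$ it pins down only the symmetric combination $\Theta_{j,1}\K_{\tau j}+\Theta_{\tau j,1}\K_j=-[B_j,B_{\tau j,1}]_v-[B_{\tau j},B_{j,1}]_v$, and splitting this into the two individual formulas requires, as you note, the extra input \eqref{GTH1'} together with the cross-degree identity $v[B_{j},B_{\tau j,-1}]_{v^{-1}}C=-[B_{\tau j},B_{j,1}]_v$. That identity does not follow from the shift relation (it changes the total loop degree by $2$, compensated by $C$) and is equivalent, via the $\TT_{\bome_j}$-invariance of $\Theta_{j,1}$, to a nontrivial statement about $\TT_{\bome_j}^{-1}$ applied to the $\TT_{\bth_r}^{-1}(B_0)$-term; you should therefore quote the precise rank-one statement from \cite{LWZ23} or supply that short computation rather than asserting it.
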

	
	\begin{proof}
		The recursion formulas in the lemma are reformulations of \eqref{qsiDR4} with $k=n$ and $l=0$, and \eqref{qsiDR6} with $k=n-1$, $l=0$. The second statement follows by an induction on $n$ using the recursion formulas. (A precise linear combination can be written down, but will not be needed.)
	\end{proof}
	
	\begin{proposition}
		\label{prop:iDR2}
		Assume $c_{ij}=0=c_{i,\tau j}$, for $i,j\in \II$. Then, for $m\geq 1$ and $k \in\Z$, we have
		\begin{align*}
			[\Theta_{i,m},B_{j,k}] &=0 =[H_{i,m},B_{j,k}].
		\end{align*}
	\end{proposition}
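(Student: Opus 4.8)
The plan is to reduce the statement to the single commutator $[\Theta_{i,m},B_j]$ at $k=0$, to split this commutator into manageable pieces via Lemma~\ref{lem:comb}, and to isolate one genuinely new computation.

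First I would reduce to $k=0$ and to the $\Theta$-statement. By Proposition~\ref{prop:Theta-invariant Tomega} the element $\Theta_{i,m}$ is $\TT_{\bome_j}$-invariant, while $\TT_{\bome_j}^{-k}(B_j)=o(j)^{k}B_{j,k}$ by \eqref{re root}. Thus applying the automorphism $\TT_{\bome_j}^{-k}$ to an identity $[\Theta_{i,m},B_j]=0$ at once gives $[\Theta_{i,m},B_{j,k}]=0$ for all $k\in\Z$. Moreover, by the generating-function relation \eqref{exp} each $H_{i,m}$ is a polynomial in $\Theta_{i,1},\dots,\Theta_{i,m}$ and conversely; since the centralizer of $B_{j,k}$ is a subalgebra, $[\Theta_{i,m},B_{j,k}]=0$ for all $m$ is equivalent to $[H_{i,m},B_{j,k}]=0$ for all $m$. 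Hence it suffices to prove $[\Theta_{i,m},B_j]=0$ for all $m\ge 1$.

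Next I would apply Lemma~\ref{lem:comb}, which expresses $\Theta_{i,m}$ as a $\Q(v)[C^{\pm1},\K_i^{\pm1},\K_{\tau i}^{\pm1}]$-linear combination of $1$, of $v$-commutators of the real root vectors $B_{i,k'},B_{\tau i,l'}$, and---precisely when $c_{i,\tau i}=-1$---of $\TT_{\bth_i}^{-1}(B_0)$. The hypothesis $c_{ij}=c_{i,\tau j}=0$ together with $c_{\tau a,\tau b}=c_{ab}$ forces $c_{\tau i,j}=0$, and also gives $j\ne i$ (as $c_{ii}=2$) and $j\ne\tau i$ (as $c_{i,\tau(\tau i)}=2$). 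Consequently $\K_i,\K_{\tau i}$ commute with $B_j$ by the defining relation $\K_\ell B_j=v^{c_{\tau\ell,j}-c_{\ell j}}B_j\K_\ell$, $C$ is central, and each $B_{i,k'},B_{\tau i,l'}$ commutes with $B_j$ by Proposition~\ref{prop:qsiDR7}. So every summand from Lemma~\ref{lem:comb} commutes with $B_j$ except possibly $\TT_{\bth_i}^{-1}(B_0)$, and the proposition is reduced to proving
\[
[\TT_{\bth_r}^{-1}(B_0),B_j]=0,\qquad i\in\{r,r+1\},\ c_{rj}=c_{r+1,j}=0,
\]
where without loss of generality $i=r$.

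This last identity is the main obstacle. Since $\widehat{\tau}$ commutes with every $\TT_k$ and fixes $B_0$, it fixes $\TT_{\bth_r}^{-1}(B_0)$ and exchanges $B_j\leftrightarrow B_{\tau j}$; thus it suffices to treat $j\in\{1,\dots,r-2\}$. Writing $Y_p:=\TT_{\bth_p}^{-1}(B_0)$, the recursion \eqref{eq:Theta_iB0} of Lemma~\ref{lem:induction-TB} reads $Y_p=\big[B_{\tau(p-1)},[B_{p-1},Y_{p-1}]_v\big]_v-vY_{p-1}\K_{\tau(p-1)}$ with $Y_1=B_0$, and I would prove $[Y_p,B_j]=0$ for $1\le j\le p-2$ by induction on $p$. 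For $j\le p-3$ the generator $B_j$ commutes with $B_{p-1}$, $B_{\tau(p-1)}$, $\K_{\tau(p-1)}$ and, inductively, with $Y_{p-1}$, hence with $Y_p$. The delicate case is the active end $j=p-2$, where $B_{p-2}$ is adjacent to $B_{p-1}$: here $[Y_p,B_{p-2}]$ must be expanded with the Jacobi identity \eqref{eq:Jacobi} and simplified using the quantum Serre relations \eqref{eq:S6}, \eqref{relation5} and, crucially, the Serre-type vanishing $\big[B_{p-2},[B_{p-2},Y_{p-2}]_v\big]_{v^{-1}}=0$ furnished by \eqref{eq:Serre-Thetai-1}. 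I expect this boundary cancellation to be the only laborious step; once it is in hand, the induction and a final application of $\widehat{\tau}$ yield $[\Theta_{i,m},B_{j,k}]=0$, and with it $[H_{i,m},B_{j,k}]=0$.
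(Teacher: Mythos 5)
Your reduction is exactly the paper's: you use Proposition \ref{prop:Theta-invariant Tomega} to reduce to $k=0$, the generating function \eqref{exp} to pass between $H_{i,m}$ and $\Theta_{i,m}$, Lemma \ref{lem:comb} to decompose $\Theta_{i,m}$, and Proposition \ref{prop:qsiDR7} together with the $\K$-commutation relations to dispose of every summand except $\TT_{\bth_r}^{-1}(B_0)$. Up to the point where everything hinges on $[\TT_{\bth_r}^{-1}(B_0),B_j]=0$ (with $j\le r-2$ after applying $\widehat\tau$), your argument is correct and complete, and it coincides with the paper's.

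The gap is in your treatment of that last identity. Writing $Y_p=\TT_{\bth_p}^{-1}(B_0)$ and inducting via the recursion \eqref{eq:Theta_iB0} handles $j\le p-3$ trivially, but the boundary case $j=p-2$ is the entire mathematical content of the claim, and you do not carry it out; you only assert that \eqref{eq:Jacobi}, \eqref{eq:S6}, \eqref{relation5} and \eqref{eq:Serre-Thetai-1} will make everything cancel. As formulated the induction does not close: expanding $[Y_p,B_{p-2}]$ forces you to control $[B_{p-2},Y_{p-1}]$, which is \emph{not} covered by your inductive hypothesis (that only gives $[Y_{p-1},B_j]=0$ for $j\le p-3$) and is genuinely nonzero. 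One needs the auxiliary identity $[B_{p-2},Y_{p-1}]_{v^{-1}}=v^{-1}[B_{p-2},Y_{p-2}]_v\,\K_{p-2}$ (which the paper establishes separately, inside the surjectivity part of the proof of Theorem \ref{thm:Dr}), plus further weight and commutation computations involving $B_{\tau(p-2)}$ via \eqref{relation5}; none of this is exhibited. So either the induction hypothesis must be strengthened to include such identities, or the boundary cancellation must be computed in full — as written it is a hole precisely at the crux. For comparison, the paper dispatches the whole thing in three lines without any recursion: since $\TT_{\bth_r}^{-1}(B_0)=\TT_{r-1}^{-1}\cdots\TT_1^{-1}(B_0)$, applying the automorphism $\TT_1\cdots\TT_{r-1}$ turns the desired bracket into $[B_0,\TT_1\cdots\TT_{r-1}(B_j)]$, and $\TT_1\cdots\TT_{r-1}(B_j)=B_{j+1}$ by Proposition \ref{prop:fixB} (via $\TT_j\TT_{j+1}(B_j)=B_{j+1}$, all other $\TT_k$ fixing the relevant generators), so the bracket vanishes by the defining relation \eqref{eq:S1}. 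You should either adopt that conjugation trick or supply the missing boundary computation.
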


\begin{proof}
	We shall only prove the first equality $[\Theta_{i,m},B_{j,k}]=0$; the second equality follows as $H_{i,n}$ can be expressed in terms of $\Theta_{i,m}$ for various $m$. 
        
    \underline{Case (1): $c_{i,\tau i}=0$}. By Lemma~\ref{lem:comb} (with index $j$ replaced by $i$), it suffices to check that $[B_{i,k}, B_{\tau i,l}]$ commutes with $B_{j,r}$ for all $k,l,r$. But this clearly follows by the commutative relations $[ B_{i,k},B_{j,r}]=0=[B_{\tau i,l},B_{j,r}]$, that is \eqref{qsiDR7}, which is   proved in Proposition \ref{prop:qsiDR7}.

	\underline{Case (2): $c_{i,\tau i}\neq0$}. This only happens for $i=r$ or $r+1$. We shall prove $[\TT_{\bth_r}^{-1} (B_0),B_{j}]=0$. 
		By the proof of Proposition \ref{prop:rank1iso-sl3},
		we have
		$\TT_{\bth_r}^{-1} (B_0)=\TT_{r-1}^{-1}\TT_{r-2}^{-1}\cdots \TT_2^{-1}\TT_{1}^{-1}(B_0).$
		So it is equivalent to proving that
		\begin{align*}
			[B_0,\TT_{1}\TT_{2}\cdots \TT_{r-2}\TT_{r-1}(B_j)]=0.
		\end{align*}
		This identity follows as 
        \[
        \TT_{1}\TT_{2}\cdots \TT_{r-2}\TT_{r-1}(B_j)=\TT_{1}\TT_{2}\cdots \TT_{j}\TT_{j+1}(B_j)=\TT_{1}\TT_{2}\cdots \TT_{j-1} (B_{j+1})=B_{j+1}, 
        \]
		where we have used  
		$\TT_{j}\TT_{j+1}(B_j)=B_{j+1}$.

        The same argument as in Case (1) shows that $B_j$ commutes with $[B_{i,k}, B_{\tau i,l+1}]_{v^{-c_{i,\tau i}}} +[B_{\tau i,l}, B_{i,k+1}]_{v^{-c_{i,\tau i}}}$. Hence by Lemma~\ref{lem:comb} (with index $j$ therein replaced by $i$), we have $[\Theta_{i,m},B_j]=0$. Applying $(o(j)\TT_{\bome_j})^{-k}$ to this identity gives $[\Theta_{i,m},B_{j,k}]=0$ since $\TT_{\bome_j}(\Theta_{i,m})=\Theta_{i,m}$ (see Proposition \ref{prop:Theta-invariant Tomega}).
	\end{proof}

	\subsection{Relation \eqref{qsiDR2}}
	
	The relation \eqref{qsiDR2} for $c_{ij}=0=c_{\tau i,j}$ has been verified in \S\ref{subsec:verifydeg}. 
	
	\begin{proposition}
		Assume $i\neq r,r+1$. 
		\begin{itemize}
			\item[(1)] If $c_{ij}=-1$ and $c_{\tau i,j}=0$, then  we have  $[H_{i,m}, B_{j,l}] = -\frac{[m]}{m} B_{j,m+l}$.
			\item[(2)] If $c_{ij}=0$ and $c_{\tau i,j}=-1$, then  we have  $[H_{i,m}, B_{j,l}] = \frac{[m]}{m} B_{j,l-m}C^m$.
		\end{itemize}
	\end{proposition}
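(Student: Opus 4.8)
The plan is as follows. Since $i\neq r,r+1$, the node $i$ satisfies $c_{i,\tau i}=0$, so by \eqref{def:thetac=0} one has $\Theta_{i,n}=[B_{i,n},B_{\tau i}]\K_{\tau i}^{-1}$ for $n>0$ and $\Theta_{i,0}=\tfrac{1}{v-v^{-1}}$. Because $H_{i,m}$ and the $\Theta_{i,n}$ are polynomial expressions in one another via \eqref{exp}, I would invoke the equivalence in Lemma~\ref{lem:relationsreform}(2) and prove instead the equivalent $\Theta$-form \eqref{qsiDR2reform}, specialized to the two Cartan patterns. I will settle case (1) ($c_{ij}=-1,\ c_{\tau i,j}=0$) in full and then transport it to case (2).

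For case (1) the two inputs are that $B_{\tau i}$ commutes with every $B_{j,k}$ (this is \eqref{qsiDR7}/Proposition~\ref{prop:qsiDR7}, valid as $c_{\tau i,j}=0$ and $\tau i\neq j$) and the loop relation \eqref{qsiDR3}/Proposition~\ref{prop:iDR3a} between the adjacent nodes $i,j$. Expanding $[\Theta_{i,m},B_{j,k}]$ and moving the central $\K_{\tau i}^{-1}$ past $B_{j,k}$ (using $\K_{\tau i}^{-1}B_{j,k}=vB_{j,k}\K_{\tau i}^{-1}$), the identity $[B_{\tau i},B_{j,k}]=0$ lets me rewrite
\[
[\Theta_{i,m},B_{j,k}]=-\big[\,[B_{j,k},B_{i,m}]_v,\,B_{\tau i}\big]\K_{\tau i}^{-1},\qquad m\ge 1 .
\]
Applying \eqref{qsiDR3} in the form $[B_{i,m-1},B_{j,k+1}]_v=v[B_{i,m},B_{j,k}]_{v^{-1}}$ to shift the loop indices, I would derive the single difference identity
\[
[\Theta_{i,m},B_{j,k}]-v^{-1}[\Theta_{i,m-1},B_{j,k+1}]=-(v-v^{-1})B_{j,k+1}\Theta_{i,m-1},\qquad m\ge 2 .
\]
Two uses of this identity, at $(m,k)$ and at $(m-1,k-1)$, together with $[\Theta_{i,a},B_{j,b}]_{v^2}=[\Theta_{i,a},B_{j,b}]+(1-v^2)B_{j,b}\Theta_{i,a}$, make every $\Theta$-product correction cancel and collapse \eqref{qsiDR2reform} to $0=0$. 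The base case $[\Theta_{i,1},B_{j,k}]=-B_{j,k+1}$ I would compute directly from the same index shift, the rank-one relation \eqref{relation5} for $[B_i,B_{\tau i}]$, and the above weight identity. This gives \eqref{qsiDR2reform} for all $m\ge 1$, hence case (1) by Lemma~\ref{lem:relationsreform}(2).

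For case (2) ($c_{ij}=0,\ c_{\tau i,j}=-1$) I would not redo the computation but transport case (1) through the anti-involution $\sigma_\imath$ of \eqref{eq:sigma}. Using $\sigma_\imath\TT_i\sigma_\imath=\TT_i^{-1}$ one checks $\sigma_\imath(B_{j,l})=B_{j,-l}$, while \eqref{qsiDR4} (with $k=-m,\ l=0$) gives $[B_{i,-m},B_{\tau i}]=-\K_iC^{-m}\Theta_{\tau i,m}$ and hence $\sigma_\imath(\Theta_{i,m})=C^{-m}\Theta_{\tau i,m}$, so that $\sigma_\imath(H_{i,m})=C^{-m}H_{\tau i,m}$. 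Applying the anti-homomorphism $\sigma_\imath$ to the case-(1) identity for the pair $(\tau i,j)$ — whose data $c_{\tau i,j}=-1,\ c_{i,j}=0$ is exactly case-(1) data — and using that $C$ is central, the positive shift $B_{j,l+m}$ is converted into $B_{j,l-m}C^m$ with the opposite sign, which is precisely the asserted identity in case (2).

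The main obstacle I anticipate is the bookkeeping in case (1): keeping the $v$- versus $v^{-1}$-brackets, the powers of $v$ from $\K_{\tau i}$, and the $C$-shifts aligned so that the difference identity really does reduce \eqref{qsiDR2reform} to a tautology; the edge cases $m=1,2$, where $\Theta_{i,0}$ is scalar and $\Theta_{i,-1}=0$, must be verified by hand rather than from the recursion. A secondary delicate point is the identity $\sigma_\imath(B_{j,l})=B_{j,-l}$: since $\sigma_\imath$ reverses products and $\sigma_\imath\TT_w\sigma_\imath\neq\TT_w^{-1}$ for a general word $w$, this must be extracted from the translation structure of $\bome_j$ (equivalently from the rank-one $\widetilde{\mathbf U}^\imath$-relations at node $j$) and cannot simply be assumed.
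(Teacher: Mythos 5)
Your case (1) is correct and is essentially the route the paper takes (the paper itself only cites \cite[Proposition 5.6]{LWZ24}, but the visible analogue for $i=r$ in the same section proceeds exactly this way). Your single difference identity is precisely the statement $[\Theta_{i,m},B_{j,k}]=v^{-1}[\Theta_{i,m-1},B_{j,k+1}]_{v^2}$, the mirror of \eqref{eq:HBequiv1}; it does follow from $[B_{\tau i},B_{j,k}]=0$, the weight relation for $\K_{\tau i}$, and \eqref{qsiDR3}, and two applications of it (at $(m,k)$ and $(m-1,k-1)$) do collapse \eqref{qsiDR2reform} to a tautology, with the $m=1$ base case coming out of \eqref{relation5} as you describe. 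That half of the argument is complete.

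Case (2) has a genuine gap, and moreover the key formula you rely on is false as stated. Consistency with relation \eqref{qsiDR4} (already established in $\tUi$ by Proposition \ref{prop:iDR31}) rules out $\sigma_\imath(B_{j,l})=B_{j,-l}$: applying an anti-homomorphism with $B_{j,l}\mapsto B_{j,-l}$ and the induced $\Theta_{i,m}\mapsto C^{-m}\Theta_{\tau i,m}$ to $[B_{i,k},B_{\tau i,l}]=\K_{\tau i}C^l\Theta_{i,k-l}-\K_iC^k\Theta_{\tau i,l-k}$ produces a mismatch by $C^{2l}$ for $l\neq 0$; the formulas forced by the relations are $\sigma_\imath(B_{j,l})=B_{j,-l}C^{l}$ and $\sigma_\imath(\Theta_{i,m})=\Theta_{\tau i,m}$. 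It so happens that your two errors cancel in the final identity (both sides of $[H_{\tau i,m},B_{j,l}]=-\frac{[m]}{m}B_{j,l+m}$ are off by the same power of the central $C$), so you land on the correct case (2) statement, but the argument as written rests on an unproved and incorrectly normalized lemma. Proving the correct formula is not a formality: $\sigma_\imath\TT_w\sigma_\imath=\TT_{w^{-1}}^{-1}$ involves the positive braid lift of $w^{-1}$, which for $w=\bome_j$ is not $\TT_{\bome_j}^{-1}$, so one must genuinely analyze the translation element --- work comparable to just redoing the computation. The paper avoids this entirely by proving case (2) through the mirror-image direct computation (compare the proof of \eqref{HBrr+1} for $i=r$), which is the safer route; if you want to keep the $\sigma_\imath$ transport, you must first prove $\sigma_\imath(B_{j,l})=B_{j,-l}C^l$, e.g.\ by pushing $\sigma_\imath$ through the rank-one isomorphisms $\aleph_j$ of Propositions \ref{prop:rank1isoQG} and \ref{prop:T1Ti}.
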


	\begin{proof}
		The proof is the same as \cite[Proposition 5.6]{LWZ24}, hence omitted here.
	\end{proof}

	We next consider \eqref{qsiDR2} for the remaining case $i=r$ or $r+1$. Due to the symmetry $\widehat{\tau}$, it suffices to consider the case $i=r$.

	\begin{lemma}
		\label{lem:qsiDR100}
		For any $l\in\Z$, we have 
		\begin{align}\label{eq:qsiDR100}
			[\Theta_{r,1},B_{r-1,l}]=-B_{r-1,l+1}.
		\end{align}
	\end{lemma}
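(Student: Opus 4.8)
The plan is to prove \eqref{eq:qsiDR100} first for $l=0$ and then to propagate it to all $l$ by translation invariance. For the propagation, recall $B_{r-1,l}=o(r-1)^{-l}\TT_{\bome_{r-1}}^{-l}(B_{r-1})$ from \eqref{re root}, while $\TT_{\bome_{r-1}}(\Theta_{r,1})=\Theta_{r,1}$ by Proposition~\ref{prop:Theta-invariant Tomega}. Granting the base case $[\Theta_{r,1},B_{r-1}]=-B_{r-1,1}$, I would apply the automorphism $\TT_{\bome_{r-1}}^{-l}$ to it; since $\TT_{\bome_{r-1}}^{-l}(B_{r-1})=o(r-1)^{l}B_{r-1,l}$ and $\TT_{\bome_{r-1}}^{-l}(B_{r-1,1})=o(r-1)^{l}B_{r-1,l+1}$, the $l=0$ identity transforms precisely into \eqref{eq:qsiDR100}. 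So the whole content is the base case.

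For the base case I would start from $\Theta_{r,1}=v[B_r,B_{r+1,-1}]_{v^{-1}}C\K_{r+1}^{-1}-v\TT_{\bth_r}^{-1}(B_0)\K_r$ (recorded just above the lemma) and bracket each summand with $B_{r-1}$, using that $[B_{r-1},B_{r+1,-1}]=0$ (this is \eqref{qsiDR7}, valid by Proposition~\ref{prop:qsiDR7} since $c_{r-1,r+1}=0$ and $r+1\neq\tau(r-1)=r+2$) together with the $\K$-commutations in \eqref{qsiDR0}. For the first summand, an application of the Jacobi identity \eqref{eq:Jacobi}, with the parameters arranged so that the term carrying $[B_{r-1},B_{r+1,-1}]$ drops out, collapses $[B_{r-1},[B_r,B_{r+1,-1}]_{v^{-1}}]_v$ to $[[B_{r-1},B_r]_v,B_{r+1,-1}]_{v^{-1}}$, giving the contribution $-vC[[B_{r-1},B_r]_v,B_{r+1,-1}]_{v^{-1}}\K_{r+1}^{-1}$. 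For the second summand, $\K_r B_{r-1}=vB_{r-1}\K_r$ yields $v[B_{r-1},\TT_{\bth_r}^{-1}(B_0)]_v\K_r$, which I would expand by Lemma~\ref{lem:induction-TB}: the identity \eqref{eq:Theta_iB0} rewrites $\TT_{\bth_r}^{-1}(B_0)$ through $B_{r+2}$, $B_{r-1}$ and $\TT_{\bth_{r-1}}^{-1}(B_0)$, the Serre relation \eqref{eq:Serre-Thetai-1} kills the inner double bracket $[B_{r-1},[B_{r-1},\TT_{\bth_{r-1}}^{-1}(B_0)]_v]_{v^{-1}}$, and \eqref{relation5} (for the commuting-type pair $r-1,\,r+2=\tau(r-1)$) disposes of the residual Cartan terms.

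The crux, and the step I expect to be hardest, is to recognize the sum of these two contributions as $-B_{r-1,1}=-o(r-1)^{-1}\TT_{\bome_{r-1}}^{-1}(B_{r-1})$: this forces one to reconcile the Cartan factors $C,\K_r,\K_{r+1}$ and to match a nested $v$-commutator against a braid image. I would carry this out by expanding $\TT_{\bome_{r-1}}^{-1}(B_{r-1})$ through the factorization $\TT_{\bome_{r-1}}^{-1}=\TT_{r-1}^{-1}\TT_{\bome_{r-1}'}^{-1}$, using $\TT_{\bome_{r-1}'}^{-1}(B_{r-1})=\TT_{\bome_{r-1}'}(B_{r-1})$ from Lemma~\ref{lem:T-T} and Proposition~\ref{prop:fixB} to re-express it in the same building blocks $B_{r-1},B_r,B_{r+1,-1},\TT_{\bth_r}^{-1}(B_0)$; one may cross-check the resulting equality after a further bracket $[B_r,-]_v$, where the already-established loop relation \eqref{qsiDR3} (Proposition~\ref{prop:iDR3a}) makes both sides computable. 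Throughout, two bookkeeping checks guard the signs and powers of $v$: every term has loop-degree $1$ (with $C$ of degree $2$), and, after using $\alpha_r\equiv-\alpha_{r+1}$ in the relative weight lattice, every term has relative weight $\alpha_{r-1}$, matching $-B_{r-1,1}$.
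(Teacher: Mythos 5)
Your reduction to $l=0$ via $\TT_{\bome_{r-1}}$-invariance of $\Theta_{r,1}$ and your treatment of the first summand of $\Theta_{r,1}$ (the Jacobi identity \eqref{eq:Jacobi} arranged so that $[B_{r-1},B_{r+1,-1}]=0$ kills one term) coincide with the paper's opening moves. But the two decisive steps of the base case are missing. First, you never actually produce the target $-B_{r-1,1}$. In the paper this comes from pushing the first summand one more step with \eqref{qsiDR3} to get $-[B_{r+1,-1},[B_{r,-1},B_{r-1,1}]_v]_v$ and then recognizing this, via the explicit rank-one formula in Theorem \ref{thm:Ti}(3), as $\TT_{\bome_r}\TT_{\bome_{r-1}}^{-1}\big(\TT_r^{-1}(B_{r-1})+B_{r-1}\K_{r+1}\big)$; this is precisely where $-B_{r-1,1}$ emerges, together with an explicit correction term $v\,\TT_{\bs_1\cdots\bs_{r-1}}^{-1}\TT_{\bs_0\cdots\bs_{r-2}}(B_{r+2})\K_{r-1}\K_r$. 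Your substitute for this identification --- expanding $\TT_{\bome_{r-1}}^{-1}(B_{r-1})$ through $\TT_{\bome_{r-1}'}$ and then ``cross-checking after a further bracket $[B_r,-]_v$'' --- is not a proof: agreement after applying $[B_r,-]_v$ does not imply equality without an injectivity argument, which you do not supply.

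Second, your route for the $\TT_{\bth_r}^{-1}(B_0)$ summand via Lemma \ref{lem:induction-TB} does not go through as described. After substituting \eqref{eq:Theta_iB0}, the Jacobi identity cannot simultaneously isolate the Serre combination $[B_{r-1},[B_{r-1},\TT_{\bth_{r-1}}^{-1}(B_0)]_v]_{v^{-1}}$ (which requires inner parameter $v^{-1}$) and reduce $[B_{r-1},B_{r+2}]$ to the Cartan element of \eqref{relation5} (which requires the plain commutator): whichever arrangement you choose leaves a genuinely non-Cartan cross term such as $[[B_{r-1},B_{r+2}]_{v^2},\,\cdot\,]$, and the surviving terms involve $\TT_{\bth_{r-1}}^{-1}(B_0)$ and $B_{r+2}$ with no mechanism offered for their cancellation. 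The paper avoids this entirely: it computes $[\TT_{\bth_r}^{-1}(B_0)\K_r,B_{r-1}]$ directly as $\TT_{\bs_1\cdots\bs_{r-1}}^{-1}\TT_{\bs_0\cdots\bs_{r-1}}^{-1}(B_1)\K_{r-1}\K_r$ by pulling $B_{r-1}$ inside the braid operators, and then cancels it against the correction term above through the reduced-word identity $\bs_0\bs_1\cdots\bs_{r-1}\bs_0\bs_1\cdots\bs_{r-2}(\alpha_{r+2})=\alpha_1$ and Proposition \ref{prop:fixB}. Without some version of these braid-group identifications, your outline does not close.
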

	
	\begin{proof}
		%We only prove \eqref{eq:qsiDR100}. 
		Without loss of generality, assume $ o(r-1)=1$ and $l=0$ by using Proposition \ref{prop:Theta-invariant Tomega}. By definition, we have 
        \[
        \Theta_{r,1}= v[B_r,B_{r+1,-1}]_{v^{-1}}C\K_{r+1}^{-1}-v\TT_{\theta_r}^{-1}(B_0)\K_r. 
        \]
        We have
		\begin{align*}
			&\quad v\big[[B_r,B_{r+1,-1}]_{v^{-1}}C\K_{r+1}^{-1},B_{r-1}\big]\\
			&=v^2\big[[B_r,B_{r+1,-1}]_{v^{-1}}, B_{r-1}\big]_{v^{-1}}C\K_{r+1}^{-1}\\
			&=v \Big(v\big[B_r,[B_{r+1,-1}, B_{r-1}] \big]_{v^{-2}}-\big[B_{r+1,-1},[B_r, B_{r-1}]_{v^{-1}}\big]_{v }\Big)C\K_{r+1}^{-1}\\
			&=-v\big[B_{r+1,-1},[B_r, B_{r-1}]_{v^{-1}}\big]_{v }C\K_{r+1}^{-1}\\
			&=- \big[B_{r+1,-1},[B_{r,-1}, B_{r-1,1}]_{v }\big]_{v} C\K_{r+1}^{-1}
		\end{align*}
		where the last equality follows by applying \eqref{qsiDR3}. By definition, we have $B_{i,-1}=o(i)\TT_{\bome_i}(B_i)$ and $\TT_{\bome_i}(\K_i)=o(i)o(\tau i)\K_i C^{-1}$ for $i\in \I_0$. By Theorem~\ref{thm:Ti}, we have
		\begin{align*}
			-\big[B_{r+1,-1},[B_{r,-1}, B_{r-1,1}]_{v}\big]_{v}
			&=\TT_{\bome_r} \TT_{\bome_{r-1}}^{-1}\big( \TT_r^{-1}(B_{r-1})+ B_{r-1}\K_{r+1}\big)
			\\
			&=\TT_{\bome_r} \TT_{\bome_{r-1}}^{-1}  \TT_r^{-1}(B_{r-1}) - B_{r-1,1} \K_{r+1} C^{-1}.
		\end{align*}
		By Corollary \ref{cor:Da}, we have
		\begin{align*}
			\TT_{\bome_r} \TT_{\bome_{r-1}}^{-1}  \TT_r^{-1}(B_{r-1})&=\TT_{\bs_1\bs_2\cdots\bs_{r-1}}^{-1}\TT_{\bs_0\bs_1\cdots\bs_{r-1}}(B_{r-1})
			\\
			&=-\TT_{\bs_1\bs_2\cdots\bs_{r-1}}^{-1}\TT_{\bs_0\bs_1\cdots\bs_{r-2}}(\K_{r-1}^{-1}B_{r+2})
			\\
			&=-v^{-2}\TT_{\bs_1\bs_2\cdots\bs_{r-1}}^{-1}\TT_{\bs_0\bs_1\cdots\bs_{r-2}}(B_{r+2})\TT_{\bs_1\bs_2\cdots\bs_{r-1}}^{-1}\TT_{\bs_0\bs_1\cdots\bs_{r-2}}(\K_{r-1}^{-1})
			\\
			&=v \TT_{\bs_1\bs_2\cdots\bs_{r-1}}^{-1}\TT_{\bs_0\bs_1\cdots\bs_{r-2}}(B_{r+2}) C^{-1}\K_{r-1}\K_r\K_{r+1}.
		\end{align*}
		Putting together all the above computations, we have
		\begin{align}
			\label{eq:pfiDR2''}
			[\Theta_{r,1},B_{r-1}]
			=- B_{r-1,1}+v \TT_{\bs_1\bs_2\cdots\bs_{r-1}}^{-1}\TT_{\bs_0\bs_1\cdots\bs_{r-2}}(B_{r+2}) \K_{r-1}\K_r
			-v[\TT_{\theta_r}^{-1}(B_0)\K_r , B_{r-1}].
		\end{align}
              
		To finish proving \eqref{eq:qsiDR100}, it remains to show that the last two terms in \eqref{eq:pfiDR2''} cancel. Indeed, we have
		\begin{align*}
			[\TT_{\theta_r}^{-1}(B_0)\K_r,B_{r-1}]&=v[\TT_{r-1}^{-1}\cdots\TT_1^{-1}(B_0),B_{r-1}]_{v^{-1}}\K_r\\
			&=v\TT_{\bs_1\bs_2\cdots\bs_{r-1}}^{-1}[B_0,\TT_{\bs_1\bs_2\cdots\bs_{r-1}}(B_{r-1})]_{v^{-1}}\K_r\\
			&=-v^{-1}\TT_{\bs_1\bs_2\cdots\bs_{r-1}}^{-1}[B_0,\TT_{\bs_1\bs_2\cdots\bs_{r-2}}(B_{r+2}\K_{r-1}^{-1})]_{v^{-1}}\K_r\\
			&=-v \TT_{\bs_1\bs_2\cdots\bs_{r-1}}^{-1}[B_0,\TT_{\bs_1\bs_2\cdots\bs_{r-2}}(B_{r+2} )]_{v^{-1}}\K_{r-1}\K_r\\
			&=-v \TT_{\bs_1\bs_2\cdots\bs_{r-1}}^{-1}[B_0,\TT_{\bs_2\cdots\bs_{r-1}}^{-1}(B_{2r})]_{v^{-1}}\K_{r-1}\K_r\\
			&=\TT_{\bs_1\bs_2\cdots\bs_{r-1}}^{-1}\TT_{\bs_2\cdots\bs_{r-1}}^{-1}\TT_0(B_{2r})\K_{r-1}\K_r\\
			&=\TT_{\bs_1\bs_2\cdots\bs_{r-1}}^{-1}\TT_{\bs_0\bs_1\cdots\bs_{r-1}}^{-1} (B_{1})\K_{r-1}\K_r.
		\end{align*}
		
		It remains to show that
		\begin{align}
			\TT_{\bs_1\bs_2\cdots\bs_{r-1}}^{-1}\TT_{\bs_0\bs_1\cdots\bs_{r-2}}(B_{r+2})= \TT_{\bs_1\bs_2\cdots\bs_{r-1}}^{-1}\TT_{\bs_0\bs_1\cdots\bs_{r-1}}^{-1} (B_{1})
		\end{align}
		which is equivalent to
		\begin{align}
			\label{eq:pfiDR22}
			\TT_{\bs_0\bs_1\cdots\bs_{r-1}} \TT_{\bs_0\bs_1\cdots\bs_{r-2}} (B_{r+2})= B_1.
		\end{align}
		Since $\bs_0\bs_1\cdots\bs_{r-1}\bs_0\bs_1\cdots\bs_{r-2}$ is a reduced expression, \eqref{eq:pfiDR22} follows by Proposition~\ref{prop:fixB} and 
		$$\bs_{0}\bs_1\cdots \bs_{r-1}\bs_0\bs_1\cdots\bs_{r-2}(\alpha_{r+2})=\alpha_1.
        $$ 
        The proof is completed.
	\end{proof}
	
	\begin{lemma}
		\label{lem:pfiDR4}
		For any $l\in\Z$, we have 
		\begin{align}
			\label{eq:pfiDR4}
			[\Theta_{r,1},B_{r+2,l}]=B_{r+2,l-1}C.
		\end{align}
	\end{lemma}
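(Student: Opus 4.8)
The plan is to reduce to the case $l=0$ and then compute $[\Theta_{r,1},B_{r+2}]$ by hand, in close parallel to the proof of Lemma~\ref{lem:qsiDR100}. By Proposition~\ref{prop:Theta-invariant Tomega} we have $\TT_{\bome_{r+2}}(\Theta_{r,1})=\Theta_{r,1}$ (note $\bome_{r+2}=\bome_{r-1}$), while $C$ is central and braid-invariant and $B_{r+2,l}=(o(r+2)\TT_{\bome_{r+2}})^{-l}(B_{r+2})$ by \eqref{re root}. Hence, once the identity $[\Theta_{r,1},B_{r+2}]=B_{r+2,-1}C$ is established, applying $(o(r+2)\TT_{\bome_{r+2}})^{-l}$ to both sides produces $[\Theta_{r,1},B_{r+2,l}]=B_{r+2,l-1}C$, which is the assertion. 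So it suffices to treat $l=0$.

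First I would substitute the explicit expression $\Theta_{r,1}=v[B_r,B_{r+1,-1}]_{v^{-1}}C\K_{r+1}^{-1}-v\TT_{\bth_r}^{-1}(B_0)\K_r$ and split the commutator into two pieces. For the first piece, $c_{r,r+2}=0$ and $r+2\neq\tau r$ give $[B_r,B_{r+2}]=0$ by Proposition~\ref{prop:qsiDR7}, and the $\K$-commutation relation \eqref{qsiDR0} yields $\K_{r+1}^{-1}B_{r+2}=v^{-1}B_{r+2}\K_{r+1}^{-1}$; together these collapse the first piece to $C\big[[B_r,B_{r+1,-1}]_{v^{-1}},B_{r+2}\big]_v\K_{r+1}^{-1}=C\big[B_r,[B_{r+1,-1},B_{r+2}]_v\big]_{v^{-1}}\K_{r+1}^{-1}$. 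Applying the loop relation of Proposition~\ref{prop:iDR3a} (i.e.\ \eqref{qsiDR3}) with $i=r+1,\,j=r+2$ rewrites $[B_{r+1,-1},B_{r+2}]_v=v[B_{r+1},B_{r+2,-1}]_{v^{-1}}$, so this piece becomes the nested bracket $vC\big[B_r,[B_{r+1},B_{r+2,-1}]_{v^{-1}}\big]_{v^{-1}}\K_{r+1}^{-1}$, which I would then realize as a braid image through Theorem~\ref{thm:Ti} and Corollary~\ref{cor:Da}, exactly as the analogous term $-[B_{r+1,-1},[B_{r,-1},B_{r-1,1}]_v]_vC\K_{r+1}^{-1}$ is treated in Lemma~\ref{lem:qsiDR100}.

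For the second piece, $-v[\TT_{\bth_r}^{-1}(B_0)\K_r,B_{r+2}]$, the relation $\K_rB_{r+2}=v^{-1}B_{r+2}\K_r$ from \eqref{qsiDR0} reduces it to $-[\TT_{\bth_r}^{-1}(B_0),B_{r+2}]_v\K_r$, which I would evaluate by writing $\TT_{\bth_r}^{-1}(B_0)=\TT_{r-1}^{-1}\cdots\TT_1^{-1}(B_0)$, conjugating $B_{r+2}$ through the braid operators, and repeatedly invoking $\TT_a\TT_{a-1}(B_a)=B_{a-1}$ from Proposition~\ref{prop:fixB}. Finally I would add the two contributions and check that the auxiliary braid-expressed terms cancel, leaving precisely $B_{r+2,-1}C$. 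The hard part will be this last cancellation: keeping track of the $v$-powers and the signs $o(\cdot)$ and confirming the match hinges on a reduced-word identity $w(\alpha_\bullet)=\alpha_\bullet$ in $W_\tau$ verified via Proposition~\ref{prop:fixB}, of the same type as $\bs_0\bs_1\cdots\bs_{r-1}\bs_0\cdots\bs_{r-2}(\alpha_{r+2})=\alpha_1$ that closes Lemma~\ref{lem:qsiDR100}. As a sign cross-check worth recording, applying the involution $\widehat\tau$ to Lemma~\ref{lem:qsiDR100} gives $[\Theta_{r+1,1},B_{r+2,l}]=-B_{r+2,l+1}$, since $\widehat\tau(\Theta_{r,1})=-\Theta_{r+1,1}$ and $\widehat\tau(B_{r-1,l})=(-1)^lB_{r+2,l}$ (using $o(r-1)o(r+2)=-1$); this is the $\tau$-companion of the present identity rather than the identity itself, so it does not replace the direct computation but does pin down the expected normalization.
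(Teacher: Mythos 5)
Your strategy is the paper's strategy: reduce to $l=0$ by translation invariance of $\Theta_{r,1}$ and of $C$, substitute $\Theta_{r,1}=v[B_r,B_{r+1,-1}]_{v^{-1}}C\K_{r+1}^{-1}-v\TT_{\bth_r}^{-1}(B_0)\K_r$, kill the $[B_r,B_{r+2}]$ term, shift indices with \eqref{qsiDR3}, and play the resulting nested bracket off against $-[\TT_{\bth_r}^{-1}(B_0),B_{r+2}]_v\K_r$. Everything you actually carry out is correct: the $\K$-commutation factors from \eqref{qsiDR0}, the Jacobi collapse of the first piece, the rewriting $[B_{r+1,-1},B_{r+2}]_v=v[B_{r+1},B_{r+2,-1}]_{v^{-1}}$, and the $\widehat\tau$ cross-check, which you rightly identify as producing the companion identity $[\Theta_{r+1,1},B_{r+2,l}]=-B_{r+2,l+1}$ rather than the statement itself.

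The gap is that the proof stops where the real work begins: three computations are announced but not performed. (i) You do not convert $\bigl[B_r,[B_{r+1},B_{r+2,-1}]_{v^{-1}}\bigr]_{v^{-1}}$ into the explicit form $v^{-2}\TT_r(B_{r+2,-1})+v^{-1}B_{r+2,-1}\K_{r+1}$ via Theorem \ref{thm:Ti}; it is the second summand here that produces the answer $B_{r+2,-1}C$, so without this step the target term never appears. (ii) You do not evaluate $[\TT_{\bth_r}^{-1}(B_0),B_{r+2}]_v$, which requires pushing $B_{r+2}$ through $\TT_{r-1}^{-1}\cdots\TT_1^{-1}$ and then through $\TT_0$. (iii) You do not verify the cancellation \eqref{eq:pfiDR6} of the two leftover braid-expressed terms, and your predicted mechanism for it is not quite what is needed: unlike Lemma \ref{lem:qsiDR100}, which closes with a root identity of the form \eqref{eq:pfiDR22} checked via Proposition \ref{prop:fixB}, here the paper closes by substituting the factorization \eqref{eq:Tomegar-1} of $\TT_{\bome_{r-1}}$, commuting $\TT_r$ and the lower $\TT_j$ past $(\TT_0\TT_1\cdots\TT_r)^{r-1}$, and doing an explicit bookkeeping of $\K_\delta$ against $C$ (namely $\TT_{\bs_1\cdots\bs_{r-1}}^{-1}\TT_0^{-1}\TT_1\cdots\TT_{r-2}(\K_{r+2}^{-1})=-v^3C^{-1}\K_{r+2}\K_{r+1}\K_r$) to match the two terms. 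None of these steps would fail, but none is routine, and together they are the content of the lemma; as it stands the proposal is a correct outline rather than a proof.
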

	
\begin{proof}
	It suffices to prove that $[\Theta_{r,1},B_{r+2}]=B_{r+2,-1}C$, the special case of \eqref{eq:pfiDR4} when $l=0$ and $o(r-1)=1$. (The general case follows by applying $(o(r-1)\TT_{\bome_{r-1}})^{-l}$ to this special case.) 
    
    To that end, recall 
    \[
    \Theta_{r,1}= v[B_{r},B_{r+1,-1}]_{v^{-1}}C\K_{r+1 }^{-1}-v\TT_{\theta_r}^{-1}(B_0)\K_{r}. 
    \]
    We have
		\begin{align*}
			&\big[[B_{r},B_{r+1,-1}]_{v^{-1}}C\K_{r+1}^{-1},B_{r+2}\big]
			\\
			&=v^{-1}\big[[B_{r},B_{r+1,-1}]_{v^{-1}}, B_{r+2}\big]_{v}C\K_{r+1}^{-1}
			\\
			&= \Big(v^{-1}\big[B_{r},[B_{r+1,-1}, B_{r+2}]_v \big]_{v^{-1}}-v^{-2}\big[B_{r+1,-1},[B_{r}, B_{r+2}]\big]_{v^2 }\Big)C\K_{r+1}^{-1}
			\\
			&=v^{-1}\big[B_{r},[B_{r+1,-1}, B_{r+2}]_v \big]_{v^{-1}} C\K_{r+1}^{-1}
			\\
			&=\big[B_{r},[B_{r+1}, B_{r+2,-1}]_{v^{-1}}\big]_{v^{-1}} C\K_{r+1}^{-1},
		\end{align*}
		where the last equality follows by applying \eqref{qsiDR3}. Since $o(r+2)=-1$, we have $B_{r+2,-1}=-\TT_{\bome_{r-1}}(B_{r+2})$. By Theorem~\ref{thm:Ti}, we have
		\begin{align*}
			\big[B_{r},[B_{r+1},B_{r+2,-1}]_{v^{-1}}\big]_{v^{-1}}
			&=-v^{-2}\TT_{\bome_{r-1}}\big( \TT_r (B_{r+2})+ v B_{r+2}\K_{r+1 }\big)
			\\
			&=v^{-2} \TT_r (B_{r+2,-1})+ v^{-1} B_{r+2,-1}\K_{r+1}.
		\end{align*}
		
		On the other hand, we have
		\begin{align*}
			v[\TT_{\theta_r}^{-1}(B_0)\K_{r} , B_{r+2}]
			&=[\TT_{r-1}^{-1}\cdots\TT_1^{-1}(B_0) , B_{r+2}]_v\K_{r}
			\\
			&=\TT_{r-1}^{-1}\cdots\TT_1^{-1}[B_0 , \TT_{\bs_1\bs_2\cdots\bs_{r-1}}(B_{r+2})]_v\K_{r}
			\\
			&=-v^{-2}\TT_{r-1}^{-1}\cdots\TT_1^{-1}[B_0 , \TT_{\bs_1\bs_2\cdots\bs_{r-2}}(B_{r-1}\K_{r+2}^{-1})]_v\K_{r}
			\\
			&=-\TT_{r-1}^{-1}\cdots\TT_1^{-1}[B_0 , \TT_{\bs_1\bs_2\cdots\bs_{r-2}}(B_{r-1})]_v\K_{r+2} \K_{r}
			\\
			&=-\TT_{\bs_1\bs_2\cdots\bs_{r-1}}^{-1} [B_0 , \TT_{\bs_2\cdots\bs_{r-1}}^{-1}(B_{1})]_v\K_{r+2} \K_{r}
			\\
			&=-\TT_{\bs_1\bs_2\cdots\bs_{r-1}}^{-1}  \TT_{\bs_2\cdots\bs_{r-1}}^{-1}\TT_0^{-1} (B_{1}) \K_{r+2} \K_{r}.
		\end{align*}
	Collecting the above computations, we have
		\begin{align*}
			[\Theta_{r,1},B_{r+2}]
			= B_{r+2,-1}C +v^{-1}\TT_r (B_{r+2,-1}) C\K_{r+1}^{-1}
			+\TT_{\bs_1\bs_2\cdots\bs_{r-1}}^{-1}  \TT_{\bs_2\cdots\bs_{r-1}}^{-1}\TT_0^{-1} (B_{1}) \K_{r+2} \K_{r}.
		\end{align*}
		
		To prove that $[\Theta_{r,1},B_{r+2}]=B_{r+2,-1}C$, it remains to show that 
		\begin{align}\label{eq:pfiDR6}
			v^{-1}\TT_r (B_{r+2,-1}) C\K_{r+1}^{-1}
			+\TT_{\bs_1\bs_2\cdots\bs_{r-1}}^{-1}  \TT_{\bs_2\cdots\bs_{r-1}}^{-1}\TT_0^{-1} (B_{1}) \K_{r+2} \K_{r}=0.
		\end{align}
		Recall that $\TT_{\bome_r}=(\TT_0\TT_1\cdots \TT_r)^r$. 
        By \eqref{eq:Tomegar-1}, we have 
		\begin{align*}
			v^{-1}\TT_r (B_{r+2,-1}) C\K_{r+1}^{-1}&=-v^{-1}\TT_r\TT_{\bome_{r-1}}(B_{r+2})C\K_{r+1}^{-1}
			\\
			&=-v^{-1}\TT_r(\TT_0\TT_1\cdots \TT_r)^{r-1}\TT_1\TT_2\cdots \TT_{r-1}(B_{r+2})C\K_{r+1}^{-1}
            \\
            &=v^{-1} \TT_{\bs_1\bs_2\cdots \bs_{r-1}}^{-1}\TT_0^{-1}\TT_{\bome_r}\TT_1\TT_2\cdots \TT_{r-2}(\K_{r+2}^{-1} B_{r-1})C\K_{r+1}^{-1}
            \\
            &=v^{-3} \TT_{\bs_1\bs_2\cdots \bs_{r-1}}^{-1}\TT_0^{-1}\TT_1\TT_2\cdots \TT_{r-2}( B_{r-1}\K_{r+2}^{-1})C\K_{r+1}^{-1}
            \\
            &\overset{(*)}{=}- \TT_{\bs_1\bs_2\cdots \bs_{r-1}}^{-1}\TT_0^{-1}\TT_1\TT_2\cdots \TT_{r-2}( B_{r-1} )\K_{r+2}\K_{r} 
            \\
            &=- \TT_{\bs_1\bs_2\cdots \bs_{r-1}}^{-1}\TT_0^{-1}\TT_{\bs_2\cdots \bs_{r-1}}^{-1}( B_{1} )\K_{r+2}\K_{r},
		\end{align*}
        where the equality (*) follows by
        \[
        \TT_{\bs_1\bs_2\cdots \bs_{r-1}}^{-1}\TT_0^{-1}\TT_1\TT_2\cdots \TT_{r-2}(\K_{r+2}^{-1})
        = v^{-2r+4} \K_{\delta}^{-1} \K_{r+2} \K_{r+1}\K_r=-v^3 C^{-1}\K_{r+2} \K_{r+1}\K_r.
        \]
        Thus, \eqref{eq:pfiDR6} holds. The lemma is proved.
	\end{proof}

	\begin{proposition} For any $l\in\Z$, $m>0$, we have 
		\begin{align}
			\label{HBrr-1}
			[H_{r,m}, B_{r-1,l}] &= -\frac{[m]}{m} B_{r-1,l+m},
			\\
			\label{HBrr+1}
			[H_{r,m}, B_{r+2,l}] &= \frac{[m]}{m} B_{r+2,l-m}C^m.
		\end{align}
	\end{proposition}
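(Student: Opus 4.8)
The plan is to reduce to the case $l=0$ and then induct on $m$; throughout I assume $r\ge 2$, the statement being vacuous for $r=1$. Since $\bome_{r+2}=\bome_{r-1}$ (because $\bome_j=\bome_{\tau j}$) and $\TT_{\bome_{r-1}}$ fixes $C$ as well as every $\Theta_{r,m}$ by Proposition~\ref{prop:Theta-invariant Tomega}, hence every $H_{r,m}$ through \eqref{exp}, while $(o(r-1)\TT_{\bome_{r-1}})^{-l}(B_{r-1})=B_{r-1,l}$ and $(o(r+2)\TT_{\bome_{r-1}})^{-l}(B_{r+2})=B_{r+2,l}$ by \eqref{re root}, it suffices to establish \eqref{HBrr-1} and \eqref{HBrr+1} at $l=0$. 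The general $l$ then follows by applying $(o(r-1)\TT_{\bome_{r-1}})^{-l}$, resp.\ $(o(r+2)\TT_{\bome_{r-1}})^{-l}$, and using $\TT_{\bome_{r-1}}^{-l}(B_{r-1,m})=o(r-1)^{l}B_{r-1,l+m}$ together with the corresponding identity for $B_{r+2,m}$.

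For the induction I would pass to the equivalent $\Theta$-form of the relations: by Lemma~\ref{lem:relationsreform}(2) the $H$-relation \eqref{qsiDR2} for $i=r$, $j\in\{r-1,r+2\}$ is equivalent to the recursive relation \eqref{qsiDR2reform}, which is tractable here since $c_{r,r-1}=-1=c_{r+1,r+2}$ while $c_{r+1,r-1}=0=c_{r,r+2}$. The base case $m=1$ is precisely Lemmas~\ref{lem:qsiDR100} and~\ref{lem:pfiDR4}, once one notes $\Theta_{r,1}=H_{r,1}$ (the degree-one part of \eqref{exp}); the case $m=2$ is a direct computation from the explicit formula \eqref{eq:GTH13} for $\Theta_{r,2}$, needing only $[D_{r,0},B_{j,0}]$ and the $\K$-weights in \eqref{qsiDR0}. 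For the step $m\ge 3$ I would substitute the recursion of Lemma~\ref{lem:comb}, namely $\Theta_{r,n}=vC\,\Theta_{r,n-2}-\K_r^{-1}E_n$ with $E_n=[B_{r+1},B_{r,n}]_v+[B_{r,n-1},B_{r+1,1}]_v$, and use that $B_{r-1,k}$ commutes with every $B_{r+1,\ast}$ (relation \eqref{qsiDR7}, Proposition~\ref{prop:qsiDR7}) together with the weight $\K_rB_{r-1,k}=vB_{r-1,k}\K_r$ from \eqref{qsiDR0} to get $[\Theta_{r,n},B_{r-1,k}]=vC\,[\Theta_{r,n-2},B_{r-1,k}]-\K_r^{-1}[E_n,B_{r-1,k}]_v$. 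The first summand is handled by the inductive hypothesis, and the Jacobi identity \eqref{eq:Jacobi} rewrites $[E_n,B_{r-1,k}]_v$ in terms of the nested brackets $\big[B_{r+1},[B_{r,n},B_{r-1,k}]_v\big]_v$ and $\big[[B_{r,n-1},B_{r-1,k}],B_{r+1,1}\big]_v$, whose inner $B_r$--$B_{r-1}$ commutators are reduced by the loop-shift relation \eqref{qsiDR3} (Proposition~\ref{prop:iDR3a}) and the Serre relations.

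The hard part will be this reduction of $[E_n,B_{r-1,k}]_v$: I must show that, after shifting loop indices by \eqref{qsiDR3} and collapsing the $B_r$-strings by the Serre relations, the nested brackets assemble — together with the $vC$-shift term — into exactly the right-hand side of \eqref{qsiDR2reform}, with the $C$-powers and the $\K_r,\K_{r+1}$ factors (each contributing a $v^{\pm1}$ when moved past $B_{r-1,k}$) tracked precisely. The $j=r+2$ case runs in parallel, the only structural difference being that $r+2$ is adjacent to $\tau r=r+1$ rather than to $r$; this reverses the role of the $C$-shift and produces the factor $C^m$ in \eqref{HBrr+1}, and it may alternatively be deduced from the $j=r-1$ computation by the symmetry exchanging the two ends of the $\tau$-orbit, exactly as Lemma~\ref{lem:pfiDR4} mirrors Lemma~\ref{lem:qsiDR100}.
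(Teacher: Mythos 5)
Your skeleton coincides with the paper's: reduce to $l=0$ by $\TT_{\bome_{r-1}}$-invariance of $\Theta_{r,m}$ and $C$ (Proposition~\ref{prop:Theta-invariant Tomega}), pass to the $\Theta$-form via Lemma~\ref{lem:relationsreform}(2), take Lemmas~\ref{lem:qsiDR100} and~\ref{lem:pfiDR4} as the base case, and induct on $m$ using the recursion defining $\Theta_{r,n}$. However, the step you defer as ``the hard part'' is precisely the content of the paper's proof, and your sketch of it is not yet a proof. Two specific points. First, the paper does not verify the four-term relation \eqref{qsiDR2reform} directly; it proves the stronger two-term identity $[\Theta_{r,m},B_{r-1,l}]=v^{-1}[\Theta_{r,m-1},B_{r-1,l+1}]_{v^2}$ (and its mirror $[\Theta_{r,m},B_{r+2,l}]=v[\Theta_{r,m-1},B_{r+2,l-1}]_{v^{-2}}C$), which implies \eqref{qsiDR2reform} and is what actually closes under the induction; with the four-term target your inductive bookkeeping would be considerably worse. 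Second, your description of $m=2$ as needing ``only $[D_{r,0},B_{j,0}]$ and the $\K$-weights'' underestimates it: after expanding \eqref{eq:GTH13}, the nested brackets must be reassembled via \eqref{GTH1'} into $\Theta_{r,1}$, $\Theta_{r+1,1}$ and $\TT_{\bth_r}^{-1}(B_0)$-terms, the latter cancelling in pairs, and one must invoke the \emph{other} base-case lemma (namely $[\Theta_{r+1,1},B_{r-1,1}]=B_{r-1}C$, i.e.\ Lemma~\ref{lem:pfiDR4} transported by $\widehat\tau$) to kill the cross term. The same interlocking of the two nodes recurs at $m=3$, which the paper must treat separately from $m\ge4$ because $\Theta_{r,1}$ re-enters the recursion through \eqref{eq:GTH13} with extra constant terms. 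One further caution: the paper's inductive step uses the recursion in the form \eqref{GTHn1}, i.e.\ with the factor $\K_{r+1}=\K_{\tau r}$, not the $\K_r^{-1}$-form you quote from Lemma~\ref{lem:comb}; since $\K_r$ and $\K_{r+1}$ produce opposite powers of $v$ when moved past $B_{r-1,k}$, this choice changes the twist parameter in $[E_n,B_{r-1,k}]_{v^{\pm1}}$ and must be tracked correctly. So the strategy is right and all the ingredients are correctly identified, but the proof as written is incomplete where it matters most.
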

	
	\begin{proof}
		Let us prove \eqref{HBrr-1}. By Lemma~\ref{lem:relationsreform}(2), the identity \eqref{HBrr-1} is equivalent to
        \begin{align*}
        [\Theta_{r,m},B_{r-1,l}]-v^{-1}[\Theta_{r,m-1},B_{r-1,l+1}]_{v^2}=
        [\Theta_{r,m-1},B_{r-1,l-1}]C-v^{-1}[\Theta_{r,m-2},B_{r-1,l}]_{v^2}C,
        \end{align*}
		which is implied by the following identity 
		\begin{align}
			\label{eq:HBequiv1}
			[\Theta_{r,m},B_{r-1,l}]=v^{-1}[\Theta_{r,m-1},B_{r-1,l+1}]_{v^2}, \quad \text{ for }m>0 \text{ and }l\in\Z.
		\end{align}
		The application of $\TT_{\bome_{r-1}}$ allows the reduction of proving \eqref{eq:HBequiv1} to its special case for $l=0$.
		For $m=1$, the identity \eqref{eq:HBequiv1} is proved in Lemma \ref{lem:qsiDR100}. 
		For $m=2$, by \eqref{eq:GTH13}, we have
		\begin{align*}
			&\quad[\Theta_{r,2},B_{r-1}]
			\\
			&=\Big(v^2\big[ [B_{r+1},B_r]_{v^{-1}},B_{r-1}\big]_{v^{-1}}C+v^2\big[[B_{r,1},B_{r+1,-1}]_{v^{-1}},B_{r-1}\big]_{v^{-1}}C-vB_{r-1}C\K_r\Big)\K_{r+1}^{-1}
			\\
			&=v^2\big[B_{r+1},[B_r,B_{r-1}]_{v^{-1}}\big]_{v^{-1}}C\K_{r+1}^{-1}-v\big[B_{r+1,-1},[B_{r,1},B_{r-1}]_{v^{-1}}\big]_{v}C\K_{r+1}^{-1}-vB_{r-1}C\K_{r+1}^{-1}\K_r\\
			&=v\big[B_{r+1},[B_{r,-1},B_{r-1,1}]_{v}\big]_{v^{-1}}C\K_{r+1}^{-1}-\big[B_{r+1,-1},[B_{r},B_{r-1,1}]_{v}\big]_{v}C\K_{r+1}^{-1}-vB_{r-1}C\K_{r+1}^{-1}\K_r,
		\end{align*}
		where we used \eqref{qsiDR7} and \eqref{qsiDR3} that has been proved in Propositions \ref{prop:qsiDR7} and \ref{prop:iDR3a}.	
		Applying \eqref{GTH1'}, we have 
		\begin{align*}
			&\quad [\Theta_{r,2},B_{r-1}]
			\\
			&=v\big[B_{r-1,1},[B_{r,-1},B_{r+1}]_{v}\big]_{v^{-1}}C\K_{r+1}^{-1}-v^2\big[B_{r-1,1},[B_{r},B_{r+1,-1}]_{v^{-1}}\big]_{v^{-1}}C\K_{r+1}^{-1}
            \\
            &\qquad-vB_{r-1}C\K_{r+1}^{-1}\K_r
			\\
			&=-\big[[B_{r,-1},B_{r+1}]_{v},B_{r-1,1}\big]_{v}C\K_{r+1}^{-1}+v\big[[B_r,B_{r+1,-1}]_{v^{-1}},B_{r-1,1}\big]_vC\K_{r+1}^{-1}-vB_{r-1}C\K_{r+1}^{-1}\K_r\\
			&=[\Theta_{r,1}C^{-1}\K_{r+1},B_{r-1,1}]_vC\K_{r+1}^{-1}-vo(r)[\TT^{-1}_{\bth_{r}}(B_0)\K_rC^{-1}\K_{r+1},B_{r-1,1}]_vC\K_{r+1}^{-1}
			\\
			&\quad +[\Theta_{r+1,1}C^{-1}\K_{r},B_{r-1,1}]_vC\K_{r+1}^{-1}+vo(r)[\TT^{-1}_{\bth_r}(B_0)\K_{r+1}C^{-1}\K_{ r},B_{r-1,1}]_vC\K_{r+1}^{-1}
            \\
            &\quad -vB_{r-1}C\K_{r+1}^{-1}\K_r
			\\
			&=v^{-1}[\Theta_{r,1},B_{r-1,1}]_{v^2}+v[\Theta_{r+1,1},B_{r-1,1}]\K_{r+1}^{-1}\K_r-vB_{r-1}C\K_{r+1}^{-1}\K_r
			\\
			&=v^{-1}[\Theta_{r,1},B_{r-1,1}]_{v^2}.
		\end{align*}
		Here the last equality holds since $[\Theta_{r+1,1},B_{r-1,1}]=B_{r-1}C$; see Lemma \ref{lem:pfiDR4}.
		
		For $m\geq3$, by \eqref{GTHn1}, \eqref{qsiDR7} and \eqref{qsiDR3}, we have
		\begin{align*}
			&[\Theta_{r,m},B_{r-1}]-v[\Theta_{r,m-2}C,B_{r-1}]
			\\
			&=v\big[[B_{r+1,1},B_{r,m-1}]_{v^{-1}}\K_{r+1}^{-1},B_{r-1}\big]+v\big[[B_{r,m},B_{r+1}]_{v^{-1}}\K_{r+1}^{-1},B_{r-1}\big]
			\\
			&=v^2\big[[B_{r+1,1},B_{r,m-1}]_{v^{-1}},B_{r-1}\big]_{v^{-1}}\K_{r+1}^{-1}+v^2\big[[B_{r,m},B_{r+1}]_{v^{-1}},B_{r-1}\big]_{v^{-1}}\K_{r+1}^{-1}
			\\
			&=v^2\big[B_{r+1,1},[B_{r,m-1},B_{r-1}]_{v^{-1}}\big]_{v^{-1}}\K_{r+1}^{-1}-v\big[B_{r+1 },[B_{r,m},B_{r-1}]_{v^{-1}} \big]_{v}\K_{r+1}^{-1}
			\\
			&=-\big[[B_{r,m-2},B_{r-1,1}]_{v},B_{r+1,1}\big]_{v}\K_{r+1}^{-1}+v\big[[B_{r,m-1},B_{r-1,1}]_{v},  B_{r+1}\big]_{v^{-1}}\K_{r+1}^{-1}.
		\end{align*}
		By using  \eqref{eq:Jacobi} again, we have
		\begin{align}\notag
			&\quad [\Theta_{r,m},B_{r-1}]-v[\Theta_{r,m-2}C,B_{r-1}]
			\\\label{eq:ThBinduction}
			&=v\big[B_{r-1,1},[B_{r,m-2},B_{r+1,1}]_v\big]_{v^{-1}}\K_{r+1}^{-1}-v^2\big[B_{r-1,1},[B_{r,m-1},B_{r+1}]_{v^{-1}}\big]_{v^{-1}}\K_{r+1}^{-1}.
		\end{align}
		
		If $m=3$, using \eqref{eq:GTH13}, we rewrite \eqref{eq:ThBinduction} as follows
		\begin{align*}
			&\quad [\Theta_{r,3},B_{r-1}]-v[\Theta_{r,1}C,B_{r-1}]
			\\	
			&=[B_{r-1,1},-v\Theta_{r,2}\K_{r+1}+\frac{v^2C\K_{r+1}}{v-v^{-1}}-\frac{vC\K_r}{v-v^{-1}}]_{v^{-1}}\K_{r+1}^{-1}
			\\
			&=-v[B_{r-1,1},\Theta_{r,2}]_{v^{-2}}+vB_{r-1,1}C
			\\
			&=v^{-1}[\Theta_{r,2},B_{r-1,1}]_{v^2}+vB_{r-1,1}C.
		\end{align*}
		Together with Lemma \ref{lem:qsiDR100}, we obtain
		$[\Theta_{r,3},B_{r-1}]=v^{-1}[\Theta_{r,2},B_{r-1,1}]_{v^2}$, which proves \eqref{eq:HBequiv1}.
		
		If $m\geq4$, we rewrite \eqref{eq:ThBinduction} as follows
		\begin{align*}
			&\quad [\Theta_{r,m},B_{r-1}]-v[\Theta_{r,m-2}C,B_{r-1}]
			\\	
			&=-[B_{r-1,1},v\Theta_{r,m-1}\K_{r+1}-v^2\Theta_{r,m-3}C\K_{r+1}]_{v^{-1}} \K_{r+1}^{-1}
			\\
			&=-v[B_{r-1,1},\Theta_{r,m-1}-v\Theta_{r,m-3}C]_{v^{-2}}
			\\
            &=v^{-1}[\Theta_{r,m-1},B_{r-1,1}]_{v^{2}}-[\Theta_{r,m-3},B_{r-1,1}]C.
		\end{align*}
		By induction, we have
		$$[\Theta_{r,m},B_{r-1}]=v^{-1}[\Theta_{r,m-1},B_{r-1,1}]_{v^2},$$
        which proves the desired identity \eqref{eq:HBequiv1}.

        Now let us prove \eqref{HBrr+1}, in a way similar to \eqref{HBrr-1}. Note that \eqref{HBrr+1} is equivalent to 
        \begin{align}
        \label{HBrr+1-reform}
			[\TH_{r,m},B_{r+2,l}]= v[\TH_{r,m-1},B_{r+2,l-1}]_{v^{-2}}C,\quad \forall m>0,l\in\Z.
		\end{align}
        It is enough to prove \eqref{HBrr+1-reform} for $l=0$.

        For $m=1$, it is proved in Lemma \ref{lem:pfiDR4}. For $m=2$,  by \eqref{eq:GTH13} and \eqref{qsiDR7}--\eqref{qsiDR3}, we have
		\begin{align*}
			& [\Theta_{r,2},B_{r+2}]
			\\
			&=\big[ [B_{r+1},B_r]_{v^{-1}},B_{r+2}\big]_v C\K_{r+1}^{-1}+\big[[B_{r,1},B_{r+1,-1}]_{v^{-1}},B_{r+2}\big]_{v}C\K_{r+1}^{-1}+v^{-1}B_{r+2}C\K_{r+1}^{-1}\K_r
			\\
&=\big[[B_{r+1},B_{r+2}]_{v},B_r\big]_{v^{-1}}C\K_{r+1}^{-1}+\big[B_{r,1},[B_{r+1,-1},B_{r+2}]_{v}\big]_{v^{-1}}C\K_{r+1}^{-1}+v^{-1}B_{r+2}C\K_{r+1}^{-1}\K_r\\
			&=v\big[[B_{r+1,1},B_{r+2,-1}]_{v^{-1}},B_r\big]_{v^{-1}}C\K_{r+1}^{-1}+v\big[B_{r,1},[B_{r+1},B_{r+2,-1}]_{v^{-1}}\big]_{v^{-1}}C\K_{r+1}^{-1}\\
            &\quad+v^{-1}B_{r+2}C\K_{r+1}^{-1}\K_r.
		\end{align*}
		Applying \eqref{GTH1'} and Proposition~\ref{prop:Theta-invariant Tomega}, we have 
		\begin{align*}
			&\quad[\Theta_{r,2},B_{r+2}]\\
            &=v\big[[B_{r+1,1},B_{r}]_{v^{-1}},B_{r+2,-1}\big]_{v^{-1}}C\K_{r+1}^{-1}+v\big[[B_{r,1},B_{r+1}]_{v^{-1}},B_{r+2,-1}\big]_{v^{-1}}C\K_{r+1}^{-1}
            \\
            &\quad+v^{-1}B_{r+2}C\K_{r+1}^{-1}\K_r
			\\
			&=\big[\Theta_{r+1,1}\K_r,B_{r+2,-1}\big]_{v^{-1}}C\K_{r+1}^{-1}+o(r)v\big[\TT_{\bth_{r}}^{-1}(B_0)\K_r\K_{r+1},B_{r+2,-1}\big]_{v^{-1}}C\K_{r+1}^{-1}
            \\
    &\quad+\big[\Theta_{r,1}\K_{r+1},B_{r+2,-1}\big]_{v^{-1}}C\K_{r+1}^{-1}-o(r)v\big[\TT_{\bth_{r}}^{-1}(B_0)\K_{r}\K_{r+1},B_{r+2,-1}\big]_{v^{-1}}C\K_{r+1}^{-1}
    \\
    &\quad+v^{-1}B_{r+2}C\K_{r+1}^{-1}\K_r\\
			&=v^{-1}[\Theta_{r+1,1},B_{r+2,-1}]C\K_r\K_{r+1}^{-1}+v[\Theta_{r,1},B_{r+2,-1}]_{v^{-2}}C+v^{-1}B_{r+2}C\K_{r+1}^{-1}\K_r
			\\
			&=v[\Theta_{r,1},B_{r+2,-1}]_{v^{-2}}C.
		\end{align*}
		Here the last equality holds since $[\Theta_{r+1,1},B_{r+2,-1}]=-B_{r+2}$; see Lemma \ref{lem:qsiDR100}.
		
		For $m\geq3$, by \eqref{GTHn1}, \eqref{qsiDR7} and \eqref{qsiDR3}, we have
		\begin{align*}
			&[\Theta_{r,m},B_{r+2}]-v[\Theta_{r,m-2}C,B_{r+2}]
			\\
			&=v\big[[B_{r+1,1},B_{r,m-1}]_{v^{-1}}\K_{r+1}^{-1},B_{r+2}\big]+v\big[[B_{r,m},B_{r+1}]_{v^{-1}}\K_{r+1}^{-1},B_{r+2}\big]
			\\
			&=\big[[B_{r+1,1},B_{r,m-1}]_{v^{-1}},B_{r+2}\big]_{v}\K_{r+1}^{-1}+\big[[B_{r,m},B_{r+1}]_{v^{-1}},B_{r+2}\big]_{v}\K_{r+1}^{-1}
			\\
			&=\big[[B_{r+1,1},B_{r+2}]_v,B_{r,m-1}\big]_{v^{-1}}\K_{r+1}^{-1}+\big[B_{r,m},[B_{r+1 },B_{r+2}]_v \big]_{v^{-1}}\K_{r+1}^{-1}
			\\
			&=v\big[[B_{r+1,2},B_{r+2,-1}]_{v^{-1}},B_{r,m-1}\big]_{v^{-1}}\K_{r+1}^{-1}+v\big[B_{r,m},[B_{r+1,1},B_{r+2,-1}]_{v^{-1}}\big]_{v^{-1}}\K_{r+1}^{-1}.
		\end{align*}
		By using  \eqref{eq:Jacobi} again, we have
		\begin{align*}
			&[\Theta_{r,m},B_{r-1}]-v[\Theta_{r,m-2}C,B_{r-1}]
			\\
			&=v\big[[B_{r+1,2},B_{r,m-1}]_{v^{-1}},B_{r+2,-1}\big]_{v^{-1}}\K_{r+1}^{-1}+v\big[[B_{r,m},B_{r+1,1}]_{v^{-1}},B_{r+2,-1}\big]_{v^{-1}}\K_{r+1}^{-1}.
		\end{align*}
		
		If $m=3$, using \eqref{eq:GTH13}, we have
		\begin{align*}
			&[\Theta_{r,3},B_{r+2}]-v[\Theta_{r,1}C,B_{r+2}]
			\\	
			&=[\Theta_{r,2}\K_{r+1}-\frac{vC\K_{r+1}}{v-v^{-1}}+\frac{C\K_r}{v-v^{-1}},B_{r+2,-1}]_{v^{-1}}C\K_{r+1}^{-1}
			\\
			&=v[\Theta_{r,2},B_{r+2,-1}]_{v^{-2}}C-vB_{r+2,-1}C^2.
		\end{align*}
		Together with Lemma \ref{lem:qsiDR100}, we obtain
		$[\Theta_{r,3},B_{r-1}]=v[\Theta_{r,2},B_{r+2,-1}]_{v^{-2}}C$.
		
		If $m\geq4$, we have
		\begin{align*}
			&[\Theta_{r,m},B_{r+2}]-v[\Theta_{r,m-2}C,B_{r+2}]
			\\	
			&=[\Theta_{r,m-1}\K_{r+1}-v\Theta_{r,m-3}C\K_{r+1},B_{r+2,-1}]_{v^{-1}} C\K_{r+1}^{-1}
			\\
            &=v[\Theta_{r,m-1},B_{r+2,-1}]_{v^{-2}}C-v^2[\Theta_{r,m-3},B_{r+2,-1}]_{v^{-2}}C^2.
		\end{align*}
		By induction, we have
		$$[\Theta_{r,m},B_{r+2}]=v[\Theta_{r,m-1},B_{r+2,-1}]_{v^{-2}}C.$$
        
	\end{proof}

	\subsection{Relation \eqref{qsiDR1} for $j\notin\{r,r+1\}$}
	
	We shall derive the identity $[H_{i,m},H_{j,n}]=0$ in \eqref{qsiDR1}, for $j\neq r,r+1$, from the relations \eqref{qsiDR2}--\eqref{qsiDR6} (proved above).
	
	\begin{lemma}
		\label{lem:comm}
		Let $i,j \in \II$ such that $c_{j,\tau j}=0$. For any $l, k \in \Z$ and  $m\ge 1$,  we have
		\[
		\Big[ H_{i,m}, [B_{j,k},B_{\tau j,l+1}]_{v^{-c_{j,\tau j}}}+[B_{\tau j,l},B_{j,k+1}]_{v^{-c_{j,\tau j}}} \Big] =0.
		\]
	\end{lemma}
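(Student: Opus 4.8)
The plan is to compute the commutator directly, exploiting that $[H_{i,m},\,\cdot\,]$ is a derivation for the ordinary commutator bracket (Jacobi identity), together with the explicit action of $H_{i,m}$ on the real root vectors supplied by \eqref{qsiDR2}. Since $c_{j,\tau j}=0$, every bracket $[\,\cdot\,,\,\cdot\,]_{v^{-c_{j,\tau j}}}$ in the statement is an ordinary commutator, so I would abbreviate
\[
G(k,l) := [B_{j,k},B_{\tau j,l+1}] + [B_{\tau j,l},B_{j,k+1}],
\]
the element to be shown to commute with $H_{i,m}$. The argument is then uniform in $i$ (no need to separate the cases $i\in\{j,\tau j\}$).

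The key preliminary observation is a homogeneity property of $G$. Relation \eqref{qsiDR4} applied to the pair $(j,\tau j)$ — legitimate as $c_{j,\tau j}=0$ — gives $[B_{j,k},B_{\tau j,l}] = \K_{\tau j}C^l\Theta_{j,k-l} - \K_j C^k \Theta_{\tau j,l-k}$. Writing $k=l+s$ and using that $C$ is central, this reads $[B_{j,k},B_{\tau j,l}] = C^l\phi(k-l)$, where $\phi(s) := \K_{\tau j}\Theta_{j,s} - \K_j C^s\Theta_{\tau j,-s}$ depends only on $s$. Hence
\[
G(k,l) = C^{l+1}\phi(s-1) - C^l\phi(s+1) = C^l\,g(k-l), \qquad g(s):=C\phi(s-1)-\phi(s+1),
\]
so that $G(k,l)$ depends on $(k,l)$ only through the central factor $C^l$ and the difference $s=k-l$. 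In particular, for every $m$, one reads off $G(k+m,l) = C^m G(k,l-m)$ and $G(k,l+m) = C^m G(k-m,l)$.

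Next I would expand $[H_{i,m},G(k,l)]$ by the derivation property and substitute \eqref{qsiDR2}. Setting $\alpha = \tfrac{[mc_{ij}]}{m}$ and $\beta = \tfrac{[mc_{\tau i,j}]}{m}$, and invoking the $\tau$-invariance of the Cartan matrix ($c_{i,\tau j}=c_{\tau i,j}$ and $c_{\tau i,\tau j}=c_{ij}$), relation \eqref{qsiDR2} yields
\[
[H_{i,m},B_{j,k}] = \alpha B_{j,k+m} - \beta B_{j,k-m}C^m, \qquad [H_{i,m},B_{\tau j,l}] = \beta B_{\tau j,l+m} - \alpha B_{\tau j,l-m}C^m.
\]
Applying Leibniz to each of the two commutators in $G(k,l)$ and collecting the eight resulting terms by their coefficient ($\alpha$ or $\beta$) and the presence of $C^m$, the four groups reassemble into shifted copies of $G$:
\[
[H_{i,m},G(k,l)] = \alpha\big(G(k+m,l) - C^m G(k,l-m)\big) + \beta\big(G(k,l+m) - C^m G(k-m,l)\big).
\]
By the two homogeneity identities above, both parenthesized differences vanish, giving $[H_{i,m},G(k,l)]=0$.

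The computation is elementary once the homogeneity of $G$ is in hand; the only points needing care are the correct matching of $\tau$-conjugate Cartan entries in \eqref{qsiDR2} — so that the $\alpha$- and $\beta$-terms regroup into genuine copies of $G$ rather than some unrelated combination — and the bookkeeping of the central factors $C^m$ when regrouping. The main conceptual step, which I would isolate first, is recognizing that \eqref{qsiDR4} forces $G(k,l)=C^l g(k-l)$, since this is exactly what makes the four shifted terms cancel in pairs.
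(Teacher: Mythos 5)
Your proposal is correct and follows essentially the same route as the paper, which omits the argument by reference to \cite[Lemma 5.8]{LWZ24}: a Leibniz expansion of the commutator using \eqref{qsiDR2}, with the resulting four groups of terms cancelling because of \eqref{qsiDR4}. Your packaging of \eqref{qsiDR4} as the homogeneity statement $G(k,l)=C^l g(k-l)$ is a clean way to organize the same cancellation, not a different method.
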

	
	\begin{proof}
		The proof is completely the same as \cite[Lemma 5.8]{LWZ24}, hence omitted here.
	\end{proof}

	\begin{proposition}
		\label{prop:iDR1}
		Relation \eqref{qsiDR1} holds for $j\notin\{i,\tau i\}$. 
	\end{proposition}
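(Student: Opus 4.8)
The plan is to reduce the claim $[H_{i,m},H_{j,n}]=0$ to the assertion $[H_{i,m},\Theta_{j,n}]=0$ for all $n\ge 1$. This reduction is immediate from the exponential relation \eqref{exp}: each $H_{j,n}$ is expressible as a polynomial in $\Theta_{j,1},\dots,\Theta_{j,n}$ and conversely, so commuting with all the $\Theta_{j,\bullet}$ is equivalent to commuting with all the $H_{j,\bullet}$. After this, everything is phrased in terms of the imaginary root vectors $\Theta_{j,n}$, for which Lemma \ref{lem:comb} supplies an explicit decomposition.

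First I would treat the case $c_{j,\tau j}=0$, i.e. $j\notin\{r,r+1\}$. By Lemma \ref{lem:comb}, each $\Theta_{j,n}$ is a $\Q(v)[C^{\pm1},\K_j^{\pm1},\K_{\tau j}^{\pm1}]$-linear combination of $1$ and of the elements $[B_{j,k},B_{\tau j,l+1}]+[B_{\tau j,l},B_{j,k+1}]$. Since $H_{i,m}$ commutes with each $\K^{\pm1}$ and with the central $C^{\pm1}$ by \eqref{qsiDR0}, and commutes with each such bracket element by Lemma \ref{lem:comm}, it commutes with $\Theta_{j,n}$; hence $[H_{i,m},H_{j,n}]=0$ in this case.

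The remaining case is $c_{j,\tau j}=-1$, i.e. $j\in\{r,r+1\}$, where Lemma \ref{lem:comm} does not apply directly to the index $j$. The key observation is that the hypothesis $j\notin\{i,\tau i\}$ forces $i\notin\{r,r+1\}$: the set $\{r,r+1\}$ is a single $\tau$-orbit, so $i\in\{r,r+1\}$ would give $\{i,\tau i\}=\{r,r+1\}\ni j$, a contradiction. Consequently $c_{i,\tau i}=0$, and I would invoke the antisymmetry $[H_{i,m},H_{j,n}]=-[H_{j,n},H_{i,m}]$, which places the ``good'' index $i$ (with $c_{i,\tau i}=0$) into the role occupied by $j$ in the previous paragraph. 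Applying the $c=0$ argument verbatim — decomposing $\Theta_{i,m}$ via Lemma \ref{lem:comb} and using Lemma \ref{lem:comm} to commute with $H_{j,n}$ — yields $[H_{j,n},H_{i,m}]=0$, and hence the desired identity.

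The substantive analytic input is Lemma \ref{lem:comm}, whose proof is imported from \cite[Lemma 5.8]{LWZ24}; granting it, the only genuinely new point is the case analysis, and this is the step I expect to require the most care. The crucial structural fact is that $j\notin\{i,\tau i\}$, combined with $\{r,r+1\}$ being a $\tau$-orbit, guarantees that at least one of the two indices has vanishing $c_{\cdot,\tau\cdot}$; this is exactly what lets the antisymmetry maneuver always reduce the problem to the tractable case, and overlooking it is the natural pitfall since Lemma \ref{lem:comm} is stated asymmetrically in $i$ and $j$.
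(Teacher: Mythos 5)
Your proof is correct and follows essentially the same route as the paper's: reduce via the exponential relation \eqref{exp} to showing $[H_{i,m},\Theta_{j,n}]=0$, decompose $\Theta_{j,n}$ by Lemma \ref{lem:comb}, and apply Lemma \ref{lem:comm}. The only difference is that you spell out explicitly why one may assume $c_{j,\tau j}=0$ (namely that $\{r,r+1\}$ is a single $\tau$-orbit, so $j\notin\{i,\tau i\}$ prevents both indices from lying in it), which the paper compresses into a ``without loss of generality.''
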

	
	\begin{proof}
		Without loss of generality, we assume $c_{j,\tau j}=0$. It follows by Lemma~\ref{lem:comb} and Lemma~\ref{lem:comm} that $[ H_{i,m}, \Theta_{j,a}] =0$, for all $m, a\ge 1.$ Since $H_{j,n}$ for any $n \ge 1$ is a linear combination of monomials in $\Theta_{j,a}$, for various $a\ge 1$ by \eqref{exp h}, we conclude that $[ H_{i,m}, H_{j,n}] =0$, whence \eqref{qsiDR1}.
	\end{proof}
	
	\subsection{Relations \eqref{qsiDR9}--\eqref{qsiDR10}}
	
	\begin{proposition}
		Relation \eqref{qsiDR10} holds in $\tUi$.
	\end{proposition}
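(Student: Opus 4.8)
The plan is to treat \eqref{qsiDR10} as a relation internal to an affine rank one subalgebra and to transport it from the rank one Serre relation \eqref{qsiA1DR6} in $\tUi(\widehat{\mathfrak{sl}}_3,\tau)$, exactly in the spirit of Proposition~\ref{prop:iDR31}. First I would observe that the hypothesis $c_{i,\tau i}=-1$ forces $i\in\{r,r+1\}$, and that the involution $\widehat{\tau}$ reduces us to the case $i=r$. Every element occurring in \eqref{qsiDR10}, namely $B_{r,k_1}$, $B_{r,k_2}$, $B_{r+1,l}$, $\Theta_{r,m}$, $\Theta_{r+1,m}$, $C$, $\K_r$, $\K_{r+1}$, then lies in the subalgebra $\tUi_{[r]}$ of Definition~\ref{def:Uir}, which by Proposition~\ref{prop:rank1iso-sl3} is isomorphic to $\tUi(\widehat{\mathfrak{sl}}_3,\tau)$ via $\aleph_r$.

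The core of the argument is to identify each of these root vectors with the $\aleph_r$-image of its rank one counterpart, under the relabeling $1\leftrightarrow r$, $2\leftrightarrow r+1$ and the matching sign normalization $o(1)=o(r)$ (so that $o(1)o(2)=o(r)o(r+1)=-1$). For the real root vectors this is immediate from the definition \eqref{re root}, its rank one original \eqref{eq:Bik}, and the intertwining $\TT_{\bome_r}|_{\tUi_{[r]}}=\aleph_r\circ\dot{\TT}_{\bome_1}\circ\aleph_r^{-1}$ of Proposition~\ref{prop:T1Ti}, which give $\aleph_r(\dot B_{1,k})=B_{r,k}$ and $\aleph_r(\dot B_{2,k})=B_{r+1,k}$. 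For the imaginary root vectors I would argue by induction on $m$ that $\aleph_r(\dot\Theta_{i,m})=\Theta_{i,m}$: the seed $\Theta_{i,0}=\tfrac{1}{v-v^{-1}}$ is fixed, the auxiliary element $D_{i,n}$ of \eqref{GDn} is visibly the same expression in the $B$'s as its rank one version \eqref{Dn}, and $\aleph_r$ carries $\dot B_0\mapsto\TT_{\bth_r}^{-1}(B_0)$ and intertwines $\dot{\TT}_{\bome_1}$ with $\TT_{\bome_r}$; consequently the recursions \eqref{eq:TH12} and \eqref{THn} are transported term by term into \eqref{eq:GTH13} and \eqref{GTHn1} (the coefficient $v^{-c_{i,\tau i}}=v$ matching since $c_{i,\tau i}=-1$).

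With the identification in place, the relation \eqref{qsiA1DR6} holds in $\tUi(\widehat{\mathfrak{sl}}_3,\tau)$: it is a defining relation of $\tUiD(\widehat{\mathfrak{sl}}_3,\tau)$, and the isomorphism of Proposition~\ref{prop:Dr1} carries it to the identical relation among the genuine root vectors $\dot B_{i,k}$, $\dot\Theta_{i,m}$, $\dot C$, $\dot\K_i$. Applying the algebra homomorphism $\aleph_r$ to this relation and substituting $\aleph_r(\dot B_{i,k})=B_{i,k}$, $\aleph_r(\dot\Theta_{i,m})=\Theta_{i,m}$, $\aleph_r(\dot C)=C$, $\aleph_r(\dot\K_i)=\K_i$ yields precisely \eqref{qsiDR10} in $\tUi$.

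The main obstacle I expect is the base case $m=1$ of the imaginary root vector identification, since the higher rank seed \eqref{GTH1} is written through $\TT_{\bome_r}(B_{r+1})$ and $\TT_{\bth_r}^{-1}(B_0)$ rather than the iterated bracket $\bigl[\dot B_1,[\dot B_2,\dot B_0]_v\bigr]_{v^2}$ of \eqref{TH}. Reconciling the two requires the loop generator rewriting already recorded in Lemma~\ref{lem:thrloop} together with the rank one identity from \cite{LWZ23} that underlies the simplified form \eqref{GTH1'}; once $\aleph_r(\dot\Theta_{r,1})=\Theta_{r,1}$ is confirmed this way, the induction and the final transport are routine.
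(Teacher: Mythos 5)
Your proposal is correct and follows essentially the same route as the paper, whose proof is the one-line observation that \eqref{qsiDR10} follows from Propositions \ref{prop:rank1iso-sl3} and \ref{prop:T1Ti} together with the rank one relation \eqref{qsiA1DR6}. You have simply spelled out the details the paper leaves implicit, namely the identification of the real and imaginary $v$-root vectors under $\aleph_r$ and the reconciliation of \eqref{GTH1} with \eqref{TH} via Lemma \ref{lem:thrloop}.
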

	
	\begin{proof}
		It follows from Propositions \ref{prop:rank1iso-sl3}, \ref{prop:T1Ti} and \eqref{qsiA1DR6}.
	\end{proof}	
	
	In the remainder of this subsection, we shall verify \eqref{qsiDR9} in $\tUi$.
	
	We shall fix $i,j \in \II$ such that $c_{ij}=-1$ and $i\neq \tau j$ throughout this subsection.
	
	\begin{lemma}
		\label{lem:SSSa}
		Assume $i,j \in \II$ such that $c_{ij}=-1$ and $i\neq \tau j$. 
		For $k_1, k_2, l \in \Z$, we have
		\begin{align*}
			& \SS(k_1,k_2+1 |l) + \SS(k_1+1,k_2|l) -[2] \SS(k_1+1,k_2+1 |l-1)=0.
		\end{align*}
	\end{lemma}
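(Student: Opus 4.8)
I would begin by recording that $\SS(k_1,k_2|l)=\bS_{i,j}(k_1,k_2|l)$ as in \eqref{eq:SS} (with $i,j$ the pair fixed throughout the subsection), and rewriting this symmetrized Serre expression in nested $v$-bracket form, which holds because $[2]=v+v^{-1}$:
\[
\bS_{i,j}(k_1,k_2|l)=\Sym_{k_1,k_2}\big[B_{i,k_1},[B_{i,k_2},B_{j,l}]_v\big]_{v^{-1}}
=\Sym_{k_1,k_2}\big[B_{i,k_1},[B_{i,k_2},B_{j,l}]_{v^{-1}}\big]_{v}.
\]
Both equalities follow by expanding and relabelling the symmetrization. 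The only inputs are the loop relations \eqref{qsiDR3}, already established in Proposition~\ref{prop:iDR3a}, for the two ordered pairs $(i,j)$ and $(i,i)$: the pair $(i,i)$ is admissible because no node of $\II$ is $\tau$-fixed (see \eqref{pic:AIII2r}), so $i\neq\tau i$, while the hypothesis $i\neq\tau j$ forces $j\neq\tau i$, so \eqref{qsiDR3} applies to $(i,j)$ as well. For $(i,j)$ with $c_{ij}=-1$ the relation takes the convenient operational form $[B_{i,k},B_{j,l}]_v=v\,[B_{i,k+1},B_{j,l-1}]_{v^{-1}}$, trading one unit of the $j$-index for one unit of an $i$-index.

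The plan is to apply this last identity to the inner bracket of each summand of $\SS(k_1,k_2+1|l)$ and of $\SS(k_1+1,k_2|l)$, lowering the $j$-index to $l-1$ while raising one $i$-index by one. Adding the two expressions is precisely what restores the symmetry of the single-slot shift: the four resulting nested brackets, all with $j$-index $l-1$, carry $i$-indices $(k_1+1,k_2+1)$, $(k_2+1,k_1+1)$, $(k_1,k_2+2)$ and $(k_2,k_1+2)$. The first two already assemble the symmetrization in the target $[2]\,\SS(k_1+1,k_2+1|l-1)$, once the outer bracket is converted from $[\,\cdot\,]_{v^{-1}}$ to $[\,\cdot\,]_{v}$ via $[A,B]_{v^{-1}}-[A,B]_v=(v-v^{-1})BA$. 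The two ``unbalanced'' brackets, at $(k_1,k_2+2)$ and $(k_2,k_1+2)$, are where the $(i,i)$ loop relation enters: it moves a unit of index between the two $B_i$-slots, and together with the correction terms just produced it must cancel completely.

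It is worth recording the conceptual content in the generating functions of \eqref{eq:Genfun}: the claimed identity is equivalent to $(w_1+w_2-[2]z)\,\bS_{i,j}(w_1,w_2|z)=0$, while \eqref{qsiDR3} for $(i,j)$ becomes $w\,[\bB_i(w),\bB_j(z)]_v=vz\,[\bB_i(w),\bB_j(z)]_{v^{-1}}$. Writing $(w_1+w_2)\bS_{i,j}=\Sym_{w_1,w_2}\big[(w_1+w_2)\Psi\big]$ with $\Psi=[\bB_i(w_1),[\bB_i(w_2),\bB_j(z)]_v]_{v^{-1}}$, the factor $w_2$ can be pushed onto the inner bracket and removed by the displayed current identity, producing the $vz\,\bS_{i,j}$ contribution; the $w_1$-term, in which the surviving $\bB_i(w_1)$ sits outside the $\bB_j$-bracket, is exactly the piece that needs the $(i,i)$ relation and should furnish the complementary $v^{-1}z\,\bS_{i,j}$. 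This reformulation both guides the manipulation and supplies a final check. As a minor simplification, by Proposition~\ref{prop:TiFix} the operators $\TT_{\bome_i}$ and $\TT_{\bome_j}$ shift the $i$- and $j$-indices respectively while fixing the others, and since $o(i)^2=1$ one may normalize to $k_2=l=0$, reducing the bookkeeping to the single parameter $k_1$.

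The main obstacle will be exactly this bookkeeping rather than any conceptual point: one must invoke the two loop relations in tandem, track the flip between the $[\,\cdot\,]_v$ and $[\,\cdot\,]_{v^{-1}}$ outer brackets as indices are shifted, and verify that every $(v-v^{-1})$-correction and every unbalanced nested bracket cancels, so that only the fully symmetric shifted Serre element $[2]\,\SS(k_1+1,k_2+1|l-1)$ survives.
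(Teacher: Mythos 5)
Your proposal is correct and follows essentially the same route as the paper, which simply cites \cite[Lemma 4.9]{LW21b} and notes that the identity is a formal consequence of the already-established relations \eqref{qsiDR3}; your nested-bracket reformulation of $\bS_{i,j}$ and the use of \eqref{qsiDR3} for the two pairs $(i,j)$ and $(i,i)$ (the latter admissible since no node of $\II$ is $\tau$-fixed) is exactly the mechanism of that cited computation. The generating-function form $(w_1+w_2-[2]z)\,\bS_{i,j}(w_1,w_2|z)=0$ is a correct and useful restatement, and the remaining work you flag is indeed only bookkeeping.
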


	\begin{proof}
		The proof is the same as for \cite[Lemma 4.9]{LW21b}, and hence omitted here. It uses only the relations \eqref{qsiDR3}, which have been established above.
	\end{proof}

	\begin{lemma}  \label{lem:SSS}
		Assume $i,j \in \II$ such that $c_{ij}=-1$ and $i\neq \tau j$.	For $k_1, k_2, l \in \Z$, we have
		\begin{align*}
			& \SS(k_1,k_2+1 |l) + \SS(k_1+1,k_2|l) -[2] \SS(k_1,k_2|l+1)=0.
		\end{align*}
	\end{lemma}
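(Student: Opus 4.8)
The plan is to obtain this identity from the already-established Lemma~\ref{lem:SSSa} by applying the anti-involution $\sigma_\imath$ of \eqref{eq:sigma}, rather than repeating a long monomial bookkeeping. The point is that the two lemmas share the same ``source'' term $\SS(k_1,k_2+1|l)+\SS(k_1+1,k_2|l)$ and differ only in how the index shift is absorbed, and $\sigma_\imath$ interchanges precisely these two normalizations.

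The one non-formal input I would first secure is the action of $\sigma_\imath$ on the real root vectors, namely
\[
\sigma_\imath(B_{i,k}) = B_{i,-k}, \qquad \forall\, i\in\II,\ k\in\Z.
\]
I stress that this does \emph{not} follow formally from the braid identity $\sigma_\imath\TT_i\sigma_\imath=\TT_i^{-1}$ alone: since the reduced word for $\bome_i$ in Corollary~\ref{cor:length} is not a palindrome, one has $\sigma_\imath\TT_{\bome_i}\sigma_\imath\neq\TT_{\bome_i}^{-1}$ as operators, and the displayed equality holds only when evaluated on the specific vector $B_i$. I would therefore prove it by reduction to affine rank one exactly as in Proposition~\ref{prop:Theta-invariant Tomega}: each $B_{i,k}$ lies in $\tUi_{[i]}$, and under $\aleph_i$ (Propositions~\ref{prop:rank1isoQG} and \ref{prop:rank1iso-sl3}) the claim becomes the corresponding statement for $\ov{\U}(\widehat{\sll}_2)$ or $\tUi(\widehat{\sll}_3,\tau)$, which is known from \cite{Da93,LWZ23}. (As a sanity check, $\sigma_\imath(B_{i,k})=B_{i,-k}$ is compatible with the shift relation \eqref{qsiDR3}, since applying $\sigma_\imath$ to it and relabeling reproduces \eqref{qsiDR3}.)

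Granting this, because $\sigma_\imath$ is a $\Q(v)$-algebra anti-homomorphism fixing $[2]$, a term-by-term computation from the definition \eqref{eq:SS} of $\bS_{i,j}$ (which involves only the $B$'s, no $\K$'s) yields
\[
\sigma_\imath\big(\SS(k_1,k_2|l)\big) = \SS(-k_1,-k_2|-l), \qquad \forall\, k_1,k_2,l\in\Z,
\]
where one uses that reversing the order of each of the three monomials and sending $k_a\mapsto -k_a$, $l\mapsto -l$ permutes the three summands among themselves, and that $\Sym_{k_1,k_2}$ is invariant under $k_1\leftrightarrow k_2$. Applying $\sigma_\imath$ to the identity of Lemma~\ref{lem:SSSa} then gives
\[
\SS(-k_1,-k_2-1|-l) + \SS(-k_1-1,-k_2|-l) - [2]\SS(-k_1-1,-k_2-1|-l+1) = 0,
\]
and substituting $(k_1,k_2,l)\mapsto(-k_1-1,-k_2-1,-l)$, a bijection of $\Z^3$, turns this into
\[
\SS(k_1+1,k_2|l) + \SS(k_1,k_2+1|l) - [2]\SS(k_1,k_2|l+1) = 0,
\]
which is exactly the asserted identity; the hypotheses $c_{ij}=-1$ and $i\neq\tau j$ are preserved since $\sigma_\imath$ fixes the indices $i,j$.

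The main obstacle is thus isolated in the single input $\sigma_\imath(B_{i,k})=B_{i,-k}$; everything else is formal. If one prefers to avoid invoking it, the alternative is to mimic verbatim the proof of Lemma~\ref{lem:SSSa} (that is, \cite[Lemma~4.9]{LW21b}): expand all three symmetrized expressions into monomials and repeatedly apply \eqref{qsiDR3} to align the single $B_{j}$-index against the two $B_{i}$-indices. That route uses only \eqref{qsiDR3} but is longer, and its sole difficulty is the combinatorial matching of the cancelling monomials.
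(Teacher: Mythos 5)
Your reduction of Lemma~\ref{lem:SSS} to Lemma~\ref{lem:SSSa} via $\sigma_\imath$ is a genuinely different route from the paper's (the paper proves Lemma~\ref{lem:SSS} by the direct monomial computation of \cite[Lemma 4.13]{LW21b}, using only \eqref{qsiDR3} --- essentially your ``fallback'' option), and the formal index bookkeeping in your reduction is correct. However, the single non-formal input you rely on, $\sigma_\imath(B_{i,k})=B_{i,-k}$, is \emph{false}, so as written the proof has a genuine gap. To see this, apply $\sigma_\imath$ to the already-established relation \eqref{qsiDR4} with $c_{i,\tau i}=0$ and $k=l=1$, namely $[B_{i,1},B_{\tau i,1}]=\frac{C}{v-v^{-1}}(\K_{\tau i}-\K_i)$. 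Since $\sigma_\imath(\K_\delta)=\K_\delta$ (as $\tau$ permutes $\I$), one has $\sigma_\imath(C)=C$, so the right side maps to $-\frac{C}{v-v^{-1}}(\K_{\tau i}-\K_i)$, while under your formula the left side maps to $-[B_{i,-1},B_{\tau i,-1}]=-\frac{C^{-1}}{v-v^{-1}}(\K_{\tau i}-\K_i)$; this forces $C^2=1$, which is absurd. (This is the same phenomenon as in Beck's setting: the anti-involution sending $x^\pm_{i,k}\mapsto x^\pm_{i,-k}$ must invert the central element, whereas $\sigma_\imath$ fixes $C$.) Your sanity check against \eqref{qsiDR3} does not detect this because \eqref{qsiDR3} is homogeneous --- every monomial has total index $k+l+1$ --- so any twist of the form $B_{i,k}\mapsto B_{i,-k}C^k$ passes it unseen; it is the inhomogeneous relations \eqref{qsiDR4}--\eqref{qsiDR6} that pin down the central factor.

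The strategy is likely repairable: the consistency checks against \eqref{qsiDR4} suggest the correct formula is $\sigma_\imath(B_{i,k})=B_{i,-k}C^{k}$ up to a possible sign $\epsilon_i^k$, and since all three terms of Lemma~\ref{lem:SSSa} carry the same total index $k_1+k_2+l+1$, a uniform twist $C^{k_1+k_2+l+1}$ would factor out and your substitution argument would go through verbatim. But two things must then actually be proved rather than asserted: (i) the precise form of $\sigma_\imath(B_{i,k})$, including any signs (if the sign were $o(i)^k$, then $o(i)o(j)=-1$ would make the three terms pick up \emph{different} signs and the argument would transform Lemma~\ref{lem:SSSa} into a different identity); and (ii) that the rank-one reduction you invoke is legitimate, i.e.\ that $\sigma_\imath$ preserves $\tUi_{[i]}$ and intertwines with the rank-one anti-involution under $\aleph_i$, which is not immediate from Definitions~\ref{def:Uii}--\ref{def:Uir} since $\sigma_\imath(\TT_{\bome_i'}(B_i))=\TT_{(\bome_i')^{-1}}^{-1}(B_i)$ need not visibly lie in $\tUi_{[i]}$. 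Until these are supplied, the self-contained route is the paper's: expand and cancel using only \eqref{qsiDR3}, as in \cite[Lemma 4.13]{LW21b}.
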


	\begin{proof}
		The proof is the same as that of \cite[Lemma 4.13]{LW21b}, hence omitted here. It uses only the relations \eqref{qsiDR3}, which have been established above.	
	\end{proof}
	
	\begin{proposition}
		Relation \eqref{qsiDR9} holds in $\tUi$.
	\end{proposition}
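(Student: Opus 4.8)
The plan is to reduce the general Serre relation \eqref{qsiDR9} to its lowest-degree instance, which is already a defining relation of $\tUi$, and then to propagate it to all index values using the loop symmetries together with the recurrences of Lemmas \ref{lem:SSSa} and \ref{lem:SSS}. Throughout I would fix $i,j\in\II$ with $c_{ij}=-1$ and $i\neq\tau j$ (whence $i\neq j,\tau j$), and abbreviate $\SS(k_1,k_2|l):=\bS_{i,j}(k_1,k_2|l)$ as in those two lemmas. The base case is
\[
\SS(0,0|0)=2\bigl(B_i^2B_j-[2]B_iB_jB_i+B_jB_i^2\bigr)=0,
\]
which is precisely the defining Serre relation \eqref{eq:S6} for $c_{ij}=-1$, $j\neq\tau i\neq i$.

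Next I would pin down how the loop symmetries move the three indices. From the definition \eqref{re root} one computes $\TT_{\bome_i}(B_{i,k})=o(i)^{-1}B_{i,k-1}$, while Proposition \ref{prop:TiFix} gives $\TT_{\bome_i}(B_{j,l})=B_{j,l}$ (since $j\neq i,\tau i$); as each monomial of $\SS$ carries exactly two $B_i$-factors and one $B_j$-factor, the scalar $o(i)^{-2}=1$ factors out uniformly and $\TT_{\bome_i}\bigl(\SS(k_1,k_2|l)\bigr)=\SS(k_1-1,k_2-1|l)$. Hence the vanishing of $\SS$ is invariant under the diagonal shift $(k_1,k_2)\mapsto(k_1\pm1,k_2\pm1)$. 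Symmetrically $\TT_{\bome_j}(B_{j,l})=o(j)^{-1}B_{j,l-1}$ and $\TT_{\bome_j}(B_{i,k})=B_{i,k}$ (since $i\neq j,\tau j$), so vanishing is invariant under $l\mapsto l\pm1$. Applying $\TT_{\bome_i}^{\mp m}\TT_{\bome_j}^{\mp l}$ to the base case therefore yields the entire diagonal $\SS(m,m|l)=0$ for all $m,l\in\Z$.

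It then remains to reach the off-diagonal values, which I would obtain by induction on $d=|k_1-k_2|$ using Lemma \ref{lem:SSS}. Specializing that recurrence to $k_1=k_2=m$ gives $\SS(m,m+1|l)+\SS(m+1,m|l)=[2]\SS(m,m|l+1)=0$; since $\SS$ is symmetric in its first two arguments this forces $\SS(m,m+1|l)=0$, settling $d=1$. For the inductive step, assuming vanishing for all differences $\le d$, I would rewrite Lemma \ref{lem:SSS} as
\[
\SS(k_2+d+1,k_2|l)=[2]\,\SS(k_2+d,k_2|l+1)-\SS(k_2+d,k_2+1|l),
\]
where the two terms on the right have differences $d$ and $d-1$ and hence vanish, so the difference-$(d+1)$ term vanishes as well; by symmetry this also covers negative differences. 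The companion recurrence of Lemma \ref{lem:SSSa} supplies the alternative invariance $\SS(k_1{+}1,k_2{+}1|l{-}1)=\SS(k_1,k_2|l{+}1)$, which serves as a consistency check and an equally valid route for the induction. This exhausts all $(k_1,k_2,l)$ and establishes \eqref{qsiDR9}, in parallel with \cite[Lemma 4.9, Lemma 4.13 and the ensuing Serre relation]{LW21b}.

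Since the genuine algebraic content — the two quadratic-index recurrences — is already isolated in Lemmas \ref{lem:SSSa} and \ref{lem:SSS} (which themselves rest only on the already-verified relation \eqref{qsiDR3}), the remaining difficulty here is essentially organizational: correctly tracking the sign factors $o(i)^{\pm1}$ and $o(j)^{\pm1}$ together with the index shifts induced by $\TT_{\bome_i}$ and $\TT_{\bome_j}$, and arranging the induction on $|k_1-k_2|$ so that each step reduces strictly to lower differences. I expect this bookkeeping, rather than any new identity, to be the only step requiring genuine care.
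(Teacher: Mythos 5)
Your argument is correct and is essentially the paper's own proof: the paper disposes of \eqref{qsiDR9} with a one-line citation to \cite[\S5.1]{Z22} together with Lemmas \ref{lem:SSSa}--\ref{lem:SSS}, and the argument you spell out (base case \eqref{eq:S6}, diagonal translation via $\TT_{\bome_i}$ and $\TT_{\bome_j}$ using Proposition \ref{prop:TiFix} and Lemma \ref{lem:Tomeij}, then induction on $|k_1-k_2|$ via Lemma \ref{lem:SSS} and the symmetry of $\SS$ in its first two arguments) is precisely the argument being invoked there. The only cosmetic difference is that you relegate Lemma \ref{lem:SSSa} to a consistency check, which is harmless since Lemma \ref{lem:SSS} together with the translation invariance already closes the induction.
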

	
	\begin{proof}
		It follows by using the same argument of \cite[\S5.1]{Z22} by  using Lemmas \ref{lem:SSSa}--\ref{lem:SSS}.
	\end{proof}	

This completes the verification that ${\Phi}: {}\tUiD \longrightarrow\tUi$ is a homomorphism and hence the proof of Theorem \ref{thm:Dr}. 
	%
	%

	%%%%%%%%%%%%%
	%%%%%%%%%%%%%

\end{document}